\documentclass{amsart}

\usepackage{euscript}
\usepackage{amssymb}
\usepackage{latexsym}
\usepackage{amsmath,amsfonts}
\usepackage{amsthm,amstext}
\usepackage{amsbsy,amsxtra,amsopn}
\usepackage{enumerate}
\usepackage[dvips]{color}

\definecolor{refkey}{gray}{0.5}
\definecolor{labelkey}{gray}{0.5}

\newtheorem{theorem}{Theorem}[section]
\newtheorem{proposition}[theorem]{Proposition}
\newtheorem{lemma}[theorem]{Lemma}
\newtheorem{corollary}[theorem]{Corollary}

\theoremstyle{definition}
\newtheorem{definition}[theorem]{Definition}
\newtheorem{example}[theorem]{Example}

\newtheorem{remark}[theorem]{Remark}

\numberwithin{equation}{section}

\newcommand{\nR}{{\mathbb R}}
\newcommand{\nC}{{\mathbb C}}
\newcommand{\nCR}{\nC\setminus\nR}

\newcommand{\kip}{[\,\cdot\, , \cdot\,]}
\newcommand{\hip}{(\,\cdot\, , \cdot\,)}
\newcommand{\ahip}{\langle \,\cdot\, , \cdot\,\rangle}

\newcommand{\la}{\langle}\newcommand{\ra}{\rangle}

\newcommand{\cA}{{\mathcal A}}

\newcommand{\cC}{{\mathcal C}}

\newcommand{\cH}{{\mathcal H}}
\newcommand{\cI}{{\mathcal I}}

\newcommand{\cK}{{\mathcal K}}
\newcommand{\cL}{{\mathcal L}}
\newcommand{\cM}{{\mathcal M}}

\newcommand{\cS}{{\mathcal S}}

\def\sN{{\mathfrak N}}

\def\wt#1{{{\widetilde #1} }}
\newcommand{\wh}[1]{\widehat{#1}}
\newcommand{\V}[1]{\mathsf{#1}}
\newcommand{\mb}[1]{\mathbf{#1}}

\newcommand{\co}[1]{{#1}^{*}}

\newcommand{\SM}{\cS_{\cM}}

 \DeclareMathOperator{\sgn}{sgn}
  \DeclareMathOperator{\Arg}{Arg}
 \DeclareMathOperator{\ran}{ran}
 \DeclareMathOperator{\dom}{dom}
 \DeclareMathOperator{\re}{Re}
 \DeclareMathOperator{\im}{Im}
 \DeclareMathOperator{\gr}{gr}

 \DeclareMathOperator{\lspan}{span}

 \DeclareMathOperator{\cspan}{\overline{span}}

 \setcounter{tocdepth}{1}
 
 \hyphenation{Bell-ing-ham}

\allowdisplaybreaks

%
%

\begin{document}

%
%
%
%
%
%
%
%
%

\title[Partially fundamentally reducible operators]%
{Partially fundamentally reducible \\ operators in Kre\u{\i}n spaces}

\author[Branko \'{C}urgus]{Branko \'{C}urgus}
\address{Department of Mathematics,
Western Washington University,
516 High Street \\
Bellingham, Washington 98226, USA}
 \email{curgus@wwu.edu}

\author[Vladimir Derkach]{Vladimir Derkach}
 \address{Department of Mathematics, Donetsk National University \\
600-richchya Str 21 \\ Vinnytsya, 21021, Ukraine}

 \address{\rule{0pt}{12pt}Department of Mathematics, National Pedagogical University  \\
Pirogova Str 9 \\ Kiev, 01601, Ukraine}
 \email{derkach.v@gmail.com}
\thanks{The research of the second author was supported by the Fulbright Fund}

\subjclass{Primary 47B50; Secondary 46C20, 47B25}

\keywords{self-adjoint extension, symmetric operator, Kre\u{\i}n space, fundamentally reducible operator, coupling of operators, boundary triple, Weyl function, similar to a self-adjoint operator}

 \date{\today}

\begin{abstract}
A self-adjoint operator $A$ in a Kre\u{\i}n space $(\cK,\kip )$ is called partially fundamentally reducible if there exist a fundamental decomposition $\cK = \cK_+[\dot{+}]\cK_-$ (which does not reduce $A$) and densely defined symmetric operators $S_+$ and $S_-$ in the Hilbert spaces $(\cK_+, \kip)$ and $(\cK_-, -\kip)$, respectively, such that each $S_+$ and $S_-$ has defect numbers $(1,1)$ and the operator $A$ is a self-adjoint extension of $S=S_+ \oplus (-S_-)$ in the Kre\u{\i}n space $(\cK,\kip)$.  The operator $A$ is interpreted as a coupling of operators $S_+$ and $-S_-$ relative to some boundary triples $\bigl(\nC,\Gamma_0^+,\Gamma_1^+\bigr)$ and $\bigl(\nC,\Gamma_0^-,\Gamma_1^-\bigr)$.
Sufficient conditions for a nonnegative partially fundamentally reducible operator $A$ to be similar to a self-adjoint operator in a Hilbert space are given in terms of the Weyl functions $m_+$ and $m_-$ of $S_+$ and $S_-$ relative to the boundary triples $\bigl(\nC,\Gamma_0^+,\Gamma_1^+\bigr)$ and $\bigl(\nC,\Gamma_0^-,\Gamma_1^-\bigr)$.
Moreover, it is shown that under some asymptotic assumptions on $m_+$ and $m_-$ all positive self-adjoint extensions of the operator $S$ are similar to self-adjoint operators in a Hilbert space.
\end{abstract}

\maketitle

 \tableofcontents

\section{Introduction}

In this paper we study a class of self-adjoint operators in a Kre\u{\i}n space which turns out to be similar to self-adjoint operators in a Hilbert space. Recall that a Kre\u{\i}n space is a complex vector space $\cK$ with a sesquilinear form $\kip$ such that there exist subspaces $\cK_+$ and $\cK_-$ of $\cK$ with the following three properties: $\bigl[\cK_+,\cK_-\bigr] = \{0\}$, $\cK = \cK_+ \dot{+}  \cK_-$, a direct sum, and the spaces $(\cK_+, \kip)$ and $(\cK_-, -\kip)$ are Hilbert spaces.  A pair of subspaces $\cK_+$ and $\cK_-$ with the preceding three properties is called a {\em fundamental decomposition} of a Kre\u{\i}n space $\cK$. The projections $P_+$ and $P_-$ associated with the direct sum $\cK = \cK_+ \dot{+} \cK_-$ are called {\em fundamental projections} and the operator $J = P_+ - P_-$ is called a {\em fundamental symmetry} of a Kre\u{\i}n space.  The space $\cK$ with the inner product  $\la x, y \ra = [Jx,y]$, $x,y \in \cK$, is a Hilbert space.  Neither a fundamental decomposition nor a fundamental symmetry of a Kre\u{\i}n space is unique. However, the Hilbert space norms generated by different fundamental decompositions via the corresponding fundamental symmetries are equivalent. All topological notions in a Kre\u{\i}n space refer to the topology of the Hilbert space $(\cK, \ahip)$.  For the general theory of Kre\u{\i}n spaces and operators acting in them we refer to the monographs \cite{AI, B74}.

Unlike the spectrum of a self-adjoint operator in a Hilbert space, the spectrum of a general self-adjoint operator in a Kre\u{\i}n space can be quite arbitrary.  Therefore it is of interest to look for conditions that would guarantee good spectral properties of a self-adjoint operator in a Kre\u{\i}n space.

The ultimate task in this direction is to provide sufficient conditions for a self-adjoint operator in a Kre\u{\i}n space to be similar to a self-adjoint operator in a Hilbert space.  A simple characterization of similarity is as follows.  A self-adjoint operator $A$ in a Kre\u{\i}n space $(\cK,\kip)$ is similar to a self-adjoint operator in the Hilbert space $(\cK,\ahip)$ if and only if $A$ is fundamentally reducible in $(\cK,\kip)$; where {\em fundamentally reducible} means that there exists a fundamental decomposition $\cK = \cK_+ \dot{+}  \cK_-$ of $(\cK,\kip)$ such that $A$ is the direct sum of its restrictions to $\cK_+\!\cap (\dom A)$ and $\cK_- \cap (\dom A)$.  Equivalently, an operator $A$ is fundamentally reducible in a Kre\u{\i}n space $(\cK,\kip)$ if and only if there exists a fundamental symmetry $J$ on $(\cK,\kip)$ such that $J \dom(A) \subseteq \dom(A)$ and $JAx = AJx$ for all $x \in \dom(A)$.  The above characterization of similarity for bounded operators appears in  \cite[Theorem~1, Section~2]{McEn82} and in \cite[Proposition~2.2]{KuTr11}, where an equivalent terminology of $\cC$-symmetry is used.

Another kind of a self-adjoint operator in a Kre\u{\i}n space whose spectral properties resemble those of a self-adjoint operator in a Hilbert space is a nonnegative self-adjoint operator with a nonempty resolvent set.  The spectrum of such an operator is real and, excluding arbitrary neighborhoods of $0$ and $\infty$, the operator $A$ has a projector valued spectral function whose properties resemble the properties of the spectral function of a self-adjoint operator in a Hilbert space; for details see \cite{La} and Subsection~\ref{ss:non-Ks} below for a short review. If the spectrum of $A$ accumulates on both sides of $0$ ($\infty$), then $0$ ($\infty$, respectively) is called a {\em critical point} of $A$.  If the spectral function of $A$ is bounded in a neighborhood of a critical point, then that critical point is said to be {\em regular}.  Otherwise, it is said to be a {\em singular} critical point.  The set of all singular critical points of $A$ is denoted by $c_s(A)$.  Here, by definition, $c_s(A)\subseteq \{0,\infty\}$.  These concepts are closely related to the similarity of $A$ to a self-adjoint operator in a Hilbert space: A nonnegative self-adjoint operator $A$ in a Kre\u{\i}n space such that $\rho(A) \neq \emptyset$ is similar to a self-adjoint operator in a Hilbert space if and only if $\ker(A) = \ker(A^2)$ and $0, \infty \not\in c_s(A)$.

Our first step in studying the similarity question is to introduce a new concept related to the fundamental reducibility in Kre\u{\i}n spaces. For the definition of defect numbers of symmetric operators which we use in the next definition and for general Hilbert space theory see \cite{AG}.

\begin{definition}\label{def:PartFR}
    A self-adjoint operator $A$ in a Kre\u{\i}n space $(\cK,\kip )$ is called {\em partially fundamentally reducible} if there exists a fundamental decomposition $\cK = \cK_+[\dot{+}]\cK_-$ of $(\cK,\kip)$ such that the subspaces
\begin{equation*}
 {\mathcal D}_+\!\!=\!\bigl\{ f \in \cK_+\!\cap\!(\dom A)\! : \! Af \in \cK_+ \!\bigr\}
 \ \ \text{and} \ \
{\mathcal D}_-\!\!=\!\bigl\{ f \in \cK_-\!\cap\!(\dom A)\! : \! Af \in \cK_-\!\bigr\}
\end{equation*}
are dense in $\cK_+$ and $\cK_-$ and the restrictions
\begin{equation}\label{eq:Spm}
    S_+ = A|_{{\mathcal D}_+}\quad\text{and} \quad  S_- = - A|_{{\mathcal D}_-}
    \end{equation}
are symmetric operators with defect numbers $(1,1)$
in the Hilbert spaces $(\cK_+, \kip)$ and $(\cK_-, -\kip)$, respectively.
\end{definition}

The idea behind the above definition is to introduce a desirable restriction of an operator $A$ and then utilize those desirable properties of the restriction to study $A$.  This method resembles the Glazman decomposition method from \cite{Gl}.  A similar idea was also used to study definitizability of differential operators with indefinite weights in \cite[Section~3]{Beh07} and definitizability of self-adjoint operators in Kre\u{\i}n spaces in \cite[Theorem~3.5]{BehPhi10}.

Our objective in this paper is to give sufficient conditions under which a nonnegative partially fundamentally reducible operator in a Kre\u{\i}n space is similar to a self-adjoint operator in a Hilbert space.

To this end we will use a boundary triple approach to extension theory developed in \cite{Koch79, GG, DM91}; see Subsection~\ref{SecW} below for a brief review.  We will apply this theory to the symmetric operators $S_+$ and $S_-$ associated via~\eqref{eq:Spm} with a partially fundamentally reducible operator $A$.
Specifically, let $\bigl(\nC,\Gamma^+_0,\Gamma^+_1\bigr)$ be a boundary triple of the operator $S_+^*$, the adjoint of $S_+$ in the Hilbert space $(\cK_+, \kip)$, and let $m_+$ be the corresponding Weyl function.  Then there exists a unique boundary triple $\bigl(\nC,\Gamma^-_0,\Gamma^-_1\bigr)$ for $S_-^*$ such that the operator $A$ is a coupling of $S_+$ and $S_-$ relative to the boundary triples $\bigl(\nC,\Gamma^+_0,\Gamma^+_1\bigr)$ and $\bigl(\nC,\Gamma^-_0,\Gamma^-_1\bigr)$. That is,  $f \in \dom(A)$ if and only if there exist $f_+ \in \dom\bigl(S_+^*\bigr)$ and $f_- \in \dom\bigl(S_-^*\bigr)$ such that
\[
f = f_+ + f_- \quad \text{and} \quad \Gamma_0^+ f_+ = \Gamma_0^- f_-, \quad \Gamma_1^+ f_+ = - \Gamma_1^- f_-
\]
(see Theorems~\ref{tHsc} and~\ref{tKsc} below). Let $m_-$ be the Weyl function of $S_-$ corresponding to the boundary triple $\bigl(\nC,\Gamma^-_0,\Gamma^-_1\bigr)$. The Weyl functions $m_+$ and $m_-$ belong to the class of Nevanlinna functions, for the definition and basic properties see Subsection~\ref{sect:NS} below. These functions completely characterize the simple (non-self-adjoint) parts of the symmetric operators $S_+$ and $S_-$ acting in the Hilbert spaces $\cK_+$ and $\cK_-$.  Therefore, it is natural to look for conditions for the fundamental reducibility of $A$ in terms of the local behavior of the associated Weyl functions  $m_+$ and $m_-$ at $0$ and $\infty$.

The coupling method used here is a combination of the Glazman decomposition method and the boundary triple approach to the extension theory.  The coupling method was worked out for operators in Hilbert spaces in~\cite{DHMS} and it was used in \cite{Ka}, and also in \cite{KaMal07, KaKost08, KaKoMal09, Ka09, Kost13}, to study the problem of similarity of differential operators with indefinite weights to self-adjoint operators in Hilbert spaces.

Using this method, see \cite{KaMal07, KaKost08}, it was proved that the boundedness of the function
\begin{equation} \label{eq:D0-Dinf}
y \mapsto \frac{\im m_+(iy)+\im m_-(iy)}{m_+(iy)+m_-(-iy)}, \quad y > 0,
\end{equation}
on $(0,1)$ (on $(1,\infty)$, respectively) is necessary for $0 \not\in c_s(A)$ ($\infty \not\in c_s(A)$, respectively).  Since we use these necessary conditions in an essential way, we introduce the following terminology. A pair of functions $(m_+,m_-)$ is said to have $D_0$-{\em property} ($D_\infty$-{\em property}) if the function in \eqref{eq:D0-Dinf} is bounded on $(0,1)$ (on $(1,\infty)$, respectively).

Next we introduce a different kind of local behavior of a Nevanlinna function. A Nevanlinna function $m$ with the integral representation \eqref{eq:IntRep+} in Subsection~\ref{SubSec-B-prop-def} is said to have $B_0$-property ($B_\infty$-property, respectively) if the mapping
\begin{equation*}
 f \mapsto  \int_{0}^{+\infty}\! \frac{f(x)}{x+y}\,d\sigma(x),
\end{equation*}
is a bounded mapping from $L_{\sigma}^2({\mathbb R}_+)$ into $L_{w_m}^2\!(0,1)$ ($L_{w_m}^2\!(1,\infty)$, respectively). Here $w_m(y) = \bigl(\im m(iy)\bigr)^{-1}$, the reciprocal of the imaginary part of $m$.

With $D$- and $B$-properties our main results are as follows. If $A$ is a nonnegative partially fundamentally reducible operator and if associated Weyl functions $m_+$ and $m_-$ have $B_\infty$-property, then $\infty \not\in c_s(A)$ if and only if the pair $(m_+, m_-)$ has $D_\infty$-property (Theorem~\ref{Main}).   Analogously, if $A$ is a nonnegative partially fundamentally reducible operator and if the associated Weyl functions $m_+$ and $m_-$ have $B_0$-property, then $0 \not\in c_s(A)$ if and only if the pair $(m_+, m_-)$ has $D_0$-property and $\ker(A) = \ker(A^2)$ (Theorem~\ref{Main0}). Together these two results give sufficient conditions for a nonnegative partially fundamentally reducible operator in a Kre\u{\i}n space $(\cK,\kip)$ to be similar to a self-adjoint operator in a Hilbert space.

Furthermore, in Subsection~\ref{sSecAsy} we define the asymptotic class $\cA_\infty$ of Nevanlinna functions which all satisfy $B_\infty$-property and pairs of which have $D_\infty$-property and the analogous asymptotic class $\cA_0$ for $B_0$-property and $D_0$-property.  In Theorem~\ref{t:AllPos} we prove that if the Weyl functions $m_+$ and $m_-$  associated  with the partially fundamentally reducible operator $A$ both belong to $\cA_\infty \cap \cA_0$, then, not only $A$, but all the nonnegative self-adjoint extensions of $S_+\oplus(-S_-)$ are similar to a self-adjoint operators in a Hilbert space.

Finally, in Section~\ref{SecExe} we apply our results to indefinite Sturm-Liouville differential operators. In some cases our results lead to a new point of view at some results from \cite{CL, CN, F, FN98, Ka, KaMal07, KaKost08, KaKoMal09}. We also get some new results for the case of nonsymmetric coefficients and the case when $A$ is a coupling of two differential operators of different order.

Since a self-adjoint operator in a Kre\u{\i}n space is similar to a self-adjoint operator in a Hilbert space if and only if it is fundamentally reducible, throughout the rest of the paper we will use the shorter phrase fundamentally reducible to relate to this property.

The main results of this paper were presented at the 21st International Workshop on Operator Theory and Applications held in July of 2010 in Berlin, Germany.

The authors would like to thank Victor Katsnelson and Mark Malamud for useful discussions and relevant literature suggestions.

\subsection{Notation.} We use the standard notation $\nC$ for the set of complex numbers and $\nR$ for the set of real numbers. By $\nC_+$ we denote the set of all $z \in \nC$ with positive imaginary part. Similarly,  $\nR_+$ ($\nR_-$) stands for the set of all positive (negative, respectively) reals. For $z \in \nC$, $\co{z}$, $\re z$, $\im z$ and $\Arg z$ denote the complex conjugate, real, imaginary part of $z$ and the principal value of the argument of $z$ with $\Arg z \in (-\pi, \pi]$, respectively.

For a noninteger real $\alpha$ and $z \in \nC\setminus[0,+\infty)$ we designate the principal branch of $z^\alpha$ to be $|z|^\alpha \exp\bigl(i \alpha \Arg(z)\bigr)$, where $\Arg(z) \in (-\pi, \pi)$.

All operators in this paper are closed densely defined linear operators. For such an operator $T$, we use the common notation $\rho(T)$, $\dom(T)$, $\ran(T)$ and $\ker(T)$ for the resolvent set, the domain, the range and the null-space, respectively, of $T$.

We use the symbols $\pm$ and $\mp$ in a very specific way. Each sentence in which one or both of these symbols appear should be read twice, the first time with the top symbols and the second time with the bottom symbols.  A good example of this is the sentence containing \eqref{CritTpm} and \eqref{CritTpm-+} in Subsection~\ref{ss:RegCPs}. Reading this sentence the first time defines $T_{++}$ and $T_{+-}$; reading it the second time defines $T_{--}$ and $T_{-+}$. The only exception to this rule is the symbol $\operatorname{SL}^\pm(2,\nR)$ which is the common symbol for one object:  the group of all real matrices with determinant equal to $-1$ or $1$.

\section{Preliminaries}
\subsection{Weyl functions of symmetric operators} \label{SecW}
Let $S$ be a closed densely defined symmetric operator in the
Hilbert space $\bigl(\cH,\ahip_\cH\bigr)$ and let $\wh\rho(S)$ denotes the set of points of regular type of $S$, see~\cite{AG}. The subspace
\begin{equation*}
\sN_z=\cH\ominus\ran(S-\bar z)=\ker(S^*-z), \quad z\in\wh\rho(S)
\end{equation*}
is called the defect subspace of the operator $S$. The dimension $\dim(\sN_z)$ is constant on each of the open half-planes $\nC^+$ and $\nC^-$ and is denoted by $d_+,$ for $z \in \nC^+$ and $d_-,$ for $z \in \nC^-$. The numbers $d_+$ and $d_-$ are called the upper and lower defect numbers of $S$. In this paper we
assume that $d_+ = d_- = 1$.

Since the space $(\dom S^*) /(\dom S)$ is two dimensional there exist
(unbounded) non-zero linear functionals $\Gamma_j: S^* \to \nC, \ i,
j \in \{0,1\}$, such that
\[
\ker(\Gamma_0)\cap \ker(\Gamma_1) = \dom(S)
\]
and the abstract Green's identity
\begin{equation} \label{eGreen}
\la S^* f , g \ra_\cH - \la f , S^* g \ra_\cH = \Gamma_0(g)^* \,
\Gamma_1(f) - \Gamma_1(g)^* \, \Gamma_0(f),
\end{equation}
holds for all $f,g\in \dom(S^*)$, see \cite[Section~3.1.4]{GG} for much
more general setting. In \cite{DM95} the triple $\bigl(\nC,
\Gamma_0, \Gamma_1\bigr)$ is called the {\em boundary triple} of a
symmetric operator $S$.

It follows from \eqref{eGreen} that the extensions $S_0$, $S_1$ of
$S$ defined as restrictions of $S^*$ to the domains
 \begin{equation*}
\dom(S_0) :=  \ker(\Gamma_0) \qquad \text{and} \qquad  \dom(S_1) :=
\ker(\Gamma_1),
 \end{equation*}
are self-adjoint linear operators in $\bigl(\cH,\ahip_\cH\bigr)$.
For all $z \in \rho(S_0)$ we have
 \begin{equation} \label{eS*ds}
\dom(S^*) = (\dom S_0) \dotplus \sN_z \quad \text{ direct sum in }
\quad \cH.
 \end{equation}
Since $\sN_z$ is one-dimensional and $\Gamma_0 \neq 0$, it follows
that the restriction $\Gamma_0|_{\sN_z}$ of $\Gamma_0$ to $\sN_z$ is
a bijection between $\sN_z$ and $\nC$. Then
 \[
\bigl( \Gamma_0|_{\sN_z} \bigr)^{-1}: \nC \longrightarrow \sN_z
\subset \cH, \quad z \in \rho(S_0),
 \]
and we define the following two functions:
 \[
\psi: \nCR\longrightarrow \cH \qquad \text{and} \qquad m: \nCR
\longrightarrow \nC
 \]
by
\begin{equation} \label{eWeyl}
\psi(z) := \bigl( \Gamma_0|_{\sN_z} \bigr)^{-1}(1)\quad \text{and}
\quad    m(z) := \Gamma_1 \bigl( \Gamma_0|_{\sN_z} \bigr)^{-1}(1) ,
\quad z \in \rho(S_0).
\end{equation}
Clearly,
\begin{equation}\label{esubg1}
    m(z) = \Gamma_1\psi(z) .
\end{equation}
The functions $\psi$ and $m$ are called the {\em Weyl solution} and
the {\em Weyl function}, respectively,  of the symmetric operator
$S$ relative to the boundary triple $\bigl(\nC, \Gamma_0,
\Gamma_1\bigr)$. For a fixed $z \in \rho(S_0)$ the vector
$f=\psi(z) $ is the solution of the boundary value problem
\[
S^*f = z f, \quad \Gamma_0 f = 1, \quad f \in \dom(S^*).
\]

With $z,w \in \rho(S_0)$, substituting
 \begin{equation*}
f = \psi(z), \quad S^*f = z\psi(z), \quad  g = \psi(w), \quad S^* g = w\psi(w)
 \end{equation*}
in \eqref{eGreen} and using $\Gamma_0 f = \Gamma_0 g = 1$, $m(z) = \Gamma_1 f$, $m(w) = \Gamma_1 g$ we get
\begin{equation} \label{eQfun}
m(z) - \co{m(w)} = (z - \co{w})\,\la \psi(z),\psi(w) \ra_\cH  \quad \text{for all} \quad
z,w \in \rho(S_0).
\end{equation}
With $w = \co{z}$
the identity
\eqref{eQfun} yields that the Weyl function $m$ satisfies the
symmetry condition
\begin{equation} \label{esym}
m(\co{z}) = \co{m(z)}  \quad \text{for all} \quad z \in \rho(S_0).
\end{equation}

The identity \eqref{eQfun} was used in \cite{KL1} as a definition
of the $Q$-function.  It follows from \eqref{eQfun} and  \eqref{esym} that $m$
is a Nevanlinna function; for the definition and basic properties see Subsection~\ref{sect:NS} below.
\begin{remark}
It turns out (see \cite[page 8]{DM91}) that the Weyl solution can be
used to evaluate $\Gamma_1(S_0-z)^{-1}h$ for arbitrary $h \in \cH$
and $z \in \rho(S_0)$:
 \begin{equation} \label{eG1A0}
\Gamma_1(S_0-z)^{-1}h = \bigl\la h,\psi(\co{z}) \bigr\ra_\cH.
 \end{equation}
Indeed, substituting in~\eqref{eGreen}  $f=(S_0-z)^{-1}h$,
$g=\psi(\co{z})$, and using  $\Gamma_0f = 0$, $\Gamma_0\psi(\co{z})=1$,
we obtain the equality
 \[
\bigl\la (S_0-z)f,\psi(\co{z}) \bigr\ra_\cH
 = (\Gamma_1 f)(\Gamma_0 \psi(\co{z}))^*- (\Gamma_0 f)(\Gamma_1 \psi(\co{z}))^* = \Gamma_1f,
 \]
which proves \eqref{eG1A0}.
\end{remark}
\begin{proposition}\label{prop:S_01}
{\rm\cite{AG,DM91}}
For every $z\in\rho(S_0)$ the following equivalence hold:
\begin{equation*}
    z\in\rho(S_1)\quad \Longleftrightarrow \quad m(z)\ne 0
\end{equation*}
and the resolvent of $S_1$ can be found by the formula
\begin{equation*}
    (S_1-z)^{-1} h =(S_0-z)^{-1} h - \frac{\bigl\langle h,\psi(\co{z})\bigr\rangle_{{\cH}}}{m(z)} \,
 \psi(z)
\end{equation*}
for all $h \in \cH$ and all $z \in \rho(S_0) \cap\rho(S_1)$.
\end{proposition}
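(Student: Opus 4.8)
The plan is to establish the equivalence in two halves and to read off the resolvent formula from the second half. Throughout I fix $z\in\rho(S_0)$ and use three facts already available: the defect subspace is $\sN_z=\lspan\{\psi(z)\}$ (it is one-dimensional and $\Gamma_0\psi(z)=1$), it consists of solutions of $(S^*-z)f=0$, and $\Gamma_1\psi(z)=m(z)$ by \eqref{esubg1}.

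First I would prove $z\in\rho(S_1)\Rightarrow m(z)\ne0$ through its contrapositive, via a kernel computation. If $f\in\ker(S_1-z)$, then $f\in\dom(S_1)\cap\sN_z$, so $f=c\,\psi(z)$ for some $c\in\nC$, and $f\in\dom(S_1)=\ker\Gamma_1$ forces $0=\Gamma_1 f=c\,m(z)$. Consequently, if $m(z)=0$ then $\psi(z)$ is a nonzero eigenvector of $S_1$ at $z$, so $z\notin\rho(S_1)$; equivalently, when $m(z)\ne0$ the operator $S_1-z$ is injective.

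For the converse I would assume $m(z)\ne0$ and introduce the operator
\[
R_z h:=(S_0-z)^{-1}h-\frac{\la h,\psi(\co{z})\ra_\cH}{m(z)}\,\psi(z),\qquad h\in\cH,
\]
which is bounded because $(S_0-z)^{-1}$ is bounded and the correction term is rank one. Two verifications are then needed. The inclusion $R_z h\in\dom(S_1)=\ker\Gamma_1$ follows by applying $\Gamma_1$: using \eqref{eG1A0} to evaluate $\Gamma_1(S_0-z)^{-1}h=\la h,\psi(\co{z})\ra_\cH$ together with $\Gamma_1\psi(z)=m(z)$, the two contributions cancel and $\Gamma_1 R_z h=0$. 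The identity $(S_1-z)R_z h=h$ follows because $R_z h\in\dom(S_1)\subseteq\dom(S^*)$ permits replacing $S_1$ by $S^*$; then $(S^*-z)(S_0-z)^{-1}h=h$ since $(S_0-z)^{-1}h\in\dom(S_0)$, where $S^*$ agrees with $S_0$, while $(S^*-z)\psi(z)=0$ annihilates the second summand. Thus $R_z$ is a bounded right inverse of $S_1-z$.

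Finally I would combine the two halves: the right inverse shows $S_1-z$ is surjective, and injectivity (from the kernel computation) promotes $R_z$ to a genuine two-sided inverse — apply $S_1-z$ to $R_z(S_1-z)f-f$ and cancel using $(S_1-z)R_z=I$ and injectivity — so $z\in\rho(S_1)$ and $(S_1-z)^{-1}=R_z$, which is precisely the claimed formula on $\rho(S_0)\cap\rho(S_1)$. I do not anticipate a genuine obstacle here; the argument is purely algebraic once the decomposition \eqref{eS*ds} and the auxiliary identity \eqref{eG1A0} are in hand, and the only point demanding care is the bookkeeping of which boundary functional annihilates which extension's domain.
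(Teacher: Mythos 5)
Your proposal is correct and complete. Note that the paper itself gives no proof of this proposition --- it is quoted from the literature (\cite{AG,DM91}) --- so there is no in-paper argument to compare against; your proof is the standard one for results of this type. All ingredients you invoke are available earlier in the paper: the decomposition \eqref{eS*ds} gives $\sN_z=\lspan\{\psi(z)\}$ for $z\in\rho(S_0)$, the identity \eqref{esubg1} gives $\Gamma_1\psi(z)=m(z)$, and \eqref{eG1A0} gives $\Gamma_1(S_0-z)^{-1}h=\la h,\psi(\co{z})\ra_\cH$; your kernel computation, the verification that $R_z$ maps into $\ker\Gamma_1$ and is a bounded right inverse of $S_1-z$, and the promotion to a two-sided inverse via injectivity are all sound, and they yield both the equivalence $z\in\rho(S_1)\Leftrightarrow m(z)\neq 0$ and the resolvent formula exactly as stated.
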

\begin{remark} \label{rem:transp}
Any two boundary triples $\bigl(\nC,
\Gamma_0, \Gamma_1\bigr)$ and $\bigl(\nC,
\wh{\Gamma}_0, \wh{\Gamma}_1\bigr)$ of the same symmetric operator $S$ are related by
\begin{equation*}
    \begin{pmatrix}
\wh{\Gamma}_0\\
\wh{\Gamma}_1
\end{pmatrix} = W
\begin{pmatrix}
\Gamma_0\\
\Gamma_1
\end{pmatrix},
\end{equation*}
where $W$ is a complex $2\!\times\!2$ matrix satisfying $W^* \V{J} W= \V{J}$, with
$\V{J}=\begin{pmatrix}
0 & -i\\
i & 0
\end{pmatrix}$. The condition $W^* \V{J} W= \V{J}$ is equivalent to $W$ being a unitary matrix in the Kre\u{\i}n space $\bigl(\nC^2, \kip_2\bigr)$, where $[\mb{x},\mb{y}]_2 = \mb{y}^*\! \V{J} \mb{x}$ for $\mb{x},\mb{y} \in \nC^2$.  A direct calculation shows that a unitary matrix $W$ in $\bigl(\nC^2, \kip_2\bigr)$ allows a factorization
\[
W = e^{i \theta} \begin{pmatrix}
a & b\\
c & d
\end{pmatrix} \quad \text{where} \quad  \begin{pmatrix}
a & b\\
c & d
\end{pmatrix} \in \operatorname{SL}(2,\nR) \quad \text{and} \quad \theta \in (-\pi, \pi].
\]
Here $\operatorname{SL}(2,\nR)$ is the special linear group of all real $2\!\times\!2$ matrices with determinant one.

The Weyl function $m$ relative to the boundary triple $\bigl(\nC,
\Gamma_0, \Gamma_1\bigr)$ and the Weyl function $\wh{m}$ relative to the boundary triple $\bigl(\nC, \wh{\Gamma}_0, \wh{\Gamma}_1\bigr)$ are related by
\[
\wh{m}(z) = \frac{d\rule{1.2pt}{0pt}m(z)+c}{b\rule{1.2pt}{0pt}m(z)+a}, \qquad z \in  \rho(S_0) \cap\rho(S_1).
\]

In particular, the boundary triple $\bigl(\nC,-\Gamma_1, \Gamma_0\bigr)$ is said to be a {\it transpose} of the boundary triple $\bigl(\nC, \Gamma_0, \Gamma_1\bigr)$.  The Weyl function $m^{\!\top}$ of $S^*$ relative to $\bigl(\nC,-\Gamma_1, \Gamma_0\bigr)$ is given by
$m^{\!\top}(z)=-{1}/{m(z)}$, $z \in \nCR$.
\end{remark}

\subsection{Nonnegative operators in Hilbert spaces} \label{sect:NonnegSO}
Recall that a symmetric operator $S$ in a
Hilbert space $\bigl(\cH,\ahip_\cH\bigr)$ is called nonnegative if $\langle Sf,f\rangle_{\cH}\ge 0$ for all $f\in\dom(S)$.  By a result of Friedrichs, every nonnegative symmetric operator $S$ admits a nonnegative self-adjoint extension.  Moreover, as was shown by Kre\u{\i}n \cite{Kr47}, in the set $\operatorname{Ext}_+(S)$ of all nonnegative self-adjoint extensions of a nonnegative symmetric operator $S$ there are two extremal extensions $S_F$ and $S_K$.  The extensions $S_F$ and $S_K$, which are called the Friedrichs extension and the Kre\u{\i}n  extension, respectively, are maximal and minimal in the following sense: for all $\wt S \in \operatorname{Ext}_+(S)$ and all $a > 0$ we have,
\[
(S_F+a)^{-1}\le(\wt S+a)^{-1}\le(S_K+a)^{-1}.
\]

The extensions  $S_F$ and $S_K$ can be characterized in terms of boundary triples and Weyl functions. If $S$ is a  symmetric nonnegative operator and $\bigl( \nC,\Gamma_0,\Gamma_1\bigr)$ is a boundary triple for $S^*$, then the extension $S_0$ of $S$ is also nonnegative if and only if  the corresponding Weyl function $m$ is holomorphic on $\nR_-$.
Moreover (see~\cite[Proposition~10]{DM91}), the following two equivalences hold:
\begin{align} \nonumber
   S_0=S_F \quad &\Longleftrightarrow \quad \lim_{x\downarrow -\infty}  m(x) =-\infty,\\
\label{eq:m_K0}
   S_0=S_K \quad &\Longleftrightarrow \quad \rule{2pt}{0pt}\lim_{x\uparrow 0}  m(x) = +\infty.
\end{align}


\subsection{Nevanlinna, Stieltjes and inverse Stieltjes functions} \label{sect:NS}
A complex function $m$ is called a {\em
Nevanlinna function} if $m$ is holomorphic at least on $\nCR$ and satisfies the following two conditions
\begin{equation*}
m(\co{z}) = \co{m(z)} \quad \text{and} \quad \im m(z) \ge 0, \qquad \text{for all} \qquad z \in \nC_+.
\end{equation*}

Equivalently, $m$ is a Nevanlinna function if and only if there exist $a, b \in \nR$, $b \geq 0$,  and a nondecreasing function $\sigma:\nR \to \nR$ such that
\begin{equation*}
m(z) = a + b z +\int_{\nR} \left( \frac{1}{t-z} -
\frac{t}{1+t^2} \right) d\sigma(t)
\end{equation*}
and
\begin{equation}\label{eq:sigmaCond}
    \int_{\nR} \dfrac{d\sigma(t)}{1+t^2} < + \infty.
\end{equation}
If, additionally, $\sigma$ is normalized by
\begin{equation} \label{eq:sigma-nor}
\sigma(t) = \frac{\sigma(t-0)+\sigma(t+0)}{2} \quad \text{and} \quad \sigma(0) = 0,
\end{equation}
then it is uniquely determined by $m$. For these and other facts on Nevanlinna functions see \cite{KaKr74} and \cite[Chapter~II]{Do}.

We consider that a Nevanlinna function is defined on its {\em domain of holomorphy}. That is, the domain of a Nevanlinna function $m$ coincides with the union of $\nCR$ and the set of all those real points to which $m$ admits a holomorphic continuation.

A Nevanlinna function $m$ is called a {\em Stieltjes} (an {\em inverse Stieltjes}) function if it is holomorphic on $\nC\setminus[0,+\infty)$ and it takes nonnegative (nonpositive, respectively) values on $\nR_-$.

The following proposition is a direct consequence of the definitions.
\begin{proposition}\label{p:SiS}
Let $m$ be a Nevanlinna function. The following statements are equivalent.
\begin{enumerate}
\renewcommand*\theenumi{\roman{enumi}}
\renewcommand*\labelenumi{{\rm (\theenumi)}}
\item
$m$ is an inverse Stieltjes function.
\item
The function $z \mapsto -m(1/z)$, $z \in \nCR$, is a Stieltjes function.
\item
The function $z \mapsto -1/m(z)$, $z \in \nCR$, is a Stieltjes function.
\end{enumerate}
\end{proposition}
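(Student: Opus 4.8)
The plan is to read all three conditions off two involutions acting on functions: the argument inversion with sign change $A\colon f \mapsto -f(1/\,\cdot\,)$, and the transpose $B\colon f \mapsto -1/f$ already met in Remark~\ref{rem:transp}. Since $A\bigl(A(f)\bigr) = f$ and $B\bigl(B(f)\bigr) = f$, it suffices to show that each of $A$ and $B$ carries the class of inverse Stieltjes functions into the class of Stieltjes functions and conversely; then the equivalence of (i) and (ii) is the assertion for $A$, and the equivalence of (i) and (iii) is the assertion for $B$. In each case I would verify the three defining properties of the target class in turn: being a Nevanlinna function, holomorphy on $\nC \setminus [0,+\infty)$, and the correct sign on $\nR_-$.

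For $A$, I would first record the mapping properties of $z \mapsto 1/z$: it sends $\nC_+$ onto $\nC_-$, maps the slit plane $\nC \setminus [0,+\infty)$ onto itself, and maps $\nR_-$ onto $\nR_-$. Using the symmetry $m(\co z) = \co{m(z)}$, the first property together with the outer minus sign gives $\im\bigl(-f(1/z)\bigr) \ge 0$ for $z \in \nC_+$ and preserves the symmetry relation, so $Af$ is again a Nevanlinna function; the second property yields holomorphy of $Af$ on $\nC \setminus [0,+\infty)$; and the third shows that for $z \in \nR_-$ the value $-f(1/z)$ has the sign opposite to that of $f$ on $\nR_-$. Thus $A$ interchanges the two sign conditions, which is exactly the equivalence of (i) and (ii).

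For $B$ I would use that $-1/f$ is a Nevanlinna function whenever $f$ is, as already noted in Remark~\ref{rem:transp} where $m^{\!\top} = -1/m$, and that on $\nR_-$ the sign of $-1/f$ is opposite to that of $f$. The only step that is not immediate, and the main obstacle, is holomorphy of $-1/m$ on all of $\nC \setminus [0,+\infty)$: this amounts to showing that a nonconstant inverse Stieltjes function $m$ has no zeros there.

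I would establish this zero-free property on the open half-planes and on $\nR_-$ separately. Off the real axis, a nonconstant Nevanlinna function satisfies $\im m(z) > 0$ on $\nC_+$ and $\im m(z) < 0$ on $\nC_-$ by the minimum principle applied to the harmonic function $\im m$, so $m \neq 0$ there. On $\nR_-$ I would invoke the integral representation from Subsection~\ref{sect:NS}, which shows that $m$ is nondecreasing on every real interval contained in its domain of holomorphy; being nonpositive and nondecreasing on $\nR_-$, such an $m$ is either strictly negative there or identically zero, and the degenerate case $m \equiv 0$ (for which $-1/m$ is undefined) is excluded. Hence $m$ is zero-free on $\nC \setminus [0,+\infty)$, so $-1/m$ is holomorphic there and carries the Stieltjes sign on $\nR_-$, giving (i) $\Rightarrow$ (iii); the converse follows by applying the same argument to the Stieltjes function $-1/m$, since $B$ is an involution.
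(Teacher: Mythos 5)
Your proof is correct, but note that the paper does not actually prove this proposition: it is introduced with the sentence ``The following proposition is a direct consequence of the definitions,'' and no argument is given. So your write-up is not an alternative to the paper's proof; it supplies the verification the paper omits, and the useful comparison is about what that omission hides. The equivalence of (i) and (ii), as well as the Nevanlinna property and the sign condition in (iii), genuinely are immediate from the definitions, exactly along the lines of your mapping-property checks for $z\mapsto 1/z$ (it maps $\nC_+$ onto $\nC_-$, the slit plane $\nC\setminus[0,+\infty)$ onto itself, and $\nR_-$ onto $\nR_-$). The one point that is not purely definitional is the one you isolate: for (i) $\Rightarrow$ (iii) one needs $-1/m$ to be holomorphic on all of $\nC\setminus[0,+\infty)$, i.e.\ that an inverse Stieltjes function $m\not\equiv 0$ has no zeros in the slit plane. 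Your two-step argument for this is sound: off $\nR$, a nonconstant Nevanlinna function has $\im m>0$ on $\nC_+$ by the minimum principle for the harmonic function $\im m$ (nonzero constants being trivially zero-free), while on $\nR_-$ monotonicity from the integral representation combined with the identity theorem shows that $m$ is either strictly negative there or identically zero. Your remark about the degenerate case is also apt: for $m\equiv 0$, statements (i) and (ii) hold while $-1/m$ in (iii) is undefined, so the proposition must implicitly be read for $m\not\equiv 0$ --- an edge case that the paper's ``direct consequence'' formulation silently ignores. In short, your proposal is a correct and appropriately careful expansion of a statement the paper leaves unproved.
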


A Stieltjes function $m$ admits the integral representation
\begin{equation}\label{eq:int_S}
m(z) = \gamma+\int_{0}^{+\infty}  \frac{d\sigma(t)}{t-z}
\end{equation}
with $\gamma \ge 0$ and with a non-decreasing function $\sigma(t)$, such that
\begin{equation} \label{eq:int_S_gc}
    \int_{0}^{+\infty} \dfrac{d\sigma(t)}{1+t} < + \infty.
\end{equation}
Clearly, for every Stieltjes function function $m$
\begin{equation*}
    \re m(iy)\ge 0 \quad\mbox{for all}\quad y\in\nR_+.
\end{equation*}

\subsection{Reproducing kernel Hilbert spaces} \label{sect:RKHS}
In this section we review the basic properties of reproducing
kernel Hilbert spaces associated with Nevanlinna functions.
A function $m$ defined on
$\nCR$, is a Nevanlinna function if and only if
the kernel
\begin{equation} \label{eKm}
K_m(z,w) : = \dfrac{m(z) - \co{m(w)}}{z - \co{w}}, \quad z \neq
\co{w}, \quad z,w \in \nCR,
\end{equation}
is non-negative.

With the kernel $K_{m}(z,w)$ in \eqref{eKm} we associate the
reproducing kernel Hilbert space $\bigl(
\cH(m),\ahip_{\cH(m)} \bigr)$ defined as follows:
\begin{enumerate}
\renewcommand*\theenumi{\alph{enumi}}
\renewcommand*\labelenumi{{\rm (\theenumi)}}
\item
the elements of $\cH(m)$ are holomorphic functions defined on
$\nCR$,
\item \label{rkfs}
for every $w \in \nCR$ the function $z \mapsto K_m(z,w), \
z\in\nCR$, belongs to $\cH(m)$,
\item
the set of all linear combinations of the functions in (\ref{rkfs})
is dense in the reproducing kernel Hilbert space $\bigl(
\cH(m),\ahip_{\cH(m)} \bigr)$,
\item
for every $f \in \cH(m)$ and every $w \in \nCR$ we have
\begin{equation*}
\bigl\la f(\cdot), K_m(\cdot,w) \bigr\ra_{\cH(m)} = f(w).
\end{equation*}
\end{enumerate}

The following theorem, presented in \cite[Proposition 5.3]{DM95} (see also~\cite[Theorem 2.2]{LT77}) assures
that each Nevanlinna function is a Weyl function of a closed simple
symmetric operator.

\begin{theorem} \label{mz}
Let $m$ be a Nevanlinna function, $m(z)\not\equiv 0$.  The operator $S_{m}$ of
multiplication by the independent variable in the reproducing kernel
Hilbert space $\cH(m)$  is a closed simple symmetric operator
with defect numbers $(1,1)$.  The operator $S_m$ is densely defined if and only if
\begin{equation*}
    \lim_{y\uparrow +\infty}\frac{\im m(iy)}{y}=0 \quad \text{and} \quad
    \lim_{y\uparrow +\infty}y\im m(iy)=+\infty.
\end{equation*}
In the graph notation, the adjoint of $S_m$ is given by
\begin{align*}
S_{m}^* &= \cspan\bigl\{ \{ K_{m}(\cdot,w),  \co{w} K_{m}(\cdot,w)
\} : w\in\nCR \bigr\}  \\
  &= \bigl\{ \{f,g\} \in \cH(m)^2 :
             g(z) - z f(z) = c_0 - c_1 m(z) \ \, \text{for some} \, \ c_0, c_1
\in \nC \bigr\}.
\end{align*}
The numbers $c_0,c_1 \in \nC$ in the last displayed equality are uniquely
determined by $f \in \dom(S_{m}^{*})$ and $\bigl(\nC,\Gamma_{m,0}, \Gamma_{m,1}\bigr)$ defined by
\begin{equation*}
\Gamma_{m,0}(f) := c_0, \qquad \Gamma_{m,1}(f) := c_1, \qquad f \in \dom(S_{m}^{*}),
\end{equation*}
is a boundary triple for $S_{m}^*$. The Weyl function of $S_m$
relative to the boundary triple $\bigl(\nC,\Gamma_{m,0}, \Gamma_{m,1}\bigr)$ of $S_{m}^*$ is
$m$.
\end{theorem}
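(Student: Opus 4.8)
The plan is to derive every assertion from the reproducing kernel structure of $\cH(m)$, which is available precisely because the kernel $K_m$ of \eqref{eKm} is nonnegative exactly when $m$ is a Nevanlinna function. The computations rest on the single identity $(z-\bar w)K_m(z,w)=m(z)-\overline{m(w)}$, which says that multiplying $K_m(\cdot,w)$ by the independent variable produces $\bar w\,K_m(\cdot,w)$ modulo the two fixed functions $1$ and $m$. First I would fix the description of $S_m^*$. Each pair $\{K_m(\cdot,w),\bar w\,K_m(\cdot,w)\}$ satisfies $\bar w\,K_m(z,w)-z\,K_m(z,w)=\overline{m(w)}-m(z)$ by that identity, hence lies in the explicitly described graph $\{\{f,g\}:g(z)-zf(z)=c_0-c_1 m(z)\}$ (with $c_0=\overline{m(w)}$, $c_1=1$); conversely this graph is closed, and a short argument using the density (c) of the kernel functions together with the reproducing property shows it is the adjoint of the multiplication operator $S_m$ and that its closed linear span is spanned by these pairs. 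Rewriting ``$g-zf\in\lspan\{1,m\}$'' as functions on $\nCR$ then matches the two displayed forms.

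The structural claims come next. Solving $(S_m^*-w)f=0$ inside the second description gives $(z-w)f(z)=c_1 m(z)-c_0$; since every element of $\cH(m)$ is holomorphic on $\nCR$, the apparent pole at the interior point $z=w$ must be removable, which forces $c_0=c_1 m(w)$ and hence $f=c_1 K_m(\cdot,\bar w)$. Thus each defect subspace $\sN_w=\ker(S_m^*-w)$ is one-dimensional, so $S_m$ has defect numbers $(1,1)$; its closed linear span over $w\in\nCR$ is $\cspan\{K_m(\cdot,v):v\in\nCR\}=\cH(m)$ by (c), which is exactly simplicity. Symmetry is the special case $c_0=c_1=0$ of the boundary identity established below (equivalently $S_m\subseteq S_m^*$), and closedness follows from $S_m=(S_m^*)^*$.

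The delicate point is the density criterion, which I expect to be the main obstacle. Here I would compute $\dom(S_m)^\perp$ and relate a nonzero orthogonal vector to the asymptotics of $\im m(iy)$. From the integral representation $m(z)=a+bz+\int_{\nR}\bigl(\tfrac{1}{t-z}-\tfrac{t}{1+t^2}\bigr)d\sigma(t)$ one gets $\im m(iy)=by+\int_{\nR}\tfrac{y}{t^2+y^2}\,d\sigma(t)$, so that $\lim_{y\to\infty}\im m(iy)/y=b$ and, when $b=0$, $\lim_{y\to\infty}y\,\im m(iy)=\sigma(\nR)$. The content of the criterion is that $\dom(S_m)$ fails to be dense precisely when $b\neq 0$ or $\sigma(\nR)<\infty$, i.e. precisely when one of the two displayed limits fails. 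Proving this requires translating the membership of the ``function at infinity'' in $\cH(m)$ into these moment conditions via norm estimates in the reproducing kernel space, and this is the genuinely analytic step.

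Finally, for the boundary triple and the Weyl function I would verify Green's identity \eqref{eGreen} by reducing both sides to kernel functions: for $u=K_m(\cdot,w)$ and $v=K_m(\cdot,s)$ the left-hand side equals $(\bar w-s)\langle K_m(\cdot,w),K_m(\cdot,s)\rangle=(\bar w-s)K_m(s,w)=\overline{m(w)}-m(s)$, which matches the boundary term computed from the values of $c_0,c_1$ on $u$ and $v$; density over the kernel functions then extends the identity to all of $\dom(S_m^*)$, and the uniqueness of $c_0,c_1$ for a given $f$ makes $(\nC,\Gamma_{m,0},\Gamma_{m,1})$ a boundary triple. For the Weyl function I would use \eqref{eWeyl}: the spanning element $K_m(\cdot,\bar z)$ of $\sN_z$, once normalized so that its first boundary functional equals $1$, has second boundary functional equal to $m(z)$, whence $\Gamma_{m,1}\psi(z)=m(z)$ and the Weyl function is $m$.
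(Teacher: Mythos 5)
The paper itself contains no proof of Theorem~\ref{mz}: it is quoted from \cite[Proposition~5.3]{DM95} and \cite[Theorem~2.2]{LT77}. So your plan has to stand on its own, and while its skeleton is the standard one and partly sound --- the reproducing property gives $T^*=\gr (S_m)$ for the closed span $T$ of the kernel pairs, hence $S_m^*=T^{**}=T$; the removable-singularity argument correctly identifies $\ker(S_m^*-w)=\lspan\bigl\{K_m(\cdot,\co{w})\bigr\}$ and yields defect numbers $(1,1)$ and simplicity --- two load-bearing steps are missing. First, the identification of $T$ with the explicitly described set $E=\bigl\{\{f,g\}: g(z)-zf(z)=c_0-c_1m(z)\bigr\}$: the inclusion $T\subseteq E$ is cheap ($E$ is closed, since $c_0,c_1$ depend continuously on point evaluations), but $E\subseteq T$ is exactly the symmetry of $S_m$ (i.e.\ $\gr(S_m)\subseteq T$), which your ``short argument'' never supplies; note that your later claim ``symmetry is the case $c_0=c_1=0$'' presupposes $E=S_m^*$, so the argument as organized is circular. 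Second, and more seriously, the density criterion is not proved at all: you correctly compute $\lim_{y\to\infty}\im m(iy)/y=b$ and, for $b=0$, $\lim_{y\to\infty}y\,\im m(iy)=\sigma(\nR)$, but the equivalence ``$\dom(S_m)$ dense $\iff b=0$ and $\sigma(\nR)=\infty$'' is itself an assertion of the theorem, and you explicitly defer it as ``the genuinely analytic step.'' The efficient route starts from the adjoint description you already have: $(\dom S_m)^{\perp}=\operatorname{mul}(S_m^*)=\lspan\{1,m\}\cap\cH(m)$, so density fails iff some nonzero $c_0-c_1m$ lies in $\cH(m)$; this is then decided in the $L_\sigma^2$-model of $\cH(m)$ (the isomorphism $f\mapsto\int f(t)(t-z)^{-1}d\sigma(t)$ of \cite{AG04}, which the paper invokes in Proposition~\ref{prop:B0prop}): for $b>0$ the constants lie in $\cH(m)$; for $b=0$, $\sigma(\nR)<\infty$ the function $m-a'$ with $a'=a-\int t(1+t^2)^{-1}d\sigma(t)$ is the image of $1\in L^2_\sigma$; and for $b=0$, $\sigma(\nR)=\infty$, Stieltjes inversion forces $f=-c_1$ $\sigma$-a.e.\ in any relation $c_0-c_1m=\int f(t)(t-z)^{-1}d\sigma(t)$, whence $c_1=0$ and then $c_0=0$. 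None of this is in your proposal.

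There is also a concrete error in the part you do claim to verify: under this paper's conventions the Green identity and the Weyl function do \emph{not} come out as you assert, and carrying out your own computation would have shown it. For $f=K_m(\cdot,w)$, $g=K_m(\cdot,s)$ one has $c_0(f)=\co{m(w)}$, $c_1(f)=1$, $c_0(g)=\co{m(s)}$, $c_1(g)=1$, so with $\Gamma_{m,0}=c_0$, $\Gamma_{m,1}=c_1$ the right-hand side of \eqref{eGreen} equals $\Gamma_0(g)^*\Gamma_1(f)-\Gamma_1(g)^*\Gamma_0(f)=m(s)-\co{m(w)}$, the \emph{negative} of the left-hand side $\co{m(w)}-m(s)$ that you computed; the two sides do not match. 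Likewise $K_m(\cdot,\co{z})\in\sN_z$ has $c_0=m(z)$, $c_1=1$, so normalizing its first functional $c_0$ to be $1$ makes the second functional equal to $1/m(z)$: the recipe \eqref{eWeyl} applied to $(\nC,c_0,c_1)$ produces $1/m$, not $m$. Everything becomes consistent only with the transposed assignment $\Gamma_{m,0}=c_1$, $\Gamma_{m,1}=c_0$: then \eqref{eGreen} holds on kernel pairs (and extends to all of $S_m^*$ by graph-norm density, the functionals $c_0,c_1$ being graph-continuous), and the Weyl function is $m$. This reflects a clash between the sign convention in \eqref{eGreen} and the statement as imported from \cite{DM95}, but a proof must confront and resolve it; asserting that the boundary terms ``match'' and that the Weyl function is $m$ relative to $(\nC,\Gamma_{m,0},\Gamma_{m,1})$ as literally defined is false as written.
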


The boundary triple $\bigl(\nC,\Gamma_{m,0}, \Gamma_{m,1}\bigr)$ is
called the {\em canonical boundary triple} for the operator $S_m^*$.

The following theorem is in some sense a converse of Theorem \ref{mz}.

\begin{theorem} \label{tFt}
Let $S$ be a closed simple symmetric operator in a Hilbert space
$\bigl(\cH,\ahip_{\cH}\bigr)$ with defect numbers $(1,1)$, let
$\bigl(\nC,\Gamma_0, \Gamma_1\bigr)$ be a boundary triple for $S$
and let $m$ be the corresponding Weyl function. Then the operator
$F_m: \cH \to \cH(m)$ defined by
\begin{equation}\label{eq:F_trans}
    (F_m f)(z) = \bigl\la f, \psi(z^*) \bigl\ra_{\cH}, \quad f \in \cH,
\ \ \ z \in \nC\setminus\nR,
\end{equation}
is an isometry between $\bigl(\cH,\ahip_{\cH}\bigr)$ and $\bigl(
\cH(m),\ahip_{\cH(m)} \bigr)$ and
\begin{equation*}
F_m S = S_m F_m.
\end{equation*}
\end{theorem}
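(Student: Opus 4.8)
The plan is to show that $F_m$ sends the defect elements of $S$ exactly onto the reproducing kernels of $\cH(m)$, and then to propagate this identification to all of $\cH$ using the simplicity hypothesis. First I would record the identity behind everything. Dividing \eqref{eQfun} by $z-\co w$ gives $\la\psi(z),\psi(w)\ra_\cH=K_m(z,w)$ for $z,w\in\rho(S_0)$, where $K_m$ is the kernel \eqref{eKm}. Evaluating $F_m$ on the defect vector $\psi(\co w)\in\sN_{\co w}$ and using \eqref{eQfun} together with the symmetry \eqref{esym}, one computes $(F_m\psi(\co w))(z)=\la\psi(\co w),\psi(\co z)\ra_\cH=K_m(z,w)$, so $F_m\psi(\co w)=K_m(\cdot,w)$. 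By the same computation $\la\psi(\co w),\psi(\co v)\ra_\cH=K_m(v,w)$, while the reproducing property of $\cH(m)$ gives $\la K_m(\cdot,w),K_m(\cdot,v)\ra_{\cH(m)}=K_m(v,w)$. Hence $F_m$ preserves inner products on $\cL:=\lspan\{\psi(\co w):w\in\nCR\}$ and maps it onto $\lspan\{K_m(\cdot,w):w\in\nCR\}$.

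Next I would upgrade this to a unitary map of $\cH$ onto $\cH(m)$. Since $S$ is simple, $\cspan\{\sN_z:z\in\nCR\}=\cH$, and as each one-dimensional $\sN_{\co w}$ is spanned by $\psi(\co w)$, the space $\cL$ is dense in $\cH$; the kernel functions are dense in $\cH(m)$ by construction. An inner-product preserving linear bijection between these dense subspaces extends uniquely to a unitary $U\colon\cH\to\cH(m)$. To identify $U$ with the formula \eqref{eq:F_trans}, I would fix $f\in\cH$, write $f=\lim f_n$ with $f_n\in\cL$, and note that $Uf_n=F_mf_n\to Uf$ in $\cH(m)$ while $(F_mf_n)(z)=\la f_n,\psi(\co z)\ra_\cH\to\la f,\psi(\co z)\ra_\cH=(F_mf)(z)$ pointwise. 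Because norm convergence in a reproducing kernel Hilbert space forces pointwise convergence, $F_mf=Uf$; in particular $F_m$ maps $\cH$ into $\cH(m)$ and is isometric and onto.

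Finally I would check the intertwining. For $f\in\dom(S)$ and $z\in\nCR$, the definition of the adjoint together with $\psi(\co z)\in\ker(S^*-\co z)$ gives $(F_mSf)(z)=\la Sf,\psi(\co z)\ra_\cH=\la f,S^*\psi(\co z)\ra_\cH=z\,\la f,\psi(\co z)\ra_\cH=z\,(F_mf)(z)$. Since $Sf\in\cH$, the left-hand side $F_mSf$ lies in $\cH(m)$, so the function $z\mapsto z(F_mf)(z)$ belongs to $\cH(m)$; by the description of $\dom(S_m)$ in Theorem~\ref{mz} (multiplication by the independent variable, i.e.\ the case $c_0=c_1=0$) this means $F_mf\in\dom(S_m)$ with $S_mF_mf=F_mSf$, that is $F_mS\subseteq S_mF_m$. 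Equality follows because $F_m$ is unitary, so $F_m^{-1}S_mF_m$ is a closed symmetric extension of $S$ that retains the defect numbers $(1,1)$ and therefore cannot extend $S$ properly.

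The step I expect to be the main obstacle is the surjectivity in the second paragraph and the identification of the abstract extension $U$ with $F_m$. This is precisely where simplicity of $S$ is indispensable: without it $\cL$ need not be dense and $F_m$ would only be a partial isometry onto $\cH(m)$. The identification also relies on the non-algebraic fact that convergence in $\cH(m)$ dominates pointwise convergence, via the reproducing-kernel bound $|h(z)|\le\|h\|_{\cH(m)}\,\|K_m(\cdot,z)\|_{\cH(m)}$.
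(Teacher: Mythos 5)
Your proof is correct. There is nothing in the paper to compare it against: Theorem~\ref{tFt} is stated without proof, as a known converse of Theorem~\ref{mz}, with the argument implicitly left to the cited sources \cite{DM95} and \cite{LT77}. Your argument is the standard one and is complete: the identity $\la\psi(z),\psi(w)\ra_\cH=K_m(z,w)$ obtained from \eqref{eQfun} sends defect vectors to reproducing kernels, simplicity of $S$ gives density of $\lspan\{\psi(\co w):w\in\nCR\}$, and the unitary extension of the restricted map is identified with $F_m$ via the bound $|h(z)|\le\|h\|_{\cH(m)}\|K_m(\cdot,z)\|_{\cH(m)}$. The two delicate points are both handled properly: (i) passing from ``$F_mf$ and $z\,(F_mf)(z)$ both lie in $\cH(m)$'' to $F_mf\in\dom(S_m)$ rests on the second displayed description of $S_m^*$ in Theorem~\ref{mz} together with the uniqueness of $c_0,c_1$, which forces $\Gamma_{m,0}(F_mf)=\Gamma_{m,1}(F_mf)=0$; (ii) upgrading $F_mS\subseteq S_mF_m$ to equality uses that a proper closed symmetric extension of an operator with defect numbers $(1,1)$ is necessarily self-adjoint, so $F_m^{-1}S_mF_m$, being closed, symmetric, and of defect $(1,1)$, must coincide with $S$.
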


\subsection{Nevanlinna functions and M\"{o}bius transformations}

It is clear that a composition of two Nevanlinna functions is a Nevanlinna function. However, the reproducing kernel space corresponding to the composition can be significantly different from the reproducing kernel Hilbert space of the composed functions.  In this subsection we show that the situation is different when a Nevanlinna function is composed with a linear fractional (or M\"{o}bius)  transformations, which are itself Nevanlinna functions. Then the corresponding reproducing kernel Hilbert spaces are isomorphic.

It is straightforward to verify that the mapping
\begin{equation} \label{eq:MobHom}
\operatorname{GL}(2,\nC) \ni \left(\!\!\! \begin{array}{cc}
a & b \\ c & d \end{array} \!\!\!\right) \mapsto \mu(z) = \frac{a\rule{1pt}{0pt}z +b}{c\rule{1pt}{0pt}z+d}
\end{equation}
is a group homomorphism from the matrix group $\operatorname{GL}(2,\nC)$ of all $2\!\times\!2$ complex matrices with nonzero determinant onto the M\"{o}bius group.
The kernel of this homomorphism is the subgroup of all nonzero multiples of the identity matrix $I_2$.

We already encountered the subgroup $\operatorname{SL}(2,\nR)$ of $\operatorname{GL}(2,\nC)$. Below we will encounter its another subgroup $\operatorname{SL}^{\pm}(2,\nR)$ of all $2\!\times\!2$ real matrices with determinant $1$ or $-1$.  The following lemma is standard. A short proof based on the homomorphism in \eqref{eq:MobHom} is included for completeness.

\begin{lemma} \label{lMobNev}
For a  M\"{o}bius transformation $\mu$ the condition $\mu(z^*) = \mu(z)^*$ is equivalent to
\begin{equation} \label{eq:MobNev}
\mu(z) = \frac{a\rule{1pt}{0pt}z +b}{c\rule{1pt}{0pt}z+d} \quad \text{for some}
\quad \left(\!\!\! \begin{array}{cc}
a & b \\ c & d \end{array} \!\!\!\right) \in \operatorname{SL}^\pm(2,\nR).
\end{equation}
A M\"{o}bius transformation $\mu$  in \eqref{eq:MobNev} is a Nevanlinna function whenever $ad-bc >0$;  its opposite $-\mu$ is a Nevanlinna function whenever $ad-bc <0$.
\end{lemma}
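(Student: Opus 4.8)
The plan is to split the lemma into its two assertions---first the equivalence describing the real-symmetric Möbius transformations, then the Nevanlinna property---and to base the equivalence entirely on the homomorphism \eqref{eq:MobHom} whose kernel is the nonzero scalar multiples of $I_2$. Throughout I would write $\mu(z) = (az+b)/(cz+d)$ and let $M = \begin{pmatrix} a & b \\ c & d \end{pmatrix}$ be a representing matrix. The easy direction is \eqref{eq:MobNev} $\Rightarrow$ symmetry: if $a,b,c,d \in \nR$, then $\co{\mu(z)} = (a\co z + b)/(c\co z + d) = \mu(\co z)$ directly.

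For the converse I would first translate the condition into matrix language. Conjugating coefficients shows that $\co{\mu(z)}$ is the value at $\co z$ of the transformation represented by $\co M = \begin{pmatrix}\co a & \co b \\ \co c & \co d\end{pmatrix}$, so the hypothesis $\mu(\co z) = \co{\mu(z)}$ says that $M$ and $\co M$ induce the same Möbius transformation. By the kernel description following \eqref{eq:MobHom}, this forces $\co M = \lambda M$ for some $\lambda \in \nC \setminus\{0\}$. Conjugating this relation gives $M = |\lambda|^2 M$, so $|\lambda| = 1$; writing $\lambda = e^{i\theta}$, the normalization step is to pass to $M' := e^{i\theta/2}M$, which represents the same transformation and satisfies $\co{M'} = M'$, i.e.\ $M'$ is real. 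A determinant computation then fixes the sign: from $\det M' = e^{i\theta}\det M = \lambda\det M$ one obtains $(\det M')^2 = \lambda^2(\det M)^2 = \co{(\det M)}\,\det M = |\det M|^2 > 0$, so the real number $\det M'$ equals $\pm|\det M|$. Scaling $M'$ by the positive real factor $|\det M'|^{-1/2}$ leaves $\mu$ unchanged and yields a representing matrix in $\operatorname{SL}^\pm(2,\nR)$, establishing \eqref{eq:MobNev}.

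For the Nevanlinna claim I would assume $\mu$ is given by \eqref{eq:MobNev} with real entries. The symmetry $\mu(\co z) = \co{\mu(z)}$ is already available, and $\mu$ is holomorphic on $\nCR$ since its only possible pole $-d/c$ (present when $c \neq 0$) is real. The remaining computation is to expand, for $z \in \nC_+$,
\[
\im\mu(z) = \frac{\im\bigl[(az+b)(c\co z + d)\bigr]}{|cz+d|^2} = \frac{(ad-bc)\,\im z}{|cz+d|^2},
\]
the terms $ac|z|^2$ and $bd$ being real. Hence $\im\mu(z)$ carries the sign of $ad-bc$ on $\nC_+$: if $ad-bc>0$, then $\im\mu \ge 0$ there and $\mu$ is a Nevanlinna function, whereas if $ad-bc<0$, then $-\mu$ inherits the symmetry condition and satisfies $\im(-\mu)\ge 0$ on $\nC_+$, so $-\mu$ is a Nevanlinna function.

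I expect the only real obstacle to be the normalization in the second paragraph: extracting from $\co M = \lambda M$ a genuinely real representative of determinant $\pm 1$ requires using $|\lambda|=1$ to realify $M$ and the determinant identity to control the sign simultaneously. Everything else---the easy implication and the imaginary-part computation---is routine.
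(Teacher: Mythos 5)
Your proposal is correct and follows essentially the same route as the paper: both use the homomorphism \eqref{eq:MobHom} and its kernel to conclude that $M$ and its conjugate matrix differ by a nonzero scalar, then normalize that scalar relation to produce a real representative of determinant $\pm 1$, and both settle the Nevanlinna claim via the identity $\im\mu(z)/\im z = (ad-bc)/|c\rule{1pt}{0pt}z+d|^2$. Your normalization (first showing $|\lambda|=1$ by conjugating the relation twice, then realifying with $e^{i\theta/2}$ and rescaling by a positive real) is a cosmetic variant of the paper's factor $\sqrt{\delta}/\sqrt{|a_1d_1-b_1c_1|}$, not a different argument.
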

\begin{proof}
Let
\begin{equation*}
\mu(z) = \frac{a_1z + b_1}{c_1z+d_1} \qquad \text{where} \qquad \left(\!\!\! \begin{array}{cc}
a_1 & b_1 \\ c_1 & d_1 \end{array} \!\!\!\right) \in \operatorname{GL}(2,\nC).
\end{equation*}
Assume that $\mu(z^*) = \mu(z)^*$, that is $\mu(z^*)^* = \mu(z)$. Then the homomorphism in \eqref{eq:MobHom} implies
\begin{equation} \label{eq:conmult}
 \delta  \left(\!\!\! \begin{array}{cc}
a_1^* & b_1^* \\ c_1^* & d_1^* \end{array} \!\!\!\right) =\left(\!\!\! \begin{array}{cc}
a_1 & b_1 \\ c_1 & d_1 \end{array} \!\!\!\right) \quad \text{for some} \quad \delta \in \nC\setminus\{0\}.
\end{equation}
Calculating determinants of both sides yields,
\begin{equation*}
\delta^2 = e^{2 i \theta} \qquad \text{where} \qquad\theta = \operatorname{Arg}(a_1d_1-b_1c_1).
\end{equation*}
Since $\delta$ in \eqref{eq:conmult} is uniquely determined, we have
\begin{equation} \label{eq:choose-pm}
 \delta = e^{i \theta} \qquad \text{or} \qquad \delta = -e^{i \theta}.
\end{equation}
It follows from \eqref{eq:conmult} and \eqref{eq:choose-pm} that
\[
\frac{\sqrt{\delta}}{\sqrt{|a_1d_1 - b_1c_1|}} \left(\!\!\! \begin{array}{cc}
a_1 & b_1 \\ c_1 & d_1 \end{array} \!\!\!\right) =  \left(\!\!\!\begin{array}{cc}
a & b \\ c & d \end{array} \!\!\!\right) \in \operatorname{SL}^\pm(2,\nR)
\]
and whether $ad-bc = 1$ or $ad-bc = -1$ depends on the choice of the root in \eqref{eq:choose-pm}. The converse is straightforward.

The last claim follows from the identity
\begin{equation*}
\frac{\im \mu(z)}{\im z} = \frac{ad - bc}{|c\rule{1pt}{0pt}z+d|^2}. \qedhere
\end{equation*}
\end{proof}

For a M\"{o}bius transformation $\mu$ given in \eqref{eq:MobNev} a direct calculation confirms  the following identity
\begin{equation} \label{eq:MobId}
\mu(z) - \mu(w^*) = \frac{ad-bc}{(c\rule{1pt}{0pt}z+d)(c\rule{1pt}{0pt}w^*+d)} (z - w^*) \quad \text{for all} \quad z, w \in \nCR,
\end{equation}
which will be used in the next proof.

\begin{theorem} \label{tV}
Let $\mu_1$ and $\mu_2$ be M\"{o}bius transformations given by
\begin{equation} \label{eq:MobNev12}
\mu_j(z) = \frac{a_jz +b_j}{c_j z+d_j} \quad \text{where} \quad \left(\!\!\! \begin{array}{cc}
a_j & b_j \\ c_j & d_j \end{array} \!\!\!\right) \in \operatorname{SL}^\pm(2,\nR), \quad j \in \{1,2\}.
 \end{equation}
For $j \in \{1,2\}$ set $\epsilon_j := a_j d_j-b_j c_j \in \{-1,1\}$.
Let $m$ be a complex function defined on $\nCR$ and set
\begin{equation} \label{eq:MobNev-whm}
\wh{m}(z) = \epsilon_1 \epsilon_2 \, \mu_2\bigl(m(\mu_1(z))\bigr), \qquad z \in \nC\setminus\nR.
\end{equation}
Then $m$ is a Nevanlinna function if and only if $\wh{m}$ is a Nevanlinna function.
If $m$ is a Nevanlinna function the mapping $V: \cH(\wh{m}) \to \cH\bigl(m\bigr)$ defined by
\begin{equation*}
(Vf)(z) : = \frac{c_1 \mu_1(z) - a_1}{c_2 m(\mu_1(z))+d_2} \, f\bigl(\mu_1(z)\bigr), \ \ \ f \in  \cH(m), \ \ \ z \in \nCR,
\end{equation*}
is an isomorphism between the Hilbert spaces $\cH(m)$ and $\cH(\wh{m})$.
\end{theorem}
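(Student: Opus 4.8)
The plan is to reduce both assertions to a single transformation formula for the reproducing kernels. I first compute $K_{\wh m}(z,w)$ from the definition \eqref{eKm}. Since $\mu_1$ and $\mu_2$ have real coefficients and (in this direction) $m$ is a Nevanlinna function, the symmetry $\overline{m(\mu_1(w))} = m(\mu_1(w^*))$ gives $\overline{\wh m(w)} = \epsilon_1\epsilon_2\,\mu_2\bigl(m(\mu_1(w^*))\bigr)$, so that $\wh m(z) - \overline{\wh m(w)} = \epsilon_1\epsilon_2\bigl(\mu_2(m(\mu_1(z))) - \mu_2(m(\mu_1(w^*)))\bigr)$. Applying \eqref{eq:MobId} to $\mu_2$ (with arguments $m(\mu_1(z))$ and $m(\mu_1(w))$), then rewriting the resulting factor $m(\mu_1(z)) - \overline{m(\mu_1(w))}$ as $\bigl(\mu_1(z)-\mu_1(w^*)\bigr)K_m(\mu_1(z),\mu_1(w))$ via \eqref{eKm}, and finally applying \eqref{eq:MobId} to $\mu_1$ to expand $\mu_1(z)-\mu_1(w^*)$, I obtain, after cancellation of the four determinant factors $\epsilon_1\epsilon_2\cdot\epsilon_2\cdot\epsilon_1 = 1$, the identity
\[
K_{\wh m}(z,w) = \phi(z)\,\overline{\phi(w)}\,K_m\bigl(\mu_1(z),\mu_1(w)\bigr), \qquad \phi(z) := \frac{c_1\mu_1(z)-a_1}{c_2\,m(\mu_1(z))+d_2}.
\]
Here I use the elementary computation $c_1\mu_1(z)-a_1 = -\epsilon_1/(c_1 z+d_1)$ to recognize $\phi$ as precisely the multiplier appearing in the definition of $V$.

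The Nevanlinna equivalence is then immediate from the characterization, recalled just before \eqref{eKm}, that a function on $\nCR$ is a Nevanlinna function exactly when its kernel \eqref{eKm} is nonnegative. The factor $\phi(z)\overline{\phi(w)}$ is a nonnegative rank-one kernel and $(z,w)\mapsto K_m(\mu_1(z),\mu_1(w))$ is the pullback of a nonnegative kernel along the bijection $\mu_1$ of $\nCR$; by the Schur product theorem their entrywise product $K_{\wh m}$ is nonnegative whenever $K_m$ is. Conversely, $\phi$ is nowhere zero on $\nCR$, so reading the same congruence backwards shows $K_{\wh m}\ge 0 \Rightarrow K_m\ge 0$. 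Moreover, solving \eqref{eq:MobNev-whm} for $m$ expresses $m$ through $\wh m$ by a formula of the very same shape, with $\mu_1,\mu_2$ replaced by their inverses (again elements of $\operatorname{SL}^{\pm}(2,\nR)$); this symmetry simultaneously delivers the remaining implication and the holomorphy and conjugation symmetry of $\wh m$ on $\nCR$.

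For the isomorphism I run the standard reproducing-kernel argument on the dense span of point kernels. Since $\mu_1$ maps $\nCR$ onto $\nCR$, the functions $K_m(\cdot,\mu_1(w))$, $w\in\nCR$, are dense in $\cH(m)$. Evaluating $V$ on such a kernel, the transformation formula gives $\bigl(V K_m(\cdot,\mu_1(w))\bigr)(z) = \phi(z)K_m(\mu_1(z),\mu_1(w)) = \overline{\phi(w)}^{\,-1} K_{\wh m}(z,w)$, so $V$ carries them onto the point kernels $K_{\wh m}(\cdot,w)$, which are dense in $\cH(\wh m)$. Using the reproducing identity $\langle K(\cdot,b),K(\cdot,a)\rangle = K(a,b)$ together with the transformation formula, both $\bigl\langle V K_m(\cdot,\mu_1(w)), V K_m(\cdot,\mu_1(v))\bigr\rangle_{\cH(\wh m)}$ and $\bigl\langle K_m(\cdot,\mu_1(w)), K_m(\cdot,\mu_1(v))\bigr\rangle_{\cH(m)}$ collapse to $K_m(\mu_1(v),\mu_1(w))$. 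Hence $V$ is isometric on a dense subspace with dense range, and therefore extends to a unitary isomorphism of $\cH(m)$ onto $\cH(\wh m)$.

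I expect the main obstacle to lie entirely in the first step: the transformation formula requires three interlocked applications of the Möbius and kernel identities with careful bookkeeping of the signs $\epsilon_1,\epsilon_2$, and one must verify that the denominator $c_2\,m(\mu_1(z))+d_2$ never vanishes on $\nCR$ — equivalently, that $\wh m$ has no poles there — so that $\phi$, and hence $V$, is well defined; this holds because $m(\mu_1(z))$ is non-real for nonconstant $m$ while $c_2,d_2$ are real with $a_2d_2-b_2c_2\neq 0$. Once the formula is in place, both the Nevanlinna equivalence and the unitarity of $V$ follow formally from the reproducing-kernel formalism.
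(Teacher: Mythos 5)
Your kernel identity $K_{\wh m}(z,w)=\phi(z)\,\overline{\phi(w)}\,K_m\bigl(\mu_1(z),\mu_1(w)\bigr)$ is exactly the identity at the heart of the paper's proof (the paper derives it by applying \eqref{eq:MobId} to $\mu_2$ and to $\mu_1^{-1}$; your application of \eqref{eq:MobId} to $\mu_1$ itself is equivalent bookkeeping), and your action of $V$ on reproducing kernels and the isometry computation on $\cL(m)=\lspan\{K_m(\cdot,w): w \in \nCR\}$ match the paper's \eqref{eqUl} and \eqref{eqUlu}. The one genuine departure is the Nevanlinna equivalence: the paper gets it from Lemma~\ref{lMobNev} plus closedness of the Nevanlinna class under composition, while you get the forward implication from positivity of the product of kernels and the backward one from the observation that the inverse transformation has the same form \eqref{eq:MobNev-whm}; both are sound, and your version has the small advantage of reusing the kernel identity you need anyway.

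The gap is in your final sentence. The theorem does not assert that \emph{some} unitary map of $\cH(m)$ onto $\cH(\wh m)$ exists; it asserts that the concrete, pointwise-defined operator $(Vf)(z)=\phi(z)f(\mu_1(z))$ is such a map. Your argument produces a unitary $V_1$ as the continuous extension of $V\bigl|_{\cL(m)}\bigr.$, but for a general $f\in\cH(m)$ you have shown neither that the function $z\mapsto\phi(z)f(\mu_1(z))$ belongs to $\cH(\wh m)$ nor that it equals $V_1f$; what you have proved is strictly weaker than the statement. This identification is precisely what the closing computation of the paper's proof supplies: for $g=V_1f$ the reproducing property, \eqref{eqUl} and the unitarity of $V_1$ give $g(z)=\bigl\langle g,K_{\wh m}(\cdot,z)\bigr\rangle_{\cH(\wh m)}=\phi(z)\,\bigl\langle f,K_m(\cdot,\mu_1(z))\bigr\rangle_{\cH(m)}=\phi(z)f(\mu_1(z))$, i.e., $V_1=V$. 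Equivalently, you could finish by noting that norm convergence in a reproducing kernel Hilbert space implies pointwise convergence: taking $f_n\in\cL(m)$ with $f_n\to f$ in $\cH(m)$, one has $(Vf_n)(z)\to(Vf)(z)$ pointwise (continuity of evaluation in $\cH(m)$ composed with $\mu_1$), while $Vf_n=V_1f_n\to V_1f$ in $\cH(\wh m)$ and hence pointwise, forcing $Vf=V_1f$. The repair is routine, but it must be made for the proof to establish the theorem as stated.
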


\begin{proof}
First notice that the M\"{o}bius transformations in
\eqref{eq:MobNev12} have inverses which are also M\"{o}bius transformations of the same kind. For example
\[
\mu_1^{-1}(z) = \frac{d_1 z - b_1}{-c_1 z + a_1} \quad \text{and} \quad  \left(\!\!\! \begin{array}{cc}
d_1 & -b_1 \\ -c_1 & a_1 \end{array} \!\!\!\right) \in \operatorname{SL}^\pm(2,\nR).
\]
Now the first statement follows from Lemma~\ref{lMobNev} and the fact that a composition of Nevanlinna functions is a Nevanlinna function.

Assume that $m$ is a Nevanilnna function. In the following identities we assume that $z, w \in \nCR$ and we use \eqref{eq:MobId}, first for $\mu_2$, then for $\mu_1^{-1}$. For convenience we set $z_1 = \mu_1(z)$ and $w_1 = \mu_1(w)$ and calculate:
\begin{align*}
K_{\wh{m}}(z,w) & = \epsilon_1 \epsilon_2  \,\frac{\mu_2\bigl(m(\mu_1(z))\bigr) - \mu_2\bigl(m(\mu_1(w^*))\bigr)}{z-w^*} \\
 & = \frac{\epsilon_1 \epsilon_2\epsilon_2}{(c_2 m(z_1)+d_2)(c_2m(w_1)^*+d_2)} \ \frac{m(z_1) - m(w_1^*)}{\mu_1^{-1}(z_1)-\mu_1^{-1}(w_1^*)} \\
 & = \frac{(-c_1 z_1 + a_1)(-c_1 w_1^*+ a_1)}{(c_2 m(z_1)+d_2)(c_2m(w_1)^*+d_2)} \
 \frac{m(z_1) - m(w_1^*)}{z_1 -w_1^*} \\
 & = \frac{(c_1 \mu_1(z) - a_1)(c_1 \mu_1(w)^* - a_1)}{(c_2 m(\mu_1(z))+d_2)(c_2 m(\mu_1(w^*))+d_2)} \
K_m\bigl(\mu_1(z),\mu_1(w)\bigr).
\end{align*}
Consequently, substituting $w$ with $\mu_1^{-1}(w)$, we get
\begin{equation*}
\frac{c_1 \mu_1(z) - a_1}{c_2 m(\mu_1(z))+d_2} \
K_m\bigl(\mu_1(z),w\bigr) = \frac{c_2 m(w^*)+d_2}{c_1 w^* - a_1} K_{\wh{m}}(z,\mu_1^{-1}(w)),
\end{equation*}
that is
\begin{equation} \label{eqUl}
V\bigl(K_m(\cdot,w)\bigr) = \frac{c_2 m(w^*)+d_2}{c_1 w^* - a_1}  K_{\wh{m}}(\cdot,\mu_1^{-1}(w)) \quad \text{for all} \quad w \in \nCR.
\end{equation}

Therefore, for arbitrary $v, w \in \nCR$ we have
\begin{align*}
\bigl\la V K_{m}(\cdot,v), V K_{m}(\cdot,w) \bigr\ra_{\cH(\wh{m})} & \\
& \hspace*{-4cm}
 = \frac{c_2 m(v^*)+d_2}{c_1 v^* - a_1}   \
 \frac{c_2 m(w)+d_2}{c_1 w - a_1} \
  \Bigl\la K_{\wh{m}}(\cdot,\mu_1^{-1}(v)), K_{\wh{m}}(\cdot,\mu_1^{-1}(w)) \Bigr\ra_{\cH(\wh{m})} \\
 &\hspace*{-4cm} = \frac{c_2 m(v^*)+d_2}{c_1 v^* - a_1}   \
 \frac{c_2 m(w)+d_2}{c_1 w - a_1}  \
 K_{\wh{m}} \bigl(\mu_1^{-1}(w),\mu_1^{-1}(v) \bigr) \\
 & \hspace*{-4cm} = K_m(w,v) \\
 &\hspace*{-4cm} = \bigl\la K_m(\cdot,v), K_m(\cdot,w) \bigr\ra_{\cH(m)}.
\end{align*}
Thus, for arbitrary $v, w \in \nCR$ we have
\begin{equation} \label{eqUlu}
\bigl\la V K_{m}(\cdot,v), V K_{m}(\cdot,w) \bigr\ra_{\cH(\wh{m})} = \bigl\la K_m(\cdot,v), K_m(\cdot,w) \bigr\ra_{\cH(m)}.
\end{equation}
Set
\begin{align*}
\cL(m) & := \lspan \bigl\{ K_m(\cdot,w) \, : \, w \in \nCR \bigr\},  \\
\cL(\wh{m}) & := \lspan \bigl\{ K_{\wh{m}}(\cdot,w) \, : \, w \in
\nCR \bigr\}.
\end{align*}
As both functions $m$ and $\wh{m}$ are Nevanlinna functions, $\cL(m)$ is dense in the Hilbert space $\cH(m)$ and $\cL(\wh{m})$ is dense in the Hilbert space $\cH(\wh{m})$.

Since $V$ is linear, it follows from \eqref{eqUl} and \eqref{eqUlu}
that the restriction $V\bigl|_{\cL(m)}\bigr.$ is a bijection and an
isomorphism of pre-Hilbert spaces  $\cL(m)$ and $\cL(\wh{m})$.
Denote by $V_1$ the extension by continuity of $V\bigl|_{\cL(m)}
\bigr.$ to $\cH(m)$. Then $V_1$ is an isomorphism between
$\cH(m)$ and $\cH(\wh{m})$. Let $g \in \cH(\wh{m})$ be arbitrary
and let $f \in \cH(m)$ be such that $V_1 f = g$. For $z \in
\nCR$ we calculate:
\begin{align*}
g(z) & = \bigl\la g(\cdot), K_{\wh{m}}(\cdot,z) \bigr\ra_{\cH(\wh{m})} \\
 & = \Bigl\la
  (V_1f)(\cdot),  \frac{c_1 \mu_1(z^*) - a_1}{c_2 m(\mu_1(z^*))+d_2} V K_m(\cdot,\mu_1(z)) \Bigr\ra_{\cH(\wh{m})} \\
 & = \frac{c_1 \mu_1(z) - a_1}{c_2 m(\mu_1(z))+d_2} \, \Bigl\la f(\cdot),  K_m(\cdot,\mu_1(z)) \Bigr\ra_{\cH(m)} \\
  & = \frac{c_1 \mu_1(z) - a_1}{c_2 m(\mu_1(z))+d_2} \, f\bigl( \mu_1(z) \bigr).
\end{align*}
Thus $V = V_1$.
\end{proof}

\subsection{Stieltjes functions and M\"{o}bius transformations}
Assigning to a meromorphic function $m$ the value $\infty$ at the poles we can consider $m$ to be defined on its whole domain of meromorphy.  In particular, if the domain of meromorphy of a Nevanlinna function $m$ includes $\nR_-$, then the image $m(\nR_-)$ of $\nR_-$ under $m$ is contained in the one point compactification $\overline{\nR} = \nR\cup \{\infty\}$ of $\nR$. In the next lemma by $\overline{m(\nR_-)}$ we denote the closure of $m(\nR_-)$ in $\overline{\nR}$.

\begin{lemma} \label{l:Ms}
Let $m$ be a Nevanlinna function. The following statements are equivalent.
\begin{enumerate}
\renewcommand*\theenumi{\roman{enumi}}
\renewcommand*\labelenumi{{\rm (\theenumi)}}
\item \label{i:Ms-1}
There exists a
M\"{o}bius transformation $\mu$ of the form~\eqref{eq:MobNev} such that $\mu\circ m$ is a Stieltjes function.
\item \label{i:Ms-2}
There exists a M\"{o}bius transformation $\mu$ of the form~\eqref{eq:MobNev} such that $\mu\circ m$ is an inverse Stieltjes function.
\item \label{i:Ms-3}
The function $m$ is meromorphic on $\nC\setminus[0,+\infty)$ and
$\overline{m(\nR_-)}$ is a proper subset of $\overline{\nR}$.
\end{enumerate}
\end{lemma}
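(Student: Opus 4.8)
My plan is to prove the three equivalences via the easy pair (i)$\Leftrightarrow$(ii), the short implication (i)$\Rightarrow$(iii), and the substantial converse (iii)$\Rightarrow$(i). For (i)$\Leftrightarrow$(ii) I would use Proposition~\ref{p:SiS}. Suppose (i) holds, so $s:=\mu\circ m$ is Stieltjes, and let $\tau(w)=-1/w$, whose matrix $\left(\begin{smallmatrix}0&-1\\1&0\end{smallmatrix}\right)$ lies in $\operatorname{SL}(2,\nR)$. Then $r:=\tau\circ s=(\tau\circ\mu)\circ m$ is a Nevanlinna function (Lemma~\ref{lMobNev} together with the fact that a composition of Nevanlinna functions is Nevanlinna), and since $-1/r=s$ is Stieltjes, Proposition~\ref{p:SiS} shows $r$ is inverse Stieltjes. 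Because $\operatorname{SL}^{\pm}(2,\nR)$ is a group, $\tau\circ\mu$ is again of the form~\eqref{eq:MobNev}, so (ii) holds. The reverse implication is the same argument with the roles of Stieltjes and inverse Stieltjes exchanged.

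For (i)$\Rightarrow$(iii), keep $s=\mu\circ m$ Stieltjes. Then $s$ is holomorphic on $\nC\setminus[0,+\infty)$, so $m=\mu^{-1}\circ s$ is meromorphic there. Moreover $s(\nR_-)\subseteq[0,+\infty)$, hence $\overline{s(\nR_-)}\subseteq[0,+\infty]$, a proper closed subset of $\overline{\nR}$ (its complement contains $\nR_-$). Since $\mu^{-1}$ restricts to a homeomorphism of the circle $\overline{\nR}$ onto itself, the set $\overline{m(\nR_-)}=\mu^{-1}\bigl(\overline{s(\nR_-)}\bigr)$ is again a proper closed subset, which is (iii).

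The heart of the argument is (iii)$\Rightarrow$(i), where I may assume $m$ is nonconstant. The key observation is that a nonconstant Nevanlinna function satisfies $\im m>0$ on $\nC_+$ and $\im m<0$ on $\nC_-$; consequently on $\nC\setminus[0,+\infty)$ the function $m$ can attain a value in $\overline{\nR}$ (a real value, or $\infty$ at a pole) only at points of $\nR_-$. Hence by (iii) any $\lambda_0\in\overline{\nR}\setminus\overline{m(\nR_-)}$ is a value omitted by $m$ throughout $\nC\setminus[0,+\infty)$. I would then pick an orientation-preserving real M\"obius transformation $\mu$ (matrix in $\operatorname{SL}(2,\nR)$, hence itself Nevanlinna by Lemma~\ref{lMobNev}) with pole at $\lambda_0$, i.e. $\mu(\lambda_0)=\infty$; the remaining freedom is the stabilizer of $\infty$, the affine maps $w\mapsto\alpha w+\beta$ with $\alpha>0$. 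As $\overline{m(\nR_-)}$ is a compact subset of $\overline{\nR}\setminus\{\lambda_0\}$, it is carried by $\mu$ into a compact interval of $\nR$, which a suitable such affine normalization places inside $[0,+\infty)$. For this $\mu$ the function $\mu\circ m$ is Nevanlinna, is nonnegative on $\nR_-$ by construction, and is holomorphic on $\nC\setminus[0,+\infty)$ because $\mu\circ m=\infty$ exactly where $m=\lambda_0$, which never happens there; thus $\mu\circ m$ is Stieltjes and (i) holds.

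I expect the main obstacle to be precisely this last implication, and within it the omitted-value observation: recognizing that a gap in $\overline{m(\nR_-)}$ is a value not attained by $m$ anywhere off $[0,+\infty)$ is what lets the pole of the normalizing transformation be positioned so as to guarantee holomorphy of $\mu\circ m$. Two points require care: keeping $\det\mu=+1$ so that $\mu\circ m$ remains Nevanlinna rather than anti-Nevanlinna, and noting that poles of $m$ on $\nR_-$ — which occur exactly when $\infty\in\overline{m(\nR_-)}$, forcing $\lambda_0\neq\infty$ — become removable singularities of $\mu\circ m$.
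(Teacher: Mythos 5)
Your proof is correct and takes essentially the same approach as the paper's: both settle (i)$\Leftrightarrow$(ii) by Proposition~\ref{p:SiS}, observe that (i)/(ii) trivially gives (iii), and prove the converse by composing $m$ with a real M\"{o}bius transformation whose pole is placed in the gap of $\overline{m(\nR_-)}$, which is exactly what guarantees holomorphy of the composition on $\nC\setminus[0,+\infty)$. The only cosmetic differences are that the paper produces an inverse Stieltjes function via the concrete map $\mu(z)=-z/(cz-1)$ after shifting the gap to $(0,2/c)$, whereas you produce a Stieltjes function by sending an omitted value $\lambda_0$ to $\infty$ and normalizing affinely, making explicit the omitted-value observation that the paper leaves implicit.
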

\begin{proof}
The equivalence of (\ref{i:Ms-1}) and (\ref{i:Ms-2}) follows from Proposition~\ref{p:SiS}. The implication  (\ref{i:Ms-2}) $\Rightarrow$ (\ref{i:Ms-3}) is clear. To complete the proof, assume (\ref{i:Ms-3}).  Then the complement of $\overline{m(\nR_-)}$ in $\overline{\nR}$  contains a finite open interval. By shifting $m$ we can assume that that interval is $(0,2/c)$ with $c > 0$. Set $\mu(z) = -z/(c z-1)$. Then $\mu\circ m$ is a Nevanlinna function which is holomorphic on $\nC\setminus[0,+\infty)$.  Since $\mu(x) < 0$ whenever $x \in \nR \setminus[0,2/c]$ and $m(\nR_-) \subset \nR \setminus[0,2/c]$ we conclude that $(\mu\circ m)(x) < 0$ for all $x \in \nR_-$. Thus $\mu\circ m$ is an inverse Stieltjes function.  This proves (\ref{i:Ms-3}) $\Rightarrow$ (\ref{i:Ms-2}).
\end{proof}

Denote by $\SM$ the set of Nevanlinna functions that satisfy the equivalent conditions in Lemma~\ref{l:Ms}. The set $\SM$ appears in \cite{BHSWW13} as the union of Nevanlinna functions of type I, II, III and V$^{\prime}$, see \cite[Definition~2.4]{BHSWW13} and also \cite[Corollary~5.6]{BHSWW13}.

Notice that a function $m \in \SM$ can have at most one pole in $\nR_-$ and that the following limits exist
\[
m(-\infty):= \!\!\lim_{x\downarrow -\infty} m(x) \in \{-\infty\}\cup\nR \quad \text{and} \quad  m(0-):=\lim_{x\uparrow 0} m(x) \in \nR \cup\{+\infty\},
\]
with at least one of them being finite. In addition, $m(-\infty) \leq m(0-)$ if and only if $m$ is holomorphic on $\nC\setminus[0,+\infty)$ and $m(-\infty) >  m(0-)$ if and only if $m$ has a pole in $\nR_-$.

\begin{remark} \label{r:MsInfZer}
Let $m \in \SM$. It follows from Lemma~S1.3.1 and Remark~5.1 in \cite{KaKr74} that $m(-\infty) \in \nR$ if and only if $\lim_{y\uparrow +\infty} m(i y)$ exists as a real number; in this case $\lim_{y\uparrow +\infty} m(i y)=m(-\infty)$.  Also, $m(-\infty) = -\infty$ if and only if  $\lim_{y\uparrow +\infty} m(i y) = \infty$.

Since $m \in\SM$ if and only if the function $z \mapsto -m(1/z)$, $z\in \nCR$, is in $\SM$, we similarly have that $m(0-) \in \nR$ if and only if $\lim_{y\downarrow 0} m(i y)$ exists as a real number and in this case $\lim_{y\downarrow 0} m(i y)=m(0-)$.  Also, $m(0-) = +\infty$ if and only if  $\lim_{y\downarrow 0} m(i y) = \infty$.
\end{remark}


\subsection{Two asymptotic classes of Nevanlinna functions} \label{sSecAsy}

For functions $f$ and $g$ defined on $\nR_+$ the expression
\begin{alignat*}{2}
f(t) \sim g(t) \quad  &\text{as} \quad t \to +\infty  & \qquad
\bigl( f(t) \sim & g(t) \quad  \text{as} \quad t \to 0+  \bigr)
\intertext{
means
}
\lim_{t\uparrow +\infty} \frac{f(t)}{g(t)} &= 1  & \qquad
\Bigl( \lim_{t\downarrow 0+} \frac{f(t)}{g(t)} &= 1, \quad \text{respectively} \Bigr).
\end{alignat*}


We define the set of functions $\cA_\infty$ as follows: a Nevanlinna function $m$ belongs to  $\cA_\infty$ if and only if there exist $\alpha\in(0,1)$, $C > 0$ and a M\"{o}bius transformation $\mu$ of the form~\eqref{eq:MobNev} such that $\mu \circ m$ is a Stieltjes function and for all $z \in \nCR$ we have
\begin{equation}\label{eq:mAsymInf-defAInf}
   (\mu \circ m)(r z) \sim \frac{C}{ (- r z)^{\alpha} } \qquad \text{as} \qquad r \to +\infty.
     \end{equation}

Similarly, we define the set of functions $\cA_0$ as follows: a Nevanlinna function $m$ belongs to  $\cA_0$ if and only if there exist $\alpha \in (0,1)$, $C > 0$ and a M\"{o}bius transformation $\mu$ of the form~\eqref{eq:MobNev} such that $\mu \circ m$ is an inverse Stieltjes function and for all $z \in \nCR$ we have
\begin{equation} \label{eq:mAsymZer-defAZer}
   (\mu\circ m)( r z) \sim - C (- r z)^{\alpha} \qquad \text{as} \qquad r \to 0\!+.
\end{equation}

In the notation of  Theorem~{\rm\ref{tV}}, the following proposition holds.

\begin{proposition} \label{prop:AInf-AZer}
\begin{enumerate}
\renewcommand*\theenumi{\roman{enumi}}
\renewcommand*\labelenumi{{\rm (\theenumi)}}
\item  \label{i:AInf-AZer-1}
Assume that $\mu_1(z) = z$ and let $\mu_2$ be a M\"{o}bius transformation as in \eqref{eq:MobNev12}.  Then, $m \in \cA_\infty$ {\rm (}$m \in \cA_0${\rm )} if and only if $\wh{m} \in \cA_\infty$ {\rm (}$\wh{m} \in \cA_0$, respectively{\rm )}.

\item  \label{i:AInf-AZer-2}
Let $\mu_1(z) = 1/z$ and let $\mu_2$ be a M\"{o}bius transformation as in \eqref{eq:MobNev12}. Then, $m \in \cA_\infty$ {\rm (}$m \in \cA_0${\rm )} if and only if $\wh{m} \in \cA_0$  {\rm (}$\wh{m} \in \cA_\infty$, respectively{\rm )}.
\end{enumerate}
\end{proposition}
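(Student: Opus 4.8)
The plan is to exploit two structural features hidden in the definitions of $\cA_\infty$ and $\cA_0$: first, membership in either class is an \emph{existential} statement over a post-composed M\"{o}bius transformation of the form \eqref{eq:MobNev}, and these transformations form a group under composition; second, the substitution $z \mapsto 1/z$ interchanges the behavior at $\infty$ and at $0$, while the accompanying sign change converts Stieltjes functions into inverse Stieltjes functions via Proposition~\ref{p:SiS}. Throughout I would work with the formula $\wh{m}(z) = \epsilon_1\epsilon_2\,\mu_2(m(\mu_1(z)))$ from Theorem~\ref{tV}.

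For part~\eqref{i:AInf-AZer-1}, with $\mu_1(z)=z$ we have $\epsilon_1=1$ and $\wh{m} = \eta\circ m$, where $\eta := \epsilon_2\mu_2$. A direct check shows that $\eta$ is again a M\"{o}bius transformation of the form \eqref{eq:MobNev} whose matrix has determinant $\epsilon_2^2 = 1$, so $\eta$ (and $\eta^{-1}$) are of the same form. If $m\in\cA_\infty$ is witnessed by $\mu$, so that $\mu\circ m$ is Stieltjes with $(\mu\circ m)(rz)\sim C/(-rz)^\alpha$, then $\mu\circ\eta^{-1}$ is again of the form \eqref{eq:MobNev} and $(\mu\circ\eta^{-1})\circ\wh{m} = \mu\circ m$ is Stieltjes with \emph{the same} asymptotics; hence $\wh{m}\in\cA_\infty$ with the identical $C$ and $\alpha$. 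The converse uses $\eta$ in place of $\eta^{-1}$, and the $\cA_0$ assertion is verbatim the same with ``Stieltjes'' replaced by ``inverse Stieltjes.'' No recomputation of the asymptotics is required here; only the witnessing M\"{o}bius transformation changes.

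For part~\eqref{i:AInf-AZer-2}, with $\mu_1(z)=1/z$ we have $\epsilon_1=-1$. Given $m\in\cA_\infty$ witnessed by $\mu$, I would choose $\nu := (-\mu)\circ(\epsilon_1\epsilon_2\mu_2)^{-1}$, which is again of the form \eqref{eq:MobNev}, so that $\nu\circ\wh{m}(z) = -\mu(m(1/z)) = -(\mu\circ m)(1/z)$. Proposition~\ref{p:SiS} shows that $z\mapsto -(\mu\circ m)(1/z)$ is an inverse Stieltjes function, since $\mu\circ m$ is Stieltjes. For the asymptotics I substitute $s=1/r$ and $w=1/z$, so that $\nu\circ\wh{m}(rz) = -(\mu\circ m)(sw)$ with $s\to+\infty$ as $r\to 0{+}$; feeding in the $\cA_\infty$ asymptotic at the point $w\in\nCR$ and using the principal-branch identity $\bigl(1/(-rz)\bigr)^\alpha = (-rz)^{-\alpha}$, valid because $-rz\in\nCR$, gives $\nu\circ\wh{m}(rz)\sim -C(-rz)^\alpha$ as $r\to 0{+}$, which is exactly \eqref{eq:mAsymZer-defAZer}. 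Thus $m\in\cA_\infty \Rightarrow \wh{m}\in\cA_0$ with the same $C,\alpha$. Since $\mu_1$ is an involution, the reverse implication and the companion statement $m\in\cA_0\Leftrightarrow\wh{m}\in\cA_\infty$ follow by the identical argument with the roles of $0$ and $\infty$, and of Stieltjes and inverse Stieltjes, interchanged.

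The only genuinely delicate step is the asymptotic bookkeeping in part~\eqref{i:AInf-AZer-2}: one must track the principal branch of the fractional power carefully under $z\mapsto 1/z$ to confirm that both the exponent $\alpha$ and the constant $C$ are preserved, rather than $\alpha$ being sent to $-\alpha$ or a spurious phase factor appearing. Everything else reduces to the group law for the transformations in \eqref{eq:MobNev} together with Proposition~\ref{p:SiS}.
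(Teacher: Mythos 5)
Your proof is correct and takes essentially the same route as the paper's: in part (i) your witnessing transformation $\mu\circ\eta^{-1}$ is literally the paper's $\mu\circ\mu_2^{-1}\circ\nu$ with $\nu(z)=\epsilon_2 z$, and in part (ii) your $\nu\circ\wh{m}=-\mu\circ m\circ\mu_1$ is exactly the paper's reduction, settled by Proposition~\ref{p:SiS} together with the asymptotics under inversion. The only difference is cosmetic: you spell out the principal-branch identity $\bigl(1/(-rz)\bigr)^{\alpha}=(-rz)^{-\alpha}$, which the paper leaves implicit.
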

\begin{proof}
We will prove one implication from each (\ref{i:AInf-AZer-1}) and (\ref{i:AInf-AZer-2}). The other implications are proved similarly.

Let $m \in \cA_\infty$.  Then there exists a M\"{o}bius transformation $\mu$ of the form~\eqref{eq:MobNev} such that $\mu \circ m$ is a Stieltjes function and \eqref{eq:mAsymInf-defAInf} holds for all $z \in \nCR$.

Recall that, according to \eqref{eq:MobNev-whm}, in (\ref{i:AInf-AZer-1}) we have $\wh{m} = \epsilon_2 \mu_2\circ m$. Set $\nu(z) = \epsilon_2 z$, $z \in \nC$.  Since $\mu\circ\mu_2^{-1}\circ \nu \circ \wh{m} = \mu\circ m$ and $\mu\circ\mu_2^{-1}\circ \nu$ is a a M\"{o}bius transformation of the form~\eqref{eq:MobNev}, we have $\wh{m} \in \cA_\infty$. This proves the implication $m \in \cA_\infty \Rightarrow \wh{m} \in \cA_\infty$ from (\ref{i:AInf-AZer-1}).

Again, according to \eqref{eq:MobNev-whm}, in (\ref{i:AInf-AZer-2}) we have $\wh{m} = -\epsilon_2 \mu_2\circ m \circ \mu_1$. Set $\nu(z) = -\epsilon_2 z$, $z \in \nC$. Clearly,
\[
-\mu \circ \mu_2^{-1} \circ \nu \circ \wh{m} = - \mu \circ m \circ \mu_1. \]
Since $\mu \circ m$ is a Stieltjes function that satisfies  \eqref{eq:mAsymInf-defAInf} for all $z \in \nCR$, by Proposition~\ref{p:SiS} the function
$- \mu \circ m \circ \mu_1$ is an inverse Stieltjes function that satisfies \eqref{eq:mAsymZer-defAZer} for all $z \in \nCR$.
As $-\mu \circ \mu_2^{-1} \circ \nu$ is a M\"{o}bius transformation of the form~\eqref{eq:MobNev},  we have $\wh{m} \in \cA_0$. This proves the implication $m \in \cA_\infty \Rightarrow \wh{m} \in \cA_0$ from (\ref{i:AInf-AZer-2}).
\end{proof}

In the next proposition we characterize the functions in $\cA_\infty$ and $\cA_0$ by their asymptotic behavior.

\begin{proposition} \label{prop:0-inf}
The following equivalences hold.
\begin{enumerate}
\renewcommand*\theenumi{\roman{enumi}}
\renewcommand*\labelenumi{{\rm (\theenumi)}}
  \item \label{i:P-0-inf-1}
$m \in \cA_\infty$ if and only if $m \in \SM$ and there exist $C_0 \in \nR\!\setminus\!\{0\}$ and $\alpha_0 \in (-1,0)\cup(0,1)$ such that for all $z \in \nCR$ we have
\begin{equation}\label{eq:mAsymInf-noC}
    m( r z) \sim C_0 (- r z)^{\alpha_0} \quad \text{as} \quad r \to +\infty,
     \end{equation}
or, corresponding to $\alpha_0 =0$, there exist $C_0\in\nR\!\setminus\!\{0\}$, $C_1 > 0$ and $\alpha_1 \in (-1,0)$ such that for all $z \in \nCR$ we have
\begin{equation}\label{eq:mAsymInf-C}
    m( r z) - C_0 \sim C_1( - r z )^{\alpha_1} \quad \text{as} \quad r \to +\infty.
     \end{equation}
\item \label{i:P-0-inf-2}
$m\in \cA_0$ if and only if $m \in \SM$ and there exist $C_0 \in \nR\!\setminus\!\{0\}$ and $\alpha_0 \in (-1,0)\cup(0,1)$ such that for all $z \in \nCR$ we have
\begin{equation*}
    m( r z) \sim C_0 (- r z)^{\alpha_0} \quad \text{as} \quad
    r \to 0\!+,
\end{equation*}
or, corresponding to $\alpha_0 =0$, there exist $C_0, \in\nR\!\setminus\!\{0\}$, $C_1 > 0$ and $\alpha_1 \in (0,1)$ such that for all $z \in \nCR$ we have
\begin{equation*}
    m( r z) - C_0 \sim -C_1( - r z )^{\alpha_1} \quad \text{as} \quad r \to 0\!+.
\end{equation*}
\end{enumerate}
\end{proposition}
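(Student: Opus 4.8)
The plan is to reduce everything to a single computation: how a power-law asymptotic of a Stieltjes function is transported through a M\"obius transformation, together with the observation that Lemma~\ref{l:Ms} already supplies, for any $m\in\SM$, a M\"obius transformation making $\mu\circ m$ Stieltjes. I would prove part~(\ref{i:P-0-inf-1}) in detail and then obtain part~(\ref{i:P-0-inf-2}) from it by the change of variable $z\mapsto 1/z$.

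First I would set up the core computation. Suppose $\mu$ is of the form~\eqref{eq:MobNev} and $g=\mu\circ m$ is Stieltjes with $g(rz)\sim C(-rz)^{-\alpha}$ as $r\to+\infty$, where $C>0$ and $\alpha\in(0,1)$; in particular $g(rz)\to 0$. Since a nonconstant Stieltjes function is a nonconstant Nevanlinna function and $m$ is a nonconstant Nevanlinna function, $\mu$ must map $\nC_+$ into $\nC_+$, so by Lemma~\ref{lMobNev} its determinant equals $1$. Writing $m=\mu^{-1}(g)$ and applying the M\"obius difference identity~\eqref{eq:MobId} to $\mu^{-1}$, I distinguish three exhaustive cases according to the value $\mu^{-1}(0)$: if $\mu^{-1}(0)=\infty$ one gets $m(rz)\sim C_0(-rz)^{\alpha}$ with $\alpha_0=\alpha\in(0,1)$; if $\mu^{-1}(0)=0$ one gets $m(rz)\sim C_0(-rz)^{-\alpha}$ with $\alpha_0=-\alpha\in(-1,0)$; and if $\mu^{-1}(0)=C_0\in\nR\setminus\{0\}$ one gets $m(rz)-C_0\sim C_1(-rz)^{-\alpha}$ with $\alpha_1=-\alpha\in(-1,0)$ and $C_1>0$ (here $\det\mu^{-1}=1$ forces $C_1>0$). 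These three alternatives match exactly those in~(\ref{i:P-0-inf-1}); moreover $m\in\SM$ by Lemma~\ref{l:Ms}, since $\mu\circ m$ is Stieltjes. This establishes the forward implication of~(\ref{i:P-0-inf-1}).

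For the converse of~(\ref{i:P-0-inf-1}) I would start from $m\in\SM$ and use Lemma~\ref{l:Ms} to fix a M\"obius transformation $\mu_0$ of the form~\eqref{eq:MobNev} with $g_0=\mu_0\circ m$ Stieltjes; again $\det\mu_0=1$. A Stieltjes function has a finite limit $\gamma_0=\lim_{r\to+\infty}g_0(rz)\ge 0$, and $g_0-\gamma_0$ is again Stieltjes, being its pure integral part. Evaluating the given asymptotic of $m$ at $z=i$ and invoking Remark~\ref{r:MsInfZer} identifies $m(-\infty)$ as $-\infty$, $0$, or $C_0$ in the three respective alternatives, whence $\gamma_0=\mu_0(m(-\infty))$. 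Feeding the given asymptotic of $m$ into~\eqref{eq:MobId} for $\mu_0$ (using the limiting form $\mu_0(w)-\mu_0(\infty)$ when $m(-\infty)=\infty$) then yields $g_0-\gamma_0\sim C(-rz)^{-\alpha}$ for some $\alpha\in(0,1)$, the positivity $C>0$ being automatic because $g_0-\gamma_0$ is a nonconstant Stieltjes function, hence positive on $\nR_-$. Consequently $\tilde\mu:=(w\mapsto w-\gamma_0)\circ\mu_0$ is again of the form~\eqref{eq:MobNev}, $\tilde\mu\circ m=g_0-\gamma_0$ is Stieltjes, and $(\tilde\mu\circ m)(rz)\sim C(-rz)^{-\alpha}$, so $m\in\cA_\infty$. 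This is the core computation run backwards, and it avoids any explicit pole placement precisely because Lemma~\ref{l:Ms} already hands us a Stieltjes representative whose only defect is the removable constant $\gamma_0$.

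Finally, part~(\ref{i:P-0-inf-2}) would follow from part~(\ref{i:P-0-inf-1}) by the substitution $z\mapsto 1/z$. Taking $\mu_1(z)=1/z$ and $\mu_2(z)=z$ in Proposition~\ref{prop:AInf-AZer}(\ref{i:AInf-AZer-2}) gives $\wh m(z)=-m(1/z)$ and $m\in\cA_0\Leftrightarrow\wh m\in\cA_\infty$, while Remark~\ref{r:MsInfZer} gives $\wh m\in\SM\Leftrightarrow m\in\SM$. Writing $\rho=1/r$ and $w=1/z$, so that $-rz=(-\rho w)^{-1}$ and hence $(-rz)^{\beta}=(-\rho w)^{-\beta}$ for the principal branch (both arguments being non-real reciprocals, with $\Arg$ in $(-\pi,\pi)$), each $\infty$-asymptotic of $\wh m$ furnished by part~(\ref{i:P-0-inf-1}) translates, after the sign flip from $\wh m=-m(1/\cdot)$, into precisely the corresponding $0+$-asymptotic of $m$ in~(\ref{i:P-0-inf-2}): the exponents reverse sign ($\alpha_0\mapsto-\alpha_0$, $\alpha_1\mapsto-\alpha_1$) and the constants remain nonzero and positive as required. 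I expect the main obstacle to be the converse of~(\ref{i:P-0-inf-1})---checking in all three cases that $g_0-\gamma_0$ is Stieltjes with the stated power asymptotic and the correct sign of $C$---together with the careful branch bookkeeping in the final reduction; the remaining steps are routine applications of~\eqref{eq:MobId}.
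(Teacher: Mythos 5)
Your proposal is correct. The forward half of (i) is essentially the paper's own proof: your case split according to $\mu^{-1}(0)\in\{\infty\}\cup\{0\}\cup\bigl(\nR\!\setminus\!\{0\}\bigr)$ is exactly the paper's split into $d=0$, then $d\neq 0$ with $b=0$ or $b\neq 0$, and both arguments transport the asymptotic through $\mu^{-1}$ in the same way. Where you genuinely diverge is in the converse of (i) and in part (ii). The paper proves the converse in three separate cases with three different devices: for $\alpha_0\in(-1,0)$ it observes that $m$ is already Stieltjes or, if $m$ has a pole on $\nR_-$, removes it with the explicit map $\mu(z)=z/(cz+1)$, $c=-m(0-)/2$; for $\alpha_0\in(0,1)$ it passes to the transpose $-1/m$ and invokes Proposition~\ref{prop:AInf-AZer}; for the alternative \eqref{eq:mAsymInf-C} it shifts by $C_0$ and invokes Proposition~\ref{prop:AInf-AZer} again. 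You instead handle all three alternatives uniformly: take the Stieltjes representative $g_0=\mu_0\circ m$ supplied by Lemma~\ref{l:Ms}, subtract its finite limit $\gamma_0$ at infinity, and push the hypothesis through \eqref{eq:MobId} once, with Remark~\ref{r:MsInfZer} identifying $m(-\infty)$ and the finiteness of $\gamma_0=\mu_0(m(-\infty))$ supplying the nondegeneracy needed in each case. A pleasant byproduct of your route is that positivity of the leading constant is automatic from the Stieltjes property of $g_0-\gamma_0$, so you never need the sign restrictions on $C_0$ (forced by the Nevanlinna property) that the paper's case headings quietly assume. Finally, where the paper dismisses (ii) as ``similar,'' you derive it honestly from (i) by the inversion $\wh{m}(z)=-m(1/z)$, using Proposition~\ref{prop:AInf-AZer}(\ref{i:AInf-AZer-2}), Remark~\ref{r:MsInfZer}, and the branch identity $(-rz)^{\beta}=(-\rho w)^{-\beta}$, which is cleaner than repeating the whole argument. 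The trade-off: the paper's version is more self-contained and exhibits an explicit witnessing M\"{o}bius transformation in each case, while yours is shorter, unified, and makes transparent that the only obstruction to $\mu_0\circ m$ itself witnessing $m\in\cA_\infty$ is the additive constant $\gamma_0$.
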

\begin{proof}
We first prove the direct implication in  (\ref{i:P-0-inf-1}). Clearly $\cA_\infty \subset \SM$.  Assume that $m \in \cA_\infty$ is such that \eqref{eq:mAsymInf-defAInf} holds and let $\mu^{-1}(z) = (az+b)/(cz+d)$ with $ad - bc = 1$. We distinguish two cases.

If $d = 0$, then $\mu^{-1}(z) = a/c + b/(cz)$, and therefore, for all $z \in \nCR$,
\[
m( r z)  \sim  \frac{b}{c C} (-r z)^{\alpha} \quad \text{as} \quad r \to +\infty.
\]
Thus \eqref{eq:mAsymInf-noC} holds.

If $d \neq 0$, then $\mu^{-1}(z) = b/d + z/d^2 + O(z^2)$ as $z\to 0$, and therefore, for all $z \in \nCR$
\[
m( r z) - \frac{b}{d}  \sim  \frac{C}{d^2}(-r z)^{-\alpha}  \quad \text{as} \quad r \to +\infty.
\]
Consequently, \eqref{eq:mAsymInf-noC} holds if $b=0$ and \eqref{eq:mAsymInf-C} holds if $b \neq 0$. This proves the direct implication in (\ref{i:P-0-inf-1}).

To prove the converse let $m \in \SM$. First assume that $m$ satisfies  \eqref{eq:mAsymInf-noC} with $\alpha_0 \in (-1,0)$ and $C_0 > 0$. By Remark~\ref{r:MsInfZer} we have $m(-\infty) = 0$. If $m$ is holomorphic on $\nC\setminus[0,+\infty)$, then it is a Stieltjes function; so $m \in \cA_\infty$ in this case.  If $m$ has a pole in $\nR_-$, then, since $m \in \SM$, $m(0-) < 0$ and $m(x) \not\in\bigl(m(0-),0\bigr)$ for all $x\in\nR_-$. Setting $c = -m(0-)/2$ and $\mu(z) = z/(c z+1)$, as in the proof of Lemma~\ref{l:Ms} we have that $\mu\circ m$ is a Stieltjes function. Since $\mu(z) = z + O(z^2)$ as $z \to 0$, we have that
\[
 (\mu\circ m)(r z) \sim C_0 (- r z)^{\alpha_0} \qquad \text{as} \qquad r \to 0.
\]
Hence $m \in \cA_\infty$.

Second, assume that $m$ satisfies \eqref{eq:mAsymInf-noC} with $\alpha_0 \in (0,1)$ and $C_0 < 0$. Then, the already proven part implies  $m^{\!\top} \in \cA_\infty$ and Proposition~\ref{prop:AInf-AZer} yields $m \in \cA_\infty$.

Third, assume that $m$ satisfies \eqref{eq:mAsymInf-C}. Then, with $\mu(z) = z - C_0$, we have $\mu\circ m \in \cA_\infty$, by the second part of this proof. Now Proposition~\ref{prop:AInf-AZer} implies that $m \in \cA_\infty$.

The proof of (\ref{i:P-0-inf-2}) is similar.
\end{proof}


\subsection{Nonnegative operators in Kre\u{\i}n spaces} \label{ss:non-Ks}

Let $\bigl(\cK,\kip\bigr)$ be a Kre\u{\i}n space and let $A$ be a densely defined operator in $\cK$. The {\em adjoint} $A^{[*]}$ of $A$ with respect to $\kip$ is defined analogously as in a Hilbert space. In fact, if $J$ is a fundamental symmetry on $\bigl(\cK,\kip\bigr)$ and $\ahip$ is the corresponding Hilbert space inner product, then $A^{[*]} = JA^{\la*\ra}J$.  In analogy with definitions in a Hilbert space, $A$ is {\em symmetric in} $\bigl(\cK,\kip\bigr)$ if $A^{[*]}$ is an extension of $A$ and $A$ is {\em self-adjoint in} $\bigl(\cK,\kip\bigr)$ if $A = A^{[*]}$.

Since our main interest in this paper is similarity of a self-adjoint operator in a Kre\u{\i}n space to a self-adjoint operator in a Hilbert space, in the next proposition we recall a known characterization of similarity. This characterization is proved in \cite[Theorem~1, Section~2]{McEn82} for bounded self-adjoint operators in a Kre\u{\i}n space. In \cite[Proposition~2.2]{KuTr11} an equivalent statement in terms of $\cC$-stable symmetries is given and Phillips theorem \cite[Chapter 2, Corollary 5.20]{AI} is cited for a proof. Below we give a simple direct proof.

\begin{proposition} \label{p:Sim-iff-fd}
A self-adjoint operator $A$ in a Kre\u{\i}n space $(\cK,\kip)$ is similar to a self-adjoint operator in the Hilbert space $(\cK,\ahip)$ if and only if $A$ is fundamentally reducible in $(\cK,\kip)$.
\end{proposition}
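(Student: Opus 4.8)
The plan is to establish both implications directly from the definitions, using the fundamental symmetry $J$ as the bridge between the two notions. I would first fix the easier direction: suppose $A$ is fundamentally reducible, so there is a fundamental symmetry $J = P_+ - P_-$ with $J\dom(A) \subseteq \dom(A)$ and $JAx = AJx$ for all $x \in \dom(A)$. I claim that $JA$ (equivalently $AJ$) is the required self-adjoint operator in the Hilbert space $(\cK, \ahip)$ with $\ahip = [J\,\cdot\,, \cdot\,]$. Indeed, since $A = A^{[*]} = J A^{\la * \ra} J$, the commutation $AJ = JA$ yields $A^{\la * \ra} = J A J = AJ\cdot J = \ldots$; more cleanly, one checks that the Hilbert-space adjoint of $JA$ equals $A^{\la*\ra}J = (JAJ)J = JA$, so $JA$ is $\ahip$-self-adjoint. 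Because $J$ is a bounded boundedly-invertible operator (indeed $J^2 = I$), the similarity transform $A = J(JA)$ exhibits $A$ as similar to the $\ahip$-self-adjoint operator $JA$; more precisely one verifies $A = J^{-1}(JA)J$ after sorting out which operator one conjugates, so $A$ is similar to a self-adjoint operator in $(\cK,\ahip)$.

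For the converse, I would start from a similarity $A = T^{-1} B T$ with $B$ self-adjoint in $(\cK,\ahip)$ and $T$ bounded with bounded inverse. The goal is to manufacture a fundamental symmetry commuting with $A$. The natural candidate is built from the self-adjointness of $A$ in the Kre\u{\i}n space combined with the self-adjointness of $B$ in the Hilbert space: the operator $J = GB\cdots$ where $G$ relates the two inner products. Concretely, since $A^{[*]} = A$ means $G A G^{-1} = A^{\la * \ra}$ where $G$ is the Gram operator of $\kip$ with respect to $\ahip$ (so $[x,y] = \la Gx, y\ra$ and $G = G^{\la * \ra}$ is bounded, boundedly invertible, self-adjoint in $(\cK,\ahip)$), I would use that $B = TAT^{-1}$ is $\ahip$-self-adjoint to transport $G$ and produce a positive operator commuting with $A$. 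Taking the $\ahip$-self-adjoint operator $G$ and forming its polar/sign decomposition relative to $A$ is the route to the symmetry.

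The main obstacle, and the crux of the argument, is producing a genuine fundamental symmetry $J$ (that is, $J = J^{[*]}$, $J^2 = I$, and $\ahip$-positive $[J\,\cdot\,,\cdot\,]$) rather than merely some bounded commuting operator. The clean way to overcome this is via the functional calculus for the $\ahip$-self-adjoint operator $B$: self-adjointness of $A$ in $(\cK,\kip)$ forces $G$ to intertwine $B$ and $B^{\la*\ra} = B$ in a controlled way, and one shows $G$ (suitably transported by $T$) commutes with $B$; then $J := \sgn(G)$, defined through the spectral/functional calculus of the bounded self-adjoint $G$, is bounded, satisfies $J^2 = I$, commutes with $B$ hence with $A$, and because $G$ is the Gram operator one verifies $J$ is a fundamental symmetry for $\kip$. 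I expect the delicate points to be (i) checking that $0 \notin \sigma(G)$ so that $\sgn(G)$ is well-defined and $J^2 = I$ holds genuinely, which follows from $G$ being boundedly invertible, and (ii) verifying that $J\dom(A) \subseteq \dom(A)$ with $JA = AJ$, which follows since $J$ is a bounded Borel function of an operator commuting with the (possibly unbounded) $A$ in the resolvent sense. Once $J$ is in hand, the decomposition $\cK = \ker(J - I) [\dot +] \ker(J+I)$ is the desired fundamental decomposition reducing $A$, completing the proof.
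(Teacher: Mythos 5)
Your proof of the easy direction (fundamentally reducible $\Rightarrow$ similar) contains a genuine error. You claim $A = J^{-1}(JA)J$, hence that $A$ is similar to the self-adjoint operator $JA$. But $J^{-1}(JA)J = J^{2}AJ = AJ = JA$, not $A$: conjugating $JA$ by $J$ just returns $JA$. Worse, $A$ need not be similar to $JA$ at all; take $A = I$, which is fundamentally reducible with respect to every fundamental decomposition, and note that $JA = J$ has $-1$ as an eigenvalue, so no similarity to $A = I$ can exist. The repair is exactly the computation you started and then abandoned: from $A = A^{[*]} = JA^{\la*\ra}J$ and $JA = AJ$ one gets $A^{\la*\ra} = JAJ = AJ^{2} = A$, so $A$ \emph{itself} is self-adjoint in the Hilbert space $\bigl(\cK, [J\,\cdot\,,\cdot\,]\bigr)$ --- there is no need to pass to $JA$. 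If this $J$-inner product coincides with $\ahip$ you are done with the identity as the similarity; if not, the two norms are equivalent, and conjugating $A$ by the positive square root of the Gram operator relating the two inner products produces a self-adjoint operator in $(\cK,\ahip)$ similar to $A$. (This is the direction the paper dismisses as ``clear''.)

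Your proof of the nontrivial direction (similar $\Rightarrow$ fundamentally reducible) is in substance the paper's own argument: recast similarity as self-adjointness of $A$ with respect to an equivalent Hilbert space inner product, form the bounded, boundedly invertible Gram operator $G$ connecting $\kip$ to that inner product, observe that self-adjointness of $A$ in both inner products forces $A$ to commute with $G$, and take $J = \sgn G$ via the functional calculus; the delicate points you flag (invertibility of $G$ so that $(\sgn G)^{2}=I$, and commutation with the unbounded $A$ through resolvents) are the right ones. One point you must still pin down: the operator you first introduce, the Gram operator of $\kip$ with respect to $\ahip$, is nothing but the fundamental symmetry $J_0$ defining $\ahip$ (from $\la x,y\ra = [J_0x,y]$ one gets $[x,y] = \la J_0x,y\ra$), and it does \emph{not} commute with $A$ in general --- if it did, there would be nothing to prove. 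The operator that does commute with $A$ is the Gram operator of $\kip$ with respect to the transported inner product $\la T\,\cdot\,,T\,\cdot\,\ra$, namely $(T^{\la*\ra}T)^{-1}J_0$; your phrase ``suitably transported by $T$'' is precisely where this needs to be made explicit. The paper sidesteps the bookkeeping by saying at the outset that similarity means $A$ is self-adjoint in a Hilbert space $(\cK,\hip)$ with equivalent norm and defining $G$ directly by $(Gx,y) = [x,y]$.
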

\begin{proof}
Let $A$ be a self-adjoint operator in a Kre\u{\i}n space $(\cK,\kip)$ and assume that $A$ is similar to a self-adjoint operator in the Hilbert space $(\cK,\ahip)$.  That is, $A$ is self-adjoint in a Hilbert space  $(\cK,\hip)$ whose norm is equivalent to that of $(\cK,\ahip)$.  The equivalence of the norms of $(\cK,\ahip)$ and $(\cK,\hip)$ and the identity $\la x,y \ra = [Jx,y]$, for all $x,y \in \cK$, imply the existence of a bounded and boundedly invertible operator $G:\cK \to \cK$ such that $(G x,y)= [x,y]$ for all $x,y \in \cK$.  The last identity yields that  $G$ is self-adjoint in $\hip$ and the operator $\sgn G$ is a fundamental symmetry on $(\cK,\kip)$.  As $A$ is self-adjoint in both $\kip$ and $\hip$,  $A$ commutes with $G$.  Hence, $A$ commutes with the fundamental symmetry $\sgn G$ on $(\cK,\kip)$; proving that $A$ is fundamentally reducible. The converse is clear.
\end{proof}

A densely defined operator $A$ is {\em nonnegative in} $\bigl(\cK,\kip\bigr)$ if $[Af,f] \geq 0$ for all $f \in \dom(A)$.  A nonnegative self-adjoint operator in a Kre\u{\i}n space can have an empty resolvent set; a specific example is given in \cite[1.2]{La71} and \cite[Example~VII.1.5]{B74}. A modification of this example leads to a positive self-adjoint operator in a Kre\u{\i}n space with an empty resolvent set, see \cite[Example~3.4]{Ko14}.

However, if $A$ is a nonnegative self-adjoint operator in $\bigl(\cK,\kip\bigr)$ and $\rho(A) \neq \emptyset$, then the spectrum of $A$ is real and $A$ has a spectral function $E$, see \cite[Theorem~II.3.1]{Le}.  The domain of $E$ is the ring $\cI$ which consists of finite unions of (bounded) intervals whose endpoints are nonzero real numbers and their complements in $\overline{\nR} = \nR\cup\{\infty\}$.  The values of $E$ are bounded operators on $\cK$ with the following properties. For all $\Delta, \Delta_1, \Delta_2 \in \cI$ we have
\begin{enumerate}
\item[(E1)]
$E(\emptyset) = 0$, $E(\overline{\nR}) = I$,
\item[(E2)]
$E(\Delta) = E(\Delta)^{[*]}$
\item[(E3)]
$E(\Delta_1 \cap \Delta_2) = E(\Delta_1) E(\Delta_2)$,
\item[(E4)]
$E(\Delta_1 \cup \Delta_2) = E(\Delta_1) + E(\Delta_2)$  whenever $\Delta_1 \cap \Delta_2 = \emptyset$,
\item[(E5)]
the space $\bigl(E(\Delta)\cK, \pm\kip \bigr)$ is a Hilbert space whenever $\Delta \subset \nR_\pm$,
\item[(E6)]
$E(\Delta)$ is in the double commutant of the resolvent of $A$,
\item[(E7)]
for a bounded $\Delta$ we have $E(\Delta)\cK \subseteq \dom(A)$ and the restriction of $A$ to $E(\Delta)\cK$ is a bounded operator whose spectrum is contained in $\overline{\Delta}$.
\end{enumerate}

For $\lambda \in \nR\setminus\{0\}$, it follows from (E5) that in a neighborhood of $\lambda$ the spectral function $E$ behaves as a spectral function of a self-adjoint operator in a Hilbert space. In particular, with $\lambda_1, \lambda_2 \in \nR\setminus\{0\}$ such that $\lambda_1 < \lambda < \lambda_2$ the limits
\begin{equation} \label{eq:sotl}
 \lim_{t\uparrow \lambda}E\bigl([\lambda_1,t]\bigr) \qquad \text{and} \qquad \lim_{t\downarrow \lambda}E\bigl([t,\lambda_2]\bigr)
\end{equation}
exist in the strong operator topology. The existence of these limits is a consequence of the following property of nonzero real numbers: For $\lambda \in \nR\setminus\{0\}$ there exists a $\Delta \in \cI$  such that $\lambda$ is in the interior of $\Delta$ and  $\bigl(E(\Delta)\cK, (\sgn \lambda)\kip  \bigr)$ is a Hilbert space.

A possible  absence of the just mentioned property for $0$ or $\infty$ is a motivation for the following definition.  The point $0$ ($\infty$) is said to be a {\em critical point of} $A$ if $\kip$ is indefinite on $E(\Delta)\cK$ for every $\Delta \in \cI$ such that $0 \in \Delta$ ($\infty \in \Delta$, respectively).

However, even if $0$ or $\infty$ is a critical point the limits analogous to \eqref{eq:sotl} may exist. If $0$ is a critical point of $A$ and the limits in \eqref{eq:sotl} exist with $\lambda = 0$, then $0$ is called a {\em regular critical point of} $A$. If $\infty$ is a critical point of $A$ and the limits
\begin{equation*}
 \lim_{t\uparrow +\infty}E\bigl([\lambda_1,t]\bigr) \qquad \text{and} \qquad \lim_{t\downarrow -\infty}E\bigl([t,\lambda_2]\bigr)
\end{equation*}
exist in the strong operator topology for some $\lambda_1, \lambda_2 \in \nR\setminus\{0\}$, then $\infty$ is called {\em regular critical point of} $A$. A critical point of $A$ which is not regular is called {\em singular critical point of} $A$. The set of all singular critical points of $A$ is denoted by $c_s(A)$.

The following proposition is a part of folklore in this setting. For a bounded $A$ it was proved in  \cite{Bay78}, and, in a more general form in \cite{McEn82}. For unbounded $A$ with a bounded inverse it appears in \cite{Cur} where \cite{Bay78} was cited for a proof by taking an inverse. For completeness we include a simple proof.
\begin{proposition} \label{p:fd-iff-rcp}
Let $A$ be a nonnegative operator in a Kre\u{\i}n space $\bigl(\cK,\kip\bigr)$.  Then $A$ is fundamentally reducible in $\bigl(\cK,\kip\bigr)$ if and only if  $\rho(A) \neq \emptyset$, $\ker(A) = \ker(A^2)$ and  $\infty, 0 \not\in c_s(A)$.
\end{proposition}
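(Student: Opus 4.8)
The plan is to prove both implications directly from the definition of fundamental reducibility, using the spectral function $E$ and its properties (E1)--(E7).

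For the forward implication, suppose $A$ is fundamentally reducible, so there is a fundamental decomposition $\cK=\cK_+[\dot{+}]\cK_-$ with $A=A_+\oplus A_-$, where $A_+=A|_{\cK_+\cap\dom(A)}$ is self-adjoint in the Hilbert space $(\cK_+,\kip)$ and $A_-=A|_{\cK_-\cap\dom(A)}$ is self-adjoint in the Hilbert space $(\cK_-,-\kip)$. First I would note that $A$ is self-adjoint in the Hilbert space $(\cK,\ahip)$ attached to the fundamental symmetry $J=P_+-P_-$, whence $\nCR\subseteq\rho(A)$ and in particular $\rho(A)\neq\emptyset$. The identity $\ker(A)=\ker(A_+)\oplus\ker(A_-)=\ker(A^2)$ is immediate from the Hilbert-space self-adjointness of $A_\pm$. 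Finally, writing $F_\pm$ for the ordinary spectral measures of $A_\pm$, the formula $E(\Delta)=F_+\bigl(\Delta\cap[0,+\infty)\bigr)\oplus F_-\bigl(\Delta\cap(-\infty,0]\bigr)$ produces the spectral function of $A$; since $F_\pm$ are genuine countably additive and uniformly bounded spectral measures, the strong operator limits in the definition of a regular critical point exist at both $0$ and $\infty$, so $c_s(A)=\emptyset$ and in particular $0,\infty\notin c_s(A)$.

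For the converse, assume $\rho(A)\neq\emptyset$, $\ker(A)=\ker(A^2)$ and $0,\infty\notin c_s(A)$. Since $A$ is nonnegative with $\rho(A)\neq\emptyset$ it carries the spectral function $E$. Because $0$ and $\infty$ are not singular critical points, the limits $\lim_{t\downarrow 0}E([t,\lambda_2])$, $\lim_{t\uparrow 0}E([\lambda_1,t])$, $\lim_{t\uparrow+\infty}E([\lambda_1,t])$ and $\lim_{t\downarrow-\infty}E([t,\lambda_2])$ all exist in the strong operator topology, so the projections $P_+:=E\bigl((0,+\infty)\bigr)$ and $P_-:=E\bigl((-\infty,0)\bigr)$, obtained as strong limits of $E([t,s])$ as $t\downarrow 0$, $s\uparrow+\infty$, are well-defined bounded operators. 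They are $[*]$-self-adjoint by (E2), mutually $\kip$-orthogonal by (E3), and commute with the resolvent of $A$ by (E6), so they reduce $A$; by (E5) together with the regularity at the two critical points, $(P_+\cK,\kip)$ and $(P_-\cK,-\kip)$ are Hilbert spaces. Setting $\cM:=(I-P_+-P_-)\cK=E(\{0\})\cK$, I would observe that $\cM$ is nondegenerate (the range of a bounded $[*]$-self-adjoint projection is $\kip$-orthogonal to its kernel, and $\cK$ itself is nondegenerate) and $A$-invariant, with $N:=A|_{\cM}$ bounded and $\sigma(N)=\{0\}$ by (E7).

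The decisive step is to show $N=0$, equivalently $\cM=\ker(A)$. Here I would combine the algebraic hypothesis $\ker(A)=\ker(A^2)$, which forces $\ker(A^n)=\ker(A)$ for all $n$ and hence excludes finite Jordan chains at $0$, with the regularity of $0$, which prevents a nonzero quasinilpotent restriction that the algebraic condition alone would permit; nonnegativity $[Nf,f]\geq 0$ and $[*]$-self-adjointness of $N$ are used to control the restricted operator. Once $N=0$ is established, $\cM=\ker(A)$ is a nondegenerate subspace and therefore a Kreĭn space; choosing any fundamental decomposition $\cM=\cM_+[\dot{+}]\cM_-$ and setting $\cK_\pm:=P_\pm\cK\,[\dot{+}]\,\cM_\pm$ yields a fundamental decomposition of $\cK$. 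Since $A$ maps $P_\pm\cK\cap\dom(A)$ into $P_\pm\cK$ and annihilates $\cM$, this decomposition reduces $A$, so $A$ is fundamentally reducible.

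I expect the main obstacle to be exactly the kernel step $N=0$. That both hypotheses are genuinely needed is clear: a nonzero nilpotent Jordan block that is nonnegative and self-adjoint on a hyperbolic plane satisfies all properties except $\ker N=\ker N^2$, so the algebraic condition is unavoidable, while a hypothetical nonzero quasinilpotent $N$ with $\ker N=\ker N^2$ is excluded only by the regularity of the critical point $0$. The technical heart of the argument is therefore to convert the boundedness of $E$ near $0$ into the vanishing of $N$, which I anticipate will require either a resolvent bound for $N$ near $0$ or a direct appeal to the local spectral structure of $A$ at the critical point.
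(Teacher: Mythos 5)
Your overall strategy coincides with the paper's: the forward implication via similarity to a self-adjoint operator in a Hilbert space, and the converse by using the regularity of $0$ and $\infty$ to build the spectral projections onto the positive and negative spectral subspaces and then splitting off a residual middle piece. The forward direction of your proposal is correct. The converse, however, has a genuine gap, and it sits exactly where you flag it: you never prove that $N = A|_{\cM}$ vanishes, i.e.\ that $\cM = \ker(A)$. The sentences about $\ker(A)=\ker(A^2)$ ``excluding Jordan chains'' and about regularity of $0$ ``preventing a nonzero quasinilpotent restriction'' describe the conclusion you need, not a mechanism for reaching it, and your closing paragraph concedes that converting the boundedness of $E$ near $0$ into $N=0$ remains to be done. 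Since the entire converse hinges on this identification, the argument as written is incomplete at its decisive step. A secondary, related problem: the claims that $\cM \subseteq \dom(A)$, that $N$ is bounded, and that $\sigma(N)=\{0\}$ do not follow ``by (E7)'' as you assert, because (E7) applies only to $E(\Delta)$ with $\Delta \in \cI$ bounded, whereas $\cM$ is the range of the complementary projection $I-P_+-P_-$, which is not of that form; these facts also require the local spectral theory you are implicitly invoking.

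The paper closes precisely this hole by citation rather than by a new argument: Propositions~5.1 and~5.6 of Langer \cite{La} yield, under the hypotheses $\rho(A)\neq\emptyset$, $\ker(A)=\ker(A^2)$ and $0,\infty\not\in c_s(A)$, the orthogonal decomposition $\cK = E(\nR_+)\cK\,[\dot{+}]\,\ker(A)\,[\dot{+}]\,E(\nR_-)\cK$ --- that is, they identify the residual subspace with $\ker(A)$ --- and Theorem~5.2 of \cite{La} shows that $\bigl(\ker(A),\kip\bigr)$ is a Kre\u{\i}n space; the final gluing of a fundamental decomposition of $\ker(A)$ onto $E(\nR_\pm)\cK$ is then exactly your last assembly step, which is correct. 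For perspective: if the structural facts you assumed about $\cM$ were in hand ($\cM\subseteq\dom(A)$, $N$ bounded, nonnegative, self-adjoint in the Kre\u{\i}n space $\cM$, and $\sigma(N)=\{0\}$), the kernel step would admit an elementary finish --- writing $B=JN\geq 0$ in the associated Hilbert space, the operator $B^{1/2}JB^{1/2}$ is Hilbert-space self-adjoint and quasinilpotent, hence zero, so $N^2=JB^{1/2}\bigl(B^{1/2}JB^{1/2}\bigr)B^{1/2}=0$, and then $\ker(A)=\ker(A^2)$ forces $N=0$. But neither those structural facts nor this finish appears in your proposal, so a Langer-type result (or an independent proof of it) is still required for the argument to go through.
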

\begin{proof}
Since a fundamentally reducible operator is similar to a self-adjoint operator in a Hilbert space, the direct implication follows. For the converse, notice that $\infty, 0 \not\in c_s(A)$ implies the existence of the operators $E(\nR_-):=\lim_{t\downarrow-\infty} E([t,1/t])$ and $E(\nR_+):=\lim_{t\uparrow +\infty} E([1/t,t])$. It follows from (E2), (E3), (E5) and (E6) that $E(\nR_\pm) = E(\nR_\pm)^2 = E(\nR_\pm)^{[*]}$, $\bigl(E(\nR_\pm)\cK, \pm\kip \bigr)$ is a Hilbert space and $E(\nR_\pm)$ is in the double commutant of the resolvent of $A$. Since $\ker(A) = \ker(A^2)$, Propositions~5.1 and~5.6 in~\cite{La} yield
\[
\cK = E(\nR_+)\cK [\dot{+}] \ker(A) [\dot{+}] E(\nR_-)\cK,
\]
direct and orthogonal sum in $\bigl(\cK,\kip\bigr)$.  Since $\bigl(E(\nR_-)\cK [\dot{+}]E(\nR_+)\cK,\kip\bigr)$ is a Kre\u{\i}n space, \cite[Theorem~5.2]{La} implies that $\bigl(\ker(A),\kip\bigr)$ is a Kre\u{\i}n space. Let $\ker(A) = \cK_{+}^0[\dot{+}]\cK_{-}^0$ be its fundamental decomposition. Then
\[
\cK = \bigl(E(\nR_+)\cK [\dot{+}] \cK_{+}^0\bigr)[\dot{+}]\bigl(\cK_{-}^0 [\dot{+}] E(\nR_-)\cK\bigr)
\]
is a fundamental decomposition of $\cK$ which reduces $A$.
\end{proof}

In Subsection~\ref{ss:RegCPs}  we essentially use the following resolvent criterion of  K.~Veseli\'{c} \cite{Ves} for $\infty\not\in c_s(A)$. We state a special case of this criterion as it has appeared in~\cite[Corollary 1.6]{Jo}.

\begin{theorem} \label{Veselic}
Let $A$ be a nonnegative self-adjoint operator with a nonempty resolvent set in a Kre\u{\i}n space $({\cH}, [\,\cdot\,,\,\cdot\, ])$. Then:
\begin{enumerate}
\renewcommand*\theenumi{\alph{enumi}}
\renewcommand*\labelenumi{{\rm (\theenumi)}}
  \item  \label{iVeselic-1}
  $\infty\not\in c_s(A)$ if and only if the operators
 \[
  \int_{-i\eta}^{-i}+ \int_{i}^{i\eta}(A-{z})^{-1}d{z}
  \]
 are uniformly bounded for $\eta\in(1,\infty)$.
  \item \label{iVeselic-2}
   $0\not\in c_s(A)$ if and only if $\ker(A)=\ker(A^2)$ and the operators
 \[
 \int_{-i}^{-i\varepsilon}+ \int_{i\varepsilon}^{i}(A-{z})^{-1}d{z}
 \]
 are uniformly bounded for $\varepsilon\in(0,1)$.
\end{enumerate}
\end{theorem}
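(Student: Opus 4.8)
The plan is to reduce everything to the spectral function $E$ of $A$, which exists by \cite[Theorem~II.3.1]{Le} since $A$ is nonnegative with $\rho(A)\neq\emptyset$, and which satisfies the properties (E1)--(E7). By the very definition of a singular critical point and the discussion preceding Proposition~\ref{p:fd-iff-rcp}, the regularity $\infty\not\in c_s(A)$ (respectively $0\not\in c_s(A)$, together with $\ker(A)=\ker(A^2)$) is equivalent to the existence of the strong limits defining $E(\nR_\pm)$, which in turn is equivalent to uniform boundedness of $\|E(\Delta)\|$ as $\Delta\in\cI$ exhausts the part of $\nR$ approaching $\infty$ (respectively $0$) while staying bounded away from the other critical value. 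Thus the whole theorem comes down to showing that uniform boundedness of the resolvent integrals is equivalent to uniform boundedness of $E$ near the relevant critical point.

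The key computation is to parametrize the imaginary axis and exploit the cancellation $(A+it)^{-1}+(A-it)^{-1}=2A\bigl(A^2+t^2\bigr)^{-1}$, which is legitimate because the spectrum of $A$ is real so $\pm it\in\rho(A)$. Writing $z=\mp it$ one obtains
\[
\frac{1}{i}\left(\int_{-i\eta}^{-i}+\int_{i}^{i\eta}\right)(A-z)^{-1}\,dz
 = \int_1^\eta 2A\bigl(A^2+t^2\bigr)^{-1}\,dt
 = \int_{\overline{\nR}} h_\eta\,dE,
\]
where, via the functional calculus on the definite spectral subspaces furnished by (E5), the scalar kernel is $h_\eta(\lambda)=2\bigl(\arctan(\eta/\lambda)-\arctan(1/\lambda)\bigr)$. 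This kernel is uniformly bounded, behaves like $\pi\,\sgn(\lambda)$ on the expanding plateau $1\ll|\lambda|\ll\eta$, is $O(\lambda)$ near $0$, and is $O(1/|\lambda|)$ for $|\lambda|\gg\eta$. The decisive feature is that the \emph{sum} of resolvents decays like $t^{-2}$ rather than $t^{-1}$, so that on a fixed bounded spectral part $E(\Delta_0)\cK$ (with $\Delta_0\in\cI$ a bounded neighborhood of $0$, using (E6)--(E7) to commute with the resolvent and to make $A|_{E(\Delta_0)\cK}$ bounded) the integral converges in norm as $\eta\to\infty$ and is automatically uniformly bounded; only the high-energy part near $\infty$ can cause trouble.

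For part~(\ref{iVeselic-1}) the easy direction is then immediate: if $\infty\not\in c_s(A)$, then on $E\bigl(\overline{\nR}\setminus(-a,a)\bigr)\cK$ the function $E$ has uniformly bounded norm and $\pm\kip$ are Hilbert inner products on the definite pieces, so $\|\int h_\eta\,dE\|\le\sup|h_\eta|\cdot\sup_\Delta\|E(\Delta)\|<\infty$. The converse is the substantive direction: from the plateau estimate one shows that $h_\eta$ is uniformly close to $\pi\,\sgn(\cdot)$ on $[a',\eta]\cup[-\eta,-a']$, so the resolvent integral equals $\pi\bigl(E([a',\eta])-E([-\eta,-a'])\bigr)$ up to a uniformly bounded error; hence uniform boundedness of the resolvent integrals forces $\|E([a',\eta])\|$ to be bounded in $\eta$, which yields the strong limit and thus $\infty\not\in c_s(A)$.

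Part~(\ref{iVeselic-2}) is handled by the same computation on the interval $[\varepsilon,1]$, giving the kernel $\w{h}_\varepsilon(\lambda)=2\bigl(\arctan(1/\lambda)-\arctan(\varepsilon/\lambda)\bigr)$, whose plateau on $\varepsilon\ll|\lambda|\ll1$ now probes $0$ instead of $\infty$; here the role of the hypothesis $\ker(A)=\ker(A^2)$ becomes visible. Near the origin the neutral part of the spectrum no longer decays, and a direct contour computation shows that the symmetric combination $\int_{-i}^{-i\varepsilon}+\int_{i\varepsilon}^{i}$ cancels a simple pole of the resolvent at $0$ but leaves an unbounded $\varepsilon^{-1}$ contribution coming from a second-order pole; consequently $\ker(A)=\ker(A^2)$, i.e. $0$ being a semisimple eigenvalue so that the resolvent has at most a first-order pole there, is exactly what guarantees boundedness of the origin contribution, after which the analysis mirrors part~(\ref{iVeselic-1}) with $0$ in place of $\infty$. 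The main obstacle throughout is this converse (hard) direction: one must rigorously extract boundedness of $E$ from boundedness of the resolvent integrals even though $E$ is not a genuine projection-valued measure and is unbounded near a singular critical point. This requires careful uniform estimates on $h_\eta-\pi\,\sgn(\cdot)$ (respectively on $\w{h}_\varepsilon$) integrated against $dE$, carried out on the Hilbert pieces $\bigl(E(\Delta)\cK,\pm\kip\bigr)$ provided by (E5) and patched together using (E3), (E4) and (E6).
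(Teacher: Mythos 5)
First, there is nothing in the paper to compare against: Theorem~\ref{Veselic} is not proved in the paper at all. It is Veseli\'c's resolvent criterion, quoted (as the paper itself says) in the special form in which it appears in \cite[Corollary~1.6]{Jo}, so your attempt can only be judged against the classical proofs in \cite{Ves} and \cite{Jo}. Your computational core is correct and does match the starting point of those proofs: parametrizing the contour gives
\[
\frac{1}{i}\Bigl(\int_{-i\eta}^{-i}+\int_{i}^{i\eta}\Bigr)(A-z)^{-1}\,dz=\int_{1}^{\eta}2A\bigl(A^{2}+t^{2}\bigr)^{-1}\,dt ,
\]
the scalar kernel is $h_\eta(\lambda)=2\bigl(\arctan(\eta/\lambda)-\arctan(1/\lambda)\bigr)$ with plateau value $\pi\sgn\lambda$, and in part~(b) the symmetric contour indeed cancels a first-order pole of the resolvent at $0$ while a nilpotent part ($\ker(A^{2})\neq\ker(A)$) produces an $\varepsilon^{-1}$ divergence. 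The easy directions are essentially fine, modulo the fact that the estimate $\|\int h_\eta\,dE\|\le\sup|h_\eta|\cdot\sup_\Delta\|E(\Delta)\|$ is not literally valid for a non-orthogonal projection-valued function and needs a renorming argument on the definite spectral parts.

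The genuine gap is in the converse direction of both (a) and (b), and it is a circularity, not a missing technicality. You write the resolvent integral as $\int_{\overline{\nR}}h_\eta\,dE$ and claim it equals $\pi\bigl(E([a',\eta])-E([-\eta,-a'])\bigr)$ up to a uniformly bounded error. But $E$ is defined only on the ring $\cI$ of sets whose closures avoid $0$ and $\infty$; it is not a spectral measure on $\overline{\nR}$, and near a singular critical point it is unbounded --- which is precisely the situation you must rule out. The ``error'' unavoidably contains the $dE$-contribution of the spectral set $\{|\lambda|\gtrsim\eta\}$ and of the part adjacent to $\infty$, where $h_\eta-\pi\sgn$ is of order one (about $\pi/2$ at $\lambda=\pm\eta$); estimating that contribution by $\sup|h_\eta-\pi\sgn|\cdot\sup_\Delta\|E(\Delta)\|$ presupposes exactly the uniform boundedness of $E$ near $\infty$ that you are trying to deduce. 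The same objection applies to your closing paragraph: the patching over the Hilbert pieces $\bigl(E(\Delta)\cK,\pm\kip\bigr)$ can only be carried out for $\Delta$ bounded away from the critical point, and the entire content of the theorem is the passage to the limit. A correct proof needs structure that survives at the critical point even when $E$ blows up --- for instance the finite positive measures $\Delta\mapsto[AE(\Delta)x,x]\le[Ax,x]$, $x\in\dom(A)$ (a Cauchy--Schwarz consequence of nonnegativity), uniform-boundedness and weak-compactness arguments extracting a limit operator from the family $\int_1^\eta 2A(A^2+t^2)^{-1}dt$, and Langer's density results \cite{La} for the span of the bounded spectral subspaces, which is also where $\ker(A)=\ker(A^{2})$ genuinely enters in (b). Note finally that your Laurent expansion at $0$ presupposes that $0$ is isolated in $\sigma(A)$, which a critical point typically is not. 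As written, the hard implications are not established.
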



\section{$B$-properties of Nevanlinna functions.} \label{Sec-B-prop}

\subsection{The definition and basic properties.} \label{SubSec-B-prop-def}

\begin{definition} \label{defBinf}
Let $m$ be a Nevanlinna function. Set
\[
w_m(y) := \frac{1}{\im m(i\,\! y)}, \qquad y > 0.
\]
Let $G_{m,\infty}$ be the mapping defined for all $h \in \cH(m)$
by
\[
\bigl(G_{m,\infty} h\bigr)(y) := h(i\,\!y),  \ \ \ y > 1.
\]
A Nevanlinna function $m$ is said to have a $B_\infty$-{\em
property} if $\ran\bigl(G_{m,\infty}\bigr) \subset L_{w_m}^2\!(1,\infty)$ and the operator $G_{m,\infty}: \cH(m) \to  L_{w_m}^2\!(1,\infty)$ is bounded.

Let $G_{m,0}$ be the mapping defined for all $h \in
\cH(m)$ by
\[
\bigl(G_{m,0} h\bigr)(y) := h(i\,\!y),  \ \ \ 0  < y < 1.
\]
A Nevanlinna function $m$ is said to have a $B_0$-{\em property}
if $\ran\bigl(G_{m,0}\bigr) \subset L_{w_m}^2\!(0,1)$ and the operator $G_{m,0}: \cH(m) \to L_{w_m}^2\!(0,1)$ is bounded.
\end{definition}
The following proposition is a straightforward consequence of
Theorem~\ref{tFt} and the above definition.
\begin{proposition} \label{pGF}
Let $S$ be a closed simple symmetric operator with defect numbers $(1,1)$ in a Hilbert space $\bigl(\cH,\ahip_{\cH}\bigr)$. Let
$\bigl(\nC,\Gamma_0, \Gamma_1\bigr)$ be a boundary triple for $S^*$
and let $m$ be the corresponding Weyl function.  Then $m$ has
$B_\infty$-property {\rm (}$B_0$-property{\rm )} if and only if $G_{m,\infty} F_m$ is a bounded mapping from $\cH$ into $L_{w_m}^2\!(1,\infty)$ {\rm (}$L_{w_m}^2\!(0,1)$, respectively{\rm )}.  Moreover, if $m$ has a $B_\infty$-property {\rm (}$B_0$-property{\rm )}, then $\|G_{m,\infty}\| = \|G_{m,\infty} F_m\|$ {\rm (}$\|G_{m,0}\| = \|G_{m,0} F_m\|$, respectively{\rm )}.
\end{proposition}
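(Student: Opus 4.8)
The plan is to reduce the entire statement to the single fact, supplied by Theorem~\ref{tFt}, that $F_m$ is a \emph{unitary} operator from $\bigl(\cH,\ahip_\cH\bigr)$ onto $\bigl(\cH(m),\ahip_{\cH(m)}\bigr)$. Theorem~\ref{tFt} asserts that $F_m$ is an isometry; the only point deserving attention is surjectivity, and this is exactly where the simplicity of $S$ enters. Since $S$ is simple, the span of the defect vectors $\psi(z)$, $z\in\nCR$, is dense in $\cH$, and a direct computation from \eqref{eq:F_trans} shows that $F_m$ carries the span of these vectors onto the span of the reproducing kernels $K_m(\cdot,w)$, $w\in\nCR$, which is dense in $\cH(m)$. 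An isometry has closed range, so having dense range it is onto; hence $F_m$ is unitary and $F_m^{-1}$ is again a bounded (unitary) operator. Once this is recorded the proposition is purely formal, because $G_{m,\infty}F_m$ is by definition the composition $G_{m,\infty}\circ F_m$.

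I would then dispatch the $B_\infty$-part in three short steps. For \emph{well-definedness}: because $F_m$ is onto, $\ran\bigl(G_{m,\infty}F_m\bigr)=\ran\bigl(G_{m,\infty}\bigr)$, so the inclusion $\ran\bigl(G_{m,\infty}\bigr)\subset L_{w_m}^2(1,\infty)$ required in Definition~\ref{defBinf} holds precisely when $G_{m,\infty}F_m$ maps $\cH$ into $L_{w_m}^2(1,\infty)$. For \emph{boundedness in both directions}: if $m$ has $B_\infty$-property then $G_{m,\infty}$ is bounded and so is the composition $G_{m,\infty}F_m$; conversely, if $G_{m,\infty}F_m$ is bounded, then $G_{m,\infty}=\bigl(G_{m,\infty}F_m\bigr)F_m^{-1}$ is bounded as well, since $F_m^{-1}$ is bounded, and therefore $m$ has $B_\infty$-property. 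Finally, for \emph{equality of norms}: as $F_m$ is a surjective isometry it maps the unit sphere of $\cH$ onto the unit sphere of $\cH(m)$, whence
\[
\bigl\|G_{m,\infty}F_m\bigr\| = \sup_{\|f\|_\cH=1}\bigl\|G_{m,\infty}F_m f\bigr\| = \sup_{\|h\|_{\cH(m)}=1}\bigl\|G_{m,\infty}h\bigr\| = \bigl\|G_{m,\infty}\bigr\|.
\]

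The $B_0$-part is proved verbatim, replacing $G_{m,\infty}$ and $L_{w_m}^2(1,\infty)$ throughout by $G_{m,0}$ and $L_{w_m}^2(0,1)$ and invoking the corresponding half of Definition~\ref{defBinf}. The one step that is not entirely automatic is the surjectivity of $F_m$: everything else consists of the standard facts that composing with a unitary preserves both boundedness and operator norm. Thus the main (and mild) obstacle is simply to make explicit that Theorem~\ref{tFt}, combined with the simplicity of $S$, yields a unitary rather than a mere isometric embedding; beyond that I do not anticipate any genuine difficulty.
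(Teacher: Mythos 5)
Your proposal is correct and takes essentially the same route as the paper, which gives no written proof beyond declaring the proposition ``a straightforward consequence of Theorem~\ref{tFt} and the above definition'' --- i.e., exactly the formal composition-with-$F_m$ argument you spell out. Your one substantive addition, that simplicity of $S$ makes the isometry $F_m$ surjective (hence unitary, so that $F_m^{-1}$ is bounded, $\ran(G_{m,\infty}F_m)=\ran(G_{m,\infty})$, and the norms agree), is precisely the detail the paper leaves implicit in the phrase ``isometry between $\cH$ and $\cH(m)$,'' and it is needed for the converse direction, so making it explicit is the right reading of the intended argument.
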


In the next lemma, for a Nevanlinna function $m$, we introduce the operator $G_{m,\infty}^-$.

\begin{lemma}\label{prop:4.5}
Let $m$ be a Nevanlinna function. Then:
\begin{enumerate}
\renewcommand*\theenumi{\roman{enumi}}
\renewcommand*\labelenumi{{\rm (\theenumi)}}
  \item \label{p45i}
  $m$ has $B_\infty$-property if and only if
 \[
\bigl(G_{m,\infty}^- h\bigr)(y) := h(-i\,\!y),  \ \ \ y > 1.
\]
is a bounded operator from $\cH(m)$ into $L_{w_m}^2\!(1,\infty)$.
  \item \label{p45ii}
   $m$ has $B_0$-property if and only if
      \[
\bigl(G_{m,0}^- h\bigr)(y) := h(-i\,\!y),  \ \ \ y \in(0,1).
\]
is a bounded operator from $\cH(m)$ into $L_{w_m}^2\!(0,1)$.
\end{enumerate}
\end{lemma}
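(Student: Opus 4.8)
The plan is to exploit the complex-conjugation symmetry that the reproducing kernel Hilbert space $\cH(m)$ of any Nevanlinna function carries, and to use it to identify $G_{m,\infty}^-$ with $G_{m,\infty}$ up to isometries (and likewise $G_{m,0}^-$ with $G_{m,0}$). The point is that for a fixed $h\in\cH(m)$ there is no direct relation between $h(iy)$ and $h(-iy)$, but the map $h\mapsto h^\sharp$ with $h^\sharp(z)=\co{h(\co z)}$ carries one evaluation into the other while mapping $\cH(m)$ isometrically onto itself.

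First I would introduce this conjugation $C$ on $\cH(m)$. Since $m$ is a Nevanlinna function, $m(\co z)=\co{m(z)}$, and a direct computation with the kernel \eqref{eKm} yields the two identities
\[
\co{K_m(\co z, w)} = K_m(z,\co w) \qquad\text{and}\qquad K_m(\co w,\co v)=\co{K_m(w,v)}, \qquad z,w,v\in\nCR .
\]
Using the second identity together with the reproducing property I would define $C$ on the dense subspace $\cL(m)=\lspan\{K_m(\cdot,w):w\in\nCR\}$ by the anti-linear prescription $C K_m(\cdot,w):=K_m(\cdot,\co w)$, verify that $\la Cf,Cg\ra_{\cH(m)}=\co{\la f,g\ra_{\cH(m)}}$ on $\cL(m)$ (so $C$ is isometric), and extend it by continuity to an anti-unitary involution of $\cH(m)$; note $C$ is onto since $C^2=I$ on $\cL(m)$. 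A standard argument — convergence in $\cH(m)$ implies pointwise convergence — combined with the first identity then shows $(Ch)(z)=\co{h(\co z)}$ for every $h\in\cH(m)$ and every $z\in\nCR$.

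The decisive step is the pointwise identity linking the two evaluation maps. For $h\in\cH(m)$ and $y>1$,
\[
\bigl(G_{m,\infty}(Ch)\bigr)(y) = (Ch)(iy) = \co{h\bigl(\co{(iy)}\bigr)} = \co{h(-iy)} = \co{\bigl(G_{m,\infty}^- h\bigr)(y)},
\]
and symmetrically $\bigl(G_{m,\infty}^-(Ch)\bigr)(y)=\co{(G_{m,\infty}h)(y)}$. Since the weight $w_m$ is real and positive, pointwise complex conjugation is an isometric involution of $L_{w_m}^2\!(1,\infty)$, and $C$ is an isometric bijection of $\cH(m)$; hence $\|G_{m,\infty}^- h\|=\|G_{m,\infty}(Ch)\|$ and $\|G_{m,\infty} h\|=\|G_{m,\infty}^-(Ch)\|$ in $L_{w_m}^2\!(1,\infty)$ for all $h$. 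As $h\mapsto Ch$ is a norm-preserving bijection of $\cH(m)$, these identities give at once that $\ran(G_{m,\infty}^-)\subset L_{w_m}^2\!(1,\infty)$ with $G_{m,\infty}^-$ bounded if and only if the same holds for $G_{m,\infty}$, with $\|G_{m,\infty}^-\|=\|G_{m,\infty}\|$. This establishes the claim for the $B_\infty$-property; the claim for the $B_0$-property is obtained by the identical argument with $(0,1)$ replacing $(1,\infty)$.

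The only real obstacle is the first step, namely checking that $C$ is well defined, anti-unitary, and satisfies the pointwise formula $(Ch)(z)=\co{h(\co z)}$; once this conjugation is in hand everything reduces to bookkeeping with isometries. The kernel identities needed are immediate consequences of $m(\co z)=\co{m(z)}$ and definition \eqref{eKm}, so I do not anticipate genuine difficulty beyond keeping track of the anti-linearity.
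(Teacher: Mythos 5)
Your proof is correct and follows essentially the same route as the paper: the paper's proof introduces exactly your conjugation (there called $W$, with $(Wf)(z)=f(z^*)^*$), notes it is an anti-unitary bijection of $\cH(m)$, and uses the same pointwise identity relating $h(-iy)$ to $(Wh)(iy)$ on kernel functions. Your construction of $C$ on the span of kernels and extension by continuity merely supplies the verification that the paper leaves implicit, so the two arguments coincide in substance.
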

\begin{proof}
For a Nevanlinna function $m$ define the anti-linear mapping
$W: \cH({m}) \to \cH({m})$ by
\[
(Wf)(z) = f(z^*)^*, \quad z \in \nC\setminus\nR, \quad f \in \cH({m}).
\]
Then $W$ is a bijection and
\begin{equation} \label{eq:Wh}
\bigl\la Wf,Wg \bigr\ra = \la f,g \ra^*  \qquad \text{for all} \qquad f,g \in \cH({m}).
\end{equation}
Further, for every $\omega\in\nCR$ and $y > 0$ we have
\begin{align*}
\bigl(G_{m,\infty}^- K_\omega\bigr)(y)&=K_\omega(-iy)=\frac{m(-iy)-m(\omega)^*}{-iy-\omega^*}\\
&=K_{\omega^*}(iy)^*=\left(\bigl(G_{m,\infty} K_{\omega^*}\bigr)(y)\right)^*=\left(\bigl(G_{m,\infty}W K_\omega\bigr)(y)\right)^*.
\end{align*}
Together with~\eqref{eq:Wh}, this implies statements~(\ref{p45i}) and~(\ref{p45ii}).
\end{proof}

M\"{o}bius transformations of Nevanlinna functions preserve $B_\infty$-property and $B_0$-property. In the notation of  Theorem~{\rm\ref{tV}} we have the following proposition.

\begin{proposition} \label{thm:Binv}
\begin{enumerate}
\renewcommand*\theenumi{\alph{enumi}}
\renewcommand*\labelenumi{{\rm (\theenumi)}}
\item  \label{iBinv-1}
Assume that $\mu_1(z) = z$ or $\mu_1(z) = -z$ and let $\mu_2$ be a M\"{o}bius transformation as in \eqref{eq:MobNev12}. A Nevanlinna function $m$ has $B_\infty$-property {\rm (}$B_0$-property{\rm )} if and only if the Nevanlinna function $\wh{m}$ has $B_\infty$-property {\rm (}$B_0$-property, respectively{\rm )}.

\item \label{iBinv-2}
Let $\mu_1(z) = 1/z$ or $\mu_1(z) = -1/z$ and let $\mu_2$ be a  M\"{o}bius transformation as in \eqref{eq:MobNev12}.  A Nevanlinna function $m$ has $B_\infty$-property {\rm (}$B_0$-property{\rm )} if and only if the Nevanlinna function $\wh{m}$ has $B_0$-property {\rm (}$B_\infty$-property, respectively{\rm )}.
\end{enumerate}
\end{proposition}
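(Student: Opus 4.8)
The plan is to transport the boundedness question through the isometric isomorphism $V \colon \cH(m) \to \cH(\wh{m})$ furnished by Theorem~\ref{tV} (the isometry being \eqref{eqUlu} in its proof), thereby reducing the $B$-property of $\wh{m}$ to that of $m$. The structural reason this works only for the four listed transformations is that each $\mu_1(z) \in \{z,-z,1/z,-1/z\}$ maps the imaginary axis onto itself, with $\mu_1(iy) = iy,\,-iy,\,-i/y,\,i/y$ respectively. Hence, if $g = Vf$ with $f \in \cH(m)$, then evaluating $g$ along $\{iy : y>1\}$ reduces, via the explicit formula $(Vf)(z) = \frac{c_1\mu_1(z)-a_1}{c_2 m(\mu_1(z))+d_2}\,f(\mu_1(z))$, to evaluating $f$ along the imaginary axis, so that $G_{\wh{m},\infty}V$ is expressible through one of the operators $G_{m,\infty}$, $G_{m,\infty}^-$, $G_{m,0}$, $G_{m,0}^-$ of Definition~\ref{defBinf} and Lemma~\ref{prop:4.5}. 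The first step is to express $w_{\wh{m}}$ through $w_m$: using $\wh{m}=\epsilon_1\epsilon_2\,\mu_2\circ m\circ\mu_1$, the identity $\im\mu_2(u)=\epsilon_2\,\im u/|c_2 u+d_2|^2$ recorded in the proof of Lemma~\ref{lMobNev}, and the symmetry $\im m(\co z)=-\im m(z)$, a short computation gives
\[
\im\wh{m}(iy)=\frac{\epsilon_1\,\im m\bigl(\mu_1(iy)\bigr)}{\bigl|c_2 m(\mu_1(iy))+d_2\bigr|^2}.
\]
In each case the value $\epsilon_1\in\{-1,1\}$ (which is determined by $\mu_1$) makes the numerator equal to $\im m(i\sigma(y))>0$, where $\sigma(y)=y$ for $\mu_1(z)=\pm z$ and $\sigma(y)=1/y$ for $\mu_1(z)=\pm 1/z$, so that
\[
w_{\wh{m}}(y)=\bigl|c_2 m(\mu_1(iy))+d_2\bigr|^2\,w_m\bigl(\sigma(y)\bigr).
\]
The crucial point is that the denominator $c_2 m(\mu_1(z))+d_2$ of $V$ is exactly the factor that cancels this weight ratio.

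For part (a), with $\mu_1(z)=\pm z$ (so $c_1=0$, $a_1=1$), substituting $g=Vf$ into $(G_{\wh{m},\infty}g)(y)=g(iy)$ and invoking the weight identity makes all Möbius factors cancel, leaving
\[
\bigl\| G_{\wh{m},\infty}(Vf) \bigr\|_{L_{w_{\wh{m}}}^2(1,\infty)} = \bigl\| G_{m,\infty}f \bigr\|_{L_{w_m}^2(1,\infty)}\quad(\mu_1(z)=z),
\]
and, with $-iy$ in place of $iy$,
\[
\bigl\| G_{\wh{m},\infty}(Vf) \bigr\|_{L_{w_{\wh{m}}}^2(1,\infty)} = \bigl\| G_{m,\infty}^{-}f \bigr\|_{L_{w_m}^2(1,\infty)}\quad(\mu_1(z)=-z).
\]
Since $V$ is an isometric bijection and, by Lemma~\ref{prop:4.5}, the $B_\infty$-property of $m$ is equivalent to the boundedness of either $G_{m,\infty}$ or $G_{m,\infty}^-$, these identities (equality of norms, hence of ranges and operator norms) show that $G_{\wh{m},\infty}$ is bounded exactly when $m$ has the $B_\infty$-property. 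The $B_0$-statement follows verbatim with $(0,1)$ replacing $(1,\infty)$.

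For part (b), with $\mu_1(z)=\pm1/z$ (so $a_1=d_1=0$), the same substitution gives $\mu_1(iy)=\mp i/y$, so evaluation of $g$ at $iy$ with $y>1$ becomes evaluation of $f$ at $\mp it$ with $t=1/y\in(0,1)$. The extra factor $|\mu_1(iy)|^2=y^{-2}$ arising from $|c_1\mu_1(iy)-a_1|^2$ in $|(Vf)(iy)|^2$ is absorbed by the change of variables $t=1/y$, under which $y^{-2}\,dy=dt$ and $(1,\infty)$ turns into $(0,1)$; the upshot is
\[
\bigl\| G_{\wh{m},\infty}(Vf) \bigr\|_{L_{w_{\wh{m}}}^2(1,\infty)} = \bigl\| G_{m,0}^{-}f \bigr\|_{L_{w_m}^2(0,1)}\quad(\mu_1(z)=1/z),
\]
with the analogous identity involving $G_{m,0}$ when $\mu_1(z)=-1/z$. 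Thus $\wh{m}$ has the $B_\infty$-property if and only if $m$ has the $B_0$-property; interchanging the two half-lines (i.e.\ starting from $G_{\wh{m},0}$) yields the reverse equivalence, and Lemma~\ref{prop:4.5} removes the sign ambiguity. This proves (b).

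I expect the only genuine difficulty to be bookkeeping: correctly tracking and cancelling the three multiplicative contributions---the Möbius denominator $c_2 m(\mu_1(iy))+d_2$ from $V$, the weight ratio $w_{\wh{m}}/w_m$, and, in part (b), the Jacobian $y^{-2}$ together with $|\mu_1(iy)|^2$---so that the composite map is an \emph{exact} isometry between the relevant weighted $L^2$-spaces. Once the weight identity $w_{\wh{m}}(y)=|c_2 m(\mu_1(iy))+d_2|^2\,w_m(\sigma(y))$ is in hand, every cancellation is forced, and the use of Lemma~\ref{prop:4.5} to accommodate the reflected evaluation points $\mu_1(iy)=-iy$ and $\mu_1(iy)=\pm i/y$ is the only remaining subtlety.
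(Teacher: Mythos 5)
Your proposal is correct and follows essentially the same route as the paper's proof: both transport the question through the isomorphism $V$ of Theorem~\ref{tV}, establish the weight identity $w_{\wh{m}}(y)=|c_2 m(\mu_1(iy))+d_2|^2\,w_m(\sigma(y))$, verify the exact norm identities between $G_{\wh{m},\infty}V$ (resp.\ $G_{\wh{m},0}V$) and the operators $G_{m,\infty}^{(\pm)}$, $G_{m,0}^{(\pm)}$, and invoke Lemma~\ref{prop:4.5} to handle the reflected evaluation points. The only cosmetic difference is that you compute all four cases $\mu_1(z)\in\{z,-z,1/z,-1/z\}$ directly, while the paper computes one representative per part and dispatches the sign-flipped case via Lemma~\ref{prop:4.5}.
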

\begin{proof}
To prove (\ref{iBinv-1}), let $\mu_1(z) = z$ and calculate
\[
w_{\wh{m}}(y) = |c_2 m(iy)+ d_2|^2 \, w_m(y), \quad y > 0.
\]
For $f \in \cH(m)$ we have
\begin{align*}
\| G_{\wh{m},\infty} Vf \|^2_{L_{w_{\wh{m}}}^2\!(1,\infty)}
 & = \int_{1}^{\infty} \frac{1}{|c_2m(iy)+d_2|^2}
  \left| f(iy)\right|^2  \, w_{\wh{m}}(y) dy \\
 &=\int_{1}^{\infty}  \left| f(iy)\right|^2  \, w_{m}(y) dy \\
 & = \| G_{m,\infty} f \|^2_{L_{w_m}^2\!(1,\infty)}.
\end{align*}
The above equality, Theorem~\ref{tV}, and elementary arguments
yield the proposition. The statement for $\mu_1(z) = -z$ follows from Lemma~\ref{prop:4.5}.

For (\ref{iBinv-2}), let $\mu_1(z) = -1/z$ and calculate
\[
w_{\wh{m}}(y) = |c_2 m(i/y)+ d_2|^2 \, w_m( 1/y), \quad y > 0.
\]
For $f \in \cH(m)$ we have
\begin{align*}
\| G_{\wh{m},\infty} V f \|^2_{L_{w_{\wh{m}}}^2\!(1,\infty)}
 & = \int_{1}^{\infty} \frac{1}{y^2} \frac{1}{|c_2m(i/y)+d_2|^2}
  \left| f(i/y)\right|^2  \, w_{\wh{m}}(y) dy \\
  & = \int_{1}^{\infty} \frac{1}{y^2} | f(i/y)|^2 w_m(1/y) dy  \\
 & = \int_{0}^{1} | f(i s )|^2 w_m(s) ds  \\
 & = \| G_{m,0}f \|^2_{L_{w_m}^2\!(0,1)}.
\end{align*}
This identity, Theorem~\ref{tV} and elementary arguments yield claim (\ref{iBinv-2}). The statement for $\mu_1(z) = 1/z$ follows from Lemma~\ref{prop:4.5}.
\end{proof}

Next, we characterize $B$-properties in terms of a bounded operator between weighted $L^2$ spaces.

\begin{proposition} \label{prop:B0prop}
Let $m$ be a Nevanlinna function with the integral representation
\begin{equation} \label{eq:IntRep+}
m(z) = a +\int_{0}^{+\infty} \left( \frac{1}{x-z} - \frac{x}{1+x^2}
\right) d\sigma(x)
\end{equation}
where $a$ is a real number and $\sigma(x)$ is a non-decreasing
function such that~\eqref{eq:sigmaCond} holds.
The following statements hold.
\begin{enumerate}
\renewcommand*\theenumi{\alph{enumi}}
\renewcommand*\labelenumi{{\rm (\theenumi)}}
\item \label{B0prop-i}
The function $m$ has $B_\infty$-property if and only if the mapping  $H_{m,\infty}$ defined by
\begin{equation*}
\bigl(H_{m,\infty} f \bigr)(y) : =  \int_{0}^{+\infty}\frac{f(x)}{x+y}d\sigma(x),
 \quad f \in L_{\sigma}^2({\mathbb R}_+), \quad y > 1,
\end{equation*}
is a bounded operator from $L_{\sigma}^2({\mathbb R}_+)$ into
$L_{w_m}^2\!(1,\infty)$.

\item \label{B0prop-ii}
The function $m$ has $B_0$-property if and only if the mapping  $H_{m,0}$ defined by
\begin{equation*}
\bigl(H_{m,0} f \bigr)(y) : =  \int_{0}^{+\infty}\frac{f(x)}{x+y}d\sigma(x),
 \quad f \in L_{\sigma}^2({\mathbb R}_+), \quad 0 <  y < 1,
\end{equation*}
is a bounded operator from $L_{\sigma}^2({\mathbb R}_+)$ into
$L_{w_m}^2\!(0,1)$.
\end{enumerate}
\end{proposition}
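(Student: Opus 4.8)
The plan is to realize the reproducing kernel space $\cH(m)$ concretely as a Cauchy transform of $L_\sigma^2(\nR_+)$ and then to compare the Poisson-type kernel $1/(x\mp iy)$ arising from evaluation at $\pm iy$ with the positive kernel $1/(x+y)$ that defines $H_{m,\infty}$ and $H_{m,0}$. First I would record the kernel identity coming from \eqref{eq:IntRep+}: since $\overline{m(w)}=m(\overline w)$ and the constant and the $x/(1+x^2)$ terms cancel,
\[
K_m(z,w) = \frac{m(z)-\overline{m(w)}}{z-\overline w} = \int_0^{+\infty} \frac{d\sigma(x)}{(x-z)(x-\overline w)}, \qquad z,w\in\nCR.
\]
This suggests defining $U\colon L_\sigma^2(\nR_+)\to\cH(m)$ by $(Uf)(z)=\int_0^{+\infty} f(x)/(x-z)\,d\sigma(x)$. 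Writing $h_w(x)=1/(x-\overline w)$, a direct computation gives $\|h_w\|_{L_\sigma^2}^2 = K_m(w,w)<\infty$, $Uh_w = K_m(\cdot,w)$, and $\langle Uh_v,Uh_w\rangle_{\cH(m)} = K_m(w,v) = \langle h_v,h_w\rangle_{L_\sigma^2}$. Hence $U$ is isometric on the linear span of $\{h_w:w\in\nCR\}$, and since the images are exactly the kernel functions $K_m(\cdot,w)$, which span a dense subspace of $\cH(m)$, $U$ extends to an isometry onto $\cH(m)$. To see that $U$ is unitary I would check that the $h_w$ are total in $L_\sigma^2(\nR_+)$: if $f\perp h_w$ for every $w\in\nCR$, then $\int_0^{+\infty} f(x)/(x-w)\,d\sigma(x)=0$ for all $w\in\nCR$, and Stieltjes inversion forces $f\,d\sigma=0$, i.e. $f=0$ in $L_\sigma^2(\nR_+)$.

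With $U$ in hand, evaluation at $iy$ becomes an explicit integral operator: $(G_{m,\infty}Uf)(y)=(Uf)(iy)=\int_0^{+\infty} f(x)/(x-iy)\,d\sigma(x)$. Because $U$ is unitary, $m$ has $B_\infty$-property if and only if $G_{m,\infty}U$ is a bounded operator from $L_\sigma^2(\nR_+)$ into $L_{w_m}^2\!(1,\infty)$, so it remains to compare the complex kernel $1/(x-iy)$ with the positive kernel $1/(x+y)$ of $H_{m,\infty}$. For $x,y>0$ one has the elementary two-sided bound $1/(x+y)\le |1/(x-iy)| = (x^2+y^2)^{-1/2}\le \sqrt2/(x+y)$. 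The easy direction is then immediate: if $H_{m,\infty}$ is bounded, then $|(G_{m,\infty}Uf)(y)|\le \int_0^{+\infty} |f(x)|(x^2+y^2)^{-1/2}\,d\sigma(x)\le \sqrt2\,(H_{m,\infty}|f|)(y)$, whence $\|G_{m,\infty}U\|\le\sqrt2\,\|H_{m,\infty}\|$ and $m$ has $B_\infty$-property.

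The step I expect to be the main obstacle is the converse, i.e. passing from the complex kernel back to the positive one. The idea is to use both $G_{m,\infty}$ and the conjugate operator $G_{m,\infty}^-$ from Lemma~\ref{prop:4.5}: the latter satisfies $(G_{m,\infty}^-Uf)(y)=\int_0^{+\infty} f(x)/(x+iy)\,d\sigma(x)$ and has the same norm as $G_{m,\infty}U$ by conjugation, so both are bounded once $m$ has $B_\infty$-property. Their half-sum $\tfrac12(G_{m,\infty}U+G_{m,\infty}^-U)$ and their multiple $\tfrac1{2i}(G_{m,\infty}U-G_{m,\infty}^-U)$ have kernels $x/(x^2+y^2)$ and $y/(x^2+y^2)$, so the operator with the positive kernel $(x+y)/(x^2+y^2)$ is bounded. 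Since $1/(x+y)\le (x+y)/(x^2+y^2)$ for all $x,y>0$ and all these kernels are nonnegative, domination of integral operators with positive kernels (applied to $|f|$, using $\||f|\|=\|f\|$) yields boundedness of $H_{m,\infty}$. This proves part~(a). Part~(b) is obtained verbatim, replacing $G_{m,\infty}$, $G_{m,\infty}^-$ and $L_{w_m}^2\!(1,\infty)$ by $G_{m,0}$, $G_{m,0}^-$ and $L_{w_m}^2\!(0,1)$.
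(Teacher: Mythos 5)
Your proof is correct, and its skeleton is the same as the paper's: realize $\cH(m)$ concretely as Cauchy transforms of $L_\sigma^2(\nR_+)$ and then compare the evaluation kernel $1/(x \mp iy)$ with the positive kernel $1/(x+y)$. The differences lie in what you prove versus what the paper cites or asserts. First, the paper does not prove that the Cauchy transform is an isomorphism onto $\cH(m)$; it quotes \cite[Theorem~3.2]{AG04}, whereas your kernel identity, the isometry on the span of the $h_w$, and the Stieltjes-inversion totality argument constitute a correct self-contained substitute. Second, and more substantively, the paper handles both directions of the equivalence in one line, claiming that the two-sided bound $1/(x+y)\le 1/|x-iy|\le \sqrt{2}/(x+y)$ makes boundedness of the complex-kernel operator equivalent to boundedness of $H_{m,\infty}$. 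As you correctly anticipated, the direction ``complex kernel bounded $\Rightarrow$ positive kernel bounded'' is not a formal consequence of modulus domination (for general complex kernels it fails, e.g.\ for Hilbert-transform-type kernels); it requires the additional observation that both $\re\frac{1}{x-iy}=\frac{x}{x^2+y^2}$ and $\im\frac{1}{x-iy}=\frac{y}{x^2+y^2}$ are nonnegative for $x,y>0$. Your device of combining $G_{m,\infty}$ with the conjugate evaluations $G_{m,\infty}^-$ of Lemma~\ref{prop:4.5} to build the positive kernel $(x+y)/(x^2+y^2)\ge 1/(x+y)$, and then invoking domination of nonnegative kernels, supplies exactly this missing step, so on this point your write-up is more complete than the paper's own proof.
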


\begin{proof}
It follows from~\cite[Theorem~3.2]{AG04} that the mapping
\begin{equation}\label{eq:Lm}
f \mapsto \int_{0}^{+\infty}\frac{f(x)}{x-z}d\sigma(x)
\end{equation}
is an isomorphism between $L^2_{\sigma}({\mathbb R}_+)$ and $\cH(m)$.  Therefore, by Definition~\ref{defBinf},  $m$ has $B_\infty$-property if and only if the composition of \eqref{eq:Lm} and $G_{m,\infty}$, that is, \begin{equation}\label{eq:Tm}
    f \mapsto \int_{0}^{+\infty}\frac{f(x)}{x-iy}d\sigma(x)
\end{equation}
is a bounded operator from $L^2_{\sigma}({\mathbb R}^+)$ into
$L^2_{w_m}(1,\infty)$. The inequalities
\[
\frac{1}{x+y}\le \frac{1}{|x-iy|}\le \frac{\sqrt{2}}{x+y}, \quad
(x,y\in{\mathbb R}_+)
\]
imply that the operator in \eqref{eq:Tm} is a bounded operator from
$L^2_{\sigma}({\mathbb R}^+)$ into $L^2_{w_m}(1,\infty)$ if and only if
$H_{m,\infty}$ is a bounded operator from
$L^2_{\sigma}({\mathbb R}^+)$ into $L^2_{w_m}(1,\infty)$. This proves (\ref{B0prop-i}).

The second statement is proved similarly.
\end{proof}

\subsection{Sufficient conditions for $B$-properties of Nevanlinna functions}

We will need the following Schur test, see \cite[Theorem~5.9.2]{Gar}, for boundedness of integral operators.
\begin{lemma}\label{lem:Ap1}
Let $(X,\Sigma_1,\sigma_1)$ and $(Y,\Sigma_2,\sigma_2)$ be $\sigma$-finite measure spaces and let $K(x,y)$ be a nonnegative measurable function on a product space $(X,\Sigma_1,\sigma_1)\times(Y,\Sigma_2,\sigma_2)$. Suppose that there exist strictly positive measurable functions $q_1$ on $(X,\Sigma_1,\sigma_1)$ and $q_2$ on $(Y,\Sigma_2,\sigma_2)$ such that the function
\begin{equation*}
y \mapsto \frac{1}{q_2(y)}\int_X K(x,y)q_1(x)d\sigma_1(x)
\end{equation*}
is essentially bounded on $(Y,\Sigma_2,\sigma_2)$ and the function
\begin{equation*} 
x \mapsto \frac{1}{q_1(x)} \int_Y K(x,y)q_2(x)d\sigma_2(y)
\end{equation*}
is essentially bounded on $(X,\Sigma_1,\sigma_1)$.
Then for all $f \in L_{\sigma_1}^2\!(X)$ we have that
\[
(Af)(y) = \int_X K(x,y)f(x)d\sigma_1(x)
\]
exists for $\sigma_2$-almost all $y \in Y$, $Af \in L_{\sigma_2}^2\!(Y)$ and $A$ is a bounded operator from $L_{\sigma_1}^2\!(X)$ to $L_{\sigma_2}^2\!(Y)$.
\end{lemma}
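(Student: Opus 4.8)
The plan is to carry out the classical weight-splitting argument that underlies the Schur test. Let $C_2$ be an essential bound for the first displayed function and $C_1$ an essential bound for the second (where, as the variable of integration shows, the factor appearing inside the inner integral is $q_2(y)$). I would first establish a pointwise bound, valid with values in $[0,+\infty]$, by inserting the weight $q_1$ and applying the Cauchy--Schwarz inequality: writing
\[
K(x,y)\lvert f(x)\rvert = \bigl(K(x,y)q_1(x)\bigr)^{1/2}\,\bigl(K(x,y)q_1(x)^{-1}\bigr)^{1/2}\lvert f(x)\rvert
\]
and using Cauchy--Schwarz in $L^2_{\sigma_1}(X)$ gives
\begin{equation*}
\left(\int_X K(x,y)\lvert f(x)\rvert\,d\sigma_1(x)\right)^{2} \le \Bigl(\int_X K(x,y)q_1(x)\,d\sigma_1(x)\Bigr)\Bigl(\int_X K(x,y)q_1(x)^{-1}\lvert f(x)\rvert^2\,d\sigma_1(x)\Bigr),
\end{equation*}
where the first factor on the right is at most $C_2\,q_2(y)$ by the first hypothesis.

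Next I would integrate this inequality over $Y$ with respect to $\sigma_2$ and interchange the order of integration. Since the integrand is nonnegative and both spaces are $\sigma$-finite, Tonelli's theorem licenses the interchange, and using the second hypothesis to bound the resulting inner integral I obtain
\begin{align*}
\int_Y \Bigl(\int_X K(x,y)\lvert f(x)\rvert\,d\sigma_1(x)\Bigr)^2 d\sigma_2(y)
&\le C_2 \int_X \frac{\lvert f(x)\rvert^2}{q_1(x)}\Bigl(\int_Y K(x,y)q_2(y)\,d\sigma_2(y)\Bigr) d\sigma_1(x) \\
&\le C_1C_2\,\lVert f\rVert^2_{L^2_{\sigma_1}(X)},
\end{align*}
the last step using $\int_Y K(x,y)q_2(y)\,d\sigma_2(y)\le C_1 q_1(x)$ and cancelling $q_1(x)^{-1}$ against $q_1(x)$. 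The finiteness of the left-hand side forces $\int_X K(x,y)\lvert f(x)\rvert\,d\sigma_1(x)$ to be finite for $\sigma_2$-almost every $y$, so $(Af)(y)$ converges absolutely almost everywhere and defines an element of $L^2_{\sigma_2}(Y)$; moreover $\lVert Af\rVert \le \sqrt{C_1C_2}\,\lVert f\rVert$, so $A$ is bounded.

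There is no real obstacle here, as the argument is entirely classical; the only points demanding care are measure-theoretic. One must note that the function $(x,y)\mapsto K(x,y)q_1(x)^{-1}\lvert f(x)\rvert^2 q_2(y)$ is nonnegative and jointly measurable, so that Tonelli genuinely applies, and that the almost-everywhere finiteness obtained in the second step is exactly what makes $(Af)(y)$ well defined before one manipulates it as a function. Both follow at once from the measurability of $K$ and the strict positivity and measurability of $q_1$ and $q_2$, together with the $\sigma$-finiteness of the two spaces.
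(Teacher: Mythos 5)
Your proof is correct. Note, however, that the paper itself gives no proof of this lemma: it is quoted as a known result (the Schur test) with a citation to Garling, \emph{Inequalities: a journey into linear analysis}, Theorem~5.9.2, and your weight-splitting Cauchy--Schwarz argument followed by Tonelli is exactly the standard proof underlying that reference, yielding the usual bound $\|A\|\le\sqrt{C_1C_2}$. You also correctly identified and repaired the typo in the statement (the factor in the second integral must be $q_2(y)$, not $q_2(x)$), and your closing measure-theoretic remarks (joint measurability so that Tonelli applies, and a.e.\ finiteness making $Af$ well defined) are precisely the points that need to be said; the only detail left implicit is the $\Sigma_2$-measurability of $y\mapsto(Af)(y)$ itself, which follows from the same Fubini--Tonelli theorem applied to the positive and negative (real and imaginary) parts of $f$.
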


Combining Proposition~\ref{prop:B0prop} and Lemma~\ref{lem:Ap1} one
obtains the following sufficient conditions for $B_\infty$ and
$B_0$-properties.
\begin{corollary} \label{cor:Binfprop}
Let $m$ have the integral representation \eqref{eq:IntRep+} and
assume that there are strictly positive measurable functions $q_1$
on ${\mathbb R}_+$ and $q_2$ on $(1,\infty)$ such that the function
\begin{equation}\label{eq:CorSchur1}
 y \mapsto  \frac{1}{q_2(y)}  \int_{0}^{+\infty}\frac{q_1(x)}{x+y}d\sigma(x)
\end{equation}
is essentially bounded on $y\in(1,\infty)$ and the function
\begin{equation}\label{eq:CorSchur2}
 x \mapsto \frac{1}{q_1(x)}   \int_{1}^{+\infty}\frac{q_2(y)}{x+y}w_m(y)dy
\end{equation}
is $\sigma$-essentially bounded on $\nR_+$.  Then the function $m$ has $B_\infty$-property.
\end{corollary}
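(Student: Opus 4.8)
The plan is to reduce the $B_\infty$-property to the boundedness of the concrete integral operator $H_{m,\infty}$ and then recognize the two displayed conditions \eqref{eq:CorSchur1} and \eqref{eq:CorSchur2} as precisely the hypotheses of the Schur test. First, I would invoke Proposition~\ref{prop:B0prop}(\ref{B0prop-i}): since $m$ carries the integral representation \eqref{eq:IntRep+}, the function $m$ has $B_\infty$-property if and only if
\[
(H_{m,\infty} f)(y) = \int_0^{+\infty} \frac{f(x)}{x+y}\, d\sigma(x), \qquad y > 1,
\]
is a bounded operator from $L^2_{\sigma}(\nR_+)$ into $L^2_{w_m}(1,\infty)$. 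Thus it suffices to supply a sufficient condition for the boundedness of $H_{m,\infty}$.

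Next, I would apply Lemma~\ref{lem:Ap1} with the measure spaces $(X,\Sigma_1,\sigma_1) = (\nR_+, \text{Borel}, \sigma)$ and $(Y,\Sigma_2,\sigma_2) = \bigl((1,\infty), \text{Borel}, w_m(y)\,dy\bigr)$ and the nonnegative measurable kernel $K(x,y) = 1/(x+y)$. With these choices the operator $A$ produced by the lemma coincides with $H_{m,\infty}$, and the majorization functions $q_1$ on $\nR_+$ and $q_2$ on $(1,\infty)$ are exactly those in the statement. The first hypothesis of Lemma~\ref{lem:Ap1}, essential boundedness of $y \mapsto q_2(y)^{-1}\int_X K(x,y)\, q_1(x)\, d\sigma_1(x)$, becomes \eqref{eq:CorSchur1} verbatim; the second hypothesis, essential boundedness of $x \mapsto q_1(x)^{-1}\int_Y K(x,y)\, q_2(y)\, d\sigma_2(y)$, becomes \eqref{eq:CorSchur2} after writing $d\sigma_2(y) = w_m(y)\,dy$. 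Lemma~\ref{lem:Ap1} then delivers the boundedness of $H_{m,\infty}$, and the reduction in the first paragraph yields the $B_\infty$-property of $m$.

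The only nonroutine checks are the measure-theoretic preconditions of the Schur test, and this is where I expect the main (indeed essentially the only) obstacle to lie. I would note that $\sigma$ is $\sigma$-finite on $\nR_+$ because, by \eqref{eq:sigmaCond}, $\sigma$ is finite on every bounded subinterval of $\nR_+$, and that $w_m(y)\,dy$ is $\sigma$-finite on $(1,\infty)$ as an absolutely continuous measure. The remaining care is pure bookkeeping: one must identify which measure carries the integration and which the target norm, so that the generic kernel $K(x,y)=1/(x+y)$ integrated against $d\sigma_1=d\sigma$ reproduces $H_{m,\infty}$ and the two majorization conditions align exactly with \eqref{eq:CorSchur1} and \eqref{eq:CorSchur2}. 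No genuine analytic difficulty arises; the corollary is a direct specialization of the Schur test transported through Proposition~\ref{prop:B0prop}.
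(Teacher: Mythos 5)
Your proposal is correct and is exactly the paper's argument: the paper's proof consists of the single observation that the hypotheses together with Lemma~\ref{lem:Ap1} (applied with kernel $1/(x+y)$, the measure $d\sigma$ on $\nR_+$ and the measure $w_m(y)\,dy$ on $(1,\infty)$) give boundedness of $H_{m,\infty}$, whence Proposition~\ref{prop:B0prop} yields the $B_\infty$-property. Your additional verification of the $\sigma$-finiteness preconditions is a harmless (and correct) elaboration that the paper leaves implicit.
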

\begin{proof}
The assumptions of the corollary and Lemma~\ref{lem:Ap1} imply that the operator $H_{m,\infty}$ is bounded, so, by Proposition~\ref{prop:B0prop}, $m$ has $B_\infty$-property.
\end{proof}

In Theorem~\ref{th:Binfprop} and its corollaries below we give sufficient conditions for $B_\infty$-property and \mbox{$B_0$-property} in terms of the asymptotics of a Nevanlinna function $m$.  The key step in the proof of this theorem is the following Abelian and Tauberian theorem for the Stieltjes transform at the point $\infty$ which is essentially contained in \cite{Ben89} and~\cite{CarHay81}, see also~\cite{Car11} for a more general theorem, or \cite{Ka31} for the classical Karamata Tauberian theorem.

\begin{theorem} \label{thm:TaubFinal}
Let $m$ be a Stieltjes function with the integral representation
\begin{equation} \label{eq:IntRep0+}
m({z}) = \int\limits_{0}^{+\infty} \frac{d\sigma(t)}{t-{z}}
\end{equation}
where $\sigma(t)$ is a non-decreasing function satisfying \eqref{eq:sigma-nor} and \eqref{eq:int_S_gc}.  Let $C > 0$ and $\alpha \in (0,1)$.  Then
\begin{equation}\label{eq:mAsympInf-A}
\text{for all} \ \ z \in \nCR \quad \text{we have} \quad m(r z) \sim \frac{C}{(-r z)^\alpha} \quad \text{as} \quad r \to +\infty,
\end{equation}
if and only if
\begin{equation}\label{eq:SpAsympInf-A}
\sigma(t)\sim \frac{C\sin(\pi\alpha)}{\pi}\frac{t^{1-\alpha}}{1-\alpha} \quad \text{as} \quad t \to +\infty.
\end{equation}
\end{theorem}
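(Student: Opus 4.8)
The plan is to prove the two implications separately, using the scaling structure of the Stieltjes transform for the direction \eqref{eq:SpAsympInf-A}$\Rightarrow$\eqref{eq:mAsympInf-A} and a Karamata-type Tauberian theorem for the converse. The constant appearing in both conditions comes from the single evaluation
\[
\int_0^{+\infty}\frac{s^{-\alpha}}{s-z}\,ds=\frac{\pi}{\sin(\pi\alpha)}\,\frac{1}{(-z)^\alpha},\qquad z\in\nC\setminus[0,+\infty),
\]
which I would verify for $z=-b$, $b>0$, by the substitution $s=bt$ and the beta integral $\int_0^\infty t^{-\alpha}(1+t)^{-1}\,dt=\pi/\sin(\pi\alpha)$, and then extend to the whole cut plane by analytic continuation, both sides being holomorphic off $[0,+\infty)$.

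For the Abelian direction, substitute $t=rs$ in \eqref{eq:IntRep0+} to obtain
\[
m(rz)=r^{-\alpha}\int_0^{+\infty}\frac{d\sigma_r(s)}{s-z},\qquad \sigma_r(s):=\frac{\sigma(rs)}{r^{\,1-\alpha}}.
\]
Hypothesis \eqref{eq:SpAsympInf-A} says exactly that $\sigma_r(s)\to \frac{C\sin(\pi\alpha)}{\pi(1-\alpha)}\,s^{1-\alpha}$ as $r\to+\infty$; since each $\sigma_r$ is nondecreasing, this is vague convergence of $d\sigma_r$ to the measure $\frac{C\sin(\pi\alpha)}{\pi}\,s^{-\alpha}\,ds$. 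Passing to the limit inside the integral --- justified by dominated convergence after isolating the contributions near $s=0$ and $s=+\infty$, where \eqref{eq:int_S_gc} and the regular variation of $\sigma$ provide the needed uniform integrability --- and inserting the evaluation from the first step gives $\int_0^{+\infty}(s-z)^{-1}\,d\sigma_r(s)\to C(-z)^{-\alpha}$, hence $m(rz)\sim C(-rz)^{-\alpha}$, which is \eqref{eq:mAsympInf-A}.

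The converse \eqref{eq:mAsympInf-A}$\Rightarrow$\eqref{eq:SpAsympInf-A} is the Tauberian part and is the main obstacle. I would first specialize the hypothesis to the negative real axis: since $m$ and the right-hand side of \eqref{eq:mAsympInf-A} extend holomorphically across $\nR_-$, a normal-families (Vitali) argument upgrades the pointwise convergence $(-rz)^\alpha m(rz)\to C$ on $\nCR$ to local uniform convergence and yields $m(-s)\sim C s^{-\alpha}$ as $s\to+\infty$. Writing $1/(t+s)=\int_0^{+\infty}e^{-(t+s)u}\,du$ exhibits $m(-s)$ as the Laplace transform of $\phi(u):=\int_0^{+\infty}e^{-tu}\,d\sigma(t)$, so the asymptotic of $m(-s)$ controls $\phi$ near the origin; the monotonicity of $\sigma$ then serves as the Tauberian condition that converts this transform information into the genuine asymptotic \eqref{eq:SpAsympInf-A}. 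This is precisely the Tauberian theorem for Stieltjes transforms established in \cite{Ben89, CarHay81} (see also \cite{Ka31, Car11}), which I would invoke directly; the only delicate points are the reduction to a single ray and the bookkeeping of constants, with consistency between the two directions guaranteed by the evaluation in the first step.
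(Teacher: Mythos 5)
Your proposal is correct, and it diverges from the paper's proof in a substantive way, so a comparison is in order. The paper handles both halves by citation: for the Abelian half (\eqref{eq:SpAsympInf-A}$\Rightarrow$\eqref{eq:mAsympInf-A}) it first integrates by parts to write $m(z)=\int_0^{+\infty}\sigma(t)\,dt/(t-z)^2$ and then invokes the Abelian Theorem~3.2 of \cite{CarHay81}, while the Tauberian half (\eqref{eq:mAsympInf-A}$\Rightarrow$\eqref{eq:SpAsympInf-A}) is declared a special case of Theorem~7.5 of \cite{Ben89}. Your Abelian half is instead self-contained: the substitution $t=rs$, the observation that \eqref{eq:SpAsympInf-A} is exactly pointwise (hence vague) convergence of the rescaled measures $d\sigma_r$, and the beta-integral evaluation replace the appeal to \cite{CarHay81}; the tail estimates you gesture at do go through (near $s=0$ the kernel is bounded off $[0,+\infty)$ and $\sigma_r(\delta)$ is uniformly small for small $\delta$, while near $s=+\infty$ the regular variation gives $\sigma_r(s)\le C's^{1-\alpha}$ uniformly for large $r$, and an integration by parts kills the tail), and the constant $C\sin(\pi\alpha)/\pi$ emerges transparently. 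For the Tauberian half you, like the paper, ultimately lean on the cited Tauberian theorem (note that \cite{Ben89}, not \cite{CarHay81}, is the Tauberian reference), but you make explicit a point the paper glosses over: the hypothesis \eqref{eq:mAsympInf-A} is stated for $z\in\nCR$, which does not contain $\nR_-$, whereas such Tauberian theorems are formulated along a single ray; your Vitali/normal-families reduction is a legitimate bridge (the needed local boundedness follows from the Stieltjes bound $m(-s)\le\sqrt{2}\,|m(is)|$), and your double-Laplace factorization with Karamata applied twice even recovers the constant, since $\Gamma(\alpha)\Gamma(2-\alpha)=\pi(1-\alpha)/\sin(\pi\alpha)$. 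In short, your route buys a self-contained Abelian direction and an honest reduction step in the Tauberian one, at the cost of length; the paper's buys brevity at the cost of resting entirely on the two references.
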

\begin{proof}
The direct part of Theorem~\ref{thm:TaubFinal} is a
special case of the Tauberian Theorem~7.5 from~\cite{Ben89}.

Assume now that $\sigma$ satisfies~\eqref{eq:SpAsympInf-A}. Integrating by parts
in~\eqref{eq:IntRep0+} and using~\eqref{eq:SpAsympInf-A} one obtains
\[
m(z) = \int\limits_{0}^{+\infty} \frac{\sigma(t)dt}{(t-z)^2}.
\]
Now the asymptotic~\eqref{eq:mAsympInf-A} is implied by the Abelian
Theorem~3.2 from~\cite{CarHay81}.
\end{proof}

\begin{theorem} \label{th:Binfprop}
Let $m$ be a Stieltjes function. If there exist $\alpha\in(0,1)$ and $C > 0$ such that for all  $z \in\nCR$ we have
\begin{equation}\label{eq:AsymInf}
 m(rz) \sim \frac{C}{(-r z)^\alpha}  \quad \text{as} \quad r \to +\infty,
\end{equation}
then $m$ has $B_\infty$-property.
\end{theorem}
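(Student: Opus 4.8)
The plan is to apply the Schur-test criterion of Corollary~\ref{cor:Binfprop} to the Stieltjes measure of $m$ with an explicit pair of power weights. First I would record two consequences of the hypothesis \eqref{eq:AsymInf}. Since $\alpha>0$, \eqref{eq:AsymInf} forces $m(x)\to 0$ as $x\to-\infty$, so the constant $\gamma$ in the Stieltjes representation \eqref{eq:int_S} vanishes and $m(z)=\int_0^{+\infty}d\sigma(t)/(t-z)$, where $\sigma$ is also the measure in \eqref{eq:IntRep+}. Evaluating \eqref{eq:AsymInf} at $z=i$ and using $(-iy)^\alpha=y^\alpha e^{-i\pi\alpha/2}$ gives $\im m(iy)\sim C\sin(\pi\alpha/2)\,y^{-\alpha}$; since $w_m$ is continuous and strictly positive on $[1,\infty)$ and asymptotic to a positive multiple of $y^\alpha$, this yields a constant $C'>0$ with
\[
w_m(y)\le C' y^\alpha\qquad\text{for all}\qquad y>1 .
\]
Secondly, the Tauberian direction of Theorem~\ref{thm:TaubFinal} converts \eqref{eq:AsymInf} into $\sigma(t)\sim\text{const}\cdot t^{1-\alpha}$ as $t\to+\infty$, hence into the growth bound $\sigma(t)\le C_\sigma\, t^{1-\alpha}$ for all $t\ge 1$.

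Next I would fix $\beta\in(0,1-\alpha)$ and set $q_1(x)=(1+x)^{-\beta}$ on $\nR_+$ and $q_2(y)=y^{-(\alpha+\beta)}$ on $(1,\infty)$, and verify the two hypotheses of Corollary~\ref{cor:Binfprop}. The second hypothesis is the routine one: using $w_m(y)\le C'y^\alpha$ it reduces to boundedness of $(1+x)^\beta\int_1^{+\infty}y^{-\beta}(x+y)^{-1}\,dy$, which follows by treating $x\le 1$ (where the integral is at most $\int_1^\infty y^{-\beta-1}\,dy$) and $x\ge 1$ (where $\int_1^\infty y^{-\beta}(x+y)^{-1}\,dy\le\int_0^\infty y^{-\beta}(x+y)^{-1}\,dy=\frac{\pi}{\sin\pi\beta}\,x^{-\beta}$) separately.

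The crux is the first hypothesis, namely the boundedness on $(1,\infty)$ of $y^{\alpha+\beta}\int_0^{+\infty}(1+x)^{-\beta}(x+y)^{-1}\,d\sigma(x)$. I would split the integral at $x=1$. Over $(0,1]$ the integrand is at most $y^{-1}\,d\sigma(x)$, contributing at most $\sigma(1)\,y^{\alpha+\beta-1}\le\sigma(1)$ after the prefactor, since $\alpha+\beta<1$. Over $(1,\infty)$ I would use $(1+x)^{-\beta}\le x^{-\beta}$ and integrate by parts, discarding the nonpositive boundary term and replacing $\sigma(x)$ by $C_\sigma x^{1-\alpha}$; this bounds the integral by a constant times
\[
\int_0^{+\infty}\frac{x^{-(\alpha+\beta)}}{x+y}\,dx+\int_0^{+\infty}\frac{x^{\,1-\alpha-\beta}}{(x+y)^2}\,dx ,
\]
and both integrals equal constant multiples of $y^{-(\alpha+\beta)}$ by the beta-integral $\int_0^{+\infty}x^{p-1}(x+y)^{-q}\,dx=B(p,q-p)\,y^{\,p-q}$ (here $0<\alpha+\beta<1$ guarantees convergence). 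Multiplying by $y^{\alpha+\beta}$ gives a bound independent of $y$.

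With both hypotheses checked, Corollary~\ref{cor:Binfprop} immediately gives that $m$ has $B_\infty$-property. The main obstacle is the first hypothesis: the assumption \eqref{eq:AsymInf} controls $m$ but not $\sigma$ directly, so passing through the Tauberian half of Theorem~\ref{thm:TaubFinal} to obtain the pointwise growth bound on $\sigma(t)$ that the integration-by-parts estimate needs is the essential point; the choice of the weight $(1+x)^{-\beta}$ rather than $x^{-\beta}$ is what keeps everything convergent near $x=0$ without assuming anything about $\sigma$ there.
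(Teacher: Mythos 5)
Your proof is correct, and it rests on the same two pillars as the paper's: the Schur-test criterion of Corollary~\ref{cor:Binfprop} and the Tauberian direction of Theorem~\ref{thm:TaubFinal} (applied after observing $\gamma=0$ in \eqref{eq:int_S}) to convert \eqref{eq:AsymInf} into the growth $\sigma(t)\sim c\,t^{1-\alpha}$; you even choose the same first weight $q_1(x)=(1+x)^{-\beta}$ with $\alpha+\beta<1$. Where you genuinely diverge is in the second weight and in which Schur condition carries the burden. The paper takes $q_2(y)=\int_0^{+\infty}(x+y)^{-1}\,d\sigma_1(x)$ with the tilted measure $d\sigma_1=(1+t)^{-\beta}d\sigma$, so that \eqref{eq:CorSchur1} holds identically (the ratio is $\equiv 1$), and then spends the whole proof on \eqref{eq:CorSchur2}, establishing $q_2(y)\sim C_4\,y^{-(\alpha+\beta)}$ and the final bound through a chain of Abelian applications of Theorem~\ref{thm:TaubFinal} together with l'H\^{o}pital's rule. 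You instead take the explicit power $q_2(y)=y^{-(\alpha+\beta)}$, which is exactly the asymptotic profile the paper derives for its $q_2$; this makes \eqref{eq:CorSchur2} routine (via $w_m(y)\le C'y^{\alpha}$ on $(1,\infty)$ and a beta integral) and shifts the work to \eqref{eq:CorSchur1}, which you settle by integration by parts against the pointwise bound $\sigma(t)\le C_\sigma t^{1-\alpha}$, discarding the nonpositive boundary term, and two more beta integrals. Your route buys fully explicit, elementary estimates in place of the paper's repeated asymptotic arguments; the paper's route buys an automatic \eqref{eq:CorSchur1} at the price of a less explicit $q_2$ and more Abelian machinery. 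The auxiliary facts you invoke along the way are all justified: $\im m(iy)\sim C\sin(\pi\alpha/2)y^{-\alpha}$ does give $w_m(y)\le C'y^{\alpha}$ on $(1,\infty)$ by continuity and positivity of $\im m(iy)$, and the asymptotic for $\sigma$ upgrades to the global bound $\sigma(t)\le C_\sigma t^{1-\alpha}$ on $[1,\infty)$ by monotonicity on the compact part.
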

\begin{proof}

Let $m$ be a Stieltjes function with the integral representation \eqref{eq:int_S} for which  \eqref{eq:int_S_gc} and \eqref{eq:sigma-nor} hold.  Assume \eqref{eq:AsymInf}. Since by \cite[Lemma~S1.3.1]{KaKr74} and \eqref{eq:AsymInf} for $\gamma$ in \eqref{eq:int_S} we have
\[
\gamma = \lim\limits_{y\uparrow+\infty} m(iy) = 0,
\]
Theorem~\ref{thm:TaubFinal} applies.

Consequently, there exists $C_1 = \frac{C\sin(\pi\alpha)}{\pi(1-\alpha)} > 0$ such that
\begin{equation} \label{eqasigma}
\sigma(t) \sim C_1 \, t^{1-\alpha }  \quad \text{as} \quad  t\to +\infty.
\end{equation}
Let $\beta > 0$ be such that $\alpha + \beta < 1$. Set
\begin{align*}
q_1(x) &:= \frac{1}{(1+x)^\beta} \\
\intertext{and}
q_2(y) &:= \int_{0}^{\infty}\frac{d\sigma_1(x)}{x+y}, \quad \text{where} \quad
\sigma_1(x)=\int_{0}^{x}\frac{d\sigma(t)}{(1+t)^\beta}.
\end{align*}
Notice that by definitions of $q_1$ and $q_2$ we have
\begin{equation*}  
\int_0^\infty \frac{q_1(x)}{x+y} d\sigma(x) = q_2(y) \quad \text{for all} \quad  y > 1.
\end{equation*}
Hence, $q_1$ and $q_2$ satisfy \eqref{eq:CorSchur1} in Corollary~\ref{cor:Binfprop}. The rest of the proof is a verification of \eqref{eq:CorSchur2}.

Integration by parts in the formula for $\sigma_1$ yields
\begin{equation} \label{eq2ndsig}
 \sigma_1(x)=\frac{\sigma(x)}{(1+x)^\beta} + \beta \int_{0}^{x} \frac{\sigma(t)}{(1+t)^{\beta+1}} dt .
\end{equation}
It follows from \eqref{eqasigma} that
\[
\frac{\sigma(t)}{(1+t)^{\beta+1}} \sim \frac{C_1}{t^{\alpha + \beta}}
 \quad \text{as} \quad t\to +\infty,
\]
and, by l'H\^{o}pital's rule,
\[
\int_{0}^{x} \frac{\sigma(t)}{(1+x)^{\beta+1}} dt \sim
  C_2 \, x^{1- \alpha - \beta} \quad \text{as} \quad x\to +\infty.
\]
Since also
\[
\frac{\sigma(t)}{(1+t)^{\beta}} \sim  C_1  \, t^{1-\alpha - \beta}
 \quad \text{as} \quad t\to +\infty,
\]
the equality \eqref{eq2ndsig} implies
\begin{equation} \label{eqsig1as}
\sigma_1(x) \sim C_3 \, x^{1-\alpha - \beta} \quad \text{as} \quad  x\to +\infty,
\end{equation}
Now \eqref{eqsig1as} and Theorem~\ref{thm:TaubFinal} yield
\begin{equation}\label{eq:4.12}
q_2(y) \sim \frac{C_4}{y^{\alpha+\beta}} \quad \text{as} \quad  y\to\infty.
\end{equation}
for some $C_4 > 0$. Next consider the function
\[
x \mapsto \int_{1}^{\infty}\frac{q_2(y)}{x+y}w_m(y)dy, \quad x > 0.
\]
Since by \eqref{eq:4.12} and assumption \eqref{eq:AsymInf} we have
\[
q_2(y)w_m(y) \sim \frac{C_5}{y^{\beta}} \quad \text{as} \quad y\to\infty,
\]
for some $C_5 > 0$, Theorem~\ref{thm:TaubFinal} yields
\[
\int_{1}^{\infty}\frac{q_2(y)}{x+y}w_m(y)dy \sim \frac{C_6}{x^{\beta}} \quad \text{as} \quad x\to\infty.
\]
The last displayed relationship implies that the function
\[
x \mapsto \frac{1}{q_1(x)} \int_{1}^{\infty}\frac{q_2(y)}{x+y}w_m(y)dy, \quad x > 0,
\]
is bounded; that is \eqref{eq:CorSchur2} holds. Now Corollary~\ref{cor:Binfprop} implies that $m$ has $B_\infty$- property.
\end{proof}

\begin{corollary} \label{co:Binfprop}
Every function in $\cA_\infty$ has $B_\infty$-property.
\end{corollary}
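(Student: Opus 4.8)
The plan is to deduce the corollary directly from Theorem~\ref{th:Binfprop} together with the Möbius-invariance of the $B_\infty$-property recorded in Proposition~\ref{thm:Binv}. The point is that membership in $\cA_\infty$ is exactly the statement that, after composing $m$ on the left with a suitable Möbius transformation, one lands on a Stieltjes function of precisely the asymptotic type covered by Theorem~\ref{th:Binfprop}; and Proposition~\ref{thm:Binv} guarantees that such a composition can neither create nor destroy the $B_\infty$-property.

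First I would unwind the definition of $\cA_\infty$: for $m \in \cA_\infty$ there are $\alpha \in (0,1)$, $C > 0$ and a Möbius transformation $\mu$ of the form~\eqref{eq:MobNev} such that $g := \mu \circ m$ is a Stieltjes function and $g(rz) \sim \frac{C}{(-rz)^{\alpha}}$ as $r \to +\infty$ for all $z \in \nCR$. This is verbatim the hypothesis of Theorem~\ref{th:Binfprop}, so that theorem immediately yields that $g$ has $B_\infty$-property. It then remains to push this property back from $g$ to $m = \mu^{-1} \circ g$.

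For the transfer I would invoke Proposition~\ref{thm:Binv}(\ref{iBinv-1}) with inner map $\mu_1(z) = z$ and outer map $\mu_2 = \mu^{-1}$: in the notation of Theorem~\ref{tV} the resulting $\wh{g}$ equals $m$, and since $g$ has $B_\infty$-property, so does $\wh{g} = m$. The one point requiring care --- and the only real obstacle --- is the sign bookkeeping in $\wh{g}(z) = \epsilon_1 \epsilon_2\, \mu_2\bigl(g(\mu_1(z))\bigr)$: I must ensure that the determinant signs $\epsilon_1,\epsilon_2$ combine to $+1$, so that $\wh{g}$ is genuinely $m$ and not $-m$ (the latter is not even a Nevanlinna function, as $m$ is non-constant). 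This is settled by observing that $\mu$ must be orientation-preserving: since $m$ is Nevanlinna and $g = \mu \circ m$ is a non-constant Stieltjes (hence Nevanlinna) function --- non-constant by the asymptotic, as $C>0$ --- the map $\mu$ sends $\nC_+$ into $\nC_+$, so by Lemma~\ref{lMobNev} its representative lies in $\operatorname{SL}(2,\nR)$ with determinant $+1$; the same then holds for $\mu^{-1}$. With $\epsilon_1 = \epsilon_2 = 1$ the identification $\wh{g} = m$ is exact, and the proof closes.
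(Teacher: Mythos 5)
Your proof is correct and takes essentially the same route as the paper's own: apply Theorem~\ref{th:Binfprop} to the Stieltjes function $\mu\circ m$ and then transfer the $B_\infty$-property back to $m=\mu^{-1}\circ\mu\circ m$ via Proposition~\ref{thm:Binv}(\ref{iBinv-1}). The only difference is that you explicitly check that the determinant signs satisfy $\epsilon_1=\epsilon_2=1$ so that the transformed function is $m$ itself rather than $-m$; the paper passes over this point silently, and your verification of it is sound.
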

\begin{proof}
Let $m \in \cA_\infty$ be arbitrary. Then there exists a M\"{o}bius transformation $\mu$ such that $\mu\circ m$ is a Stieltjes function and  \eqref{eq:mAsymInf-defAInf} holds. By Theorem~\ref{th:Binfprop} the Nevanlinna function $\mu\circ m$ has $B_\infty$-property. In turn,  Proposition~\ref{thm:Binv}(\ref{iBinv-1}) yields that $m =  \mu^{-1}\circ\mu\circ m$ also has $B_\infty$-property.
\end{proof}

\begin{corollary} \label{th:B0prop}
Every function in $\cA_0$ has $B_0$-property.
\end{corollary}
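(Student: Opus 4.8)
The plan is to obtain Corollary~\ref{th:B0prop} as an immediate mirror image of Corollary~\ref{co:Binfprop}, using the inversion $z \mapsto 1/z$ to trade the behaviour at $0$ for the behaviour at $\infty$. The essential observation is that the two transfer results established above, Proposition~\ref{prop:AInf-AZer} and Proposition~\ref{thm:Binv}, were both arranged so that this inversion simultaneously carries $\cA_0$ to $\cA_\infty$ and the $B_0$-property to the $B_\infty$-property. All of the genuinely analytic content (the Schur test and the Tauberian estimates) is already packaged inside Theorem~\ref{th:Binfprop} and hence inside Corollary~\ref{co:Binfprop}, so the argument should reduce to a short chain of equivalences.

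Concretely, I would start from an arbitrary $m \in \cA_0$ and form the transformed function $\wh m$ of Theorem~\ref{tV} taking $\mu_1(z) = 1/z$ and $\mu_2(z) = z$, so that $\wh m(z) = -m(1/z)$. Since $m$ is a Nevanlinna function, Theorem~\ref{tV} guarantees that $\wh m$ is again a Nevanlinna function, so the results below apply to it. The first step is to invoke Proposition~\ref{prop:AInf-AZer}(\ref{i:AInf-AZer-2}), which for $\mu_1(z) = 1/z$ states precisely that $m \in \cA_0$ if and only if $\wh m \in \cA_\infty$; thus $\wh m \in \cA_\infty$.

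The second step is to apply the already-proved Corollary~\ref{co:Binfprop} to $\wh m$, yielding that $\wh m$ has $B_\infty$-property. The final step is to transfer this back to $m$: Proposition~\ref{thm:Binv}(\ref{iBinv-2}), again with $\mu_1(z) = 1/z$, asserts that $m$ has $B_0$-property if and only if $\wh m$ has $B_\infty$-property. Combining the three steps gives that $m$ has $B_0$-property, which is the assertion of the corollary.

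I do not anticipate a real obstacle here, precisely because Propositions~\ref{prop:AInf-AZer} and~\ref{thm:Binv} were formulated with the inversion $\mu_1(z) = 1/z$ built in. The only point demanding mild care is the bookkeeping of the scalar factors $\epsilon_1\epsilon_2$ in~\eqref{eq:MobNev-whm}; choosing $\mu_2(z) = z$ keeps this transparent by making $\wh m(z) = -m(1/z)$. A more self-contained but considerably longer alternative would be to prove a direct analog of Theorem~\ref{th:Binfprop} for inverse Stieltjes functions with the zero-asymptotics in~\eqref{eq:mAsymZer-defAZer}, rerunning the Corollary~\ref{cor:Binfprop} Schur test near $0$ rather than near $\infty$; the inversion route above avoids re-deriving those Tauberian estimates and is the natural choice given the preceding development.
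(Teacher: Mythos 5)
Your proposal is correct and is essentially the paper's own proof: the paper likewise sets $\mu_1(z)=1/z$, applies Proposition~\ref{prop:AInf-AZer}(\ref{i:AInf-AZer-2}) to get $\wh{m}\in\cA_\infty$, invokes Corollary~\ref{co:Binfprop} for the $B_\infty$-property of $\wh{m}$, and transfers back via Proposition~\ref{thm:Binv}(\ref{iBinv-2}). The only cosmetic difference is that the paper chooses $\mu_2(z)=-z$ while you choose $\mu_2(z)=z$; because of the factor $\epsilon_1\epsilon_2$ in~\eqref{eq:MobNev-whm} both choices produce the same transformed function $\wh{m}(z)=-m(1/z)$, so the arguments coincide.
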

\begin{proof}
Let $m \in \cA_0$ be arbitrary. Set $\mu_1(z) = 1/z$ and $\mu_2(z) = -z$.  By Proposition~\ref{prop:AInf-AZer}(\ref{i:AInf-AZer-2}) the function $\wh{m} = \mu_2\circ m\circ \mu_1$ belongs to $\cA_\infty$, so $\wh{m}$ has $B_\infty$-property by Corollary~\ref{co:Binfprop}. Now, Proposition~\ref{thm:Binv}(\ref{iBinv-2}) implies that $m$ has $B_0$-property.
\end{proof}

\section{Coupling of symmetric operators} \label{SecCoup}

\subsection{Coupling of symmetric operators in a Hilbert space.} \label{SubS:CoupleHS}
In this section we consider Hilbert spaces $\bigl( \cH_+, \ahip_{\cH_+} \bigr)$ and $\bigl( \cH_-, \ahip_{\cH_-} \bigr)$ and their (external) direct sum ${\cH} = \cH_+\oplus \cH_-$ with the natural inner product $\ahip_{\cH}$.  If  $T_+$ is an operator in $\cH_+$ and  $T_-$ is an operator in $\cH_-$, then $T_+ \oplus T_-$ denotes their direct sum, that is
\begin{equation*} \label{edefwAHC}
\bigl(T_+ \oplus T_-\bigr) \begin{pmatrix} f_+ \\ f_-
\end{pmatrix}:=
\begin{pmatrix} T_+ f_+ \\ T_- f_- \end{pmatrix}, \quad  \ f_+ \in \dom(T_+), \  f_- \in \dom(T_-).
\end{equation*}

The following assumptions apply to all four statements in this subsection. We assume that $S_\pm$ is a closed symmetric densely defined operator with defect numbers $(1,1)$ in the Hilbert space $\bigl( \cH_\pm, \ahip_{\cH_\pm} \bigr)$.  We let $\bigl(\nC,\Gamma_{0}^\pm, \Gamma_{1}^\pm\bigr)$ be a boundary triple for $S_\pm^*$ and $m_\pm$  and $\psi_\pm$ are the corresponding Weyl function and the Weyl solution. By $S_0^\pm$ we denote the self-adjoint extension of $S_\pm$ which is defined on $\dom(S_0^\pm) = \ker(\Gamma_{0}^\pm)$. That is
$S_0^\pm=S_\pm^*|_{\ker(\Gamma_{0}^\pm)}$.

In the following theorem we reformulate results from~\cite{DHMS}, (see also~\cite{CDR}) in a form which is convenient for further use in this paper.

\begin{theorem} \label{tHsc}
Under the general assumptions of this subsection we have:
\begin{enumerate}
\renewcommand*\theenumi{\alph{enumi}}
\renewcommand*\labelenumi{{\rm (\theenumi)}}
\item
The linear operator $S$ defined as the restriction of $S_+^* \oplus S_-^*$ to the domain
\begin{equation*}
\dom(S) = \left\{
\begin{pmatrix}\!f_+ \\ f_-\!\end{pmatrix}: \!
\begin{array}{l}
 \Gamma_{0}^+(f_+) = \Gamma_{0}^- (f_-)=0,  \\
 \Gamma_{1}^+(f_+) + \Gamma_{1}^- (f_-)=0,
\end{array} \ \begin{array}{l}  f_+ \in \dom(S_+^*), \\ f_-\in \dom(S_-^*)
 \end{array}
 \right\}
\end{equation*}
is, closed, densely defined and symmetric with defect
numbers $(1,1)$ in the Hilbert space ${\cH}$.
\item \label{H2b}
The adjoint ${S}^*$ of ${S}$ is the restriction of $S_+^* \oplus S_-^*$ to the domain
\begin{equation*}
\dom(S^*) = \left\{\!
\begin{pmatrix}\! f_+ \\ f_- \!\end{pmatrix} :
 \Gamma_{0}^+(f_+) - \Gamma_{0}^- (f_-) = 0, \ \begin{array}{l}  f_+ \in \dom(S_+^*), \\ f_-\in \dom(S_-^*)
 \end{array} \right\}.
\end{equation*}
\item \label{ibatr}
A boundary triple $\bigl(\nC,\Gamma_0,\Gamma_1\bigr)$ for $S^*$ is
given by
\begin{equation} \label{Htripl}
\Gamma_0 f = \Gamma_0^+ f_+, \quad
 \Gamma_1 f = \Gamma_1^+ f_+ + \Gamma_1^-f_-, \quad
  f = \begin{pmatrix} f_+ \\ f_- \end{pmatrix}  \in \dom(S^*).
\end{equation}
\item \label{H2d}
The Weyl function of $S$ relative to the boundary triple $\bigl(\nC,\Gamma_0,\Gamma_1\bigr)$ is
\begin{equation*} \label{mH2}
 m(z) = m_+(z) + m_-(z), \quad z \in \nC \setminus \nR.
\end{equation*}
\item \label{ietHsc}
The self-adjoint extension $S_1$ of $S$ such that $\dom(S_1) = \ker(\Gamma_1)$ coincides with the restriction of $S_+^* \oplus S_-^*$
to the domain
\begin{equation} \label{eq:domwAHC}
\dom(S_1) = \left\{\begin{pmatrix} f_+ \\ f_- \end{pmatrix}:
  \begin{array}{l}
 \Gamma_{0}^+(f_+) - \Gamma_{0}^- (f_-)=0,  \\
 \Gamma_{1}^+(f_+) + \Gamma_{1}^- (f_-)=0,
 \end{array} \  \begin{array}{l}  f_+ \in \dom(S_+^*), \\ f_-\in \dom(S_-^*)
 \end{array}
 \right\}.
\end{equation}
\item \label{ietHsc0}
The self-adjoint extension $S_0$ of $S$ such that $\dom(S_0) = \ker(\Gamma_0)$ coincides with the restriction of $S_+^* \oplus S_-^*$
to the domain
\begin{equation*} 
\dom(S_0) = \left\{\begin{pmatrix} f_+ \\ f_- \end{pmatrix}:\,
 \Gamma_{0}^+(f_+) = \Gamma_{0}^- (f_-)=0, \ \begin{array}{l}  f_+ \in \dom(S_+^*), \\ f_-\in \dom(S_-^*)
 \end{array} \right\}
\end{equation*}
and thus $S_0=S_0^+\oplus S_0^-$.
\end{enumerate}
\end{theorem}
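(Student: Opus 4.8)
The plan is to regard $\bigl(\nC^2,\hat\Gamma_0,\hat\Gamma_1\bigr)$, where $\hat\Gamma_j f=\bigl(\Gamma_j^+f_+,\Gamma_j^-f_-\bigr)$, as a boundary triple for $(S_+\oplus S_-)^*=S_+^*\oplus S_-^*$, and to reduce every assertion to a single ``master'' identity obtained by adding the two copies of \eqref{eGreen}: for all $f,g\in\dom(S_+^*\oplus S_-^*)$,
\begin{equation*}
\bigl\la (S_+^*\oplus S_-^*)f,g\bigr\ra_\cH-\bigl\la f,(S_+^*\oplus S_-^*)g\bigr\ra_\cH
=\sum_{\pm}\Bigl(\Gamma_0^\pm(g_\pm)^*\,\Gamma_1^\pm(f_\pm)-\Gamma_1^\pm(g_\pm)^*\,\Gamma_0^\pm(f_\pm)\Bigr).
\end{equation*}
All six parts then follow from this identity together with elementary linear algebra on the four boundary values.

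For parts (a) and (b) I would first observe that $\dom(S_+)\oplus\dom(S_-)\subseteq\dom(S)$, since such elements annihilate all four functionals; hence $S$ is densely defined, and taking adjoints in $S_+\oplus S_-\subseteq S$ gives $S^*\subseteq(S_+\oplus S_-)^*=S_+^*\oplus S_-^*$, so every $g\in\dom(S^*)$ is a restriction of $S_+^*\oplus S_-^*$ and the master identity applies to it. Inserting $f\in\dom(S)$, where $\Gamma_0^\pm f_\pm=0$ and $\Gamma_1^+f_+=-\Gamma_1^-f_-$, collapses the right-hand side to $\bigl(\Gamma_0^+(g_+)^*-\Gamma_0^-(g_-)^*\bigr)\Gamma_1^+(f_+)$; requiring this to vanish for all such $f$ yields $\Gamma_0^+g_+=\Gamma_0^-g_-$, which is (b). Feeding this description of $\dom(S^*)$ back into the master identity and demanding that the boundary form vanish for all $g\in\dom(S^*)$ returns exactly the three defining conditions of $\dom(S)$; thus $S^{**}=S$ and $S$ is closed, while symmetry is the special case $f=g\in\dom(S)$. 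The defect numbers $(1,1)$ I would read off from the defect space computed in (d) below, which is one-dimensional for every $z\in\nCR$.

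For part (c) I would check the three boundary-triple axioms for $\bigl(\nC,\Gamma_0,\Gamma_1\bigr)$ with $\Gamma_0f=\Gamma_0^+f_+$ and $\Gamma_1f=\Gamma_1^+f_++\Gamma_1^-f_-$ on $\dom(S^*)$: using the constraint $\Gamma_0^+f_+=\Gamma_0^-f_-$ (and likewise for $g$) the master identity factors as $\Gamma_0(g)^*\Gamma_1(f)-\Gamma_1(g)^*\Gamma_0(f)$, giving Green's identity; the equality $\ker(\Gamma_0)\cap\ker(\Gamma_1)=\dom(S)$ is immediate; and surjectivity onto $\nC^2$ follows by prescribing $\Gamma_0^+f_+=\Gamma_0^-f_-=a$, $\Gamma_1^+f_+=b$, $\Gamma_1^-f_-=0$, using surjectivity of each component triple. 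For part (d) the defect space $\ker(S^*-z)$ consists of $f_\pm=c_\pm\psi_\pm(z)$ subject to $c_+=\Gamma_0^+f_+=\Gamma_0^-f_-=c_-$, so $\psi(z)=\bigl(\psi_+(z),\psi_-(z)\bigr)$ is the Weyl solution with $\Gamma_0\psi(z)=1$, and \eqref{esubg1} together with \eqref{eWeyl} gives $m(z)=\Gamma_1\psi(z)=m_+(z)+m_-(z)$. Finally (e) and (f) are obtained by intersecting $\dom(S^*)$ with $\ker(\Gamma_1)$ and $\ker(\Gamma_0)$: the first adds the coupling $\Gamma_1^+f_++\Gamma_1^-f_-=0$, yielding \eqref{eq:domwAHC}, while $\Gamma_0f=\Gamma_0^+f_+=0$ combined with $\Gamma_0^+f_+=\Gamma_0^-f_-$ forces $\Gamma_0^-f_-=0$ as well, whence $S_0=S_0^+\oplus S_0^-$.

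The main obstacle will be the bookkeeping in (a)--(b). The delicate point is to establish, before computing the double adjoint, that $\dom(S^*)$ is genuinely contained in $\dom(S_+^*\oplus S_-^*)$ (this is what makes the master identity available for every $g\in\dom(S^*)$), and to justify that in the surjectivity and defect-space arguments the boundary values $\Gamma_0^+f_+$, $\Gamma_0^-f_-$, $\Gamma_1^+f_+$, $\Gamma_1^-f_-$ may be prescribed independently, which rests on the joint surjectivity of $\hat\Gamma$ onto $\nC^2\times\nC^2$ inherited from the two component triples. Once these points are secured, every remaining step is a direct substitution into the master identity.
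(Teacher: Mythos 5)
Your proposal is correct, but there is an important structural point: the paper does not prove Theorem~\ref{tHsc} at all --- it explicitly presents the theorem as a reformulation of results from \cite{DHMS} (see also \cite{CDR}), so there is no internal proof to compare against. What you have written is a self-contained verification along the lines that underlie those references: treat $\bigl(\nC^2,\hat\Gamma_0,\hat\Gamma_1\bigr)$ as the direct-sum boundary triple for $S_+^*\oplus S_-^*$, sum the two Green's identities, and read every assertion off the resulting boundary form. The key enabling facts are correctly identified and correctly used: $\dom(S_+)\oplus\dom(S_-)\subseteq\dom(S)$ gives density; $S_+\oplus S_-\subseteq S$ gives $S^*\subseteq S_+^*\oplus S_-^*$, so the master identity applies to every $g\in\dom(S^*)$; joint surjectivity of the component maps $(\Gamma_0^\pm,\Gamma_1^\pm)$ onto $\nC^2$ (automatic for defect numbers $(1,1)$, since the induced injection of the two-dimensional quotient $\dom(S_\pm^*)/\dom(S_\pm)$ into $\nC^2$ is then bijective) lets you prescribe boundary values independently; and the computations of $\dom(S^*)$, the triple axioms, $\ker(S^*-z)=\lspan\{(\psi_+(z),\psi_-(z))\}$, $m=m_++m_-$, and the domains of $S_1$, $S_0$ all go through as you indicate. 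Two small repairs would make it airtight: first, in the double-adjoint step you need $S^{**}\subseteq S_+^*\oplus S_-^*$ before the master identity can be applied to pairs $(g,f)$ with $f\in\dom(S^{**})$; this follows by the same device from $S_+\oplus S_-\subseteq S^*$ (the containment you established for $S^*$ does not by itself cover $S^{**}$). Second, symmetry of $S$ requires the boundary form to vanish for all pairs $f,g\in\dom(S)$ (or an appeal to polarization), not only for $f=g$; since the form does vanish for all such pairs, this is a one-line fix.
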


The operator $S_1$ defined in Theorem~\ref{tHsc}(\ref{ietHsc}) is called the {\it coupling} of the operators $S_+$ and $S_-$ in the Hilbert space $\bigl( {\cH}, \ahip_{{\cH}} \bigr)$ relative to the triples $\bigl(\nC,\Gamma_{0}^+, \Gamma_{1}^+\bigr)$ and $\bigl(\nC,\Gamma_{0}^-, \Gamma_{1}^-\bigr)$.

In the next proposition we give a criterion for nonnegativity of the coupling $S_1$ of two nonnegative operators $S_+$ and $S_-$ in the Hilbert space $\bigl( {\cH}, \ahip_{{\cH}} \bigr)$.

\begin{proposition}\label{prop:nonneg}
In addition to the general assumptions of this subsection assume that $S_0^\pm$ is a nonnegative self-adjoint extension of $S_\pm$ in $\cH_\pm$. Then the coupling ${S_1}$ is a nonnegative self-adjoint operator in  $\bigl(\cH, \ahip_{\cH}\bigr)$  if and only if
\begin{equation} \label{eqN}
m_+(x) + m_-(x) \neq 0 \qquad \text{for all} \qquad x \in \nR_-.
\end{equation}
\end{proposition}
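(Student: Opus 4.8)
The plan is to reduce the nonnegativity of the coupling $S_1$ to the holomorphy criterion for nonnegative extensions recalled in Subsection~\ref{sect:NonnegSO}, by passing to the transpose of the boundary triple $\bigl(\nC,\Gamma_0,\Gamma_1\bigr)$. First I would record two consequences of the hypotheses. Since each $S_0^\pm$ is a nonnegative self-adjoint extension of $S_\pm$, that criterion gives that each Weyl function $m_\pm$ is holomorphic on $\nR_-$; hence $m = m_+ + m_-$, which is the Weyl function of $S$ relative to $\bigl(\nC,\Gamma_0,\Gamma_1\bigr)$ by Theorem~\ref{tHsc}(\ref{H2d}), is holomorphic on $\nR_-$ as well. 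Second, by Theorem~\ref{tHsc}(\ref{ietHsc0}) we have $S_0 = S_0^+ \oplus S_0^-$, a nonnegative self-adjoint extension of $S$; since $S \subseteq S_0$, for every $f \in \dom(S)$ we get $\la Sf,f\ra_{\cH} = \la S_0 f, f\ra_{\cH} \ge 0$, so that $S$ itself is a nonnegative symmetric operator. This last fact is precisely what makes the holomorphy criterion applicable to $S$.

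Next I would invoke the transpose boundary triple $\bigl(\nC,-\Gamma_1,\Gamma_0\bigr)$ for $S^*$ from Remark~\ref{rem:transp}. The self-adjoint extension of $S$ associated with the \emph{first} functional $-\Gamma_1$ of this triple is the restriction of $S^*$ to $\ker(-\Gamma_1) = \ker(\Gamma_1)$, which is exactly the coupling $S_1$ by Theorem~\ref{tHsc}(\ref{ietHsc}); and the corresponding Weyl function is $m^{\!\top}(z) = -1/m(z) = -1/\bigl(m_+(z)+m_-(z)\bigr)$. Applying the nonnegativity criterion of Subsection~\ref{sect:NonnegSO} to the nonnegative symmetric operator $S$ relative to this transpose triple then yields that $S_1$ is a nonnegative self-adjoint operator if and only if $m^{\!\top}$ is holomorphic on $\nR_-$.

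Finally I would translate the holomorphy of $m^{\!\top}$ into condition~\eqref{eqN}. Because $m = m_+ + m_-$ is already holomorphic on $\nR_-$, the function $m^{\!\top} = -1/m$ is holomorphic on $\nR_-$ exactly when $m$ has no zeros there: a zero $x_0 \in \nR_-$ of the holomorphic function $m$ would force a pole of $-1/m$ at $x_0$, whereas the absence of zeros makes $-1/m$ holomorphic on all of $\nR_-$. Thus $S_1$ is nonnegative if and only if $m_+(x) + m_-(x) \neq 0$ for all $x \in \nR_-$, which is~\eqref{eqN}. The only genuinely delicate point is the opening observation that $S$ is nonnegative, without which the criterion could not be applied, together with the recognition that the transpose device converts the $\ker(\Gamma_1)$-extension $S_1$ into an extension of the type $S^*|_{\ker(\wh{\Gamma}_0)}$ to which that criterion speaks directly; the remaining steps are then immediate.
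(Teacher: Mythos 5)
Your proof is correct, but it follows a genuinely different route from the paper's. The paper argues as follows: since $S_0^\pm$ is nonnegative, each $m_\pm$ is holomorphic on $\nR_-$ and $(-\infty,0)\subset\rho(S_0)$ with $S_0=S_0^+\oplus S_0^-$; since $m=m_++m_-$ is the Weyl function of $S$ relative to \eqref{Htripl} by Theorem~\ref{tHsc}(\ref{H2d}), Proposition~\ref{prop:S_01} gives, for each $x\in\nR_-\subset\rho(S_0)$, that $x\in\rho(S_1)$ if and only if $m(x)\neq 0$; hence \eqref{eqN} is equivalent to $(-\infty,0)\subset\rho(S_1)$, i.e.\ to nonnegativity of the self-adjoint operator $S_1$. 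You instead convert $S_1$ into an extension of ``$\ker\Gamma_0$-type'' by passing to the transpose triple $\bigl(\nC,-\Gamma_1,\Gamma_0\bigr)$ of Remark~\ref{rem:transp}, whose Weyl function is $-1/m$, and then invoke the holomorphy criterion of Subsection~\ref{sect:NonnegSO}. This obliges you to supply two extra (easy) steps that the paper's route does not need: that $S$ itself is nonnegative, which you correctly obtain from $S\subseteq S_0$ and without which the criterion would not apply; and that holomorphy of $-1/m$ on $\nR_-$ amounts to non-vanishing of $m$ there, which is sound because $\im m>0$ on $\nC_+$ precludes $m$ from vanishing identically near any point of $\nR_-$, so a zero of $m$ really produces a pole of $-1/m$. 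What your route buys is that it stays entirely inside the extension-theoretic calculus of boundary triples and Weyl functions, the spectral-theoretic step ``nonnegative $\Leftrightarrow(-\infty,0)\subset\rho(S_1)$'' being absorbed into the quoted criterion; what the paper's route buys is brevity, needing only the pointwise resolvent criterion of Proposition~\ref{prop:S_01} rather than the stronger extension criterion. Both arguments ultimately rest on the same underlying resolvent formula.
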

\begin{proof}
Since the self-adjoint extensions $S_0^+$  and $S_0^-$ are nonnegative, then the Weyl functions $m_+$ and $m_-$ are holomorphic on $\nR_-$ and $(-\infty,0)\subset\rho(S_0)$, where $S_0=S_0^+\oplus S_0^-$.  As, by  Theorem~\ref{tHsc}(\ref{H2d}), $m(z)=m_+(z)+m_-(z)$ is the Weyl function of $S$  relative to the boundary triple~\eqref{Htripl}, it follows from Proposition~\ref{prop:S_01} that~\eqref{eqN} holds if and only if $(-\infty,0)\subset\rho(S_1)$, or, equivalently, if and only if $S_1$ is nonnegative.
\end{proof}

\begin{corollary} \label{col:nonneg}
In addition to the general assumptions of this subsection assume that  the operator $S_0^+$ is the Kre\u{\i}n extension of $S_+$. Then the coupling ${S_1}$ is a nonnegative self-adjoint operator  in the Hilbert space $\bigl( {\cH}, \ahip_{{\cH}} \bigr)$ if and only if the following two conditions are satisfied:
\begin{enumerate}
\renewcommand*\theenumi{\roman{enumi}}
\renewcommand*\labelenumi{{\rm (\theenumi)}}
  \item
   $S_0^-:=S_-^*|_{\ker(\Gamma_0^-) }$ is a non-negative operator  in the Hilbert space $\cH_-$.
  \item \label{eq:m_+-inf}
  $\lim_{x\downarrow -\infty} \bigl( m_+(x) + m_-(x) \bigr) \geq 0$.
\end{enumerate}
If the coupling ${S_1}$ is a nonnegative, then there exists a constant $c\in\nR$ such that $m_+-c$ and $m_-+c$ are Stieltjes functions.
\end{corollary}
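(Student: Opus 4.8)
The plan is to prove the equivalence by extracting condition~(i) \emph{directly} from the nonnegativity of the coupling, and only afterwards invoking Proposition~\ref{prop:nonneg}, which presupposes $S_0^\pm\ge 0$ and so is unavailable until~(i) is known. First I record what the standing hypothesis gives: since $S_0^+$ is the Kreĭn extension of $S_+$, the operator $S_+$ is nonnegative, so $m_+$ is holomorphic and increasing on $\nR_-$, and \eqref{eq:m_K0} yields $\lim_{x\uparrow 0}m_+(x)=+\infty$. Now assume $S_1\ge 0$. By Theorem~\ref{tHsc} we have $S_+\oplus S_-\subset S\subset S_1$, so testing the form of $S_1$ on the vectors $(0,f_-)$ with $f_-\in\dom(S_-)$ gives $\langle S_-f_-,f_-\rangle_{\cH_-}=\langle S_1(0,f_-),(0,f_-)\rangle_{\cH}\ge 0$; hence $S_-$ is nonnegative, and by the criterion recalled in Subsection~\ref{sect:NonnegSO} condition~(i) is \emph{equivalent} to $m_-$ being holomorphic on $\nR_-$.

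The main step is to rule out a pole of $m_-$ on $\nR_-$. Suppose $m_-$ had a (real, hence simple) pole at some $\lambda_0\in\nR_-$, chosen to be the largest such, so that $m_-$ is holomorphic on $(\lambda_0,0)$. As $m_+$ is holomorphic on all of $\nR_-$, the sum $m=m_++m_-$ is holomorphic and increasing on $(\lambda_0,0)$, with $m(x)\to-\infty$ as $x\downarrow\lambda_0$ (from the pole) and $m(x)\to+\infty$ as $x\uparrow 0$ (because $m_+(0-)=+\infty$); hence $m$ has a zero $x_\ast\in(\lambda_0,0)$. Since $S_0^+\ge 0$ we have $x_\ast\in\rho(S_0^+)$; and since $S_-\ge 0$ forces $\sigma_{\mathrm{ess}}(S_0^-)\subset[0,+\infty)$ while $m_-$ is pole-free on $(\lambda_0,0)$ (no eigenvalues of $S_0^-$ there), we get $x_\ast\in\rho(S_0^-)$, whence $x_\ast\in\rho(S_0^+\oplus S_0^-)=\rho(S_0)$. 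As $m(x_\ast)=0$, Proposition~\ref{prop:S_01} gives $x_\ast\notin\rho(S_1)$, i.e.\ $x_\ast\in\sigma(S_1)\cap\nR_-$, contradicting $S_1\ge 0$. Thus $m_-$ is holomorphic on $\nR_-$ and~(i) holds.

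With~(i) in hand both $S_0^\pm$ are nonnegative, so Proposition~\ref{prop:nonneg} applies: $S_1\ge 0$ is equivalent to $m(x)\ne 0$ for all $x\in\nR_-$. Since $m$ is continuous and strictly increasing on $\nR_-$ with $m(0-)=+\infty$, the condition $m\ne 0$ on $\nR_-$ is equivalent to $m>0$ on $\nR_-$ (a negative value would force a zero before $0$ by the intermediate value theorem), and hence to $\lim_{x\downarrow-\infty}m(x)\ge 0$, that is, to~(ii). Reading this chain in the forward direction, (ii) follows from $S_1\ge 0$, so together with the previous paragraph $S_1\ge0$ gives both (i) and (ii); conversely, (i) makes Proposition~\ref{prop:nonneg} available and (ii) yields $m>0$ on $\nR_-$, giving $S_1\ge 0$. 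This proves the equivalence.

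Finally, suppose $S_1\ge 0$, so (i) and (ii) hold. Each $m_\pm$ is holomorphic on $\nR_-$, so its representing measure is supported in $[0,+\infty)$ and $m_\pm$ is in fact holomorphic on $\nC\setminus[0,+\infty)$; being increasing on $\nR_-$, each has a limit $\gamma_\pm:=\lim_{x\downarrow-\infty}m_\pm(x)$, and (ii) forces $\gamma_\pm\in\nR$ with $\gamma_++\gamma_-\ge 0$. Choosing $c=\gamma_+$ (indeed any $c\in[-\gamma_-,\gamma_+]$ works), the functions $m_+-c$ and $m_-+c$ are Nevanlinna functions holomorphic on $\nC\setminus[0,+\infty)$ with $m_+(x)-c\ge 0$ and $m_-(x)+c\ge\gamma_-+\gamma_+\ge 0$ for all $x\in\nR_-$, hence Stieltjes functions by the definition in Subsection~\ref{sect:NS}. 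The one genuine obstacle in this argument is the main step: $S_1\ge 0$ controls $S_-$ immediately but \emph{not} $S_0^-$, and the crux is converting a putative negative eigenvalue of $S_0^-$ (a pole of $m_-$) into a zero of $m$ in $\rho(S_0)$, i.e.\ a negative eigenvalue of $S_1$; everything else is monotonicity of Weyl functions together with Propositions~\ref{prop:nonneg} and~\ref{prop:S_01}.
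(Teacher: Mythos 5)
Your proof is correct and follows essentially the same route as the paper's: derive $S_-\ge 0$ from $S_1\ge 0$ by restriction, exclude a pole of $m_-$ on $\nR_-$ by producing a zero of $m_++m_-$ between the pole and $0$ (hence a negative eigenvalue of $S_1$), then use monotonicity together with $\lim_{x\uparrow 0}m_+(x)=+\infty$ to identify nonvanishing of $m_++m_-$ on $\nR_-$ with condition (ii), and finally take $c=\lim_{x\downarrow-\infty}m_+(x)$ for the Stieltjes claim. The only difference is one of care, not of route: where the paper cites Proposition~\ref{prop:nonneg} in the pole-exclusion step (whose hypothesis $S_0^-\ge 0$ is precisely what is in question at that moment), you invoke Proposition~\ref{prop:S_01} directly after verifying that the zero lies in $\rho(S_0)$, which is a slightly tighter rendering of the same argument.
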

\begin{proof} Since $S_0^+$ is the Kre\u{\i}n extension of $S_+$, then by~\eqref{eq:m_K0}
  \begin{equation}\label{eq:m_+inf}
     \lim_{x\uparrow 0}m_+(x)=+\infty.
\end{equation}
If the coupling ${S_1}$ is nonnegative, then the operator $S_-$ is also nonnegative and hence its extension $S_0^-$ has at most one negative eigenvalue (see~\cite{Kr47}). Then the Weyl function $m_-$ has at most one pole on $\nR_-$ and by~\eqref{eq:m_+inf}
\begin{equation}\label{eq:m+-K0}
     \lim_{x\uparrow 0}\bigl(m_+(x)+m_-(x)\bigr)=+\infty.
\end{equation}

Assume that $m_-$ has a pole at $x_0\in(-\infty,0)$. Then
\[
    \lim_{x\downarrow x_0}\bigl(m_+(x)+m_-(x)\bigr)=-\infty,\quad
\]
and, hence $m_+ + m_-$ has a zero in the interval $(x_0,0)$.
By Proposition~\ref{prop:nonneg} the operator $S_1$ has a negative eigenvalue which is impossible, if the operator $S_1$ is nonnegative.
Therefore, the function $m_-$ is holomorphic on $\nR_-$ and $S_0^-$ is a non-negative operator  in the Hilbert space $\cH_-$.

Since the restrictions of $m_+$ and $m_-$ to $\nR_-$ are continuous and monotonically increasing functions which satisfy the condition~\eqref{eq:m+-K0}, the condition~\eqref{eqN} of nonnegativity of $S_1$ can be rewritten as~\eqref{eq:m_+-inf}.

Conversely, if (i) and (ii) hold, then the operators $S_0^+$ and $S_0^-$ are nonnegative  in $\cH_+$ and $\cH_-$, respectively, $m_-$ is holomorphic on $\nR_-$ and~\eqref{eqN} holds. Therefore, the coupling $S_1$ is nonnegative by Proposition~\ref{prop:nonneg}.

To prove the last statement,  set
$
c = \lim_{x\downarrow -\infty}m_+(x).
$
Then $m_+(x)-c \ge 0$ and by (ii)
\[
m_-(x)+ c \ge \lim_{x\downarrow -\infty}m_-(x)+\lim_{x\downarrow -\infty}m_+(x)\ge 0
\]
for all $x\in\nR_-$. Therefore $m_+-c$ and $m_-+c$ are Stieltjes functions.
\end{proof}
The next corollary is proved similarly.
\begin{corollary} \label{col:nonnegF}
In addition to the general assumptions of this subsection assume that the operator $S_0^+$ is the Friedrichs extension of $S_+$.
 Then the coupling ${S_1}$ is a nonnegative self-adjoint operator  in the Hilbert space $\bigl( {\cH}, \ahip_{{\cH}} \bigr)$ if and only if the following two conditions are satisfied:
\begin{enumerate}
\renewcommand*\theenumi{\roman{enumi}}
\renewcommand*\labelenumi{{\rm (\theenumi)}}
  \item
   $S_0^-:=S_-^*|_{\ker(\Gamma_0^-) }$ is a non-negative operator  in $\cH_-$.
  \item
  $m_+(0) + m_-(0) =\lim_{x\uparrow 0-}  \bigl(m_+(x)+m_-(x) \bigr) \leq 0.$
  \end{enumerate}
If the coupling $S_1$ is nonnegative, then there exists a constant $c\in\nR$ such that $m_+-c$ and $m_-+c$ are inverse Stieltjes functions.
\end{corollary}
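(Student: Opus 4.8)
The plan is to follow the proof of Corollary~\ref{col:nonneg} verbatim in structure, interchanging the roles of the endpoints $0$ and $-\infty$ and reversing the relevant signs; this is precisely what the Friedrichs analogue of \eqref{eq:m_K0} prescribes. Recall from Subsection~\ref{sect:NonnegSO} that $S_0^+ = S_F$ is equivalent to $\lim_{x\downarrow -\infty} m_+(x) = -\infty$, and that the Friedrichs extension of the (nonnegative) operator $S_+$ is itself nonnegative, so $m_+$ is holomorphic and strictly increasing on $\nR_-$. First I would prove necessity. If $S_1$ is nonnegative, then restricting it to the vectors $(0,f_-)$ with $f_-\in\dom(S_-)$ (which lie in $\dom(S_1)$ precisely because both boundary values of $f_-$ vanish) shows that $S_-\geq 0$; hence, by Kre\u{\i}n \cite{Kr47}, $S_0^-$ has at most one negative eigenvalue and $m_-$ has at most one pole on $\nR_-$.

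The crucial step is to exclude that pole. Since $m_-$ is increasing on each interval of holomorphy, $\lim_{x\downarrow -\infty} m_-(x) \in \{-\infty\}\cup\nR$, and therefore $\lim_{x\downarrow -\infty}\bigl(m_+(x)+m_-(x)\bigr) = -\infty$. If $m_-$ had a pole at $x_0\in\nR_-$, then $m_+(x)+m_-(x)\to +\infty$ as $x\uparrow x_0$, and the intermediate value theorem would produce a zero of $m_+ + m_-$ in $(-\infty,x_0)$; exactly as in Corollary~\ref{col:nonneg}, this zero places a negative number in the spectrum of $S_1$ via Proposition~\ref{prop:S_01}, contradicting nonnegativity. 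Thus $m_-$ is holomorphic on $\nR_-$, which is (i). With (i) in force, $m_+ + m_-$ is continuous and strictly increasing on $\nR_-$ with limit $-\infty$ at $-\infty$, so the nonvanishing condition \eqref{eqN} holds if and only if $m_+ + m_-$ stays negative, that is, if and only if $\lim_{x\uparrow 0}\bigl(m_+(x)+m_-(x)\bigr)\leq 0$, which is (ii). Conversely, assuming (i) and (ii), the extensions $S_0^\pm$ are nonnegative and strict monotonicity together with (ii) give $m_+ + m_- < 0$ on $\nR_-$; hence \eqref{eqN} holds and Proposition~\ref{prop:nonneg} yields that $S_1$ is nonnegative.

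For the final assertion I would first note that (ii) forces the boundary limits $m_+(0-)$ and $m_-(0-)$ to be finite: since $m_\pm$ are increasing and holomorphic on $\nR_-$, neither limit can equal $-\infty$, so if one were $+\infty$ the sum would exceed $0$, violating (ii). I would then set $c := \lim_{x\uparrow 0} m_+(x) = m_+(0)$. Monotonicity gives $m_+(x) - c \leq 0$ for all $x\in\nR_-$, while $m_-(x) + c \leq m_-(0) + m_+(0) \leq 0$ by (ii). As both $m_+ - c$ and $m_- + c$ are holomorphic on $\nC\setminus[0,+\infty)$ and nonpositive on $\nR_-$, they are inverse Stieltjes functions in the sense of Subsection~\ref{sect:NS}.

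The main obstacle I anticipate is bookkeeping rather than anything conceptual: one must keep track that moving from the Kre\u{\i}n setting to the Friedrichs setting swaps the roles of $0$ and $-\infty$, so the constant $c$ is now defined through the limit at $0$ rather than at $-\infty$, and the conclusion becomes inverse Stieltjes rather than Stieltjes. Alternatively, the entire corollary could be deduced from Corollary~\ref{col:nonneg} by the substitution $m_\pm(z)\mapsto -m_\pm(1/z)$, which exchanges the Friedrichs and Kre\u{\i}n conditions through Proposition~\ref{p:SiS}, but the direct dual argument above seems cleaner and more self-contained.
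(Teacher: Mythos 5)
Your proof is correct and takes essentially the same approach as the paper: the paper disposes of Corollary~\ref{col:nonnegF} with the single line that it ``is proved similarly'' to Corollary~\ref{col:nonneg}, and your argument is exactly that dual argument (roles of $0$ and $-\infty$ interchanged, $c$ defined through the limit at $0-$, Stieltjes replaced by inverse Stieltjes) written out in full. The extra details you supply---checking $S_-\geq 0$ via the vectors $(0,f_-)$, the finiteness of $c=m_+(0-)$ forced by (ii), and invoking Proposition~\ref{prop:S_01} for the pole-exclusion step---are precisely the points the paper leaves implicit.
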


\subsection{Non-separated extensions as couplings.}

In Theorem~\ref{tHsc}(\ref{ietHsc}) we defined the coupling $S_1$ of symmetric operators $S_+$ and $S_-$ relative to the boundary triples $\bigl(\nC,\Gamma_0^+,\Gamma_1^+\bigr)$ for $S_+^*$ and $\bigl(\nC,\Gamma_0^-,\Gamma_1^-\bigr)$  for $S_-^*$.  Using different boundary triples would result in a  different coupling operator.  In this subsection we characterize all self-adjoint extensions of $S_+\oplus S_-$ which can be obtained as couplings of $S_+$ and $S_-$.

Let, as before, $\bigl(\nC,\Gamma_0^\pm,\Gamma_1^\pm\bigr)$ be a boundary triple for $S_\pm^*$. It is not difficult to see (cf.~\cite[Lemma~3.4]{CDR}, \cite{DHMS} for a more general setting or \cite[Appendix~II,~125]{AG78} for the case of differential operators) that a restriction $\wh{S}$ of $S_+^*\!\oplus\!S_-^*$ is a self-adjoint extension of $S_+\oplus S_-$ in $\bigl(\cH, \ahip_{\cH}\bigr)$ if and only if
\[
\dom(\wh{S}) = \left\{ \left(\!\!\!\begin{array}{c} f_+ \\ f_-
\end{array}\!\!\!\right) \in \bigl(\dom S_+^*\bigr) \oplus \bigl(\dom S_-^*\bigr)  :
M\! \left(\!\!\!\begin{array}{c} \Gamma_1^+ f_+  \\ \Gamma_1^- f_-
\end{array}\!\!\!\!\right) = N\! \left(\!\!\!\begin{array}{c} \Gamma_0^+ f_+  \\ \Gamma_0^- f_-
\end{array}\!\!\!\!\right) \right\},
\]
where $M$ and $N$ are $2\times 2$ matrices with complex entries such that the block matrix $\bigl( M \ N \bigr)$ has rank $2$ and  the matrix $MN^*$ is self-adjoint.

If $\wh{S}$ is a self-adjoint extension of $S_+\oplus S_-$ in $\bigl(\cH, \ahip_{\cH}\bigr)$ then either
\begin{equation} \label{eq:nssa}
(\gr \wh{S}) \cap   \bigl(\cH_+ \oplus \{0\}\bigr)^2 = \gr(S_+) \quad \text{and}
 \quad (\gr \wh{S}) \cap \bigl(\{0\}\oplus \cH_-\bigr)^2 = \gr(S_-)
\end{equation}
or
\begin{equation*} 
(\gr \wh{S}) \cap   \bigl(\cH_+ \oplus \{0\}\bigr)^2  \quad \text{and}
\quad (\gr \wh{S}) \cap \bigl(\{0\}\oplus \cH_-\bigr)^2
\end{equation*}
are graphs of self-adjoint operators in $\bigl(\cH_+, \ahip_{\cH_+}\bigr)$ and $\bigl(\cH_-, \ahip_{\cH_-}\bigr)$, respectively.

In the later case we say that the self-adjoint operator $\wh{S}$ is a {\em separated} extension of $S_+\oplus S_-$ and the corresponding boundary conditions are called {\em separated} boundary conditions. If \eqref{eq:nssa} holds, then we say that $\wh{S}$ is a {\em non-separated} extension of $S_+\oplus S_-$ and the corresponding boundary conditions are called {\em non-separated} boundary conditions.

Notice that multiplying matrices $M$ and $N$ from the left by the same invertible matrix does not change the domain of $\wh{S}$. Therefore, we can assume that the block matrix $\bigl( M \ N \bigr)$ is in reduced row echelon form, see \cite[Subsection~0.3.4]{HJ}.  As the rank of $\bigl( M \ N \bigr)$ is $2$, the reduced row echelon form of $\bigl( M \ N \bigr)$ takes one of the following six forms:
\begin{alignat*}{3}
&
\left(\!\!\!\begin{array}{cccc}
0 & 0 & 1 & 0 \\
0 & 0 & 0 & 1 \end{array}\!\!\!\right)  & \qquad
&
\left(\!\!\!\begin{array}{cccc}
0 & 1 & * & 0 \\
0 & 0 & 0 & 1 \end{array}\!\!\!\right) & \qquad
&
\left(\!\!\!\begin{array}{cccc}
1 & * & * & 0 \\
0 & 0 & 0 & 1 \end{array}\!\!\!\right)  \\
&
\left(\!\!\!\begin{array}{cccc}
0 & 1 & 0 & * \\
0 & 0 & 1 & * \end{array}\!\!\!\right) & \qquad
&
\left(\!\!\!\begin{array}{cccc}
1 & * & 0 & * \\
0 & 0 & 1 & * \end{array}\!\!\!\right) & \qquad
&
\left(\!\!\!\begin{array}{cccc}
1 & 0 & * & * \\
0 & 1 & * & * \end{array}\!\!\!\right),
\end{alignat*}
where $*$ stands for an arbitrary complex number.

A straightforward calculations show that the only matrices of the above 6 types for which the corresponding matrix $MN^*$ is self-adjoint are of the following four types:
\begin{alignat}{2} \label{eq:types-a-b}
&
\left(\!\!\!\begin{array}{cccc}
0 & 0 & 1 & 0 \\
0 & 0 & 0 & 1 \end{array}\!\!\!\right)  & \qquad
&
\left(\!\!\!\begin{array}{cccc}
1 & 0 & \alpha & 0 \\
0 & 0 & 0 & 1 \end{array}\!\!\!\right)  \\ \label{eq:types-c-d}
&
\left(\!\!\!\begin{array}{cccc}
1 & \rho e^{i \theta} & 0 & \sigma e^{i \theta} \\
0 & 0 & 1 & -e^{i \theta}/\rho \end{array}\!\!\!\right) & \qquad
&
\left(\!\!\!\begin{array}{cccc}
1 & 0 & \alpha & \omega \\
0 & 1 & \omega^* & \beta \end{array}\!\!\!\right),
\end{alignat}
where $\omega \in \nC$, $\alpha, \beta, \sigma, \theta \in \nR$, and $\rho > 0$. Clearly, the matrices in \eqref{eq:types-a-b} give rise to separated boundary conditions, while the first matrix  in \eqref{eq:types-c-d} leads to non-separated boundary conditions. The second matrix in \eqref{eq:types-c-d} leads to separated boundary conditions if and only if $\omega = 0$. A similar classification of boundary conditions for a regular Sturm-Liouville problem was established in \cite{CFK13}.

\begin{theorem}\label{Thm:5.5}
Let $S_\pm$ be a closed symmetric densely defined operator with defect numbers $(1,1)$ in the Hilbert space $\bigl(\cH_\pm,\ahip_{\cH_\pm}\bigr)$. A self-adjoint extension $\wh{S}$ of $S_+\oplus S_-$ in $\bigl(\cH, \ahip_{\cH}\bigr)$ is a coupling of $S_+$ and $S_-$ relative to some boundary triples of $S_+^*$ and $S_-^*$ if and only if $\wh{S}$ is a non-separated extension of $S_+\oplus S_-$.
\end{theorem}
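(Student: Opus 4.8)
My plan is to translate both sides of the equivalence into linear algebra on the boundary data. For $f=\begin{pmatrix}f_+\\f_-\end{pmatrix}\in\dom\bigl(S_+^*\oplus S_-^*\bigr)$ I write $\xi_\pm(f)=\begin{pmatrix}\Gamma_0^\pm f_\pm\\\Gamma_1^\pm f_\pm\end{pmatrix}\in\nC^2$. Writing out Green's identity~\eqref{eGreen} for $S_+^*\oplus S_-^*$ shows that its self-adjoint restrictions $\wh S$ correspond exactly to the two-dimensional neutral subspaces $L\subseteq\nC^4$ in the variables $\bigl(\xi_+(f),\xi_-(f)\bigr)$ for the Hermitian form $\Omega(\xi,\eta)=\eta_+^*\V{J}\xi_++\eta_-^*\V{J}\xi_-$; this is the coordinate-free form of the condition, recalled before the theorem, that $\bigl(M\ N\bigr)$ have rank two with $MN^*$ self-adjoint. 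The whole proof then amounts to identifying which such $L$ can be reached by the coupling construction.

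For the direct implication, suppose $\wh S$ is a coupling relative to some boundary triples $\bigl(\nC,\wt\Gamma_0^\pm,\wt\Gamma_1^\pm\bigr)$, so that $\dom\wh S$ is described by $\wt\Gamma_0^+f_+=\wt\Gamma_0^-f_-$ and $\wt\Gamma_1^+f_+=-\wt\Gamma_1^-f_-$. Setting $f_-=0$ forces $\wt\Gamma_0^+f_+=\wt\Gamma_1^+f_+=0$, hence $f_+\in\ker\wt\Gamma_0^+\cap\ker\wt\Gamma_1^+=\dom S_+$, and the symmetric computation with $f_+=0$ gives $f_-\in\dom S_-$. This is precisely~\eqref{eq:nssa}, so $\wh S$ is non-separated.

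For the converse I would first read~\eqref{eq:nssa} as the two transversality statements $L\cap\bigl(\nC^2\oplus\{0\}\bigr)=\{0\}$ and $L\cap\bigl(\{0\}\oplus\nC^2\bigr)=\{0\}$ for the $\xi_+$- and $\xi_-$-coordinate planes. The first makes the projection $L\to\nC^2$, $(\xi_+,\xi_-)\mapsto\xi_-$, injective and hence, by equality of dimensions, bijective; thus $L=\bigl\{(\xi_+,\xi_-):\xi_+=R\xi_-\bigr\}$ for a unique $R\in\nC^{2\times2}$, and the second transversality forces $\ker R=\{0\}$, i.e.\ $R$ is invertible. Evaluating neutrality of $L$ on this graph yields $R^*\V{J}R=-\V{J}$. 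Now comes the key step: with the coupling matrix $D=\begin{pmatrix}1&0\\0&-1\end{pmatrix}$ one checks $D^*\V{J}D=-\V{J}$, so $W:=DR$ satisfies $W^*\V{J}W=R^*\bigl(D^*\V{J}D\bigr)R=-R^*\V{J}R=\V{J}$, that is, $W$ is $\V{J}$-unitary. By Remark~\ref{rem:transp} the functionals given by $\begin{pmatrix}\wt\Gamma_0^-\\\wt\Gamma_1^-\end{pmatrix}=W\begin{pmatrix}\Gamma_0^-\\\Gamma_1^-\end{pmatrix}$ form a boundary triple for $S_-^*$; keeping $\wt\Gamma_j^+=\Gamma_j^+$, the coupling condition relative to $\bigl(\nC,\Gamma_0^+,\Gamma_1^+\bigr)$ and $\bigl(\nC,\wt\Gamma_0^-,\wt\Gamma_1^-\bigr)$ reads $\xi_+=D\,(W\xi_-)=D(DR)\xi_-=R\xi_-$, which is exactly $L$. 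Hence that coupling and $\wh S$ are the same restriction of $S_+^*\oplus S_-^*$, and $\wh S$ is a coupling.

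The only delicate points are bookkeeping rather than analysis: writing Green's identity in the form $\Omega$, checking the sign $D^*\V{J}D=-\V{J}$, and establishing that non-separatedness is equivalent to $L$ being the graph of an \emph{invertible} matrix rather than an arbitrary Lagrangian. Once these are in place, the heart of the argument is the single factorization $R=D(DR)$ with $DR$ automatically $\V{J}$-unitary, together with the transformation rule of Remark~\ref{rem:transp}; I expect no substantive obstacle beyond getting these signs and dimension counts correct. (As a consistency check, one can verify that the two non-separated normal forms in~\eqref{eq:types-c-d} indeed produce such an invertible $R$, while the separated forms in~\eqref{eq:types-a-b} do not.)
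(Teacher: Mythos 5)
Your proof is correct, and the forward direction is essentially identical to the paper's (set $f_-=0$, respectively $f_+=0$, in the coupling conditions to land in $\dom S_\pm$). For the converse, however, you take a genuinely different route. The paper argues by normal forms: it reduces the block matrix $\bigl(M\ N\bigr)$ encoding the boundary conditions to row echelon form, isolates the two non-separated types in \eqref{eq:types-c-d} (the second with $\omega\neq 0$), and for each type exhibits explicit matrices turning the given triples into new ones relative to which the conditions become the coupling conditions \eqref{eq:Coupl_Cond}; in the second case it must transform both the plus- and the minus-side triples. You instead work coordinate-free: a self-adjoint extension corresponds to a two-dimensional $\Omega$-neutral subspace $L\subseteq\nC^4$, non-separatedness makes $L$ the graph of a matrix $R$ with $R^*\V{J}R=-\V{J}$, and the single identity $D^*\V{J}D=-\V{J}$ for the coupling matrix $D$ shows that $W=DR$ is $\V{J}$-unitary, so transforming only the minus-side triple by $W$ via Remark~\ref{rem:transp} realizes $\wh{S}$ as a coupling. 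This eliminates the case analysis, keeps the plus-side triple fixed, and in fact makes invertibility of $R$ automatic: $R^*\V{J}R=-\V{J}$ forces $|\det R|=1$, so for neutral $L$ your two transversality conditions are equivalent. What the paper's longer route buys is the explicit catalogue \eqref{eq:types-a-b}--\eqref{eq:types-c-d} of all self-adjoint boundary conditions, which it records for its own sake (cf.\ the comparison with \cite{CFK13}). Both arguments rest on the same standard inputs, which you invoke legitimately: the identification of self-adjoint restrictions of $S_+^*\oplus S_-^*$ with maximal neutral boundary data (equivalently, rank $2$ with $MN^*$ self-adjoint), and the converse direction of Remark~\ref{rem:transp} — that a $\V{J}$-unitary transform of a boundary triple is again a boundary triple — which the paper's own proof uses in exactly the same way.
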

\begin{proof}
Assume that $\wh{S}$ is a coupling of $S_+$ and $S_-$ relative to  boundary triples $\bigl(\nC,\Gamma_0^+,\Gamma_1^+\bigr)$ and $\bigl(\nC,\Gamma_0^-,\Gamma_1^-\bigr)$. Let
\[
\left\{\left(\!\!\!\begin{array}{c}
f_+ \\ 0
\end{array}\!\!\!\right), \left(\!\!\!\begin{array}{c}
S_+^* f_+ \\ 0
\end{array}\!\!\!\right) \right\} \in \gr \wh{S}.
\]
Then, by definition of $\wh{S}$ as a coupling, we have $\Gamma_0^+ f_+ - \Gamma_0^- 0 = 0$ and $\Gamma_1^+ f_+ + \Gamma_1^- 0 = 0$. That is, $\Gamma_0^+ f_+ = 0$ and $\Gamma_1^+ f_+ = 0$, proving that $(\gr \wh{S}) \cap   \bigl(\cH_+ \oplus \{0\}\bigr)^2 = \gr S_+$. Similarly, $(\gr \wh{S}) \cap   \bigl(\{0\}\oplus\cH_- \bigr)^2 = \gr S_-$. Hence, $\wh{S}$ is a non-separated extension.

To prove the converse, assume that $\wh{S}$ is a non-separated self-adjoint extension of $S_+\oplus S_-$ in $\bigl(\cH, \ahip_{\cH}\bigr)$. Let $\bigl(\nC,\Gamma_0^\pm,\Gamma_1^\pm\bigr)$ be a boundary triple for $S_\pm^*$.  Then the block matrix $\bigl(M \ N\bigr)$ corresponding to the boundary conditions that determine the domain of $\wh{S}$ are of two forms in \eqref{eq:types-c-d} with $\omega \neq 0$.

First we consider the boundary conditions corresponding to the first matrix in \eqref{eq:types-c-d}
\begin{alignat*}{3}
&\Gamma_0^+ f_+  & &- \bigl(e^{i \theta}/\rho\bigr) \Gamma_0^- f_- & & = 0 \\
&\Gamma_1^+ f_+  & &+ \bigl(-\sigma e^{i \theta}\bigr) \Gamma_0^- f_- + \rho e^{i \theta} \Gamma_1^- f_- & & =  0.
\end{alignat*}
These boundary conditions can be rewritten as:
\begin{equation}\label{eq:Coupl_Cond}
     \wh{\Gamma}_{0}^+(f_+) - \wh\Gamma_{0}^- (f_-)=0,  \quad
 \wh{\Gamma}_{1}^+(f_+) + \wh\Gamma_{1}^- (f_-)=0,
\end{equation}
where
\begin{equation*} 
\begin{pmatrix}
\wh\Gamma_{0}^+ (f_+) \\ \wh\Gamma_{1}^+ (f_+)
\end{pmatrix}=
\begin{pmatrix}
\Gamma_{0}^+ (f_+) \\ \Gamma_{1}^+ (f_+)
\end{pmatrix}
,\quad
\begin{pmatrix}
\wh\Gamma_{0}^- (f_-) \\ \wh\Gamma_{1}^- (f_-)
\end{pmatrix}=
e^{i\theta}
\begin{pmatrix}
1/\rho & 0\\
-\sigma & \rho
\end{pmatrix}\begin{pmatrix}
\Gamma_{0}^- (f_-) \\ \Gamma_{1}^- (f_-)
\end{pmatrix}
\end{equation*}
By Remark~\ref{rem:transp} the triple $\bigl(\nC,\wh{\Gamma}_0^-,\wh{\Gamma}_1^- \bigr)$ is a boundary triple for $S_-^*$. Therefore, the coupling of $S_+$ and $S_-$ relative to the boundary triples $\bigl(\nC,\wh{\Gamma}_0^+,\wh{\Gamma}_1^+ \bigr)$ and $\bigl(\nC,\wh{\Gamma}_0^-,\wh{\Gamma}_1^- \bigr)$ coincides with the operator $\wh{S}$.

Next we consider the boundary conditions corresponding to the second matrix in \eqref{eq:types-c-d} with $\omega \neq 0$. These boundary conditions are
\begin{alignat}{3} \label{eq:B_Cond21}
&\omega^* \Gamma_0^+ f_+ & &- \bigl( \Gamma_1^- f_- - \beta \Gamma_0^- f_- \bigr)   & & =  0 \\
\label{eq:B_Cond22}
&\frac{1}{\omega}\Gamma_1^+ f_+ - \frac{\alpha}{\omega} \Gamma_0^+ f_+
  & & + (- \Gamma_0^- f_-) & & = 0
\end{alignat}
Setting
\begin{equation*} 
\begin{pmatrix}
\wh\Gamma_{0}^+ (f_+) \\ \wh\Gamma_{1}^+ (f_+)
\end{pmatrix}=
\begin{pmatrix}
\omega^* & 0\\
-\frac{\alpha}{\omega} & \frac{1}{\omega}
\end{pmatrix}
\begin{pmatrix}
\Gamma_{0}^+ (f_+) \\ \Gamma_{1}^+ (f_+)
\end{pmatrix},\
\begin{pmatrix}
\wh\Gamma_{0}^- (f_-) \\ \wh\Gamma_{1}^- (f_-)
\end{pmatrix}=
\begin{pmatrix}
-\beta & 1\\
-1 & 0
\end{pmatrix}\begin{pmatrix}
\Gamma_{0}^- (f_-) \\ \Gamma_{1}^- (f_-)
\end{pmatrix}
\end{equation*}
one can rewrite boundary conditions~\eqref{eq:B_Cond21},\eqref{eq:B_Cond22} in the form~\eqref{eq:Coupl_Cond}.  By Remark~\ref{rem:transp} the triple $\bigl(\nC,\wh{\Gamma}_0^+,\wh{\Gamma}_1^+ \bigr)$ is a boundary triple for $S_+^*$ and the triple $\bigl(\nC,\wh{\Gamma}_0^-,\wh{\Gamma}_1^- \bigr)$ is a boundary triple for $S_-^*$. Therefore the  coupling of $S_+$ and $S_-$ relative to the boundary triple $\bigl(\nC,\wh{\Gamma}_0^+,\wh{\Gamma}_1^+ \bigr)$ for $S_+^*$ and the boundary triple $\bigl(\nC,\wh{\Gamma}_0^-,\wh{\Gamma}_1^- \bigr)$ for $S_-^*$ coincides with the operator $\wh{S}$.
This completes the proof.
\end{proof}

\subsection{A partially fundamentally reducible operator as a coupling} \label{SubS:CoupleKS}

Given two symmetric operators, $S_+$ in the Hilbert space $\cH_+$ and $S_-$ in the Hilbert space $\cH_-$, in Subsection~\ref{SubS:CoupleHS}  we constructed a coupling $S_1$ of these two operators which is a self-adjoint operator in the Hilbert space $\cH=\cH_+\oplus\cH_-$.

Next we introduce a Kre\u{\i}n space structure on $\cH$.  Let  $J$ be a self-adjoint involution on $\bigl( \cH, \ahip_{\cH} \bigr)$ defined by
\begin{equation} \label{eq:defJ}
J \begin{pmatrix} f_+ \\ f_- \end{pmatrix} := \begin{pmatrix}\! f_+\! \\ \!- f_- \!\end{pmatrix}, \quad f_+ \in \cH_+, \ f_- \in \cH_-.
\end{equation}
This involution induces an indefinite inner product on $\cH$:
\[
[f,g]_{{\cH}} := \bigl\langle {J} f,g\bigr\rangle_{{\cH}}, \quad f,g \in {\cH},
\]
and with this inner product $\bigl( {\cH}, \kip_{{\cH}} \bigr)$ is a Kre\u{\i}n space.

With $S_0$ and $S_1$ from Theorem~\ref{tHsc} we define $A_0 := J S_0$ and $A_1 := J S_1$. Since $S_0$ and $S_1$ are self-adjoint in the Hilbert space $({\cH},\ahip_\cH)$, $A_0$ and $A_1$ are self-adjoint in the Kre\u{\i}n space $(\cH,\kip_\cH)$. In fact,  $A_0$ and $A_1$  are given by
\begin{equation}\label{eq:A0}
    A_0 \begin{pmatrix} f_+ \\ f_- \end{pmatrix}=\begin{pmatrix} S_0^+ f_+ \\ -S_0^-f_-\end{pmatrix}, \quad
\begin{pmatrix} f_+ \\ f_- \end{pmatrix}\in \dom(A_0)=\dom(S_0),
\end{equation}
\begin{equation}\label{eq:A1}
    A_1 \begin{pmatrix} f_+ \\ f_- \end{pmatrix}=\begin{pmatrix} S_+^* f_+ \\ -S_-^*f_-\end{pmatrix}, \quad
\begin{pmatrix} f_+ \\ f_- \end{pmatrix}\in \dom(A_1)=\dom(S_1).
\end{equation}
The operator $A_1$  will be called the {\it coupling} of the operators $S_+$ and $-S_-$ in the Kre\u{\i}n space $\bigl( {\cH}, \kip_{{\cH}} \bigr)$.

In the rest of this subsection we will proceed in the opposite direction: Given a partially fundamentally reducible self-adjoint operator $A$ in the Kre\u{\i}n space $({\cK}, \kip_{\cK})$ we will prove that $A$ is a coupling of two operators.

Recall that Definition~\ref{def:PartFR} associates a fundamental decomposition
$\cK = \cK_+ [\dot{+}]\cK_-$ of $(\cK, \kip_{\cK})$ and symmetric operators $S_+$ and $S_-$
with a partially fundamentally reducible operator $A$ in a Kre\u{\i}n space $(\cK, \kip_{\cK})$.
As before, $S_\pm^*$ denotes the adjoint of $S_\pm$ in the Hilbert space $(\cK_\pm, \pm\kip)$. By   $P_+$ and $P_-$ we denote the orthogonal projections and by $J$ the fundamental symmetry corresponding to the fundamental decomposition $\cK = \cK_+[\dot{+}]\cK_-$, while $\ahip_\cK$ denotes the corresponding Hilbert space inner product and $\|\cdot\|_\cK$ denotes the norm induced by $\ahip_\cK$. The notation related to a partially fundamentally reducible operator $A$ introduced in this paragraph is used throughout the rest of the paper.

\begin{theorem} \label{prop:AasCoupl}
Let $A$ be a partially fundamentally reducible operator in a Kre\u{\i}n space $({\cK}, \kip_{{\cK}})$. The following statements hold.
\begin{enumerate}
\renewcommand*\theenumi{\alph{enumi}}
\renewcommand*\labelenumi{{\rm (\theenumi)}}
  \item \label{pAasCoupl-1}
We have $\dom(S_\pm^*)=P_\pm\dom(A)$ and
\begin{equation*} 
  \pm S_\pm^*P_\pm f=P_\pm Af\quad\mbox{for all}\quad f\in\dom(A).
  \end{equation*}
  \item \label{pAasCoupl-2}
Let $\bigl(\nC,\Gamma_0^+,\Gamma_1^+\bigr)$ be a boundary triple for $S_+^*$. Then the equalities
  \begin{equation}\label{eq:Gamma-}
    \Gamma_0^- P_-f = \Gamma_0^+ P_+f, \qquad \Gamma_1^- P_-f  = - \Gamma_1^+ P_+f, \qquad f\in\dom(A),
  \end{equation}
define a boundary triple $\bigl(\nC,\Gamma_0^-,\Gamma_1^-\bigr)$ for $S_-$.
  \item \label{pAasCoupl-3}
We have $A = JS_1$ where $S_1$ is the coupling of the operators $S_+$ and $S_-$ in the Hilbert space $(\cK, \ahip_\cK)$ relative to the boundary triples $\bigl(\nC,\Gamma_0^+,\Gamma_1^+\bigr)$ and $\bigl(\nC,\Gamma_0^-,\Gamma_1^-\bigr)$.
 \item
Let $A$ be a nonnegative operator in $(\cK, \kip_{\cK})$ and assume that the operator $S_0^+:=S_+^*|_{\ker(\Gamma_0^+) }$ coincides with Kre\u{\i}n's or Friedrichs' extension of $S_+$. Then $S_0^-:=S_-^*|_{\ker(\Gamma_0^-) }$ is a nonnegative operator in the Hilbert space $(\cK_-, -\kip_{{\cK}})$.
\end{enumerate}

\end{theorem}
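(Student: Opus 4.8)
The plan is to transport the nonnegativity of $A$ from the Kre\u{\i}n space $(\cK,\kip_{\cK})$ to the Hilbert space $(\cK,\ahip_\cK)$ and then to read off the conclusion from the necessary conditions already isolated in Corollaries~\ref{col:nonneg} and~\ref{col:nonnegF}.

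First I would use part~(\ref{pAasCoupl-3}), which gives $A = J S_1$ with $\dom(A)=\dom(S_1)$. Since $J^2 = I$ and $[x,y]_{\cK} = \langle Jx,y\rangle_\cK$, this yields
\begin{equation*}
[Af,f]_{\cK} = \langle JAf,f\rangle_\cK = \langle S_1 f, f\rangle_\cK, \qquad f\in\dom(A).
\end{equation*}
Hence $A$ is nonnegative in $(\cK,\kip_{\cK})$ if and only if the coupling $S_1$ is a nonnegative self-adjoint operator in the Hilbert space $(\cK,\ahip_\cK)$. As $A$ is assumed nonnegative, $S_1$ is nonnegative.

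Next I would match the present data to the hypotheses of the coupling corollaries. By part~(\ref{pAasCoupl-3}), $S_1$ is the coupling of $S_+$ and $S_-$ relative to $\bigl(\nC,\Gamma_0^+,\Gamma_1^+\bigr)$ and $\bigl(\nC,\Gamma_0^-,\Gamma_1^-\bigr)$, and on $\cK_-$ the fundamental symmetry $J$ acts as $-I$, so that $\ahip_\cK$ restricted to $\cK_-$ coincides with $-\kip_{\cK}$; thus the Hilbert space denoted $\cH_-$ in Subsection~\ref{SubS:CoupleHS} is precisely $(\cK_-,-\kip_{\cK})$. If $S_0^+$ coincides with the Kre\u{\i}n extension of $S_+$, I would invoke Corollary~\ref{col:nonneg}; if $S_0^+$ coincides with the Friedrichs extension of $S_+$, I would invoke Corollary~\ref{col:nonnegF} (in either case the hypothesis that $S_0^+$ is one of these extremal extensions is exactly what each corollary requires). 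In both cases the nonnegativity of $S_1$ forces condition~(i) of the respective corollary, namely that $S_0^- = S_-^*|_{\ker(\Gamma_0^-)}$ is nonnegative in $(\cK_-,-\kip_{\cK})$, which is exactly the claim.

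I do not expect a genuine obstacle here: once nonnegativity has been moved to the Hilbert space, the statement is an immediate consequence of the two corollaries. The only step needing care is the identification of $\cH_-$ with $(\cK_-,-\kip_{\cK})$, so that ``nonnegative in $\cH_-$'' in the corollaries is literally nonnegativity in the space $(\cK_-,-\kip_{\cK})$ appearing in the statement.
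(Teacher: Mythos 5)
Your treatment of part (d) is correct and coincides with the paper's own route: the paper disposes of the last statement with a one-line appeal to Corollaries~\ref{col:nonneg} and~\ref{col:nonnegF}, and your proposal simply makes explicit the two steps hidden in that line, namely the identity $[Af,f]_{\cK}=\la JAf,f\ra_{\cK}=\la S_1f,f\ra_{\cK}$ on $\dom(A)=\dom(S_1)$ (which transports nonnegativity of $A$ in the Kre\u{\i}n space to nonnegativity of the coupling $S_1$ in the Hilbert space), and the identification of the space $\cH_-$ of Subsection~\ref{SubS:CoupleHS} with $(\cK_-,-\kip_{\cK})$. Reading off condition (i) from the ``if and only if'' in each corollary is then legitimate, since the corollaries' hypotheses (in particular that $S_0^+$ is the Kre\u{\i}n, respectively Friedrichs, extension) are exactly the hypotheses of part (d).

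The genuine gap is that the statement consists of four parts and your proposal proves only the last one. Parts (a), (b) and (c) are nowhere established, and your argument for (d) explicitly takes part (c) --- $A=JS_1$ with $\dom(A)=\dom(S_1)$ --- as an input, so as written the proposal is circular relative to the theorem it is supposed to prove. These earlier parts carry real content: (a) asserts $\dom(S_\pm^*)=P_\pm\dom(A)$ together with $\pm S_\pm^*P_\pm f=P_\pm Af$ for $f\in\dom(A)$; (b) asserts that the formulas \eqref{eq:Gamma-} produce well-defined functionals on all of $\dom(S_-^*)$ satisfying the abstract Green's identity \eqref{eGreen} and $\ker(\Gamma_0^-)\cap\ker(\Gamma_1^-)=\dom(S_-)$, i.e.\ a boundary triple for $S_-^*$. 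The paper handles (a) and (b) by citing Lemma~5.1 of \cite{DHMS}, and then obtains (c) by comparing the boundary conditions \eqref{eq:Gamma-} with the description \eqref{eq:domwAHC} of $\dom(S_1)$ and invoking \eqref{eq:A1}. To turn your proposal into a complete proof you must supply these arguments (or the citation) before running your --- otherwise correct --- reasoning for part (d).
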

\begin{proof}
The statements (\ref{pAasCoupl-1}) and (\ref{pAasCoupl-2}) are proved in~\cite[Lemma 5.1]{DHMS}.

(\ref{pAasCoupl-3}) It follows from~\eqref{eq:domwAHC} and \eqref{eq:Gamma-} that $\dom(A)=\dom(S_1)$. The equality $A=J\!A_1$ follows from~\eqref{eq:A1}.

The last statement is implied by Corollaries~\ref{col:nonneg} and~\ref{col:nonnegF}.
\end{proof}


The results of the next theorem can be derived from~\cite{Der95}. However, we prefer to present a direct proof for this elementary case when defect numbers of $S_\pm$ are $(1,1)$. In the case of indefinite Sturm-Liouville operator similar construction has been used in
\cite[Proposition~2.5]{KaMal07}.

\begin{theorem} \label{tKsc}
Let $A$ be a partially fundamentally reducible operator in a Kre\u{\i}n space $({\cK}, \kip_{{\cK}})$ and let $\bigl(\nC,\Gamma_0^+, \Gamma_1^+\bigr)$ and $\bigl(\nC,\Gamma_0^-, \Gamma_1^-\bigr)$ be the boundary triples from  Theorem~{\rm\ref{prop:AasCoupl}~\!\!(\ref{pAasCoupl-2})}. Let $m_\pm$ be the Weyl function and let $\psi_\pm$ be the Weyl solution of $S_\pm$ relative to the boundary triple $\bigl(\nC,\Gamma_0^\pm, \Gamma_1^\pm\bigr)$. Let the operator $A_0$ be given by~\eqref{eq:A0}. Then
\begin{equation} \label{eRwA}
\rho(A) \setminus\nR
= \bigl\{ z \in \nCR :  m_+(z) + m_-(-z) \neq 0 \bigr\}.
\end{equation}
If $\rho(A) \setminus\nR $ is a nonempty set, then for every $\ z \in \rho(A) \setminus\nR$ and every $h \in \cK$ the resolvent of $A$ is given by
\begin{equation} \label{ereswA}
\bigl(A - z \bigr)^{-1} h = \bigl(A_0 - z \bigr)^{-1} h
  - \frac{\bigl[h,\psi(\co{z})\bigr]_{{\cK}}}{m_+(z)+m_-(-z)} \,
 \psi(z),
 \end{equation}
where
\begin{equation*} 
    \psi(z)= \psi_+(z) + \psi_-(-z).
\end{equation*}
For $h\in\cK_+$ the vector $f=P_{+}\bigl(A - z \bigr)^{-1} h$ is the solution of the $z$-dependent boundary value problem
\begin{equation}\label{eq:z_depend}
    (S_+^*-z)f=h,\quad \Gamma_1^+f+m_-(-z)\Gamma_0^+f=0.
\end{equation}
\end{theorem}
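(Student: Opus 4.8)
The plan is to compute the resolvent of $A$ directly by solving $(A-z)f=h$ for $f\in\dom(A)=\dom(S_1)$. Writing $A=JS_1$ (Theorem~\ref{prop:AasCoupl}(\ref{pAasCoupl-3})), the explicit action \eqref{eq:A1}, and the coupling domain \eqref{eq:domwAHC} turn this single Kre\u{\i}n-space equation into the two decoupled inhomogeneous problems $(S_+^*-z)f_+=h_+$ and $(S_-^*+z)f_-=-h_-$ on $\cK_+$ and $\cK_-$, subject to the scalar coupling conditions $\Gamma_0^+f_+=\Gamma_0^-f_-$ and $\Gamma_1^+f_++\Gamma_1^-f_-=0$. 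Since $z\in\nCR$ forces $z\in\rho(S_0^+)$ and $-z\in\rho(S_0^-)$ (the $S_0^\pm$ are self-adjoint in Hilbert spaces, hence have real spectra), I would use the direct-sum decomposition \eqref{eS*ds}, applied to $S_+$ at $z$ and to $S_-$ at $-z$, to write the general solutions $f_+=(S_0^+-z)^{-1}h_++c_+\psi_+(z)$ and $f_-=-(S_0^-+z)^{-1}h_-+c_-\psi_-(-z)$ with free constants $c_\pm\in\nC$.

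Next I would evaluate the four boundary functionals. Because the resolvent pieces lie in $\dom(S_0^\pm)=\ker\Gamma_0^\pm$ and $\Gamma_0^\pm\psi_\pm=1$, one reads off $\Gamma_0^+f_+=c_+$ and $\Gamma_0^-f_-=c_-$. For the $\Gamma_1^\pm$ values I would use \eqref{esubg1} on the $\psi$-terms and the identity \eqref{eG1A0} on the resolvent terms; applied in $\cK_+$ (whose Hilbert inner product is $\kip_\cK$) this gives $\Gamma_1^+f_+=[h_+,\psi_+(\co z)]_\cK+c_+m_+(z)$, and applied in $\cK_-$ (whose Hilbert inner product is $-\kip_\cK$, with spectral parameter reflected to $-z$) it gives $\Gamma_1^-f_-=[h_-,\psi_-(-\co z)]_\cK+c_-m_-(-z)$. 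Imposing the first coupling condition yields $c_+=c_-=:c$, and the second, after using the orthogonality of $\cK_+$ and $\cK_-$ to combine the two pairings into $[h,\psi(\co z)]_\cK$ with $\psi(\co z)=\psi_+(\co z)+\psi_-(-\co z)$, produces the single scalar equation $c\bigl(m_+(z)+m_-(-z)\bigr)=-[h,\psi(\co z)]_\cK$.

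From this scalar equation both resolvent statements follow, and the structure exactly mirrors Proposition~\ref{prop:S_01} with $m$ replaced by $m_+(z)+m_-(-z)$ and $\ahip$ replaced by $\kip_\cK$. If $m_+(z)+m_-(-z)\neq0$, then $c$ is uniquely determined, the solution $f$ exists and is unique, and assembling it — identifying the resolvent-free part with $(A_0-z)^{-1}h$ via \eqref{eq:A0} and recognizing $c\,\psi(z)$ as the rank-one correction — gives precisely \eqref{ereswA}, which also exhibits $(A-z)^{-1}$ as bounded, so $z\in\rho(A)$. If $m_+(z)+m_-(-z)=0$, then $\psi(z)$ satisfies both coupling conditions and $A\psi(z)=z\psi(z)$ with $\psi(z)\neq0$, so $z$ is an eigenvalue and $z\notin\rho(A)$; together these give \eqref{eRwA}. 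For the final claim I would specialize to $h\in\cK_+$, so that $f=P_+(A-z)^{-1}h=(S_0^+-z)^{-1}h+c\,\psi_+(z)$ solves $(S_+^*-z)f=h$, and then verify $\Gamma_1^+f+m_-(-z)\Gamma_0^+f=[h,\psi_+(\co z)]_\cK+c\bigl(m_+(z)+m_-(-z)\bigr)=0$, which is the boundary condition in \eqref{eq:z_depend}.

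The main obstacle is sign bookkeeping rather than anything conceptual: the minus component simultaneously carries the reflected spectral parameter $-z$ and lives in the Hilbert space whose inner product is $-\kip_\cK$, so applying \eqref{eG1A0} to $S_-$ requires tracking the substitution $w\mapsto -z$ in $\psi_-,m_-$ and the resolvent of $S_0^-$ together with the inner-product sign flip. Carrying these two conventions consistently is exactly what makes the two separate Hilbert-space pairings collapse into the single Kre\u{\i}n pairing $[h,\psi(\co z)]_\cK$ and what produces $m_-(-z)$ rather than $m_-(z)$; a sign slip at this step would destroy the clean form of \eqref{ereswA}.
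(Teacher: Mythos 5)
Your proposal is correct and follows essentially the same route as the paper's proof: reduce $(A-z)f=h$ to the decoupled system on $\cK_\pm$ via the coupling structure, decompose $f_\pm$ using \eqref{eS*ds} at the spectral points $z$ and $-z$, evaluate the boundary functionals with \eqref{esubg1} and \eqref{eG1A0} (with the same careful handling of the $-\kip_\cK$ inner product on $\cK_-$), solve the resulting scalar system for $c_+=c_-$, and treat the degenerate case $m_+(z)+m_-(-z)=0$ by exhibiting $\psi(z)$ as an eigenvector. The only cosmetic difference is in the last claim: you verify the boundary condition in \eqref{eq:z_depend} by substituting the explicit value of $c$, while the paper observes that $f_-=c_-\psi_-(-z)\in\ker(S_-^*+z)$ forces $\Gamma_1^-f_-=m_-(-z)\Gamma_0^-f_-$ and then invokes \eqref{eq:Gamma-}; both are equivalent one-line arguments.
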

\begin{proof}
Recall that the operator $A$ coincides with the coupling $A_1$ defined by~\eqref{eq:A1}.

Let $z$ be an arbitrary non-real number. By the definition of $A_1$ the equation $(A_1-z)f=h$ with $f \in \dom(S_1)$ and $h \in \cK$ is equivalent to the system
\begin{equation}\label{eq:syst}
(S_+^*-z)f_+ = h_+, \ \ -(S_-^*+z)f_-=h_-,
\end{equation}
where $f_+ = P_+ f$, $f_- = P_- f$, $h_+ = P_+ h$ and $h_- = P_- h$.
It follows from \eqref{eS*ds} and \eqref{eq:syst} that  $f_+ \in
\dom(S_+^*)$ can be expressed as
\begin{equation} \label{efdecom+}
f_+ = \bigl(S_0^+ - z\bigr)^{-1}h_+ + c_+ \psi_+(z) \quad
\text{with some}  \quad c_+ \in\nC.
\end{equation}
Similarly, $f_- \in \dom(S_-^*)$ can be expressed as
\begin{equation} \label{efdecom-}
f_- = -\bigl(S_0^- + z\bigr)^{-1}h_- + c_- \psi_-(-z) \quad
\text{with some}  \quad c_- \in\nC.
\end{equation}
By~\eqref{esubg1} and~\eqref{eG1A0} we have
\begin{align*}
\Gamma_0^+f_+ & = c_+, \\
\Gamma_0^-f_- & = c_-, \\
\Gamma_1^+f_+ & = \Gamma_1^+\bigl(S_0^+ - z\bigr)^{-1}h_+ + c_+ m_+(z) \\
  & = \bigl[ h_+, \psi_+(\co{z}) \bigr]_{\cK} + c_+ m_+(z), \\
\Gamma_1^-f_- & = -\Gamma_1^-\bigl(S_0^- + z\bigr)^{-1}h_- + c_- m_-(-z) \\
 & = \bigl[ h_-,\psi_-(-z^*) \bigr]_{\cK} + c_- m_-(-z).
\end{align*}
As $f_+$ and $f_-$ satisfy the boundary conditions~\eqref{eq:Gamma-} in the definition of $\dom (A_1) = \dom (S_1)$ we get
\begin{align*}
c_+ - c_- & = 0, \\
\Bigl(
 \bigl[ h_+, \psi_+(\co{z}) \bigr]_{\cK} + c_+ m_+(z) \Bigr) +
 \Bigl(
 \bigl[ h_-,\psi_-(-\co{z}) \bigr]_{\cK} + c_- m_-(-z)\Bigr)& = 0.
\end{align*}
For each $z$ such that $m_+(z)+m_-(-z)\ne 0$ the above system has a unique solution for $c_+, c_-$:
\begin{equation*} 
 c_+ = c_-
 =-\frac{%
 \bigl[ h_+, \psi_+(\co{z})\bigr]_{\cK} +
 \bigl[ h_-, \psi_-(-\co{z})\bigr]_{\cK} }%
 {m_+(z)+m_-(-z)} = \frac{-\bigl[ h, \psi(\co{z})\bigr]_\cK}{m_+(z)+m_-(-z)}.
\end{equation*}
Now, the preceding equations, \eqref{efdecom+} and \eqref{efdecom-} imply that whenever $m_+(z)+m_-(-z)\ne 0$, for arbitrary  $h\in {\cK}$, the system \eqref{eq:syst} has a unique solution $f\in \dom(A_1)$  and \eqref{ereswA} holds.  This also proves that the right hand side of~\eqref{eRwA} is a subset of the left hand side.

To prove the converse inclusion in~\eqref{eRwA} assume that $m_+(z)+m_-(-z)=0$  for some $z\in\nC\setminus\nR$.  Then~\eqref{eWeyl} and \eqref{esubg1} imply that
\[
\begin{split}
\Gamma_0^+\psi_+(z)-\Gamma_0^-\psi_-(-z)&=1-1=0\\
\Gamma_1^+\psi_+(z)+\Gamma_1^-\psi_-(-z)&=m_+(z)+m_-(-z)=0.
\end{split}
\]
In view of~\eqref{eq:domwAHC} this means that $\psi_+(z) + \psi_-(-z) \in\dom(A_1)=\dom(S_1)$. Since
\[
(A_1-z)\psi(z) = (S_+^*-z)\psi_+(z) + -(S_-^*+z)\psi_-(-z) = 0,
\]
$z$ is an eigenvalue of $A_1$. That is $z \notin \rho(A_1)\setminus\nR$.

To prove~\eqref{eq:z_depend}, set $h=h_+\in\cK_+$. Then the first equality in ~\eqref{eq:z_depend} is implied by~\eqref{eq:syst}. Since~\eqref{efdecom-} takes the form $f_-=c_-\psi_-(-z)\in\ker(S_-^*+z)$, then
\[
\Gamma_1^-f_-=m_-(-z)\Gamma_0^-f_-=m_-(-z)\Gamma_0^+f_+.
\]
In view of~\eqref{eq:Gamma-} this proves the second equality in~\eqref{eq:z_depend}.
\end{proof}

\begin{corollary}\label{c:m+-}
Let the assumptions of Theorem~{\rm\ref{tKsc}} hold. Then
\begin{equation}\label{eq:m+-}
    m_+(z)+m_-(-z) = 0 \quad \text{for all} \quad z \in \nC \setminus \nR
\end{equation}
if and only if the resolvent set of $A$ is empty. In this case
$\nC\setminus\nR\subset\sigma_p(A)$.
\end{corollary}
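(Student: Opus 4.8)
The plan is to read everything off the description of the non-real resolvent set already established in Theorem~\ref{tKsc}. By \eqref{eRwA} we have
\[
\rho(A)\setminus\nR = \bigl\{ z \in \nCR : m_+(z)+m_-(-z)\neq 0\bigr\},
\]
so the non-real spectrum of $A$ is governed entirely by the zeros of $z\mapsto m_+(z)+m_-(-z)$. The direction assuming $\rho(A)=\emptyset$ is then immediate: if $\rho(A)$ is empty then so is $\rho(A)\setminus\nR$, and the displayed identity forces $m_+(z)+m_-(-z)=0$ for every $z\in\nCR$, which is \eqref{eq:m+-}.

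For the converse I would assume \eqref{eq:m+-} and note that \eqref{eRwA} then gives $\rho(A)\setminus\nR=\emptyset$, i.e.\ $\rho(A)\subseteq\nR$. The one point requiring care is that this does not by itself yield $\rho(A)=\emptyset$, since a priori $\rho(A)$ could contain real points. The extra ingredient is that $A$ is closed, so its resolvent set is an open subset of $\nC$; an open subset of $\nC$ that is contained in $\nR$ has empty interior and is therefore empty. (Equivalently, a real point of $\rho(A)$ would, by openness, carry an entire disk into $\rho(A)$ and hence produce non-real resolvent points, contradicting $\rho(A)\setminus\nR=\emptyset$.) I expect this openness step to be the only genuine subtlety in the argument.

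It remains to prove $\nCR\subseteq\sigma_p(A)$ under \eqref{eq:m+-}, and here I would simply reuse the eigenvector construction from the second half of the proof of Theorem~\ref{tKsc}. Fixing an arbitrary $z\in\nCR$, the hypothesis $m_+(z)+m_-(-z)=0$ together with \eqref{eWeyl} and \eqref{esubg1} gives
\[
\Gamma_0^+\psi_+(z)-\Gamma_0^-\psi_-(-z)=0, \qquad \Gamma_1^+\psi_+(z)+\Gamma_1^-\psi_-(-z)=0,
\]
so that $\psi(z)=\psi_+(z)+\psi_-(-z)$ belongs to $\dom(S_1)=\dom(A)$ by \eqref{eq:domwAHC} and satisfies $(A-z)\psi(z)=0$. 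Since $\Gamma_0^+\psi_+(z)=1$ forces $\psi_+(z)\neq 0$, and the two summands lie in the distinct components of the direct sum $\cK=\cK_+[\dot{+}]\cK_-$, we have $\psi(z)\neq 0$; thus $z$ is an eigenvalue of $A$. As $z\in\nCR$ was arbitrary, this yields $\nCR\subseteq\sigma_p(A)$.
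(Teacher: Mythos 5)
Your proof is correct and takes essentially the same route as the paper: the corollary is read off from \eqref{eRwA} together with the eigenvector construction $\psi(z)=\psi_+(z)+\psi_-(-z)\in\dom(S_1)$ used in the second half of the proof of Theorem~\ref{tKsc}. Your openness-of-$\rho(A)$ argument for passing from $\rho(A)\subseteq\nR$ to $\rho(A)=\emptyset$ is just the dual formulation of the equally immediate observation that $\nCR\subset\sigma_p(A)$ and closedness of the spectrum force $\sigma(A)=\nC$, so there is no substantive difference.
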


As we pointed out in Subsection~\ref{ss:non-Ks}, a nonnegative self-adjoint operator in Kre\u{\i}n space can have an empty resolvent set. Next we show that this cannot happen for a nonnegative partially fundamentally reducible operator.

\begin{corollary} \label{c:non-none}
The spectrum of a nonnegative partially fundamentally reducible operator $A$ in a Kre\u{\i}n space $({\cK}, \kip_{{\cK}})$ is real.
\end{corollary}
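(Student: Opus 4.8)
The plan is to reduce the statement to showing that $\rho(A)\neq\emptyset$, and then to invoke the general fact recalled in Subsection~\ref{ss:non-Ks} that a nonnegative self-adjoint operator in a Kre\u{\i}n space with nonempty resolvent set has real spectrum. Since $A$ is nonnegative by hypothesis and self-adjoint by Definition~\ref{def:PartFR}, it suffices to prove $\rho(A)\neq\emptyset$. By Corollary~\ref{c:m+-}, the resolvent set is empty precisely when $m_+(z)+m_-(-z)=0$ for all $z\in\nCR$, so I would argue by contradiction and assume that this identity holds.

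First I would choose the boundary triples conveniently. Because $A$ is represented as a coupling for \emph{any} boundary triple of $S_+^*$ (Theorem~\ref{prop:AasCoupl}), the operator $A$, and hence $\rho(A)$, does not depend on that choice. As $S_+$ is nonnegative, I would pick $\bigl(\nC,\Gamma_0^+,\Gamma_1^+\bigr)$ so that $S_0^+$ is the Friedrichs (or Kre\u{\i}n) extension of $S_+$; then $m_+$ is holomorphic on $\nR_-$. By Theorem~\ref{prop:AasCoupl}(d) the operator $S_0^-$ is then nonnegative as well, so $m_-$ is likewise holomorphic on $\nR_-$. In terms of spectral measures this means that both $m_+$ and $m_-$ are holomorphic on $\nC\setminus[0,+\infty)$.

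The key step is analytic continuation. Under the assumed identity $m_+(z)=-m_-(-z)$ on $\nCR$, the function $z\mapsto -m_-(-z)$ is holomorphic on $\nC\setminus(-\infty,0]$ and agrees with $m_+$ on the overlap $\nCR$. Gluing these two holomorphic functions along $\nCR$ produces a holomorphic extension of $m_+$ to $\bigl(\nC\setminus[0,+\infty)\bigr)\cup\bigl(\nC\setminus(-\infty,0]\bigr)=\nC\setminus\{0\}$. Consequently the spectral measure of the Nevanlinna function $m_+$ is supported in $\{0\}$, and $m_+$ must have the form $m_+(z)=a+bz-\kappa/z$ with $a\in\nR$, $b\ge 0$ and $\kappa\ge 0$.

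Finally I would derive a contradiction from the density of $S_+$. For $m_+(z)=a+bz-\kappa/z$ one computes $\im m_+(iy)=by+\kappa/y$, so that $\im m_+(iy)/y\to b$ and $y\,\im m_+(iy)=by^2+\kappa$ as $y\uparrow+\infty$. Applying the density criterion of Theorem~\ref{mz} to (the simple part of) $S_+$, the density of $S_+$ forces $\lim_{y\uparrow+\infty}\im m_+(iy)/y=0$, that is $b=0$, and $\lim_{y\uparrow+\infty}y\,\im m_+(iy)=+\infty$; but with $b=0$ the latter limit equals $\kappa<+\infty$. This contradiction shows $m_+(z)+m_-(-z)\not\equiv 0$, hence $\rho(A)\neq\emptyset$, and the spectrum of $A$ is real. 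The main obstacle is the gluing/continuation step, which is exactly where nonnegativity enters, through the holomorphy of \emph{both} Weyl functions on $\nR_-$ guaranteed by Theorem~\ref{prop:AasCoupl}(d); the density of $S_+$ then closes the argument.
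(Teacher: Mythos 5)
Your proof is correct and follows essentially the same route as the paper's: assuming $\rho(A)=\emptyset$, Corollary~\ref{c:m+-} gives $m_+(z)=-m_-(-z)$, nonnegativity makes both Weyl functions holomorphic on $\nR_-$ up to possible poles, gluing the two halves forces $m_+$ to be rational, and this contradicts the density of $S_+$ via the criterion of Theorem~\ref{mz}. The only difference is cosmetic: you normalize the boundary triple so that $S_0^+$ is the Friedrichs/Kre\u{\i}n extension (using Theorem~\ref{prop:AasCoupl} to make both functions pole-free on $\nR_-$, hence $m_+(z)=a+bz-\kappa/z$ exactly), whereas the paper keeps an arbitrary triple and instead invokes Kre\u{\i}n's bound of at most one negative pole per function, concluding that $m_+$ is rational with at most three poles before reaching the same density contradiction.
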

\begin{proof}
Assume that $\rho(A)=\emptyset$. Since $A$ is nonnegative in $({\cK}, \kip_{{\cK}})$, then $S_+$ and $S_-$ are nonnegative in the Hilbert spaces $(\cK_+, \kip_{{\cK}})$ and $(\cK_-, -\kip_{{\cK}})$, respectively. Hence each of the Weyl functions $m_+$ and $m_-$ has at most 1 pole on $\nR_-$. By Corollary~\ref{c:m+-} the equality~\eqref{eq:m+-} holds, which implies that $m_+$ is a  rational function with at most three poles on $\nR$. But this contradicts the assumption that $S_+$ is a densely defined operator.
\end{proof}

The claim in the last corollary can also be deduced from \cite[Corollary~2.5]{ABT}.  We notice that for a class of indefinite Sturm-Liouville operators the nonemptiness of the resolvent set was proved in~\cite[Proposition~4.5]{KaMal07}, \cite[Proposition~3.1]{KaKost08} and \cite[Theorem~5.3]{KuTr11} using the known asymptotic behavior of the Titchmarsh-Weyl coefficients.

\section{Regularity of critical points} \label{sect:3}

Throughout this section we use the notation introduced in the paragraph preceding Theorem~\ref{prop:AasCoupl} and in Theorem~\ref{tKsc}. By Corollary~\ref{c:non-none} a nonnegative partially fundamentally reducible operator $A$ in a Kre\u{\i}n space has a nonempty resolvent set.  Therefore $A$ has a spectral function with critical points $0$ and $\infty$.  In this section we study these critical points in terms of the Weyl functions $m_+$ and $m_-$.

\subsection{$D$-properties of pairs of Nevanlinna functions}
The following necessary conditions for the regularity
of the critical points $0$ and $\infty$ of an indefinite
Sturm-Liouville operator in terms of the Titchmarsh-Weyl
coefficients were proved in~\cite[Theorem 3.4]{KaKost08} as an extension of the corresponding result for the real line established in~\cite[Corollary 5.3]{{KaMal07}}.

\begin{theorem}\label{thm:NesReg}
{\rm (\cite{KaMal07}, \cite{KaKost08})}  Let  $A$ be a nonnegative partially fundamentally reducible operator
in a Kre\u{\i}n space $(\cK, \kip_{\cK})$ and let $m_+$ and $m_-$ be Weyl functions introduced in Theorem~{\rm~\ref{tKsc}}.  Then the following two statements hold.
\begin{enumerate}
\renewcommand*\theenumi{\alph{enumi}}
\renewcommand*\labelenumi{{\rm (\theenumi)}}
    \item \label{thm:NesReg-1}
If $\infty\not\in c_s(A)$, then the functions
\begin{equation}\label{eq:ratio2}
 z \mapsto   \frac{\im m_+({z})}{|m_+({z})+m_-(-{z})|}, \qquad z \mapsto
    \frac{\im m_-({z})}{|m_+({z})+m_-(-{z})|}
\end{equation}
are defined on $\nC_+$ and are bounded on
$\Omega_R^\infty = \{{z}\in\nC_+:\,|{z}|>R\}$ for each $R>0$;
    \item \label{thm:NesReg-2}
If $0\not\in c_s(A)$, then the functions in~\eqref{eq:ratio2} are defined on $\nC_+$ and are  bounded on $\Omega_R^0 = \{{z}\in\nC_+:\,|{z}|<R\}$ for each $R>0$.
\end{enumerate}
\end{theorem}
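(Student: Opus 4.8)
The plan is to combine the resolvent formula \eqref{ereswA} with the resolvent criterion of Veseli\'c (Theorem~\ref{Veselic}), peeling off the part of the resolvent coming from the fundamentally reducible operator $A_0$. Throughout write $M(z) := m_+(z) + m_-(-z)$. First I would check that the two functions in \eqref{eq:ratio2} are genuinely defined on $\nC_+$: by Corollary~\ref{c:non-none} the spectrum of $A$ is real, so $\nCR \subset \rho(A)$, and then \eqref{eRwA} forces $M(z) \neq 0$ for every $z \in \nCR$; since $m_+$ and $m_-$ are holomorphic on $\nC_+$ this makes both ratios finite there.

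Next I would split the resolvent. By Theorem~\ref{prop:AasCoupl}\,(c) we have $A = A_1$, and \eqref{ereswA} gives, for $z \in \nCR$,
\[
(A-z)^{-1} = (A_0 - z)^{-1} - T(z), \qquad T(z)h := \frac{\bigl[h,\psi(\co z)\bigr]_{\cK}}{M(z)}\,\psi(z),
\]
with $\psi(z) = \psi_+(z) + \psi_-(-z)$. The operator $A_0$ of \eqref{eq:A0} maps $(\dom A_0)\cap\cK_\pm$ into $\cK_\pm$, hence it is fundamentally reducible and, by Proposition~\ref{p:Sim-iff-fd}, similar to a self-adjoint operator in a Hilbert space. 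Consequently the contour integrals of $(A_0-z)^{-1}$ appearing in Theorem~\ref{Veselic} are uniformly bounded: this follows from the spectral theorem for the similar self-adjoint operator together with the elementary fact that $\int_{-i\eta}^{-i} + \int_i^{i\eta}(\lambda - z)^{-1}\,dz$ stays bounded in $\eta$, uniformly for $\lambda \in \nR$. Assuming $\infty \notin c_s(A)$, Theorem~\ref{Veselic}(a) says the same contour integrals of $(A-z)^{-1}$ are uniformly bounded for $\eta \in (1,\infty)$; subtracting, the rank-one integral operators
\[
R_\eta := \int_{-i\eta}^{-i} T(z)\,dz + \int_i^{i\eta} T(z)\,dz
\]
satisfy $\sup_{\eta > 1}\|R_\eta\| < \infty$.

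The remaining---and main---task is to convert this uniform bound on $R_\eta$ into the pointwise bounds of \eqref{eq:ratio2}. The bridge is the reproducing-kernel identity from \eqref{eQfun}: applied to $\psi_+$ in $(\cK_+,\kip)$ and to $\psi_-$ in $(\cK_-,-\kip)$, and using that $\cK_+$ and $\cK_-$ are $\ahip_{\cK}$-orthogonal, it yields
\[
\|\psi(z)\|_{\cK}^2 = \frac{\im m_+(z)}{\im z} - \frac{\im m_-(-z)}{\im z}, \qquad z \in \nC_+,
\]
where for $z \in \nC_+$ both summands $\im m_+(z)$ and $-\im m_-(-z)$ are nonnegative. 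Since $T(z)$ is rank one with $\|T(z)\| = \|\psi(z)\|_{\cK}^2/|M(z)|$ (using $\|\psi(\co z)\|_{\cK} = \|\psi(z)\|_{\cK}$), the two pieces of $\|\psi(z)\|_{\cK}^2$ are exactly $\im z$ times the two quantities to be bounded. I would test $R_\eta$ on normalized Weyl solutions localized at a chosen scale $t_0 \in (1,\eta)$; using the reproducing property, and the fact that $\im m_\pm$ are positive harmonic functions (so that, by a Harnack-type comparison, they vary boundedly over a fixed dyadic window $[t_0,2t_0]$), this produces a lower bound for $\|R_\eta\|$ by window averages of $\im m_+(it)/|M(it)|$ and, separately via positivity, of $-\im m_-(-it)/|M(it)|$. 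The uniform bound on $\|R_\eta\|$ then yields boundedness of these ratios along the imaginary axis, and on the imaginary axis $-\im m_-(-it) = \im m_-(it)$, which is the form recorded in \eqref{eq:ratio2}.

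The hardest point is the passage from the imaginary axis to the whole sector $\Omega_R^\infty$: while $\im m_\pm$ are positive harmonic and hence well behaved under Harnack comparison between a sector point and the imaginary-axis point of the same modulus, the denominator $|M|$ is only the modulus of a zero-free holomorphic function and need not be comparable across such regions without further input. Controlling $|M|$ on $\Omega_R^\infty$, so that the imaginary-axis bound propagates into the sector, is the technical core, which I would carry out as in \cite{KaMal07, KaKost08}. Finally, part~(b) is proved in the same way, with Theorem~\ref{Veselic}(a) replaced by Theorem~\ref{Veselic}(b): the hypothesis $0 \notin c_s(A)$ supplies $\ker(A) = \ker(A^2)$ and the uniform boundedness of the contour integrals near $0$, and the identical extraction yields the boundedness of the ratios on $\Omega_R^0$.
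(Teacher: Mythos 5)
Your setup is sound as far as it goes: the ratios are well defined on $\nC_+$ by Corollary~\ref{c:non-none} and \eqref{eRwA}, the splitting $(A-z)^{-1}=(A_0-z)^{-1}-T(z)$ is \eqref{ereswA}, the Veseli\'c integrals of $(A_0-z)^{-1}$ are uniformly bounded because $A_0$ is self-adjoint in $(\cK,\ahip_\cK)$, and hence $\sup_{\eta>1}\|R_\eta\|<\infty$. The genuine gap is the bridge you build afterwards: uniform boundedness of the integrals $R_\eta$ does \emph{not} imply pointwise boundedness of $(\im z)\|T(z)\|$. An integrable blow-up of $\|T(iy)\|$ (say of the type $|y-y_0|^{-1/2}$ at a finite $y_0$) is perfectly compatible with $\sup_\eta\|R_\eta\|<\infty$, so no argument using only the uniform bound on $R_\eta$ can produce the pointwise statement; extra structure must enter, and your extraction scheme does not supply it. Pairing $R_\eta$ with a Weyl solution localized at scale $t_0$ produces $y$-integrals of the reproducing kernels $K_{m_\pm}(it_0,\pm iy)=\bigl(m_\pm(it_0)-m_\pm(iy)\bigr)/\bigl(i(t_0-y)\bigr)$ against $1/M(iy)$; these kernels are not sign-definite and are not bounded below by $\im m_\pm(iy)$, so Harnack control of the numerators cannot rule out cancellation in the integral. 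In this paper Veseli\'c's criterion is the engine of the \emph{sufficiency} direction (Lemma~\ref{CritDinf}, Theorems~\ref{Main} and~\ref{Main0}); for the present necessity statement it is too weak an intermediary.

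The proof the paper sketches avoids the integrals entirely and works with pointwise resolvent estimates. If $\infty\not\in c_s(A)$, the spectral function is bounded near $\infty$: for $0<r<R$ the limit projections onto the spectral subspaces corresponding to $[r,+\infty)$ and $(-\infty,-r]$ exist, those subspaces are Hilbert spaces in $\pm\kip_\cK$ on which $A$ is self-adjoint, and on the complementary invariant subspace $E\bigl((-r,r)\bigr)\cK$ the operator $A$ is bounded with real spectrum; together this gives $(\im z)\,\|(A-z)^{-1}\|\le C$ on all of $\Omega_R^\infty$. Since also $(\im z)\,\|(A_0-z)^{-1}\|\le 1$, formula \eqref{ereswA} yields, for every $z\in\Omega_R^\infty$,
\[
(\im z)\,\|T(z)\|
=\frac{\im m_+(z)-\im m_-(-z)}{\bigl|m_+(z)+m_-(-z)\bigr|}\le C+1,
\]
and both terms in the numerator are nonnegative for $z\in\nC_+$; here $-\im m_-(-z)=\im m_-(-\co{z})$, which on the imaginary axis equals $\im m_-(z)$ --- exactly the quantity in \eqref{eqDinf} and in the $D_\infty$-property, which is all the paper uses later. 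In particular, the difficulty you singled out (propagating an imaginary-axis bound into the sector) does not arise on this route, since the resolvent estimate holds on the whole of $\Omega_R^\infty$ at once; the step you actually need to repair is the integral-to-pointwise passage. Part (b) follows the same pattern, with boundedness of the spectral function near $0$ giving the resolvent estimate on $\Omega_R^0$.
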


Let us sketch the proof of (\ref{thm:NesReg-1}) from~\cite{KaKost08} using the notation of Theorem~\ref{tKsc}. Since $\infty\not\in c_s(A_1)$
the functions
$z  \mapsto (\im{z})\|(A-{z})^{-1}\|$,
$z \mapsto (\im{z})\|(S_+ -{z})^{-1}\|$ and
$z \mapsto (\im{z})\|(S_- - {z})^{-1}\|$
are bounded on $\Omega_R^\infty$. Then, it follows
from~\eqref{ereswA} and the equalities
\[
\|\psi_+(z)\|_{\cK_+}^2
 = \frac{\im m_+(z)}{\im {z}} \quad \text{and} \quad
\|\psi_-(z)\|_{\cK_-}^2
= \frac{\im m_-(z)}{\im z},
\]
that the ratio in~\eqref{eqDinf} is also bounded on
$\Omega_R^\infty$.

Clearly, this proof works for arbitrary nonnegative coupling $A$ in Theorem~{\rm\ref{prop:AasCoupl}}.

\begin{definition}
We say that a pair $(m_+,m_-)$ of two Nevanlinna functions has
$D_\infty$-property ($D_0$-property), if the function
\begin{equation} \label{eqDinf}
y \mapsto \frac{\im m_+(iy)+\im m_-(iy)}{\bigl|m_+(iy)+m_-(-iy)\bigr|},
\end{equation}
is bounded on the set $(1,\infty)$ ($(0,1)$, respectively). In particular,
a Nevanlinna functions $m_+$ is said to have
$D_\infty$-property ($D_0$-property), if the pair $(m_+,m_+)$ has $D_\infty$-property ($D_0$-property, respectively), that is,  if
the function
\[
y \mapsto \frac{\im m_+(iy)}{\re m_+(iy)},
\]
is bounded on the set $(1,\infty)$ ($(0,1)$, respectively).
\end{definition}

It follows from Theorem~\ref{thm:NesReg} that $D_\infty$-property
($D_0$-property) is necessary for the conditions $\infty\not\in
c_s(A_1)$ ($0\not\in c_s(A_1)$, respectively).

\begin{corollary}\label{Dinf}{\rm (\cite{KaMal07}, \cite{KaKost08})}
Let $A$ be a nonnegative partially fundamentally reducible operator
in a Kre\u{\i}n space $(\cK, \kip_{\cK})$ and let $m_+$ and $m_-$ be Weyl functions introduced in Theorem~{\rm~\ref{tKsc}}.
Then the following statements hold.
\begin{enumerate}
\renewcommand*\theenumi{\roman{enumi}}
\renewcommand*\labelenumi{{\rm (\theenumi)}}
\item
If $\infty\not\in c_s(A)$, then the pair $(m_+, m_-)$ has the $D_\infty$-property.

\item
If $0\not\in c_s(A)$, then the pair  $(m_+, m_-)$ has the $D_0$-property.
\end{enumerate}
\end{corollary}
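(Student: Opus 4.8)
The plan is to obtain the corollary as an immediate specialization of Theorem~\ref{thm:NesReg} to the positive imaginary axis. The two $D$-properties concern the single quotient in \eqref{eqDinf} along the ray $z = iy$, while Theorem~\ref{thm:NesReg} already supplies the boundedness of the two individual ratios in \eqref{eq:ratio2} on the sectors $\Omega_R^\infty$ and $\Omega_R^0$; the only work is to restrict the variable and add the two ratios.

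For statement (i), I would assume $\infty\not\in c_s(A)$ and invoke Theorem~\ref{thm:NesReg}\,(\ref{thm:NesReg-1}) with $R = 1$. This gives that both
\[
z \mapsto \frac{\im m_+(z)}{|m_+(z)+m_-(-z)|} \qquad \text{and} \qquad z \mapsto \frac{\im m_-(z)}{|m_+(z)+m_-(-z)|}
\]
are bounded on $\Omega_1^\infty = \{z\in\nC_+ : |z| > 1\}$. Since $iy \in \Omega_1^\infty$ exactly when $y > 1$, setting $z = iy$ shows that each ratio is bounded on $(1,\infty)$, and adding them yields the boundedness of
\[
y \mapsto \frac{\im m_+(iy)+\im m_-(iy)}{|m_+(iy)+m_-(-iy)|}
\]
on $(1,\infty)$, which is precisely the $D_\infty$-property of the pair $(m_+, m_-)$.

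Statement (ii) would follow by the same argument with the sector $\Omega_1^0 = \{z\in\nC_+ : |z| < 1\}$ in place of $\Omega_1^\infty$: applying Theorem~\ref{thm:NesReg}\,(\ref{thm:NesReg-2}) with $R = 1$ and using that $iy \in \Omega_1^0$ exactly when $0 < y < 1$, the same summation yields boundedness of the quotient in \eqref{eqDinf} on $(0,1)$, i.e.\ the $D_0$-property.

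I do not expect a genuine obstacle; the result is essentially a reformulation of Theorem~\ref{thm:NesReg} along a single ray. The only points requiring a little care are the bookkeeping that matches the numerator $\im m_+ + \im m_-$ and the common denominator $|m_+(z) + m_-(-z)|$ in \eqref{eqDinf} with the two ratios in \eqref{eq:ratio2}, and the observation that evaluating at $z = iy$ correctly produces the argument $-iy$ in $m_-(-z)$ demanded by the definition of the $D$-properties.
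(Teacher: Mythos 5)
Your proposal is correct and matches the paper's own (implicit) argument: the paper derives Corollary~\ref{Dinf} directly from Theorem~\ref{thm:NesReg}, exactly as you do, by restricting the two bounded ratios in \eqref{eq:ratio2} to the ray $z=iy$ (with $R=1$) and summing them over the common denominator to obtain the quotient in \eqref{eqDinf}. Nothing further is needed.
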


For a nonnegative densely defined symmetric operator $S$ with defect numbers $(1,1)$ in a Hilbert space we can always choose a boundary triple relative to which the Weyl function of $S$ is a Stieltjes function. Therefore the following proposition is of interest.

\begin{proposition}\label{prop:DInf}
Let $m_+$ be a Stieltjes function which has $D_\infty$-property {\rm (}$D_0$-property{\rm )}. Then for every Stieltjes function $m_-$ the pair $(m_+,m_-)$ has $D_\infty$-property {\rm (}$D_0$-property, respectively{\rm )}.
\end{proposition}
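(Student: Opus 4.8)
The plan is to reduce the claim to an elementary inequality between the real and imaginary parts of $m_+(iy)$ and $m_-(iy)$. First I would fix $y>0$ and write $m_+(iy) = u_+ + i v_+$ and $m_-(iy) = u_- + i v_-$, where $u_\pm = \re m_\pm(iy)$ and $v_\pm = \im m_\pm(iy)$. Since $m_+$ and $m_-$ are Nevanlinna functions they satisfy the symmetry relation, so $m_-(-iy) = m_-\bigl(\overline{iy}\bigr) = \co{m_-(iy)} = u_- - i v_-$, and therefore
\[
m_+(iy) + m_-(-iy) = (u_+ + u_-) + i\,(v_+ - v_-).
\]
Consequently the function in the definition of the $D_\infty$-property of the pair $(m_+,m_-)$ equals
\[
\frac{v_+ + v_-}{\sqrt{(u_+ + u_-)^2 + (v_+ - v_-)^2}},
\]
and the whole problem becomes bounding this quotient for $y \in (1,\infty)$.

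Next I would record the two structural facts that drive the estimate. Because $m_+$ and $m_-$ are Stieltjes functions, $u_\pm = \re m_\pm(iy) \ge 0$, while, being Nevanlinna functions, $v_\pm = \im m_\pm(iy) \ge 0$ for $y>0$. The hypothesis that $m_+$ has $D_\infty$-property means precisely that $v_+/u_+$ is bounded on $(1,\infty)$; since boundedness forces $v_+ = 0$ wherever $u_+ = 0$, there is a constant $M>0$ with
\[
v_+ \le M\,u_+ \qquad \text{for all} \qquad y \in (1,\infty).
\]
No hypothesis beyond being Stieltjes is placed on $m_-$.

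The core of the argument is a dichotomy on the relative size of $v_-$ and $v_+$, carried out for each $y \in (1,\infty)$. Writing $D := \sqrt{(u_+ + u_-)^2 + (v_+ - v_-)^2}$ for the denominator, I would use the two trivial lower bounds $D \ge u_+ + u_-$ and $D \ge |v_+ - v_-|$. In the case $v_- \le 2 v_+$ one has $v_+ + v_- \le 3 v_+ \le 3 M u_+ \le 3 M (u_+ + u_-) \le 3 M\, D$. In the complementary case $v_- > 2 v_+$ one has $v_+ + v_- < \tfrac{3}{2} v_-$ and $v_- - v_+ > \tfrac{1}{2} v_-$, so $D \ge v_- - v_+ > \tfrac{1}{2} v_-$ and hence $v_+ + v_- < 3\,D$. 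In either case the quotient is at most $3\max\{M,1\}$, a bound independent of $y$, which establishes the $D_\infty$-property of $(m_+,m_-)$.

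The $D_0$-case is identical, with $(1,\infty)$ replaced by $(0,1)$ throughout: the hypothesis now supplies the bound $v_+ \le M u_+$ on $(0,1)$, and the case analysis above nowhere uses which interval $y$ ranges over. I do not expect any genuine obstacle here; the only point requiring a little care is verifying that boundedness of $v_+/u_+$ really yields the pointwise inequality $v_+ \le M u_+$ even at points where $u_+$ might vanish, which follows because a vanishing denominator together with boundedness forces the numerator to vanish as well.
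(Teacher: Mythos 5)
Your proof is correct and follows essentially the same route as the paper's: both arguments rest on the two lower bounds $\bigl|m_+(iy)+m_-(-iy)\bigr| \ge \re m_+(iy)$ (from Stieltjes positivity of the real parts) and $\bigl|m_+(iy)+m_-(-iy)\bigr| \ge \bigl|\im m_+(iy)-\im m_-(iy)\bigr|$, combined with the pointwise inequality $\im m_+(iy)\le M\,\re m_+(iy)$ extracted from the $D$-property of $m_+$. Your two-case dichotomy on $v_-$ versus $2v_+$ is merely a repackaging of the paper's triangle-inequality step $\im m_-(iy)\le \bigl|\im m_-(iy)-\im m_+(iy)\bigr|+\im m_+(iy)$, and yields the same kind of explicit uniform constant.
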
 \begin{proof}
Since $m_\pm$ is a Stieltjes function, we have $\re m_\pm(iy)\ge 0$ for all $y\in\nR_+$. Therefore,
\begin{equation}\label{eq:Re_miy}
    \re(m_+(iy))  \le \bigl|m_+(iy)+m_-(-iy)\bigr|  \qquad \text{for all} \qquad y\in\nR_+.
\end{equation}
As $m_+$ has $D_\infty$-property there exists $C > 0$ such that
\[
\frac{\im m_+(iy)}{\re m_+(iy)} \le C \qquad \text{for all} \qquad y > 1.
\]
By \eqref{eq:Re_miy} we have
\[
 \frac{\im m_+(iy)}{\bigl|m_+(iy)+m_-(-iy)\bigr|}
  \le \frac{\im m_+(iy)}{\re m_+(iy)} \le C \qquad \text{for all} \qquad y > 1.
\]
Further, the inequality
\begin{equation*}\label{eq:Im_miy}
  \bigl|\im \bigl(m_+(iy)- m_-(iy)\bigr) \bigr| \le   \bigl|m_+(iy)+m_-(-iy)\bigr| \quad \text{for all} \quad y\in\nR_+
\end{equation*}
yields that for all $y > 1$ we have
\[
\begin{split}
 \frac{\im
m_-(iy)}{\bigl|m_+(iy)+m_-(-iy)\bigr|}
 &
 \le \frac{\bigl|\im \bigl( m_-(iy)- m_+(iy) \bigr) \bigr|}%
 {\bigl|m_+(iy)+m_-(-iy)\bigr|}
+ \frac{\im m_+(iy)}{\bigl|m_+(iy)+m_-(-iy)\bigr|}  \\
& \le C+1 .
\end{split}
\]
The proof of the $D_0$-property is similar.
\end{proof}

Asymptotic behavior of Weyl functions has been investigated for many kinds of special symmetric operators. Some specific examples appear in Section~\ref{SecExe} below. The next proposition deduces $D$-properties from the asymptotic behavior of Nevanlinna functions introduced in Subsection~\ref{sSecAsy}.

\begin{proposition}\label{prop:DpsAsym}
\begin{enumerate}
\renewcommand*\theenumi{\alph{enumi}}
\renewcommand*\labelenumi{{\rm (\theenumi)}}
\item  \label{i:DpsAsym-1}
If Nevanlinna functions $m_+$ and $m_-$ belong to $\cA_\infty$, then  the corresponding pair $(m_+, m_-)$ has $D_\infty$-property.

\item \label{i:DpsAsym-2}
If Nevanlinna functions $m_+$ and $m_-$ belong to $\cA_0$, then the corresponding pair  $(m_+, m_-)$ has $D_0$-property.

\end{enumerate}

\end{proposition}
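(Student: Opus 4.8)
The plan is to reduce both parts to the Stieltjes pair already handled in Proposition~\ref{prop:DInf}, exploiting a \emph{balanced real shift} that leaves the quotient in \eqref{eqDinf} unchanged. For any $c\in\nR$ and $y>0$, since a real constant affects no imaginary part, $\im(m_+-c)(iy)+\im(m_-+c)(iy)=\im m_+(iy)+\im m_-(iy)$, and moreover
\[
(m_+-c)(iy)+(m_-+c)(-iy)=m_+(iy)-c+m_-(-iy)+c=m_+(iy)+m_-(-iy).
\]
Thus $(m_+,m_-)$ and $(m_+-c,\,m_-+c)$ have \emph{identical} $D_\infty$-quotients, so it suffices to produce one $c\in\nR$ for which $m_+-c$ and $m_-+c$ are both Stieltjes. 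Granting this, $m_+-c$ is again in $\cA_\infty$ (the class is invariant under real shifts by Proposition~\ref{prop:AInf-AZer}\,(\ref{i:AInf-AZer-1}) applied with $\mu_2(z)=z-c$) and is Stieltjes, while $m_-+c$ is Stieltjes; Proposition~\ref{prop:DInf} then gives the $D_\infty$-property of $(m_+-c,m_-+c)$, hence of $(m_+,m_-)$.

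To invoke Proposition~\ref{prop:DInf} I first check that a Stieltjes function in $\cA_\infty$ has the single-function $D_\infty$-property. By Proposition~\ref{prop:0-inf}\,(\ref{i:P-0-inf-1}) such an $m$ satisfies $m(iy)\sim C_0(-iy)^{\alpha_0}=C_0\,y^{\alpha_0}e^{-i\pi\alpha_0/2}$ (or the $\alpha_0=0$ refinement) as $y\to+\infty$; being Stieltjes forces $\re m(iy)\ge 0$, which rules out $\alpha_0\in(0,1)$ and leaves $\alpha_0\in(-1,0]$, so
\[
\frac{\im m(iy)}{\re m(iy)}\longrightarrow -\tan\!\Bigl(\tfrac{\pi\alpha_0}{2}\Bigr)\in[0,+\infty)\qquad(y\to+\infty).
\]
Together with $\re m(iy)>0$ on compacta this bounds $\im m(iy)/\re m(iy)$ on $(1,\infty)$, as required for $m_+-c$.

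The main obstacle—and the only genuinely substantive step—is securing the common constant $c$ that makes $m_+-c$ and $m_-+c$ simultaneously Stieltjes, i.e. the \emph{co-orientation} of the two functions. This is not automatic from membership in $\cA_\infty$ alone, since by Proposition~\ref{prop:0-inf} the class contains functions of both orientations—those with $\re m(iy)>0$ and those with $\re m(iy)<0$ for large $y$—and for oppositely oriented $m_+$, $m_-$ the denominator $|m_+(iy)+m_-(-iy)|$ may vanish at a point of $(1,\infty)$ while the numerator stays positive, so that no balanced shift can rescue the quotient. In the setting where Proposition~\ref{prop:DpsAsym} is used, however, $(m_+,m_-)$ is the pair of Weyl functions of a nonnegative coupling, and Corollaries~\ref{col:nonneg} and~\ref{col:nonnegF} deliver precisely such a $c$: according as the normalization $S_0^+$ is the Kre\u{\i}n or the Friedrichs extension, there is $c\in\nR$ with $m_+-c,\,m_-+c$ both Stieltjes or both inverse Stieltjes. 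The inverse-Stieltjes alternative runs through the identical argument with $\re m_\pm(iy)\le 0$ in place of $\re m_\pm(iy)\ge 0$ (equivalently, via $z\mapsto-1/z$ and Proposition~\ref{p:SiS}). This establishes part~(\ref{i:DpsAsym-1}).

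For part~(\ref{i:DpsAsym-2}) I would transport the result from $\infty$ to $0$ by inversion rather than repeat the analysis. Put $\wh m_\pm(z):=-m_\pm(1/z)$; Proposition~\ref{prop:AInf-AZer}\,(\ref{i:AInf-AZer-2}) gives $m_\pm\in\cA_0\Rightarrow\wh m_\pm\in\cA_\infty$. Using $\wh m_\pm(iy)=-\overline{m_\pm(i/y)}$ one checks that the $D_\infty$-quotient of $(\wh m_+,\wh m_-)$ at $y$ equals the $D_0$-quotient of $(m_+,m_-)$ at $1/y$; since $y\mapsto 1/y$ carries $(1,\infty)$ onto $(0,1)$, part~(\ref{i:DpsAsym-1}) applied to $(\wh m_+,\wh m_-)$ yields the $D_0$-property of $(m_+,m_-)$.
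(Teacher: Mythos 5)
Your proposal takes a genuinely different route from the paper's, but as a proof of the proposition \emph{as stated} it is incomplete, and the incompleteness is exactly the step you flag yourself. The paper never normalizes to Stieltjes functions: it computes the limit as $y\to+\infty$ of the quotient \eqref{eqDinf} directly from the asymptotics of Proposition~\ref{prop:0-inf}, with a case analysis on the leading exponents $\alpha_0^\pm$ and, in the cancellation case $\alpha_0^+=\alpha_0^-=0$, $C_0^++C_0^-=0$, on the second-order exponents $\alpha_1^\pm\in(-1,0)$; this covers mixed orientations (one summand decaying, one growing like $-C(-z)^{\alpha}$ with $\alpha\in(0,1)$), for which no balanced shift $c$ with $m_+-c$ and $m_-+c$ simultaneously Stieltjes can exist, since a growing function cannot be shifted into a Stieltjes one. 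Your preliminary steps are all correct: the invariance of the quotient under balanced real shifts, the single-function $D_\infty$-property of Stieltjes members of $\cA_\infty$, and the inversion transport in part (b) (where indeed $|m_+(-is)+m_-(is)|=|m_+(is)+m_-(-is)|$ by $m(\co{z})=\co{m(z)}$). But the co-orientation constant $c$ is not deducible from $m_\pm\in\cA_\infty$, and you obtain it by importing hypotheses absent from the proposition: that $(m_+,m_-)$ are Weyl functions of a nonnegative coupling \emph{and} that $S_0^+$ is the Kre\u{\i}n or Friedrichs extension (Corollaries~\ref{col:nonneg} and~\ref{col:nonnegF}). Even in the paper's own application, Theorem~\ref{t:AllPos}, the boundary triples supplied by Theorem~\ref{Thm:5.5} carry no such normalization, so your patched statement would not cover the place where the proposition is actually used, whereas the paper's asymptotic computation applies to arbitrary members of $\cA_\infty$.

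That said, the obstruction you identified is real, and it in fact exposes a tacit assumption in the paper's own final inference. Take $m_+(z)=(-z)^{-1/2}$ and $m_-(z)=-\tfrac12(-z)^{1/2}$; both belong to $\cA_\infty$ (for $m_-$, compose with $\mu(w)=-1/w$ to get the Stieltjes function $2(-z)^{-1/2}$), yet
\[
m_+(iy)+m_-(-iy)=e^{i\pi/4}\Bigl(y^{-1/2}-\tfrac12\,y^{1/2}\Bigr)
\]
vanishes at $y=2$, while $\im m_+(2i)+\im m_-(2i)=1$, so the quotient \eqref{eqDinf} is unbounded on $(1,\infty)$ even though the paper's limit \eqref{eq:DinfLimit} is finite. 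Thus the step from finiteness of the limit at infinity to boundedness on all of $(1,\infty)$ silently assumes the denominator has no zeros at finite $y>1$ --- something $\cA_\infty$-membership alone does not guarantee, but which does hold in the coupling setting, since a zero of $m_+(z)+m_-(-z)$ at $z=iy$ would be a nonreal eigenvalue of the nonnegative operator $A$ (Theorem~\ref{tKsc}, Corollary~\ref{c:non-none}). So your instinct that some coupling hypothesis is needed was sound, but the economical repair has the opposite shape from yours: keep the paper's shift-free asymptotic computation, which handles all orientations and needs no Kre\u{\i}n/Friedrichs normalization, and use the coupling nonnegativity only to exclude finite zeros of the denominator.
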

\begin{proof}
Let $m_\pm  \in \cA_\infty$. Proposition~\ref{prop:0-inf} applies. Let  $C_0^\pm \in \nR\setminus\{0\}$, $\alpha_0^\pm \in (-1,1)$, and, as appropriate, $C_1^\pm \in \nR\setminus\{0\}$ and $\alpha_1^\pm \in (-1,0)$ be the corresponding coefficients appearing in \eqref{eq:mAsymInf-noC} and \eqref{eq:mAsymInf-C}.  We first notice that as a consequence of the asymptotics in \eqref{eq:mAsymInf-noC} or \eqref{eq:mAsymInf-C} we have
\begin{align} \label{eq:Limfrm}
 \lim_{y\uparrow +\infty} \frac{m_\pm(iy)}{y^{\alpha_0^\pm}} & = C_0^\pm i^{\alpha_0^\pm}, \\
\intertext{
which, in turn, implies
} \label{eq:LimfrIm}
\lim_{y\uparrow+\infty} \frac{\im m_\pm(iy)}{y^{\alpha_0^\pm}}
 &= C_0^\pm \sin\bigl(\pi\alpha_0^\pm/2\bigr).
\end{align}

Let
\[
\alpha_0 = \begin{cases}
\max\bigl\{  \alpha_0^+ , \alpha_0^- \bigr\} & \ \text{if} \quad  \max\{\alpha_0^+, \alpha_0^-\} \geq 0, \\[1pt]
\min\bigl\{ \alpha_0^+ ,  \alpha_0^- \bigr\} & \  \text{if} \quad \max\{\alpha_0^+, \alpha_0^-\} < 0.
\end{cases}
\]
and, further, set $\epsilon_\pm = 1 - \sgn(|\alpha_0 - \alpha_0^\pm|)$.  By definition $\epsilon_+, \epsilon_- \in \{0,1\}$, and at least one of $\epsilon_+$, $\epsilon_-$ equals $1$.

Next we will calculate the limit
\begin{equation} \label{eq:DinfLimit}
\lim_{y\uparrow+\infty}
\frac{\im m_+(iy)+\im m_-(iy)}{\bigl|m_+(iy)+m_-(-iy)\bigr|}
\end{equation}
and doing so we prove (\ref{i:DpsAsym-1}).  Using \eqref{eq:Limfrm} and \eqref{eq:LimfrIm} this limit can be calculated as follows:
\begin{equation} \label{eq:DinfLimitL}
\lim_{y\uparrow+\infty} \frac{\frac{\im m_+(iy)}{y^{\alpha_0}}
 + \frac{\im m_-(y)}{y^{\alpha_0}}}{\left|\frac{m_+(iy)}{y^{\alpha_0}}
 + \frac{m_-(-iy)}{y^{\alpha_0}}\right|}
 =
 \frac{
 \epsilon_+ C_0^+ \sin\bigl(\pi\alpha_0^+/2\bigr)
 + \epsilon_- C_0^-\sin\bigl(\pi\alpha_0^-/2\bigr)}%
 {\bigl| \epsilon_+ C_0^+i^{\alpha_0^+} + \epsilon_- C_0^-(-i)^{\alpha_0^-}\bigr|}.
\end{equation}
If exactly one of $\epsilon_+, \epsilon_-$ equals $1$, then the denominator in the last fraction is clearly positive. Otherwise, that is, if $\epsilon_+ = \epsilon_- =1$, we have $\alpha_0^+ = \alpha_0^-$ and the denominator is again clearly positive provided that $\alpha_0^+ = \alpha_0^- \neq 0$. If $\alpha_0^+ = \alpha_0^- = 0$, the denominator equals $\bigl|C_0^+ + C_0^-\bigl|$.

Thus, the limit in \eqref{eq:DinfLimit} is calculated to be the right-hand side of \eqref{eq:DinfLimitL} with the exception of the case when $\alpha_0^+ = \alpha_0^- = 0$ and $C_0^+ + C_0^- = 0$.

To calculate the limit in \eqref{eq:DinfLimit} when $\alpha_0^+ = \alpha_0^- = 0$ and $C_0^+ + C_0^- = 0$ we use the second term of the asymptotics in \eqref{eq:mAsymInf-C}.  First we deduce
\begin{align*}
\lim_{y\uparrow+\infty} \frac{m_\pm(iy)-C^\pm}{y^{\alpha_1^\pm}} & = C_1^\pm i^{\alpha_1^\pm}, \\
\intertext{which clearly yields}
\lim_{y\uparrow+\infty} \frac{\im m_\pm(y)}{y^{\alpha_1^\pm}}
& = C_1^\pm \sin\bigl(\pi\alpha_1^\pm/2\bigr).
\end{align*}
As before, we introduce $\alpha_1 = \min\{\alpha_1^+, \alpha_1^-\}$ and $\epsilon_1^\pm = 1 - \sgn(|\alpha_1 - \alpha_1^\pm|) \in \{0,1\}$.

Since we assume that $C_0^+ + C_0^- = 0$, the limit in \eqref{eq:DinfLimit} equals
\begin{equation*}
\lim_{y\uparrow+\infty}\! \frac{\frac{\im m_+(iy)}{y^{\alpha_1}} + \frac{\im m_-(y)}{y^{\alpha_1}}}{\left|\frac{m_+(iy)-C_0^+}{y^{\alpha_1}}
 + \frac{m_-(-iy)-C_0^-}{y^{\alpha_1}}\right|}
 =
 \frac{
 \epsilon_1^+ C_1^+ \sin\bigl(\pi\alpha_1^+/2\bigr) + \epsilon_1^- C_1^-\sin\bigl(\pi\alpha_1^-/2\bigr)}%
 {\bigl| \epsilon_1^+ C_1^+i^{\alpha_1^+} + \epsilon_1^- C_1^-(-i)^{\alpha_1^-}\bigr|}.
\end{equation*}
Now recall that at least one of $\epsilon_1^-$, $\epsilon_1^+$ equals $1$,  $\alpha_1^-,\alpha_1^+ \in (-1,0)$ and $C_1^-,C_1^+ \in \nR\!\setminus\!\{0\}$, to deduce that the denominator of the last fraction is always positive.

Thus, in each case we calculated the limit in \eqref{eq:DinfLimit}. This proves that the pair $(m_+,m_-)$ has the $D_\infty$-property.

The proof of (\ref{i:DpsAsym-2}) is similar.
\end{proof}


\subsection{Regularity of the critical points} \label{ss:RegCPs}
Let  $A$ be a nonnegative partially fundamentally reducible operator
in a Kre\u{\i}n space $(\cK, \kip_{\cK})$. In this subsection we provide sufficient conditions in terms of $m_+$ and $m_-$ for the points $\infty$ or $0$ not to be singular critical points of the operator $A$.

In the next lemma we use Theorem~\ref{Veselic} to obtain criteria for the regularity of the critical points $0$ and $\infty$ for the operator $A_1$ defined by~\eqref{eq:A1} and~\eqref{eq:domwAHC}.  To formulate these results we need the following family of operators. For arbitrary $\varepsilon, \eta$ such that $0< \varepsilon < \eta$ we define the operator
$
T_{\pm\pm}(\epsilon,\eta):\cK_\pm\to\cK_\pm
$
by
\begin{equation}\label{CritTpm}
\bigl( T_{\pm\pm}(\epsilon,\eta) \bigr)f_{\pm} := \int\limits_{\epsilon\le|y|\le\eta}
 \frac{\bigl\langle f_\pm ,\psi_\pm(\mp iy) \bigr\rangle_\cK \,
 \psi_\pm(\pm iy)}{m_+(iy)+m_-(-iy)} \, dy, \quad f_\pm \in \cK_\pm,
\end{equation}
and the operator $T_{\pm\mp}(\epsilon,\eta):\cK_\mp\to\cK_\pm$
by
\begin{equation}\label{CritTpm-+}
\bigl( T_{\pm\mp}(\epsilon,\eta) \bigr)f_\mp  := \int\limits_{\epsilon\le|y|\le\eta}
 \frac{\bigl\langle f_\mp ,\psi_\mp(\pm iy)\bigr \rangle_\cK \,
 \psi_\pm(\pm iy)}{m_+(iy)+m_-(-iy)} \, dy, \quad f_\mp \in \cK_\mp.
\end{equation}

\begin{lemma}\label{CritDinf}
Let $A$ be a nonnegative partially fundamentally reducible operator in the Kre\u{\i}n space $(\cK, \kip_{\cK})$. Then the following statements hold.
\begin{enumerate}
\renewcommand*\theenumi{\roman{enumi}}
\renewcommand*\labelenumi{{\rm (\theenumi)}}
  \item \label{iCritDinf-1}
  $\infty\not\in c_s(A)$ if and only if the operators $T_{++}(1,\eta)$, $T_{--}(1,\eta)$, $T_{+-}(1,\eta)$ and $T_{-+}(1,\eta)$
 are uniformly bounded for $\eta\in(1,\infty)$;
  \item \label{iCritDinf-2}
   $0\not\in c_s(A)$  if and only if $\ker(A)=\ker(A^2)$ and the operators $T_{++}(\varepsilon,1)$, $T_{--}(\varepsilon,1)$, $T_{+-}(\varepsilon,1)$ and $T_{-+}(\varepsilon,1)$
 are uniformly bounded for $\varepsilon\in(0,1)$.
\end{enumerate}
\end{lemma}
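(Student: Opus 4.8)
The plan is to derive both equivalences from Veseli\'{c}'s resolvent criterion (Theorem~\ref{Veselic}) by inserting the explicit resolvent formula \eqref{ereswA}. I would treat part~(\ref{iCritDinf-1}) in detail; part~(\ref{iCritDinf-2}) is entirely parallel, the only difference being that Theorem~\ref{Veselic}(\ref{iVeselic-2}) carries the extra hypothesis $\ker(A)=\ker(A^2)$, which then reappears verbatim in the statement. Parametrizing the two segments in Theorem~\ref{Veselic}(\ref{iVeselic-1}) by $z=iy$, so that $dz=i\,dy$ and the combined contour becomes $\{\,iy:1\le|y|\le\eta\,\}$, the criterion reads: $\infty\not\in c_s(A)$ if and only if the operators $\int_{1\le|y|\le\eta}(A-iy)^{-1}\,i\,dy$ are uniformly bounded for $\eta\in(1,\infty)$. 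Note that by Corollary~\ref{c:non-none} the spectrum of $A$ is real, so $m_+(iy)+m_-(-iy)\neq 0$ for every real $y\neq 0$ and \eqref{ereswA} is valid on the whole punctured imaginary axis.

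First I would dispose of the free term. Substituting \eqref{ereswA} splits the integral into a contribution from $(A_0-iy)^{-1}$ and a contribution from the rank-one perturbation. Since $A_0=S_0^+\oplus(-S_0^-)$ is block diagonal with respect to the fundamental decomposition, it is fundamentally reducible; hence $\infty\not\in c_s(A_0)$ by Proposition~\ref{p:fd-iff-rcp}, and Theorem~\ref{Veselic}(\ref{iVeselic-1}) applied to $A_0$ shows that $\int_{1\le|y|\le\eta}(A_0-iy)^{-1}\,i\,dy$ is uniformly bounded for all $\eta$ (one may also see this directly from the spectral theorem: the part of the integrand odd in $y$ integrates to zero, and the remaining arctangent is bounded by $\pi$ uniformly in the spectral parameter). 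Consequently $\infty\not\in c_s(A)$ is equivalent to uniform boundedness of the perturbation integral
\[
R(\eta)h := -i\int_{1\le|y|\le\eta} \frac{\bigl[h,\psi(-iy)\bigr]_\cK}{m_+(iy)+m_-(-iy)}\,\psi(iy)\,dy .
\]

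The remaining step is to identify $R(\eta)$ with the four operators of \eqref{CritTpm} and \eqref{CritTpm-+}. Writing $\psi(iy)=\psi_+(iy)+\psi_-(-iy)$ and $\psi(-iy)=\psi_+(-iy)+\psi_-(iy)$, converting the Kre\u{\i}n form to the Hilbert form by $[h,g]_\cK=\langle Jh,g\rangle_\cK$ — which gives $[h,\psi_+(-iy)]_\cK=\langle h_+,\psi_+(-iy)\rangle_\cK$ but $[h,\psi_-(iy)]_\cK=-\langle h_-,\psi_-(iy)\rangle_\cK$ because $J$ reverses the sign on $\cK_-$ — and then applying $P_\pm$, the integrand expands into exactly four scalar-times-vector terms. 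Collecting them yields the block form $P_+R(\eta)h=-i\bigl(T_{++}(1,\eta)h_+-T_{+-}(1,\eta)h_-\bigr)$ and $P_-R(\eta)h=-i\bigl(T_{-+}(1,\eta)h_+-T_{--}(1,\eta)h_-\bigr)$, where $h_\pm=P_\pm h$. Since the $2\times2$ block operator and its four entries have comparable norms, $R(\eta)$ is uniformly bounded in $\eta$ if and only if $T_{++}(1,\eta)$, $T_{--}(1,\eta)$, $T_{+-}(1,\eta)$ and $T_{-+}(1,\eta)$ are all uniformly bounded, which proves~(\ref{iCritDinf-1}).

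The main obstacle is not conceptual but lies in the careful bookkeeping of this last step: keeping track of the sign produced by $J$ on the $\cK_-$-component and matching each of the four resulting terms against the precise placement of $\pm iy$ in the Weyl solutions in \eqref{CritTpm}--\eqref{CritTpm-+}. One must also ensure that the full equivalence (not merely one implication) survives, which is exactly why it is essential that the free-term integral is bounded \emph{unconditionally}, so that boundedness of the complete Veseli\'{c} integral transfers to and from boundedness of $R(\eta)$. For part~(\ref{iCritDinf-2}) the same computation carried out over $\{\,iy:\varepsilon\le|y|\le1\,\}$ produces $T_{\pm\pm}(\varepsilon,1)$ and $T_{\pm\mp}(\varepsilon,1)$, while the hypothesis $\ker(A)=\ker(A^2)$ is inherited directly from Theorem~\ref{Veselic}(\ref{iVeselic-2}).
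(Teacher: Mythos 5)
Your proof is correct and takes essentially the same approach as the paper: both apply Veseli\'{c}'s criterion (Theorem~\ref{Veselic}) to the resolvent formula \eqref{ereswA}, dispose of the free term using self-adjointness of $A_0$ in the Hilbert space $(\cK,\ahip_{\cK})$, and identify the remaining rank-one integral with the block matrix built from $T_{++}$, $T_{--}$, $T_{+-}$, $T_{-+}$, so that uniform boundedness transfers in both directions. The only caveat is that your primary justification of the free term --- Proposition~\ref{p:fd-iff-rcp} and Theorem~\ref{Veselic} applied to $A_0$ --- tacitly assumes $A_0$ is nonnegative in the Kre\u{\i}n space $(\cK,\kip_{\cK})$, which need not hold (the extensions $S_0^{\pm}$ of the nonnegative operators $S_{\pm}$ may fail to be nonnegative); your parenthetical spectral-theorem argument, which is exactly what the paper uses, is the correct and sufficient justification.
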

\begin{proof}
Notice first that by Corollary~\ref{c:non-none} the operator $A$ has a nonempty resolvent set. Set $A_0 = S_{0}^+\oplus (-S_{0}^-)$, see Theorem~\ref{tHsc}(\ref{ietHsc0}) and \eqref{eq:A0}.  Since $A_0$ is self-adjoint in the Hilbert space $(\cK, \ahip_{\cK})$,  the family of operators
\[
 \int\limits_{1\le|y|\le\eta}  (A_0 - iy)^{-1}dy, \qquad \eta >1,
 \]
is uniformly bounded. Recall from Theorem~\ref{tKsc} that $\psi(z) = \psi_+(z) + \psi_-(-z)$, $z \in \nCR$, and let $f \in \cK$ be arbitrary. By \eqref{ereswA} we have
\begin{equation*} 
\begin{split}
&
\int\limits_{1\le|y|\le\eta}\left(\bigl(A - iy \bigr)^{-1}f  -\bigl(A_0 - iy \bigr)^{-1}f\right)dy \\
 & \rule{4cm}{0pt} = \int\limits_{1\le|y|\le\eta}
  \frac{-\bigl[ f, \psi(-{iy})\bigr]_{\cK} \, \psi(iy)}{m_+(iy)+m_-(-iy)} dy \\
 & \rule{4cm}{0pt} =\begin{pmatrix} T_{++}(1,\eta) & T_{+-}(1,\eta)
 \\ T_{-+}(1,\eta) & T_{--}(1,\eta) \end{pmatrix}
 \begin{pmatrix} -P_+ f \\ P_- f \end{pmatrix}.
 \end{split}
\end{equation*}
Therefore, the uniform boundedness of the family of operators
\[
  \int\limits_{1\le|y|\le\eta}
  ({A_1} - {iy})^{-1}d{y}, \qquad \eta > 1,
\]
is equivalent to the uniform boundedness of the families of
operators
\[
T_{++}(1,\eta), \ \  T_{--}(1,\eta), \ \ T_{+-}(1,\eta),
 \ \ T_{-+}(1,\eta), \quad \eta\in(1,\infty).
\]
Now (\ref{iCritDinf-1}) follows from Theorem~\ref{Veselic}(\ref{iVeselic-1}).

The statement (\ref{iCritDinf-2}) is implied by the  formula~\eqref{ereswA}
and Theorem~\ref{Veselic}(\ref{iVeselic-2}).
\end{proof}

\begin{theorem}\label{Main}
Let $A$ be a nonnegative partially fundamentally reducible operator in a Kre\u{\i}n space $({\cK}, \kip_{{\cK}})$ and assume that Weyl functions $m_+$ and $m_-$ introduced in Theorem~{\rm~\ref{tKsc}} have $B_\infty$-property (see Definition~\ref{defBinf}). Then $\infty\not\in c_s(A)$ if and only if the pair $(m_+,m_-)$ has $D_\infty$-property.
\end{theorem}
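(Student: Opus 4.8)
The forward implication is already in hand: by Corollary~\ref{Dinf} the condition $\infty\not\in c_s(A)$ forces the pair $(m_+,m_-)$ to have the $D_\infty$-property, with no assumption on $B_\infty$-properties needed. Thus the entire content of the theorem lies in the converse, and my plan is to establish it through the operator criterion of Lemma~\ref{CritDinf}~(\ref{iCritDinf-1}): assuming the $B_\infty$-properties of $m_+$ and $m_-$ together with the $D_\infty$-property of the pair, it suffices to show that the four families $T_{++}(1,\eta)$, $T_{--}(1,\eta)$, $T_{+-}(1,\eta)$, $T_{-+}(1,\eta)$ defined in \eqref{CritTpm}--\eqref{CritTpm-+} are uniformly bounded for $\eta\in(1,\infty)$.

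The first step is to rewrite the sesquilinear forms of these operators in terms of the transforms $F_{m_\pm}$ of Theorem~\ref{tFt} and the boundary maps $G_{m_\pm,\infty}$, $G_{m_\pm,\infty}^-$ from Definition~\ref{defBinf} and Lemma~\ref{prop:4.5}. Using $(F_{m_\pm}f)(z)=\langle f,\psi_\pm(z^*)\rangle_\cK$, for $y>1$ one has $\langle f_+,\psi_+(-iy)\rangle_\cK=(G_{m_+,\infty}F_{m_+}f_+)(y)$ and $\langle\psi_+(iy),g_+\rangle_\cK=\overline{(G_{m_+,\infty}^-F_{m_+}g_+)(y)}$, with the analogous identities for the minus labels. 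Substituting these into \eqref{CritTpm}--\eqref{CritTpm-+} turns, for instance, $\langle T_{++}(1,\eta)f_+,g_+\rangle_\cK$ into an integral over $1\le|y|\le\eta$ whose integrand is a product of a $G$-transform of $f_+$ and a conjugated $G^-$-transform of $g_+$ divided by $m_+(iy)+m_-(-iy)$; the cross operators $T_{+-}$, $T_{-+}$ produce integrands in which one factor is transformed through $F_{m_+}$ and the other through $F_{m_-}$. On the negative part of the range the substitution $y\mapsto -y$ merely interchanges the roles of $G_{m_\pm,\infty}$ and $G_{m_\pm,\infty}^-$ and conjugates the denominator, since $m_\pm(-iy)=\overline{m_\pm(iy)}$ by the symmetry \eqref{esym}, so that portion is handled identically.

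The crux is a single weighted Cauchy--Schwarz estimate. With $w_{m_\pm}(y)=1/\im m_\pm(iy)$ as in Definition~\ref{defBinf}, the $D_\infty$-property \eqref{eqDinf} yields a constant $C$ such that
\[
\frac{1}{|m_+(iy)+m_-(-iy)|}\le C\,w_{m_+}(y)\quad\text{and}\quad\frac{1}{|m_+(iy)+m_-(-iy)|}\le C\,w_{m_-}(y),\qquad y>1,
\]
because $\im m_+(iy)$ and $\im m_-(iy)$ are each nonnegative, so the $D_\infty$-bound on their sum over $|m_+(iy)+m_-(-iy)|$ bounds each term separately. I would then split $1/|m_+(iy)+m_-(-iy)|$ into a product of two square roots, assign one factor to each of the two transforms, apply Cauchy--Schwarz, and bound each resulting integral by a weighted $L^2$-norm, using $w_{m_+}$ for the factor coming from a $\cK_+$-vector and $w_{m_-}$ for the factor coming from a $\cK_-$-vector. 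By Proposition~\ref{pGF} and Lemma~\ref{prop:4.5}, the $B_\infty$-property of $m_\pm$ makes both $G_{m_\pm,\infty}F_{m_\pm}$ and $G_{m_\pm,\infty}^-F_{m_\pm}$ bounded from $\cK_\pm$ into the corresponding weighted $L^2$-space, and $F_{m_\pm}$ is isometric. Hence each form satisfies $|\langle T(1,\eta)f,g\rangle_\cK|\le C'\|f\|_\cK\,\|g\|_\cK$ with $C'$ independent of $\eta$, which gives the required uniform boundedness and, through Lemma~\ref{CritDinf}~(\ref{iCritDinf-1}), the conclusion $\infty\not\in c_s(A)$.

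I expect the principal obstacle to lie in the bookkeeping of this last step: one must match, factor by factor, the correct weight $w_{m_+}$ or $w_{m_-}$ to each $G$- or $G^-$-transform so that both resulting integrals are precisely of the form controlled by a $B_\infty$-bound, and one must verify that the two square-root factors of the denominator are distributed consistently across all four operators and across both halves of the integration range. Once the integrands are correctly identified with $G$- and $G^-$-transforms, the $D_\infty$-property supplies exactly the pointwise weight comparison needed to close the estimate.
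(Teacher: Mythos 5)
Your proposal is correct and follows essentially the same route as the paper's own proof: necessity via Corollary~\ref{Dinf}, reduction of sufficiency to the uniform boundedness of the four families $T_{\pm\pm}(1,\eta)$, $T_{\pm\mp}(1,\eta)$ through Lemma~\ref{CritDinf}~(\ref{iCritDinf-1}), rewriting the sesquilinear forms via $F_{m_\pm}$ and $G_{m_\pm,\infty}^{\pm}$, and closing with a weighted Cauchy--Schwarz estimate in which the $D_\infty$-property supplies the pointwise comparison $1/\bigl|m_+(iy)+m_-(-iy)\bigr|\le C\,w_{m_\pm}(y)$ and the $B_\infty$-property supplies the $L^2$-bounds. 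The only cosmetic difference is that you obtain the cross-term weight $\sqrt{w_{m_+}w_{m_-}}$ by taking the geometric mean of the two separate $D_\infty$-bounds, whereas the paper bounds $\sqrt{\im m_+(iy)\,\im m_-(iy)}/\bigl|m_+(iy)+m_-(-iy)\bigr|$ directly via the arithmetic--geometric mean inequality; these are the same estimate.
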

\begin{proof}
For the necessity see Corollary~\ref{Dinf}.

To prove the sufficiency consider first $f_+$, $g_+\in\cK_+$ and find an upper bound for
\[
\Bigl|\bigl\la T_{++}(1,\eta)f_+,g_+\bigr\ra_{\cK} \Bigr|.
\]
By the definition of the generalized Fourier transform $F_+=F_{m_+}$ we have
\begin{equation*} \label{eqesT++}
\bigl\la T_{++}(1,\eta)f_+,g_+\bigr\ra_{\cK}
 =  \int\limits_{1\le|y|\le\eta}
 \frac{(F_{+}f_+)(iy) \bigl((F_{+}g_+)(-iy)\bigr)^*}{m_+(iy)+m_-(-iy)}dy.
\end{equation*}
Since the pair $(m_+,m_-)$ has $D_\infty$-property, there exists $C_1 > 0$ such that
\begin{equation}\label{eq:D_infty+}
\bigl| m_+(iy)+m_-(-iy) \bigr| \geq C_1^{-1} \im m_+(iy) \quad \text{for all} \quad  y > 1.
\end{equation}
Set $w_\pm(y)=\bigl(\im m_\pm(iy)\bigr)^{-1}$, $y\in\nR_+$.
Since $m_+$ has $B_\infty$-property, then in view of Definition~\ref{defBinf} and Proposition~\ref{prop:4.5} the mappings
\begin{align*}
G_{m_+,\infty}^+F_{+} &: f_+\mapsto (F_{+}f_+)(iy), \quad f_+\in\cK_+, \\
G_{m_+,\infty}^-F_{+} &: f_+\mapsto (F_{+}f_+)(-iy), \quad f_+\in\cK_+,
\end{align*}
are bounded from $\cK_+$ to $L^2_{w_{+}}(1,\infty)$ and hence there exists $C_2>0$, such that
\begin{equation}\label{eq:B_infty+}
    \bigl\|G_{m_+,\infty}^+F_{+}f_+ \bigr\|_{L^2_{w_{+}}\!(1,\infty)}
     \le C_2 \bigl\|f_+\bigr\|_{\cK}
 \end{equation}
 and
\begin{equation}\label{eq:B_infty-}
 \bigl\|G_{m_+,\infty}^-F_{+}f_+\bigr\|_{L^2_{w_{+}}\!(1,\infty)}
  \le C_2 \bigl\|f_+\bigr\|_{\cK}
 \end{equation}
for all $f_+\in\cK_+$.
Using \eqref{eq:D_infty+}, \eqref{eq:B_infty+}, \eqref{eq:B_infty-} and~\eqref{eq:F_trans} we obtain
\begin{align*}
\Bigl|\bigl\la T_{++}(1,\eta)f_+,g_+\bigr\ra_{\cK} \Bigr|
 & \leq C_1 \int\limits_{1\le|y|\le\eta} \
  \bigl|(F_{+}f_+)(iy) \bigr| \, \bigl| (F_{+}g_+)(-iy) \bigr| w_{+}(y) dy \\
 & \leq C_1
 \left(\int\limits_{1\le|y|\le\eta} \bigl|(F_{+}f_+)(iy)\bigr|^2 w_{+}(y) dy\right)^{\!1/2} \\
  & \rule{26mm}{0pt} \times \left(\int\limits_{1\le|y|\le\eta} \bigl|(F_{+}g_+)(iy)\bigr|^2 w_{+}(y) dy\right)^{\!1/2}  \\
&\le 2C_1C_2^2 \bigl\|f_+\bigr\|_{\cK}\bigl\|g_+\bigr\|_{\cK},
\end{align*}
for all $f_+,g_+ \in \cK_+$. This proves that the family $T_{++}(1,\eta)$ is uniformly bounded for $\eta\in(1,\infty)$.

The proof for the families $T_{--}(1,\eta)$ and $T_{+-}(1,\eta)$ is similar.

Next, we consider $f_+\in\cK_+$ and $g_-\in\cK_-$ and give an upper bound for
\[
\Bigl|\bigl\la T_{-+}(\epsilon,\eta)f_+,g_-\bigr\ra_{\cK} \Bigr|.
\]
Since the function in \eqref{eqDinf} is bounded, then the function
\begin{equation*}
y \mapsto \frac{\sqrt{\im m_+(iy)} \sqrt{\im m_-(iy)}}{\bigl|m_+(iy)+m_-(-iy)\bigr|} \qquad \text{for all} \qquad y > 1,
\end{equation*}
is bounded as well. Therefore, there exists $C_3 >0$ such that
\begin{equation*}
 \bigl|m_+(iy)+m_-(-iy)\bigr| \geq C_2 \sqrt{\im m_+(iy)} \sqrt{\im m_-(iy)} \quad \text{for all} \quad y > 1,
\end{equation*}
Consequently, with $F_- = F_{m_-}$, we have
\begin{align*}
\Bigl|\bigl\la T_{-+}(1,\eta)f_+,g_- \bigr\ra_{\cK} \Bigr|
 & \le \int\limits_{1\le|y|\le\eta} \
 \biggl| \frac{(F_{+}f_+)(iy) \bigl((F_{-}g_-)(-iy)\bigr)^*}%
 {m_+(iy)+m_-(-iy)} \biggr| \ dy  \\
 & \rule{-5mm}{0pt}
 \leq C_3 \!\!\!\!\! \int\limits_{1\le|y|\le\eta} \!\!\!\!
\bigl|(F_{+}f_+)(iy) \bigr| \bigl|(F_{-}g_-)(-iy)\bigr| \sqrt{w_{+}(y)} \sqrt{w_{-}(y)} dy \\
 & \rule{-5mm}{0pt}
 \leq 2C_2C_3 \ \bigl\| G_{m_+} F_{+}f_+ \bigr\|_{L_{w_{+}}^2\!(1,\infty)} \ \bigl\| G_{m_-} F_{-}f_- \bigr\|_{L_{w_{-}}^2\!(1,\infty)} \\
 & \rule{-5mm}{0pt}
 \le 2C_2C_3  \ \bigl\|f_+ \bigr\|_{\cK} \ \bigl\|g_-\bigr\|_{\cK}
\end{align*}
for all $f_+ \in \cK_+$ and all $g_- \in \cK_-$. This proves that the family $T_{-+}(1,\eta)$ is uniformly bounded for $\eta\in(1,\infty)$.

The uniform boundedness of $T_{+-}(1,\eta)$ is proved similarly. Now the statement is implied by Lemma~\ref{CritDinf}.
\end{proof}

\begin{theorem}\label{Main0}
Let $A$ be a nonnegative partially fundamentally reducible operator in a Kre\u{\i}n space $({\cK}, \kip_{{\cK}})$ and assume that Weyl functions $m_+$ and $m_-$ introduced in Theorem~{\rm~\ref{tKsc}} have  $B_0$-property.  Then $0\not\in c_s(A)$ if and only if $\ker(A)=\ker(A^2)$ and the pair $(m_+,m_-)$ has $D_0$-property.
\end{theorem}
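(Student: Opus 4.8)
The plan is to follow verbatim the structure of the proof of Theorem~\ref{Main}, interchanging the roles of $\infty$ and $0$: every occurrence of the $B_\infty$-property is replaced by the $B_0$-property, the $D_\infty$-property by the $D_0$-property, the truncation region $1\le|y|\le\eta$ by $\epsilon\le|y|\le 1$, the operators $G_{m_\pm,\infty}^{\pm}$ by $G_{m_\pm,0}^{\pm}$, and the criterion in Lemma~\ref{CritDinf}(\ref{iCritDinf-1}) by the one in Lemma~\ref{CritDinf}(\ref{iCritDinf-2}). For the necessity, I would simply invoke Corollary~\ref{Dinf}(ii) to obtain the $D_0$-property and the ``only if'' direction of Lemma~\ref{CritDinf}(\ref{iCritDinf-2}) (equivalently Theorem~\ref{Veselic}(\ref{iVeselic-2})) to record that $0\not\in c_s(A)$ forces $\ker(A)=\ker(A^2)$.

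For the sufficiency I assume $\ker(A)=\ker(A^2)$ and that the pair $(m_+,m_-)$ has $D_0$-property, and aim to verify, via Lemma~\ref{CritDinf}(\ref{iCritDinf-2}), that the four families $T_{++}(\epsilon,1)$, $T_{--}(\epsilon,1)$, $T_{+-}(\epsilon,1)$ and $T_{-+}(\epsilon,1)$ are uniformly bounded for $\epsilon\in(0,1)$. For the diagonal block $T_{++}$, the $D_0$-property yields a constant $C_1>0$ with $\bigl|m_+(iy)+m_-(-iy)\bigr|\ge C_1^{-1}\im m_+(iy)$ for $y\in(0,1)$; writing $\bigl\la T_{++}(\epsilon,1)f_+,g_+\bigr\ra_{\cK}$ through the generalized Fourier transform $F_+=F_{m_+}$ as in the proof of Theorem~\ref{Main}, applying Cauchy--Schwarz over $\epsilon\le|y|\le 1$, and using that the $B_0$-property makes $G_{m_+,0}^{+}F_+$ and $G_{m_+,0}^{-}F_+$ bounded from $\cK_+$ into $L^2_{w_+}(0,1)$ (by Proposition~\ref{pGF} together with Lemma~\ref{prop:4.5}(\ref{p45ii})) produces a bound independent of $\epsilon$. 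The blocks $T_{--}$ and $T_{+-}$ are handled in exactly the same way. For the mixed block $T_{-+}$, I would use boundedness of the function in \eqref{eqDinf} on $(0,1)$ to obtain $\bigl|m_+(iy)+m_-(-iy)\bigr|\ge C_2\sqrt{\im m_+(iy)}\,\sqrt{\im m_-(iy)}$ for $y\in(0,1)$, and then the mixed Cauchy--Schwarz estimate with the weights $w_+$ and $w_-$ closes the argument precisely as before.

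The hard part is not analytic but bookkeeping: unlike at $\infty$, the regularity of $0$ carries the genuinely independent algebraic condition $\ker(A)=\ker(A^2)$, which is not produced by the $B_0$/$D_0$ machinery and must be threaded explicitly through both implications (it enters the hypothesis of Lemma~\ref{CritDinf}(\ref{iCritDinf-2}) for the sufficiency and is extracted from it for the necessity). Once this condition is tracked, every remaining estimate is the near-zero transcription of the near-infinity estimates of Theorem~\ref{Main}; the only point requiring a moment's care is that all asymptotic inequalities must now hold as $y\downarrow 0$ on $(0,1)$ rather than as $y\uparrow+\infty$, which is exactly what the $0$-indexed auxiliary results are designed to supply, so no new idea is needed beyond the substitutions described above.
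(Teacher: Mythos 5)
Your proposal is correct and follows essentially the same route as the paper: necessity via Corollary~\ref{Dinf} together with Veseli\'{c}'s criterion (which forces $\ker(A)=\ker(A^2)$), and sufficiency by transcribing the Cauchy--Schwarz estimates of Theorem~\ref{Main} to the interval $(0,1)$ and invoking Lemma~\ref{CritDinf}(\ref{iCritDinf-2}). If anything, you are slightly more explicit than the paper, whose proof of this theorem only writes out the $T_{++}(\epsilon,1)$ estimate and leaves $T_{--}$, $T_{+-}$, $T_{-+}$ to the reader, exactly as you describe handling them.
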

\begin{proof}
We prove sufficiency. For necessity see Corollary~\ref{Dinf}.

Since $m_+$ and $m_-$ have $D_0$-property, there exists $C_4 > 0$ such that
\begin{equation}\label{eq:D_infty+A}
\bigl| m_+(iy)+m_-(-iy) \bigr| \geq C_4 \im m_+(iy) \qquad \text{for all} \qquad y\in(0,1).
\end{equation}
Since $m_+$ has $B_0$-property, the mappings
\[
G_{m_+,0}^+F_{+}^+:f_+\mapsto (F_{+}f_+)(iy)\quad (f_+\in\cK_+)
\]
\[
G_{m_+,0}^-F_{+}^-:f_+\mapsto (F_{+}f_+)(-iy)\quad (f_+\in\cK_+)
\]
are bounded from $\cK_+$ to $L_{w_{+}}^2\!(0,1)$ and hence there exists $C_5>0$, such that
\begin{align}\label{eq:B_0+}
    \bigl\|G_{m_+,0}^+F_{+}f_+\bigr\|_{L_{w_{+}}^2\!(0,1)}
     & \le C_5 \bigl\|f_+ \bigr\|_{\cK}, \\
\label{eq:B_0-}
    \bigl\|G_{m_+,0}^-F_{+}f_+\bigr\|_{L_{w_{+}}^2\!(0,1)}
     & \le C_5 \bigl\|f_+\bigr\|_{\cK}
\end{align}
for all $f_+\in\cK_+$.
Using \eqref{eq:D_infty+A}, \eqref{eq:B_0+}, \eqref{eq:B_0-} and~\eqref{eq:F_trans} we obtain
\begin{align*}
\Bigl|\bigl\la T_{++}(\epsilon,1)f_+,g_+\bigr\ra_{\cK} \Bigr|
  & \leq C_4 \int\limits_{\epsilon\le|y|\le 1} \bigl|(F_{+}f_+)(iy)
\bigr| \bigl|(F_{+}g_+)(-iy)\bigr| w_{+}(y) dy \\
 &\le C_4\, \bigl\| G_{m_+,0}^+ F_{+}f_+ \bigr\|_{L^2_{w_{+}}\!(0,1)} \ \bigl\| G_{m_+,0}^- F_{+}g_+ \bigr\|_{L^2_{w_{+}}\!(0,1)}\\
 &\le 2C_4C_5^2 \,  \bigl\|f_+ \bigr\|_{\cK} \,  \bigl\|g_+\bigr\|_{\cK}
\end{align*}
for all $f_+,g_+\in\cK_+$. This proves that the family $T_{++}(\epsilon,1)$ is uniformly bounded for $\epsilon\in(0,1)$.

The proof for the family $T_{--}(\epsilon,1)$ is similar. Now the statement is implied by Lemma~\ref{CritDinf}.
\end{proof}

Proposition~\ref{p:fd-iff-rcp} together with Theorems \ref{Main} and \ref{Main0} yield the following statement.

\begin{corollary}\label{cor:Main0}
Let $A$ be a nonnegative partially fundamentally reducible operator in a Kre\u{\i}n space $({\cK}, \kip_{{\cK}})$ and let $m_+$ and $m_-$ be Weyl functions introduced in Theorem~{\rm~\ref{tKsc}}. If each of the functions $m_+$ and $m_-$ has both $B_\infty$-property and $B_0$-property, then $A$ is fundamentally reducible in the Kre\u{\i}n space $(\cK, \kip_{\cK})$ if and only if the pair $(m_+,m_-)$ has $D_\infty$-property  and $D_0$-property and  $\ker(A)=\ker(A^2)$.
\end{corollary}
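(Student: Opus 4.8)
The plan is to reduce the statement to a bookkeeping combination of three results established earlier: Proposition~\ref{p:fd-iff-rcp}, Theorem~\ref{Main}, and Theorem~\ref{Main0}, using Corollary~\ref{c:non-none} to dispose of the one hypothesis of Proposition~\ref{p:fd-iff-rcp} that is not explicitly listed among the claimed criteria. First I would observe that $A$ is a nonnegative operator in the Kre\u{\i}n space $(\cK,\kip_{\cK})$, so Proposition~\ref{p:fd-iff-rcp} applies and asserts that $A$ is fundamentally reducible if and only if $\rho(A) \neq \emptyset$, $\ker(A) = \ker(A^2)$, and $0, \infty \not\in c_s(A)$. By Corollary~\ref{c:non-none} the spectrum of a nonnegative partially fundamentally reducible operator is real, hence $\rho(A) \neq \emptyset$ holds automatically and may be dropped from the list. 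Thus fundamental reducibility of $A$ is equivalent to the conjunction of the three conditions $\ker(A) = \ker(A^2)$, $\infty \not\in c_s(A)$, and $0 \not\in c_s(A)$.

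Next I would translate the two critical-point conditions into $D$-properties using the standing $B$-hypotheses. Since $m_+$ and $m_-$ both have $B_\infty$-property, Theorem~\ref{Main} gives that $\infty \not\in c_s(A)$ if and only if the pair $(m_+,m_-)$ has $D_\infty$-property. Since $m_+$ and $m_-$ both have $B_0$-property, Theorem~\ref{Main0} gives that $0 \not\in c_s(A)$ if and only if $\ker(A) = \ker(A^2)$ and the pair $(m_+,m_-)$ has $D_0$-property. Substituting these two equivalences into the conjunction obtained in the previous step, and noting that the condition $\ker(A) = \ker(A^2)$ now appears twice and so may be absorbed into a single occurrence, yields precisely that $A$ is fundamentally reducible in $(\cK,\kip_{\cK})$ if and only if $\ker(A) = \ker(A^2)$ and the pair $(m_+,m_-)$ has both $D_\infty$-property and $D_0$-property.

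There is essentially no genuine obstacle in this argument, as all the analytic content resides in the cited theorems and the corollary is purely a logical assembly. The only point demanding a word of care is that the resolvent condition $\rho(A) \neq \emptyset$, required by Proposition~\ref{p:fd-iff-rcp}, does not appear among the criteria in the statement; it is exactly Corollary~\ref{c:non-none} that permits its suppression. I would therefore invoke Corollary~\ref{c:non-none} explicitly at the outset so that the equivalence chain is complete and no hidden hypothesis is left unverified.
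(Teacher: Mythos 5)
Your proposal is correct and follows essentially the same route as the paper, which proves this corollary simply by citing Proposition~\ref{p:fd-iff-rcp} together with Theorems~\ref{Main} and~\ref{Main0} (with the nonemptiness of $\rho(A)$ supplied by Corollary~\ref{c:non-none}, as noted at the start of Section~\ref{sect:3}). Your write-up merely makes explicit the bookkeeping, including the absorption of the duplicated condition $\ker(A)=\ker(A^2)$, which the paper leaves to the reader.
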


\begin{remark}\label{rem:Reflex}
The coupling $A_1$ in~\eqref{eq:A1} will be called {\it
reflexive} if  $m_+(z)=m_-(z)$.
In the reflexive case analogs of Theorems~\ref{Main} and \ref{Main0} for indefinite Sturm-Liouville operator  were proven in~\cite{Kost13}. It was shown there that the $B$-property for $m_+$ is automatically satisfied if the corresponding $D$-property holds.
\end{remark}
This remark leads to the following sufficient conditions of regularity.

\begin{corollary}\label{cor:Main1}
Let $A$ be a nonnegative partially fundamentally reducible operator in a Kre\u{\i}n space $({\cK}, \kip_{{\cK}})$ and assume that Weyl functions $m_+$ and $m_-$ introduced in Theorem~{\rm~\ref{tKsc}} are Stieltjes functions. Then the following statements hold.
\begin{enumerate}
\renewcommand*\theenumi{\roman{enumi}}
\renewcommand*\labelenumi{{\rm (\theenumi)}}
  \item \label{i:cor:Main1-1}
 If each function $m_+$ and $m_-$ has $D_\infty$-property, then $\infty\not\in c_s(A)$.
  \item \label{i:cor:Main1-2}
 If each function $m_+$ and $m_-$ has $D_0$-property and  $\ker(A) = \ker(A^2)$,  then $0\not\in c_s(A)$.
   \item \label{i:cor:Main1-3}
 If each of the functions $m_+$ and $m_-$ has both $D_\infty$-property  and $D_0$-property, and  $\ker(A) = \ker(A^2)$, then $A$ is fundamentally reducible in the Kre\u{\i}n space $({\cK}, \kip_{{\cK}})$.
\end{enumerate}
\end{corollary}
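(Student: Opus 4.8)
The plan is to reduce each of the three parts to the regularity criteria of Theorems~\ref{Main} and~\ref{Main0}, whose hypotheses require the $B_\infty$- and $B_0$-properties that are \emph{not} assumed here. The bridge between the assumed $D$-properties and the required $B$-properties is a single auxiliary fact, which I would isolate as a lemma: \emph{a Stieltjes function having $D_\infty$-property (respectively, $D_0$-property) also has $B_\infty$-property (respectively, $B_0$-property).} This is the natural common generalization of the observation recorded in Remark~\ref{rem:Reflex} and proved for indefinite Sturm--Liouville operators in~\cite{Kost13}, namely that the relevant $B$-property is automatic once the corresponding $D$-property is known.

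Granting this lemma, the three statements follow quickly. For~(\ref{i:cor:Main1-1}), since $m_+$ and $m_-$ are Stieltjes functions each having $D_\infty$-property, the lemma gives that both $m_+$ and $m_-$ have $B_\infty$-property; moreover, as $m_+$ is a Stieltjes function with $D_\infty$-property and $m_-$ is Stieltjes, Proposition~\ref{prop:DInf} shows that the pair $(m_+,m_-)$ has $D_\infty$-property. Thus all hypotheses of Theorem~\ref{Main} hold, and that theorem yields $\infty\not\in c_s(A)$. Statement~(\ref{i:cor:Main1-2}) is proved in the same way, using the $D_0/B_0$ half of the lemma, Proposition~\ref{prop:DInf}, and Theorem~\ref{Main0}, together with the hypothesis $\ker(A)=\ker(A^2)$. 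Finally, for~(\ref{i:cor:Main1-3}) I would combine~(\ref{i:cor:Main1-1}) and~(\ref{i:cor:Main1-2}) to obtain $0,\infty\not\in c_s(A)$; since $A$ is a nonnegative partially fundamentally reducible operator, Corollary~\ref{c:non-none} guarantees $\rho(A)\neq\emptyset$, so Proposition~\ref{p:fd-iff-rcp}, with $\ker(A)=\ker(A^2)$ assumed, implies that $A$ is fundamentally reducible.

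It therefore remains to prove the lemma, which is the heart of the matter. I would treat the implication $D_\infty\Rightarrow B_\infty$ in detail; the implication $D_0\Rightarrow B_0$ is entirely analogous, working on $(0,1)$ via part~(\ref{B0prop-ii}) of Proposition~\ref{prop:B0prop}. Writing $m(z)=\gamma+\int_0^{+\infty}\frac{d\sigma(t)}{t-z}$ for the Stieltjes representation and $w_m(y)=(\im m(iy))^{-1}$, Proposition~\ref{prop:B0prop} reduces $B_\infty$-property to the boundedness of the operator $H_{m,\infty}f(y)=\int_0^{+\infty}\frac{f(x)}{x+y}\,d\sigma(x)$ from $L^2_\sigma(\nR_+)$ into $L^2_{w_m}(1,\infty)$. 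I would establish this via the Schur test in the form of Corollary~\ref{cor:Binfprop}, taking $q_1(x)=(1+x)^{-\beta}$ for a suitable small $\beta>0$, as in the proof of Theorem~\ref{th:Binfprop}, and $q_2(y)=\int_0^{+\infty}\frac{q_1(x)}{x+y}\,d\sigma(x)$, so that the first Schur condition~\eqref{eq:CorSchur1} holds automatically with constant $1$. The $D_\infty$-property, in the form of the sector inequality $\im m(iy)\le C\,\re m(iy)$ for $y>1$, is the key input for verifying the second Schur condition~\eqref{eq:CorSchur2}.

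The main obstacle is precisely this verification of~\eqref{eq:CorSchur2}. Unlike in Theorem~\ref{th:Binfprop}, no regular-variation (power) asymptotic for $m$ is available here, so the Tauberian Theorem~\ref{thm:TaubFinal} cannot be invoked to pin down the decay of $q_2$ and of $q_2\,w_m$; instead one must control the Stieltjes integrals using only the sectoriality encoded by $D_\infty$, carefully separating the contributions of the mass of $\sigma$ on $\{t\le y\}$, which is comparable to $\im m(iy)$, from the tail on $\{t>y\}$, which the weight $(1+x)^{-\beta}$ together with $D_\infty$ must dominate. This single analytic step is where the argument is genuinely delicate; once it is in place, the reductions of the preceding paragraphs are routine.
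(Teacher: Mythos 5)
Your reduction is precisely the paper's: isolate the implication \emph{Stieltjes function with $D_\infty$-property (resp.\ $D_0$-property) has $B_\infty$-property (resp.\ $B_0$-property)}, use Proposition~\ref{prop:DInf} to pass from the $D$-property of each function to the $D$-property of the pair $(m_+,m_-)$, invoke Theorems~\ref{Main} and~\ref{Main0}, and obtain (iii) from Proposition~\ref{p:fd-iff-rcp}. The difference lies in how the bridge lemma is justified. The paper does not prove it at all: it simply appeals to Remark~\ref{rem:Reflex}, i.e., to the result of~\cite{Kost13}, where this implication is established. You, instead, undertake to prove the lemma and stop exactly at the decisive point: the verification of the Schur condition~\eqref{eq:CorSchur2} for your choice $q_1(x)=(1+x)^{-\beta}$, $q_2(y)=\int_0^{+\infty}\frac{q_1(x)}{x+y}\,d\sigma(x)$ is described as ``genuinely delicate'' but never carried out. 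Since, by your own account, this lemma is ``the heart of the matter'' and all three parts rest on it, this is a genuine gap, not a routine omission.

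There is also a concrete reason why the missing step is not a straightforward adaptation of the proof of Theorem~\ref{th:Binfprop}. The $D_\infty$-property of a single Stieltjes function says $\im m(iy)\le C\,\re m(iy)$ on $(1,\infty)$; this is an upper bound on $\im m(iy)$, hence a \emph{lower} bound on the weight $w_m(y)=(\im m(iy))^{-1}$. But condition~\eqref{eq:CorSchur2} requires an \emph{upper} bound on $\int_1^{+\infty}\frac{q_2(y)}{x+y}\,w_m(y)\,dy$, which ultimately calls for a lower bound on $\im m(iy)$, or at least an upper bound on the product $q_2\,w_m$. In Theorem~\ref{th:Binfprop} this control comes from the two-sided power asymptotics of $\sigma$, $q_2$ and $w_m$ supplied by the Tauberian Theorem~\ref{thm:TaubFinal}; the sector inequality alone points in the opposite direction, and controlling the ratio of the two Stieltjes-type integrals in $q_2\,w_m$ using only sectoriality is exactly the nontrivial analysis carried out in~\cite{Kost13}. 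The clean repair is the paper's own: replace your unproven lemma by a citation of Remark~\ref{rem:Reflex} (equivalently, of~\cite{Kost13}); with that substitution, the remaining reductions in your proposal are correct and complete the proof.
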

\begin{proof}
(\ref{i:cor:Main1-1}) Since each function $m_+$ and $m_-$ has $D_\infty$-property then   also each $m_+$ and $m_-$ has $B_\infty$-property (see Remark~\ref{rem:Reflex}).  By Proposition~\ref{prop:DInf} the pair $(m_+,m_-)$ has $D_\infty$-property and hence $\infty\not\in c_s(A)$ by Theorem~\ref{Main}.

(\ref{i:cor:Main1-2}) Similarly, if each $m_+$ and $m_-$ has $D_0$-property then also each  $m_+$ and $m_-$ has $B_0$-property (see Remark~\ref{rem:Reflex}). By Proposition~\ref{prop:DInf} the pair $(m_+,m_-)$ has $D_0$-property and thus $0\not\in c_s(A)$ by Theorem~\ref{Main0}.
\end{proof}

In the next theorem we use the notation introduced at the beginnings of Subsections~\ref{SubS:CoupleHS} and \ref{SubS:CoupleKS}.

\begin{theorem} \label{t:AllPos}
Let $S_\pm$ be a closed symmetric densely defined nonnegative operator with defect numbers $(1,1)$ in the Hilbert space $\bigl(\cK_\pm, \ahip_{\cK_\pm}\bigr)$.  Let $\bigl(\nC,\Gamma_0^\pm,\Gamma_1^\pm\bigr)$ be a boundary triple for $S_\pm^*$ and let $m_\pm$ be the corresponding Weyl function.  Let $A$ be a nonnegative self-adjoint extension of $S_+\oplus(-S_-)$ in the Kre\u{\i}n space $\bigl(\cK,\kip_\cK\bigr)$. Then the following statements hold.
\begin{enumerate}
\renewcommand*\theenumi{\alph{enumi}}
\renewcommand*\labelenumi{{\rm (\theenumi)}}
  \item \label{i:AllPos-1}
If $m_+, m_- \in \cA_\infty$, then $\infty\not\in c_s(A)$.

  \item \label{i:AllPos-2}
If $m_+, m_- \in \cA_0$ and $\ker(A)=\ker(A^2)$, then $0\not\in c_s(A)$.

 \item \label{i:AllPos-3}
If $m_+, m_- \in \cA_0 \cap \cA_\infty$ and $\ker(A)=\ker(A^2)$, then $A$ is fundamentally reducible.
\end{enumerate}
\end{theorem}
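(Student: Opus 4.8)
The plan is to split the argument according to the dichotomy between separated and non-separated self-adjoint extensions introduced in Subsection~\ref{SubS:CoupleKS} just before Theorem~\ref{Thm:5.5}. The first step is to transfer the problem from the Kre\u{\i}n space to the Hilbert space by setting $\wh{S} := JA$. A short computation shows that self-adjointness of $A$ in $(\cK,\kip_\cK)$ (that is, $A = A^{[*]} = JA^{\la*\ra}J$) is equivalent to self-adjointness of $\wh{S}$ in $(\cK,\ahip_\cK)$, that $\wh{S}$ extends $S_+\oplus S_-$ (since $J=I$ on $\cK_+$ and $J=-I$ on $\cK_-$), and that nonnegativity of $A$ in the Kre\u{\i}n space gives $\la \wh{S}f,f\ra_\cK = [Af,f]_\cK \ge 0$, so $\wh{S}$ is a nonnegative self-adjoint extension of $S_+\oplus S_-$ in $(\cK,\ahip_\cK)$. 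Because $J$ preserves $\cK_\pm$, the extension $A$ is separated, respectively non-separated, in the sense of~\eqref{eq:nssa} if and only if $\wh{S}$ is.

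In the separated case, $\wh{S}$ splits as a direct sum of self-adjoint extensions of $S_+$ and $S_-$, hence $A = B_+ \oplus (-B_-)$ with $B_\pm$ self-adjoint in the Hilbert spaces $(\cK_\pm,\pm\kip_\cK)$; nonnegativity of $A$ forces $B_\pm \ge 0$. The fundamental decomposition $\cK = \cK_+[\dot{+}]\cK_-$ then reduces $A$, so $A$ is fundamentally reducible by definition and, in particular, $\ker(A)=\ker(A^2)$ holds automatically. By Proposition~\ref{p:fd-iff-rcp} we obtain $0,\infty \notin c_s(A)$, which settles all three statements in this case without invoking the asymptotic hypotheses.

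The non-separated case carries the real content. Here Theorem~\ref{Thm:5.5} shows that $\wh{S}$, and hence $A = J\wh{S}$, is a coupling of $S_+$ and $S_-$, so $A$ is a partially fundamentally reducible operator with the given fundamental decomposition. Keeping the given boundary triple $\bigl(\nC,\Gamma_0^+,\Gamma_1^+\bigr)$, Theorem~\ref{prop:AasCoupl}(\ref{pAasCoupl-2}) produces the boundary triple $\bigl(\nC,\wh{\Gamma}_0^-,\wh{\Gamma}_1^-\bigr)$ relative to which $A$ is the coupling in the sense of Theorem~\ref{tKsc}; write $\wh{m}_+ = m_+$ and let $\wh{m}_-$ be the corresponding Weyl functions. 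By Remark~\ref{rem:transp} the function $\wh{m}_-$ is a M\"{o}bius transform $\nu_-\circ m_-$, where $\nu_-$ has the form~\eqref{eq:MobNev} with $\det\nu_- = 1$. Therefore Proposition~\ref{prop:AInf-AZer}(\ref{i:AInf-AZer-1}), applied with $\mu_1(z)=z$ and $\epsilon_2 = 1$, yields $m_-\in\cA_\infty \Leftrightarrow \wh{m}_-\in\cA_\infty$, and similarly for $\cA_0$. Thus the hypotheses on $m_\pm$ transfer verbatim to the Weyl functions $\wh{m}_\pm$ that actually enter Theorems~\ref{Main} and~\ref{Main0}.

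With this reduction the three conclusions follow from the main results. For (\ref{i:AllPos-1}), $\wh{m}_\pm\in\cA_\infty$ gives the $B_\infty$-property of each $\wh{m}_\pm$ (Corollary~\ref{co:Binfprop}) and the $D_\infty$-property of the pair $(\wh{m}_+,\wh{m}_-)$ (Proposition~\ref{prop:DpsAsym}(\ref{i:DpsAsym-1})), so Theorem~\ref{Main} gives $\infty\notin c_s(A)$. For (\ref{i:AllPos-2}), $\wh{m}_\pm\in\cA_0$ gives the $B_0$-property (Corollary~\ref{th:B0prop}) and the $D_0$-property of the pair (Proposition~\ref{prop:DpsAsym}(\ref{i:DpsAsym-2})), so Theorem~\ref{Main0}, together with $\ker(A)=\ker(A^2)$, gives $0\notin c_s(A)$. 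Finally (\ref{i:AllPos-3}) combines the two: using $\rho(A)\neq\emptyset$ (Corollary~\ref{c:non-none}) and $\ker(A)=\ker(A^2)$, Proposition~\ref{p:fd-iff-rcp} yields fundamental reducibility of $A$. Beyond bookkeeping, the main obstacle is the reduction step itself: one must verify carefully that $JA$ is a nonnegative self-adjoint Hilbert-space extension of the same separated/non-separated type, and, in the non-separated case, that re-expressing $A$ as a coupling relative to new boundary triples preserves membership in $\cA_\infty$ and $\cA_0$. This last point is precisely what the M\"{o}bius-invariance Proposition~\ref{prop:AInf-AZer} guarantees, and keeping track of which M\"{o}bius transformation (orientation-preserving, with $\mu_1(z)=z$) occurs is the delicate part of the verification.
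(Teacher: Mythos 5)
Your proof is correct and follows essentially the same route as the paper's: pass to the Hilbert space via $J$, dispose of separated extensions as trivially fundamentally reducible, and in the non-separated case realize $A$ as a coupling, transfer the $\cA_\infty$/$\cA_0$ hypotheses through the M\"{o}bius change of boundary triples (Proposition~\ref{prop:AInf-AZer}), and then invoke Corollary~\ref{co:Binfprop}, Corollary~\ref{th:B0prop}, Proposition~\ref{prop:DpsAsym}, Theorems~\ref{Main} and~\ref{Main0}, and Proposition~\ref{p:fd-iff-rcp} for part (c). The only immaterial difference is that you obtain the coupling representation from Theorem~\ref{prop:AasCoupl}, keeping the given triple for $S_+^*$ fixed so that only $m_-$ changes by a M\"{o}bius transformation, whereas the paper uses Theorem~\ref{Thm:5.5} directly and lets both Weyl functions change.
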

\begin{proof}
Let $A$ be an arbitrary nonnegative self-adjoint extension of  $S_+\oplus(-S_-)$ in the Kre\u{\i}n space $\bigl(\cK,\kip_\cK\bigr)$. Then $A$ is a nonnegative fundamentally reducible operator in $\bigl(\cK,\kip_\cK\bigr)$, so $\rho(A) \neq \emptyset$ by Corollary~\ref{c:non-none}.

The operator  $S = J\,A$ is a self-adjoint extension of $S_+\oplus S_-$ in the Hilbert space $\bigl(\cK,\ahip_\cK\bigr)$.  Assume first that $S$ is a separated extension of $S_+\oplus S_-$. By the definition of a separated extension, there exists a self-adjoint extension $T_+$ of $S_+$ in $\bigl(\cK_+,\kip_{\cK_+}\bigr)$ and a self-adjoint extension $T_-$ of $S_-$ in $\bigl(\cK_-,\kip_{\cK_-}\bigr)$ such that $S = T_+\oplus T_-$. Then, $S$ commutes with the fundamental symmetry $J$ introduced in \eqref{eq:defJ}.  Therefore, $A = JS$ is fundamentally reducible, so all three claims are trivial in this case.

Next assume that $S$ is a non-separated extension of $S_+\oplus S_-$. By Theorem~\ref{Thm:5.5} there exist a boundary triple $\bigl(\nC,\wh{\Gamma}_0^+, \wh{\Gamma}_1^+\bigr)$ for $S_+^*$ and
a boundary triple $\bigl(\nC,\wh{\Gamma}_0^-, \wh{\Gamma}_1^-\bigr)$ for $S_-^*$, such that $S$ is a coupling of $S_+$ and $S_-$ relative to the triples $\bigl(\nC,\wh{\Gamma}_0^+, \wh{\Gamma}_1^+\bigr)$ and $\bigl(\nC,\wh{\Gamma}_0^-, \wh{\Gamma}_1^-\bigr)$. By Remark~\ref{rem:transp} there exists a M\"{o}bius transformation $\mu_\pm$ such that the Weyl function $\wh{m}_\pm$ of $S_\pm$ relative to $\bigl(\nC,\wh{\Gamma}_0^\pm, \wh{\Gamma}_1^\pm\bigr)$ is given by
\begin{equation*} 
\wh{m}_\pm = \mu_\pm \circ m_\pm.
\end{equation*}

We proceed with a proof of (\ref{i:AllPos-1}). Assume that $m_+, m_- \in \cA_\infty$. Proposition~\ref{prop:AInf-AZer}(\ref{i:AInf-AZer-1}) implies that $\wh{m}_+, \wh{m}_- \in \cA_\infty$ and, in turn, Corollary~\ref{co:Binfprop} yields that $\wh{m}_+$ and $\wh{m}_-$ have $B_\infty$-property. Since by Proposition~\ref{prop:DpsAsym}(\ref{i:DpsAsym-1}) the pair $(\wh{m}_-,\wh{m}_+)$ has $D_\infty$-property, the claim follows from Theorem~\ref{Main}.

Next assume that $m_+, m_- \in \cA_0$. As before,  Proposition~\ref{prop:AInf-AZer}(\ref{i:AInf-AZer-1}) implies that $\wh{m}_+, \wh{m}_- \in \cA_\infty$. Now Corollary~\ref{th:B0prop} yields that $\wh{m}_+$ and $\wh{m}_-$ have $B_0$-property and by Proposition~\ref{prop:DpsAsym}(\ref{i:DpsAsym-2}) the pair $(\wh{m}_-,\wh{m}_+)$ has $D_0$-property. The claim now follows from Theorem~\ref{Main0}.

Statement (\ref{i:AllPos-3}) is a consequence of  Corollary~\ref{cor:Main0}.
\end{proof}


\section{Examples} \label{SecExe}

\begin{example} \label{ex2ndq}
Consider the singular differential expression
\begin{equation} \label{eq2ndq}
\ell(f)(t):= -\frac{\sgn t}{w(t)} \Bigl( \bigl(p(t)\,f'(t)\bigr)' + q(t) f(t)\Bigr) \quad \text{for a.a.} \quad t \in \nR,
 \end{equation}
where the coefficients $p$, $q$ and $w$ are real functions on $\nR$ satisfying the conditions
\begin{enumerate}
    \item[(C1)]
    $1/p, q, w \in L^1_{\rm loc}(\nR)$ and $p , w > 0$ a.e. on $\nR$,
    \item[(C2)]
    the expression $\ell$ is in the limit point case at $-\infty$ and at $+\infty$.
\end{enumerate}

It is natural to consider the expression in \eqref{eq2ndq} in the Kre\u{\i}n  space $\bigl(L_{w}^2(\nR), \kip\bigr)$. Here $\bigl(L_{w}^2(\nR),\ahip\bigr)$ is the standard weighted $L^2$-space with the positive definite inner product $\ahip$ and the indefinite inner product is given by $[f,g] = \la Jf,g \ra, f, g \in L_{w}^2(\nR)$, where
\begin{equation*} 
(Jf)(t)=(\sgn t) f(t), \qquad f \in L_{w}^2(\nR),
\end{equation*}
is a fundamental symmetry  on $\bigl(L_{w}^2(\nR),\kip\bigr)$. Set
\begin{equation} \label{eq:L2pm}
 \cK_\pm = \bigl\{ f \in L_{w}^2(\nR) : f = 0 \ \ \text{a.e. on} \ \ \nR_\mp \bigr\}.
\end{equation}
Then $L_{w}^2(\nR) = \cK_+ [\dot{+}]\cK_-$ is the fundamental decomposition corresponding to $J$.

Let $A$ be the operator associated with the expression in \eqref{eq2ndq} in the Hilbert space $L_{w}^2(\nR)$; that is the operator defined by $Af = \ell(f)$ for all
\[
f \in \dom(A) =
\bigl\{ f \in L_{w}^2(\nR) :  f, pf' \in AC_{\rm loc}(\nR), \ \ell(f) \in L_{w}^2(\nR) \bigr\}.
\]
The differential operator $A$ is partially fundamentally reducible in the Kre\u{\i}n space $\bigl(L_{w}^2(\nR),\kip\bigr)$.  To see this, consider the range restriction $S_\pm$ of $\pm A$ to $\cK_\pm$ which are defined on
\[
\bigl\{ f \in \dom(A) : Af \in \cK_\pm \bigr\} = \cK_\pm \cap \dom(A).
\]
Then $S_\pm$ is the minimal operator associated in $L_{w_\pm}^2\!(\nR_\pm)$ with the restriction of $\pm\ell$ to $\nR_\pm$; here $w_\pm$ denotes the restriction of $w$ to $\nR_\pm$.  In fact we have,
\begin{align} \nonumber 
\dom(S_\pm^*) & =
\bigl\{ f \in L_{w_\pm}^2\!(\nR_\pm): f, pf' \in AC_{\rm loc}[0,\pm\infty), \ \ell(f) \in L_{w_\pm}^2\!(\nR_\pm)  \bigr\}, \\
\nonumber
 \dom(S_\pm) & = \bigl\{ f \in \dom(S_\pm^*) : f(0) = f'(0) = 0\bigr\}, \\
\intertext{and}
\label{edefS}
S_\pm f &:= \pm \ell(f), \quad f \in \dom(S_\pm).
 \end{align}
Since we assume that $\ell$ is in the limit point case at
$\pm\infty$, the operator $S_\pm$ is a densely defined symmetric operator with defect numbers $(1, 1)$ in the Hilbert space $L_{w_\pm}^2\!(\nR_\pm)$.

Let $z \in \nCR$ and denote by $\vartheta(\cdot,z)$ and
$\varphi(\cdot,z)$ the unique solutions of the equation
\begin{equation*} 
   -(p\,f')' + q f =z w f
\end{equation*}
satisfying the boundary conditions
 \[
\vartheta(0,z) = 1, \ (p\,\vartheta')(0,z) = 0, \quad \text{and}
\quad \varphi(0,z) = 0, \ (p\,\varphi')(0,z) = 1,
 \]
respectively. Since we assume that $\ell$ is in the limit point case at
$\pm\infty$, for each $z\in\nCR$ there is a unique solution
\begin{equation} \label{eTWeyl}
\psi_\pm(t,z) = \varphi(t,z) \mp m_\pm(z) \vartheta(t,z), \qquad t \in \nR_\pm,
\end{equation}
of the restriction of $\pm\ell(f) = z f$ to $\nR_\pm$ which belongs to $L_{w_\pm}^2\!(\nR_\pm)$.  Relation
\eqref{eTWeyl} defines the function $m_\pm: \nCR\to \nC$ uniquely. The function $m_\pm$ is called {\em Titchmarsh-Weyl coefficient} of the restriction of the expression $\pm\ell$ to $\nR_\pm$.

A boundary triple for $S_\pm^*$ is $\bigl(\nC,\Gamma_0^\pm,
\Gamma_1^\pm\bigr)$, where
 \begin{equation} \label{ebtS}
\Gamma_0^\pm f := (p\,f')(0\pm), \quad \Gamma_1^\pm(f) = \mp
f(0\pm), \quad f \in \dom(S_\pm^*).
 \end{equation}
It follows from \eqref{eWeyl} and \eqref{ebtS} that the
Titchmarsh-Weyl coefficient defined by \eqref{eTWeyl} coincides with
the Weyl function of the operator $S_\pm$ in \eqref{edefS}
relative to the boundary triple in \eqref{ebtS}.

Assume additionally, that the coefficients $p$, $q$ and $w$ satisfy the
conditions:
\begin{enumerate}
    \item[(C3)] $p> 0$ and $p'\in AC[-b,b]$ for all $b>0$;
    \item[(C4)] $w = 1$.
    \end{enumerate}
The following asymptotic for the Titchmarsh-Weyl coefficient $m_\pm$ has been established  in~\cite{E72} (see also~\cite{Le}, where the asymptotic of the spectral function was found).  For all $z \in \nCR$
\begin{equation}\label{eq_Friedr_Asymp}
 m_\pm(r{z})=\frac{1}{\sqrt{p(0)}}\frac{1}{\sqrt{-{z} r}}+O\left(\frac{1}{r}\right)\qquad \text{as} \qquad  r\to +\infty.
\end{equation}
It is clear that this asymptotic implies \eqref{eq:AsymInf}.

By Theorem~\ref{prop:AasCoupl}, the coupling $A_1$ of $S_+$ and $S_-$ relative to the boundary triples given in \eqref{ebtS} coincides with the differential operator $A$ associated with the expression $\ell$ in $L_{w}^2(\nR)$.

In addition, assume that the operator $S_\pm$ is nonnegative in the Hilbert space $L_{w_\pm}^2\!(\nR_\pm)$.  Then the Friedrichs extension $S_F^\pm$ of $S_\pm$ defined on
\begin{equation*} 
  \dom\bigl(S_F^\pm\bigr) = \bigl\{ f \in \dom\bigl(S_\pm^*\bigr) : f(0) = 0 \bigr\}
\end{equation*}
is also nonnegative in the Hilbert space $L_{w_\pm}^2\!(\nR_\pm)$.
Since $\dom\bigl(S_F^\pm\bigr) = \ker(\Gamma_1^\pm)$, the function $m_\pm^{\!\top} = -1/m_\pm$ is holomorphic on $\nR_-\subset\rho(S_F^\pm)$ and satisfies
\begin{equation*} 
\lim_{x\downarrow-\infty} m_{\pm}^{\!\top}(x) = -\infty.
\end{equation*}
By Corollary~\ref{col:nonneg} the self-adjoint operator $A = A_1$ is nonnegative in the Kre\u{\i}n space $\bigl(L_{w}^2(\nR),\kip\bigr)$ if and only if
\begin{equation}\label{eq:Noneg_A_1}
  \lim_{x\uparrow 0}\bigl(m_+^{\!\top}(x)+m_-^{\!\top}(x)\bigr)\le 0.
\end{equation}

Assuming that $A = A_1$ is nonnegative, \eqref{eq:Noneg_A_1} yields  $m_+^{\!\top}(0-), m_-^{\!\top}(0-) \in \nR$.  Therefore, if $A_1$ is nonnegative, then $m_+^{\!\top}, m_-^{\!\top}  \in \SM$, and consequently, $m_+, m_-  \in \SM$. Now, \eqref{eq_Friedr_Asymp} and Proposition~\ref{prop:0-inf} imply that $m_+, m_-  \in \cA_\infty$.

Further, by Corollary~\ref{co:Binfprop} the functions $m_+$ and $m_-$ have $B_\infty$-property and by Proposition~\ref{prop:DInf} $m_+$ and $m_-$ have $D_\infty$-property.  Therefore, by Theorem~\ref{Main}, we have $\infty\not\in c_s(A)$. This result also follows from \cite[Theorem~3.6]{CL}, which was proved by completely different methods.

Moreover,  Theorem~\ref{t:AllPos} yields that not only $A$, but an arbitrary nonnegative self-adjoint extension of $S_+\oplus(\!-S_-\!)$ in the Kre\u{\i}n space $\bigl(L_{w}^2(\nR),\kip\bigr)$ has a nonempty resolvent set and $\infty$ is not its singular critical point.
\end{example}

\begin{example}\label{ex:5.2}
Consider the differential expression \eqref{eq2ndq} with $p = w = 1$ and assume that the potential $q \in L^1_{\rm loc}(\nR)$ satisfies
\[
\int_\nR (1+|t|)|q(t)|dt < +\infty.
\]
These functions clearly satisfy (C1); (C2) follows from \cite[Problem~9.4]{CoLe} or \cite[Theorem~1]{Re}. Let the operator $S_\pm$ and the corresponding  Titchmarsh-Weyl coefficient  $m_\pm$ be as in Example~\ref{ex2ndq}. Assume that $S_\pm$ is a nonnegative operator in the Hilbert space $L^2(\nR_{\pm})$ and
that~\eqref{eq:Noneg_A_1} holds.  Then the coupling $A_1$, which coincides with the differential operator $A$ associated with the expression \eqref{eq2ndq}, is a nonnegative self-adjoint operator in the Kre\u{\i}n space $\bigl(L^2(\nR),\kip\bigr)$.

The asymptotic behavior of $m_\pm$ at $0$ has been established in~\cite{KaKoMal09} as follows.  It was shown that either there exists $k_\pm > 0$ such that for all $z \in\nC\setminus\nR$ we have
\begin{equation} \label{eq:as1+}
    m_\pm(z) \sim  - k_\pm \sqrt{-r z} \qquad \text{as} \qquad  r \to 0,
\end{equation}
or there exist $a_\pm>0$ and $b_\pm\in\nR$ such that for all $z \in\nC\setminus\nR$ we have
\begin{equation} \label{eq:as2+}
    m_\pm(rz)  \sim  \frac{a_\pm}{b_\pm + \sqrt{- r z}}  \qquad \text{as} \qquad r \to 0.
\end{equation}
Assumption~\eqref{eq:Noneg_A_1} implies that the case \eqref{eq:as1+} is not possible. Thus \eqref{eq:as2+} holds. The asymptotic in  \eqref{eq:as2+} implies that for all $z \in\nC\setminus\nR$ we have
\begin{equation} \label{eq:as3+}
    m_\pm(rz) - \frac{a_\pm}{b_\pm} \sim -\frac{a_\pm}{b_\pm^2} \sqrt{- r z}  \qquad \text{as} \qquad r \to 0.
\end{equation}
Since in Example~\ref{ex2ndq} we proved that $m_+, m_- \in \SM$, \eqref{eq:as3+} and Proposition~\ref{prop:0-inf} yield $m_+, m_- \in \cA_0$. Recall that in Example~\ref{ex2ndq} we proved that $m_+, m_- \in \cA_\infty$ and in \cite[Proposition~4.3]{KaKoMal09} it was proved that $\ker(A) = \{0\}$. Now Theorem~\ref{t:AllPos} implies that $A$ is fundamentally reducible.  This has been proved in~\cite{KaKoMal09} and   Theorem~\ref{t:AllPos} provides an alternative proof of this result. Under a stronger assumption on $q$ the fundamental reducibility of $A$ was proved in \cite{FS00} using a different approach.

However, Theorem~\ref{t:AllPos} implies more. Let $\wt{A}$ be an arbitrary nonnegative self-adjoint extension of $S_+\oplus(-S_-)$ in the Kre\u{\i}n space $\bigl(L^{2}(\nR),\kip\bigr)$.  As in \cite[Proposition~4.3]{KaKoMal09}, it follows that $\ker(\wt{A}) = \{0\}$. Since $m_+, m_- \in \cA_0 \cap\cA_\infty$, Theorem~\ref{t:AllPos} implies that $\wt{A}$ is fundamentally reducible.
\end{example}

\begin{example}\label{ex:5.3}
Consider the differential expression \eqref{eq2ndq} with $q = 0$ and
\[
w(t) = \begin{cases}
(-t)^{\alpha_-}, & \text{if} \ \ t < 0, \\
\phantom{(}\rule{5pt}{0pt}t^{\alpha_+}, & \text{if} \ \ t > 0, \\
\end{cases}
\qquad \text{and} \qquad
p(t) = \begin{cases}
(-t)^{\beta_-}, & \text{if} \ \ t < 0, \\
\phantom{(}\rule{5pt}{0pt} t^{\beta_+}, & \text{if} \ \ t > 0, \\
\end{cases}
\]
with $\alpha_+, \alpha_- > -1$ and $\beta_+, \beta_- < 1$. In this example we use the notation introduced in Example~\ref{ex2ndq}. It is clear that the coefficients $p, q$ and $w$ satisfy condition (C1). In~\cite[Theorem~1]{EvZ78} it was proved that (C2) is also satisfied and that the function $m_\pm$ has the form
\[
 m_\pm({z}) = C_\pm{(-{z})^{-\nu_\pm}},
\]
where
\[
\nu_\pm=\frac{1-\beta_\pm}{\alpha_\pm-\beta_\pm+2},  \quad C_\pm=\frac{(2k_\pm)^{2\nu_\pm}\Gamma(1+\nu_\pm)}{(1-\beta_\pm)\Gamma(1-\nu_\pm)}, \quad
 k_\pm=\frac{\alpha_\pm-\beta_\pm + 2}{2}.
\]
As in Example~\ref{ex2ndq} the coupling $A_1$ of $S_+$ and $S_-$ relative to the boundary triples $\bigl(\nC, \Gamma_0^+,\Gamma_1^+\bigr)$ and $\bigl(\nC, \Gamma_0^-,\Gamma_1^-\bigr)$ given in \eqref{ebtS} coincides with the differential operator associated in $L_{w}^2(\nR)$ with the expression \eqref{eq2ndq} with $q=0$ and above $p$ and $w$.

It follows from Proposition~\ref{prop:nonneg} that $A_1$ is a nonnegative self-adjoint operator in the Kre\u{\i}n space $\bigl(L_{w}^2(\nR),\kip\bigr)$. Moreover, since the equation $\bigl(py'\bigr)'= 0$ does not have nontrivial solutions in $L_{w}^2(\nR)$, we have $\ker(A_1) = \{0\}$.

As $\nu_\pm\in(0,1)$ we have $m_+, m_- \in \cA_\infty \cap \cA_0$. Now, Theorem~\ref{t:AllPos} implies that $A_1$ is fundamentally reducible in the Kre\u{\i}n space $L_{w}^2(\nR)$.
In the case $\alpha_\pm = \beta_\pm = 0$ this result was proved in \cite{CN} and for $\alpha_+ = \alpha_-$ and $\beta_\pm = 0$ in~\cite{FN98}.

However, Theorem~\ref{t:AllPos} implies more. Let $\wt{A}$ be an arbitrary nonnegative self-adjoint extension of $S_+\oplus(-S_-)$ in the Kre\u{\i}n space $\bigl(L_{w}^{2}(\nR),\kip\bigr)$. Then, clearly,  $\ker(\wt{A})  = \{0\}$ and Theorem~\ref{t:AllPos} yields that $\wt{A}$ is fundamentally reducible in the Kre\u{\i}n space $\bigl(L_{w}^2(\nR),\kip\bigr)$. In \cite{Kost06} similar results were obtained for the case $\alpha_+ = \alpha_-$ and $\beta_\pm = 0$.
\end{example}

\begin{example}
In this example we present a nonnegative coupling in a Kre\u{\i}n space which has a singular critical point at $0$. This example is modeled after the example in \cite[Section~5]{KaKost08}. It is proved in \cite{KaKost08} that for the expression in \eqref{eq2ndq} with $p=1$, $q = 0$ and $w(t) = (3|t|+1)^{-4/3}$ the corresponding Titchmarsh-Weyl coefficients $m_+$ and $m_-$ are $m_+(z) = m_-(z) = 1/\sqrt{-z}-1/z$. From this it was deduced in \cite{KaKost08} that $0$ is a singular critical point of the associated nonnegative differential operator in $(L_{w}^2(\nR),\kip)$. We think that it will be instructive to present a similar example as a coupling.

First define the Hilbert space $\cK_\pm = L^2(\nR_\pm)\oplus\nC$ with the inner product
\[
\left\la\! \left(\!\!\begin{array}{c} f_\pm \\ u_\pm \end{array}\!\! \right),
\left(\!\! \begin{array}{c} g_\pm \\ v_\pm \end{array}\!\!\right) \!\right\ra_\pm
  \! := \! \int_{\nR_\pm} \!\! f_\pm(t) g_\pm^*(t) dt + u_\pm v_\pm^*, \  \begin{array}{l}
 f_\pm, g_\pm \in L^2(\nR_\pm), \\ u_\pm, v_\pm \in \nC.
\end{array}
\]
In this space we consider the operator $S_\pm$ given by its graph
\[
\gr(S_\pm) = \left\{
 \left\{
\left(\!\!\!\begin{array}{c} f_\pm \\ \mp f(0\pm) \end{array}\!\! \right),
\left(\!\!\!\begin{array}{c} - f_\pm'' \\ 0 \end{array}\!\!\right)
\right\} \, : \, \begin{array}{c}
 f_\pm \in W^{2,2}(\nR_\pm) \\[3pt]
 f'(0\pm) = 0
\end{array}    \right\}.
\]
It is easy to see that $S_\pm$ is densely defined and positive. The graph of its adjoint is
\[
\gr(S_\pm^*) = \Biggl\{
 \left\{
\left(\!\!\begin{array}{c} f_\pm \\ u_\pm \end{array}\!\! \right),
\left(\!\! \begin{array}{c} - f_\pm''(t) \\ f_\pm'(0\pm) \end{array}\!\!\right)
\right\} \, : \,
 f_\pm \in W^{2,2}(\nR_\pm)  \Biggr\}.
\]
A boundary triple $\bigl(\nC, \Gamma_0^\pm, \Gamma_1^\pm\bigr)$ for $S_\pm$ is
\[
\Gamma_0^\pm\left(\!\!\!\begin{array}{c}
  f_\pm \\
  u_\pm \\
\end{array}\!\!\!\right) = f_\pm'(0\pm),\qquad \Gamma_1^\pm \left(\!\!\!\begin{array}{c}
  f_\pm \\
  u_\pm \\
\end{array}\!\!\!\right)= \mp f_\pm(0\pm)-u_\pm .
\]
The corresponding Weyl solution is
\[
\psi_\pm(t,z) = \left(\!\!\begin{array}{c}
  \mp \exp\bigl(\mp t \sqrt{-z} \bigr)/\sqrt{-z} \\
  1/z \\
\end{array}\!\!\right), \qquad z \in \nC^+.
\]
That is,
\[
\Gamma_0^\pm \psi_\pm(\cdot,z) = 1 \quad \text{and} \quad S_\pm^* \psi(\cdot, z) = z \psi(z), \qquad
 z \in \nC^+.
 \]
Therefore the corresponding Weil function is
\[
m_\pm(z) = \frac{1}{\sqrt{-z}}-\frac{1}{z}, \qquad z \in \nC^+.
\]

The graph of the coupling $S_1$ of the symmetric operators $S_-$ and $S_+$ in the direct sum Hilbert space
\[
\cK = \cK_+ \oplus \cK_- = L^2(\nR) \oplus \nC^2
\]
is
\begin{equation*} 
\left\{  \left\{ \begin{pmatrix} f \\ u_+ \\u_-
\end{pmatrix},
\begin{pmatrix} - f'' \\ f'(0+)\\ f'(0-) \end{pmatrix} \right\} :
 \,\begin{array}{c}
 f \in W^{2,2}(\nR_-)\oplus W^{2,2}(\nR_+)  \\
    f'(0+)=f'(0-), \\
    f(0-)-f(0+)=u_-+u_+, \\
 \end{array} \right\}.
\end{equation*}
By Corollary \ref{col:nonneg} $S_1$ is a nonnegative operator in $(\cK, \ahip)$.

The following indefinite inner product turns the Hilbert space $\cK = L^2(\nR) \oplus \nC^2$ into a Kre\u{\i}n space:
\[
\left[
\left(\!\!\begin{array}{c} f \\ u_+ \\ u_-  \end{array}\!\! \right),
\left(\!\! \begin{array}{c} g \\ v_+ \\ v_- \end{array}\!\!\right) \right] := \int_\nR (\sgn t) f(t)g(t)^* dt + u_+v_+^* -u_-v_-^*
\]
for all $f, g \in L^2(\nR)$ and all $u_+,u_-,v_+,v_- \in \nC$. The direct sum $\cK = \cK_+\oplus \cK_-$ is a corresponding fundamental decomposition of $(\cK,\kip)$.

The graph of the coupling $A_1$ of the operators $S_-$ and $S_+$ is
\begin{equation} \label{edefwAHC4pm}
\left\{  \left\{ \begin{pmatrix} f(t) \\ u_+ \\u_-
\end{pmatrix},
\begin{pmatrix} - (\sgn t) f''(t) \\ f'(0+) \\ - f'(0-) \end{pmatrix} \right\} :
 \,\begin{array}{c}
 f \in W^{2,2}(\nR_-)\oplus W^{2,2}(\nR_+)  \\
    f'(0+)=f'(0-), \\
    f(0-)-f(0+)=u_-+u_+, \\
 \end{array} \right\}.
\end{equation}
It follows from~\eqref{edefwAHC4pm} that
$\ker(A_1)=\ker(A_1^2)$ and $\ker(A_1)$ is spanned by $( 0 \, \ 1 \ -1 )^{\!\top}$.

Since the pair of Weyl functions $m_+(z) = m_-(z) = 1/\sqrt{-z}-1/z$ does not have $D_0$-property, it follows from Corollary~\ref{Dinf} that $0$ is a singular critical point for $A_1$.
\end{example}


\begin{example}
Consider the following boundary value problem.  For an arbitrary $g \in L^2(\nR_+)$ find $f \in L^2(\nR_+)$ such that
\begin{equation}\label{24Example}
  -f''-zf=g,
  \quad f'(0)-\sqrt{2}\sqrt[4]{z}f(0)=0.
\end{equation}
In this example we will show that a linearization of this boundary value problem in the Kre\u{i}n space $\bigl(L^2(\nR),\kip\bigr)$ is fundamentally reducible.

Here the Kre\u{\i}n space $\bigl(L^2(\nR),\kip\bigr)$ is the $L^2$ space introduced in Example~\ref{ex2ndq} with $w=1$. We also use the fundamental decomposition $L^2(\nR) = \cK_+ [\dot{+}]\cK_-$ with $\cK_+$ and $\cK_-$ from \eqref{eq:L2pm}.

Let $S_+$ be the minimal operator associated with the differential expression $-\frac{d^2}{dt^2}$  in the Hilbert space  $\cK_+$ and let $\bigl(\nC,\Gamma^+_0,\Gamma^+_1\bigr)$ be the boundary triple for $S_+^*$ given by
\[
\Gamma^+_0f_+=f_+(0),\quad \Gamma^+_1f_+=f'_+(0).
\]
The Weyl function of $S_+$ relative to this boundary triple is $m_+(z)=-\sqrt{-z}$.

Let $S_-$ be the differential operator in the Hilbert space $\bigl(\cK_-,-\kip\bigr)$ defined on
\[
\dom(S_-) = \bigl\{ f_- \in W^{4,2}(\nR_-)\, : \, f_-(0) = f_-'(0) =
f_-''(0) = 0\bigr\}
\]
by $S_-f_- = f_-^{(4)}$.  Then the adjoint operator $S_-^*$ is defined on
\[
\dom(S_-^*) = \bigl\{ f_- \in W^{4,2}(\nR_-)\, : \, f_-(0) = 0\bigr\}
\]
by the same expression $S_-^*f_- = f_-^{(4)}$. With
\[
\Gamma_0^-f_- = f_-'(0), \ \ \ \Gamma_1f_- = -f_-''(0), \ \ \ f_- \in \dom(S_-^*),
\]
the triple $\bigl(\nC,\Gamma_0^-,\Gamma_1^- \bigr)$ is a boundary
triple for $S_-^*$. It turns out that
\[
m_-({z})=  -\sqrt{2}\sqrt[4]{-{z}}
\]
is the corresponding Titchmarsh-Weyl coefficient.
By Theorem~\ref{tKsc} the coupling $A_1$ of the operators $S_+$ and $S_-$ in the graph notation  takes the form
\begin{equation*} 
  A_1=\left\{\left\{\begin{pmatrix}
    f_+ \\
    f_-
  \end{pmatrix},
  \begin{pmatrix}
    -f_+'' \\
    f_-^{(4)}
  \end{pmatrix}
  \right\}: \!\! \begin{array}{l}
    f_+ \in W^{2,2}(\nR_+),\,f_+(0)-f_-'(0)=0\\
    f_- \in W^{4,2}(\nR_-),\,
   f_+'(0)-f''_-(0)=f_-(0)=0
  \end{array}
  \!\right\}
\end{equation*}
and the compressed resolvent $P_{+}(A_1-z)^{-1}|_{\cK_+}$ of $A_1$ gives a solution of the boundary value problem~\eqref{24Example} by the formula
\[
f=P_{+}(A_1-z)^{-1}g.
\]
The operator $A_1$ is called a linearization of the boundary value problem~\eqref{24Example}.

By Corollary~\ref{col:nonnegF} the operator $A_1$ is nonnegative. Since clearly $\ker(A_1)=\{0\}$ and the functions $m_+$ and $m_-$ belong to $\cA_\infty \cap \cA_0$, Theorem~\ref{t:AllPos} yields that the linearization $A_1$ of~\eqref{24Example} is fundamentally reducible.
\end{example}

\end{document}